%% file: ex_article_full.tex
\documentclass[review,hidelinks,onefignum,onetabnum]{siamart251216}

\usepackage{subfigure}
\usepackage{tikz}
\usepackage{enumerate}

\newtheorem{example}[theorem]{Example}%
\newtheorem{assumption}[theorem]{Assumption}%

\UseRawInputEncoding

\input{ex_shared}

\ifpdf
\hypersetup{
  pdftitle={Optimality Conditions and Numerical Algorithms for a Class of Minimax Bilevel Optimization Problems},
  pdfauthor={Yaling Hu, Jiani Wang, Yu-hong Dai, Xiaojiao Tong}
}
\fi

\usepackage{etoolbox}
\allowdisplaybreaks[2]

\makeatletter
\renewcommand \normalsize {
  \@setfontsize\normalsize{10pt}{12pt}
  \abovedisplayskip      0pt plus 2pt minus 2pt
  \abovedisplayshortskip 0pt plus 2pt minus 2pt
  \belowdisplayshortskip 0pt plus 2pt minus 2pt
  \belowdisplayskip      0pt plus 2pt minus 2pt
  \let\@listi\@listI
} \normalsize
\makeatother

\begin{document}

\maketitle

\begin{abstract}

In many applications, including Stackelberg games, machine learning, and power systems \cite{Mackay2018Selftuning,Heinrich1952The,Wang2021Bi-Level}, the decisions in a  minimax optimization problem  can be constrained by a solution to an optimization problem. In this paper, we introduce optimality conditions of this novel minimax
bilevel optimization problem and  develops efficient first-order algorithms for this class of problems. 
Firstly, we establish the optimality conditions for minimax bilevel problems by reconstructing the lower-level problem through its Karush-Kuhn-Tucker (KKT) conditions and value function.
Secondly, we develop a penalty method framework to approximately solve the minimax bilevel problem by transforming it into a single-level minimax problem.
Thirdly, we design a projected gradient multi-step ascent descent method to solve the resulting minimax problem, which can find an $\epsilon$-KKT solution for the original minimax bilevel problem within $\mathcal{O}(\epsilon^{-3} \log(\epsilon^{-1}))$ iterations. To improve {the convergence rate} of the algorithm, we provide its Nesterov accelerated extension  with $\mathcal{O}(\epsilon^{-3} \log(\epsilon^{-1}))$ iteration complexity.
Finally, we demonstrate the effectiveness of our model and algorithms through numerical experiments on various  minimax bilevel optimization problems and a bilevel economic dispatch in the power system.

\end{abstract}

\begin{keywords}
minimax bilevel optimization, penalty method, projected gradient multi-step ascent descent method, Nesterov accelerated, iteration complexity
\end{keywords}

\begin{MSCcodes}
65K05, 90C30, 90C47, 90C90, 90C99
\end{MSCcodes}

\section{Introduction} \label{sec_introduction}

Classical constrained minimax optimization is a significant class of optimization problems due to its wide application in machine learning and operation research, e.g., generative adversarial networks, adversarial training, and (distributionally) robust optimization \cite{Creswell2018Generative,Delage2010Distributionally}. Inspired by the market clearing mechanism in power system, the inner-level decisions in the minimax optimization is considered as a solution to a minimization problem.
In this paper, we consider the following minimax bilevel optimization problem
\begin{equation}
    \label{prob:refpessbiopt_ours}
    \begin{aligned}
        \min_{x \in \mathcal{X}} \max_{y \in \mathcal{Y},\lambda \in \Lambda} \ & f(x,y, \lambda) := \bar{f}(x,y) + \lambda^{T}(Ax + By - c)\\
        \mathrm{s.t.~~~~~~} \ & y \in \mathop{\arg \min}_{z \in \mathcal{Y}} \ g(z,\lambda),
    \end{aligned}
\end{equation}
where $\bar{f}:\mathbb{R}^{d_x} \times \mathbb{R}^{d_y}  \to \mathbb{R}$ is a continuous differentiable function and $\bar{f}(\cdot,y)$ is nonconvex for any given $y \in \mathcal{Y}$, $\mathcal{X} \subset \mathbb{R}^{d_x}$, $\mathcal{Y} \subset \mathbb{R}^{d_y}$ and $\Lambda \subset \mathbb{R}^{d_{\lambda}}$ are convex and compact sets. Moreover, $g: \mathbb{R}^{d_y} \times \mathbb{R}^{d_{\lambda}} \to \mathbb{R}$ is continuously differentiable and convex in $z$ for any given $\lambda \in \Lambda$. 

\subsection{Background}
Minimax bilevel optimization presents a versatile framework that includes various practical applications arising in power systems \cite{Alves2018A,Wang2021Bi-Level}, transportation \cite{Chiou2014Optimal}, machine learning \cite{Hu2022Multi,Zhang2022Revisiting}, and so on. We introduce three important applications. \\
\textbf{Market clearing mechanism in power systems.} The electricity market clearing mechanism model is considered as the hierarchical interaction with the information of power and price exchanged between distribution system (DS) and microgrids (MGs) in  \cite{Wang2021Bi-Level}, where DS determines the distribution locational marginal price and the uncertain locational marginal price and sends these price signals to MGs. Each MG subsequently optimizes its dispatch based on the received price, which determines the operation cost of power exchanges with DS. Inspired by this modeling approach, we formulate a deterministic bilevel optimization framework to characterize the interactions between DS and single MG, i.e.,
\begin{equation}\label{ep-1}
    \begin{aligned}
        \min_{x \in \mathcal{X}} \max_{\lambda, y \in \mathcal{Y}} \ & c_1^{T} x + \lambda^{T}(Ax + By - b) \\
        \mathrm{s.t.~~~~} \ & y \in \mathop{\arg \min}_{z \in \mathcal{Y}} \ \lambda^{T}z + c_2^{T}z.
    \end{aligned}
\end{equation}
The detailed derivation of the model is presented in Section 5.1. Note that the problem \cref{ep-1} is a special case of the minimax  problem \cref{prob:refpessbiopt_ours} with  linear objective functions in both the upper-level and lower-level problems.\\
\textbf{Adversarial training.} Adversarial training (AT) is an important defense mechanism to enhance the robustness of deep neural networks against perturbations. Specifically, AT was formulated as a unified bilevel optimization problem \cite{Zhang2022Revisiting, Ahmadi2025Single}. The upper level aims to minimize the training loss, while the lower level is used to model the attack generation process. In this model, the unknown true distribution is assumed in a given uncertainty set, and the objective function is formulated with respect to the worst case expected cost over the choice of a distribution in the set. Under some mild conditions,  a distributionally robust bilevel AT model can be formulated as 
\begin{equation}
    \label{app_adtrain:dro_bilevelform_final}
    \begin{aligned}
        \min_{p = [p_i]_{i= 1}^{N} \in \mathcal{P}} \max_{\theta,\delta(\theta;x,y)} \ & -\sum_{i=1}^{N} p_i [\ell_{\text{tr}}(\theta,x_i+\delta_{i} (\theta;x,y);y_i)] \\
        \mathrm{s.t.~~~~~} \ & \delta(\theta;x,y) \in \mathop{\arg \min}_{\delta \in \mathcal{C}} \ \ell_{\text{atk}}(\theta,\delta;x,y),
    \end{aligned}
\end{equation}
where $\mathcal{P}$ is referred to as a finite ambiguity set that contains all possible probability distributions. This is a minimax bilevel optimization problem \cref{prob:refpessbiopt_ours}.\\
\textbf{Robust signal-setting for road network with uncertain performance preferences.} A robust signal setting problem aims to minimize the total travel delay incurred by road users under the decision maker's performance preference uncertainty. This problem can be formulated as minimax bilevel problems, considered in \cite{Chiou2014Optimal}. According to Wardrop’s principle, and under some mild conditions, a minimax bilevel signal-setting problem can be expressed as follows 
\begin{equation} 
    \begin{aligned}
      \max_{W\in\mathcal{W}}  \min_{\Psi\in \mathcal{Y}, f\in\mathcal{F}}  \  P(W, \Psi, f(\Psi)), \ \ \ 
        \mathrm{s.t.} f(\Psi) \in S(\Psi), 
    \end{aligned}
\end{equation}
where $S(\Psi)$ denotes an optimal solution set determined by the following user equilibrium traffic assignment optimization problem
\begin{equation}
    \label{app_trans:ll_prob}
    \begin{aligned}
        \min_{f \in K} & \ \sum_{a \in L} \int_{0}^{f_a} c_a(\Psi, w) dw. \\
    \end{aligned}
\end{equation}

However, for such minimax optimization problems with complex constraint structures, the current study only offers heuristic numerical algorithms and has not provided numerical algorithms with theoretical convergence.
\subsection{Related works}
\ \\
\textbf{Bilevel optimization.} Constraint-based methods have played an important role in solving bilevel optimization (BLO) problems by fundamentally reforming them into single-level optimization problems.
One approach involves replacing the lower-level problem with its KKT conditions, resulting in a single-level mathematical program with complementarity constraints or equilibrium constraints (MPCC/MPEC) \cite{Dempe2014KKT}, which can be solved using specialized algorithms \cite{Allende2013Solving,Liu2001Exact,Nie2021A}. 
However, the original BLO problem is not always equivalent to its KKT reformulation, specifically in cases where the lower-level problem is not convex or has multiple multipliers despite being convex \cite{Dempe2012Is}. 
To overcome this limitation, another approach reformulates BLO into a single-level optimization problem with an inequality constraint based on the value function of the lower-level problem, which was first proposed by \cite{Outrata1990On}. 
The main challenge of value function based methods comes from the nonsmoothness of the value function even when the lower-level objective is smooth.
Using the smoothing technique on value function and penalizing the smoothed value function to the upper-level objective via penalty methods, numerous BLO algorithms have been proposed \cite{Bai2025Alternating,Lin2014On,Liu2024Moreau,Ye2023Difference}.
Furthermore, assume that the lower-level problem has unique optimal solution, hypergradient-based methods have been designed for BLO \cite{Chen2024Optimal,Grazzi2020On, Ji2021Bilevel}. 
However, hypergradient-based methods are computationally expensive due to the evaluations of Hessian-vector or Jocabian-vactor products at each iteration. To avoid the nonsmoothness of value function and high computational cost, developing fast and tractable methods for them holds substantial importance. 
In \cite{Liu2022BOME}, Liu et al. introduced a simple and fast fully first-order BLO algorithm with a dynamic barrier gradient descent on the value function reformulation and established its non-asymptotic convergence to local stationary points.  
Recently, Lu and Mei \cite{Lu2024First} utilized a novel penalty method to transform BLO into a structured minimax problem, and proposed a first-order method to solve the resulting minimax problem to find an $\epsilon$-KKT solution of the original BLO. 
While most research on BLO focuses on the optimistic reformulation, the pessimistic case has received significantly less attention because of its relative intractability \cite{Dempe2020Optimality,Dempe2014Necessary}.
Lampariello et al. \cite{Lampariello2019The} introduced a standard pessimistic BLO formulation, addressed it by transforming it into an optimistic BLO with a lower-level Nash game and then applying KKT conditions to obtain a tractable single-level MPCC.
By reformulating BLO into an approximated single-level problem based on the value function,  \cite{Liu2023Value} developed a novel sequential minimization algorithmic framework to handle optimistic and pessimistic BLO. \\
\textbf{Multi-level optimization.} More recently, researchers have begun to explore general multi-level optimization framework \cite{Wu2017An,Sadeghi2022On,Huang2026Defender}. For example, \cite{Sato2021A} extended a gradient method for BLO to a multi-level problem and established its theoretical guarantee. Shafiei et al. \cite{Shafiei2024Trilevel} applied proximal gradient methods based on fixed-point theory to solve convex tri-level and multi-level problems. Tu et al. \cite{Tu2024A} proposed a first-order algorithm to solve a max-min-max tri-level problem arising from robust optimization. \\ 
\textbf{Minimax optimization.}
Minimax optimization has attracted considerable attention in recent years, with numerous efficient algorithms proposed for (non)convex-concave settings \cite{Bian2024Nonsmooth,Kong2021An,Xu2024Derivative,Zhang2025An}.
For general nonconvex-nonconcave (NC-NC) setting, there are many existing works.
Yang et al. \cite{Yang2020Global} employed alternating gradient descent ascent (AGDA) algorithm to solve deterministic and stochastic NC-NC minimax problems, and first provided the convergence result of AGDA algorithm under two-sided Polyak-\L ojasiewicz (PL) conditions. 
Subsequently, Xu et al. \cite{Xu2023Zeroth} proposed a zeroth-order AGDA algorithm and its variance-reduced variant for a broad class of problems. Under the PL condition, their proposed algorithms can find an $\epsilon$-stationary point within $\mathcal{O}(\epsilon^{-2})$ and $\mathcal{O}(\epsilon^{-3})$ iterations, respectively. 
Grimmer et al. \cite{Grimmer2023The} used the saddle envelope to reformulate the NC-NC minimax problem into convex-concave case, and developed a damped proximal point method to solve the resulting problem. 
Lan et al. \cite{Yang2024Data} investigated a novel class of stochastic minimax problems with complex expectation constraints, and utilized a stochastic projected gradient descent method to solve its primal-dual reformulation with a min-max-max-min structure. 
Li et al. \cite{Li2025Nonsmooth} introduced a smoothed proximal linear descent-ascent method for structured nonsmooth NC-NC minimax problems, which can find both $\epsilon$-game- and $\epsilon$-optimization-stationary points in $\mathcal{O}(\epsilon^{-2 \max \{2\theta,1\}})$ iterations under the Kurdyka-\L ojasiewicz property with exponent $\theta \in [0,1)$. 
However, these methods primarily address minimax problem without coupled constraints. 
Recently, Tsaknakis et al. \cite{Tsaknakis2023Minimax} studied a minimax problem with coupled linear constraints and established its duality theory and a novel stationary point concept. Subsequently, they addressed it via its dual problem that possesses a three-level min-min-max structure, and proposed a multiplier gradient descent algorithm with convergence guarantee. 
Considering a nonsmooth nonconvex-linear minimax problem with joint linear constraints, Zhang and Xu \cite{Zhang2024An} proposed an alternating proximal gradient algorithm and established its iteration complexity of $\mathcal{O}(\epsilon^{-3})$ for finding an $\epsilon$-stationary point.
Dai et al. \cite{Dai2024Optimality} established optimality conditions for nonsmooth minimax problems with coupled linear constraints, and subsequently developed a proximal gradient multi-step ascent descent method that finds an $\epsilon$-stationary point within $\mathcal{O}(\epsilon^{-2}\log \epsilon^{-1})$ iterations.
Further, Dai and Zhang \cite{Dai2020Optimality} provided optimality conditions for minimax problems with general coupled constraints, and proposed an augmented Lagrangian method for NC-NC problem with equality constraints in \cite{Dai2024The}.
In \cite{Lu2024A}, a first-order augmented Lagrangian method was proposed to solve the constrained nonsmooth minimax problem, which can find an $\epsilon$-KKT solution with operation iteration $\mathcal{O}(\epsilon^{-4} \log \epsilon^{-1})$. \\
\textbf{Minimax bilevel optimization.} There has been a limited number of works concentrating on the minimax BLO problem. In addition, existing works focus on a special case when the lower-level objective is strongly convex. 
Gu et al. \cite{Gu2021Nonconvex} first formulated the task-robust meta-learning problem as a minimax BLO problem, and proposed a gradient descent and ascent bilevel optimization algorithm for the resulting problem. 
In \cite{Hu2022Multi}, two simple single loop single timescale stochastic methods were proposed for solving a multi-block minimax BLO problem, and were shown to converge to $\epsilon$-stationary point with an oracle complexity of $\mathcal{O}(\epsilon^{-4})$. 
To incorporate robustness in the multi-objective setting, Chen et al. \cite{Chen2024Optimal} studied a minimax multi-objective BLO problem with significant applications in the robust machine learning. They developed a class of fully single-loop and Hession-inversion-free algorithms within a moving-average step for solving the inner max part of the minimax BLO problem. Considering a constrained setting, \cite{Ahmadi2025Single} introduced two novel single-loop inexact bilevel primal-dual algorithms designed for nonconvex-concave minimax BLO problem, including one-sided projection-free method and fully projected method. Their proposed algorithm can achieve an $\epsilon$-stationary solution within $\mathcal{O}(\epsilon^{-4})$ and $\mathcal{O}(\epsilon^{-5})$ iterations for one-sided projection-free and fully projected methods, respectively. 
Using the penalty method to transform minimax BLO problem into a simple minimax problem, Yang et al. \cite{Yang2024First} developed a fully first-order single-loop algorithm and a memory-efficient method to solve the resulting minimax problem and applied them into deep AUC maximization and robust meta-learning. However, they only considered minimax BLO problems with simple structures, and the iteration complexity they obtained can be further improved.

\subsection{Contributions}
In this paper, we focus on a new class of bilevel optimization problems with minimax structure, where the upper-level is NC-NC minimax problem, subject to a convex lower-level problem related to the upper-level maximization component. Unlike existing works on minimax bilevel optimization, which primarily address special cases where $f$ is (strongly) concave in $(y,\lambda)$, and $g$ is strongly convex in $z$, we consider a more general case in which $f(x,\cdot,\cdot)$ is nonconcave and $g(\cdot,\lambda)$ is convex. Utilizing penalty method, we transform problem \cref{prob:refpessbiopt_ours} into an approximate single-level minimax problem and then adopt a first-order numerical method to solve it. Our main contributions are summarized as follows.


Firstly, for this novel pessimistic bilevel optimization model with a minimax structure \cref{prob:refpessbiopt_ours}, we define minimax-strong stationary, minimax-Bouligand stationary, minimax-Mordukhovich stationary, minimax-Clarke stationary, minimax-weakly stationary, minimax-value stationary, minimax-hypergradient-based stationary conditions 
and analyze their relationships, and establish the corresponding optimality conditions. To the best of our knowledge, no prior work has identified these stationary points of the minimax bilevel problem \cref{prob:refpessbiopt_ours}. These results are derived from the equivalent reformulations via KKT conditions and value
function of the lower-level problem.

Secondly, a penalty method framework is designed to approximately solve the problem \cref{prob:refpessbiopt_ours} by transforming it into a single-level minimax problem. We establish the convergence of the proposed penalty method, demonstrating that any accumulation point of the solution sequence generated by the penalty method is an optimal solution of the problem \cref{prob:refpessbiopt_ours} when the penalty parameter is sufficiently large. 

Thirdly, we develop a projected gradient multi-step ascent descent (PG-MAD) method and its Nesterov accelerated variant (NA-PG-MAD) to solve the reconstructed nonconvex-nonconcave minimax problem, and provide the corresponding iteration complexity guarantee. Under some mild conditions, we prove that both algorithms can find an $\epsilon$-KKT solution of problem \cref{prob:refpessbiopt_ours} in $\mathcal{O}(\epsilon^{-3} \log \epsilon^{-1})$ iterations. 

Finally, we show that the proposed methods can efficiently handle important minimax bilevel problems in power systems, machine learning and transportation, and analyze the theoretical convergence results. We further validate the practical utility of the proposed model by applying it to a bilevel coordinated economic dispatch model for DS and MG.

\subsection{Organization}

The remainder of this paper is organized as follows. 
In \Cref{sec_optimconds}, we provide the basic assumptions, introduce several stationary point concepts, and derive the optimality conditions for problem \cref{prob:refpessbiopt_ours}. 
In \Cref{sec_modelandalgorithm}, we develop a penalty method for problem \cref{prob:refpessbiopt_ours} with its convergence analysis. 
In \Cref{sec_pracfirordermethod}, we propose a first-order method along with its Nesterov accelerated extension, followed by a theoretical analysis of their iteration complexity, respectively. In \Cref{sec_applications}, we give three applications regarding with a minimax bilevel optimization problem. 
Preliminary numerical results conducted to validate the proposed model and algorithms are reported in \Cref{sec_numexperiments}. Conclusion is made in the final section.

\subsection{Notations}

The following notations are used throughout this paper. 
$[a,b]^n$ denotes the set $\{x \in \mathbb{R}^n | x_i \in [a,b], i = 1,2,\ldots,n\}$. Let $\Vert \cdot \Vert$ be the Euclidean norm. $\mathcal {T}_{C}(x)$ represents the tangent cone of the set $C$ at the point $x$. A function $\varphi$ is said to be $L_{\nabla \varphi}$-smooth on its effective domain $\text{dom}(\varphi)$, if $\Vert \nabla \varphi(x) - \nabla \varphi(x^{\prime})\Vert^2 \leq L_{\nabla \varphi}^2 \Vert x - x^{\prime}\Vert^2$ for all $x, x^{\prime} \in \text{dom}(\varphi)$.
If $\varphi(x) + \frac{\rho}{2}\Vert x \Vert^2$ is a convex function, then $\varphi$ is referred to as $\rho$-weakly convex. It includes all convex functions and smooth functions with Lipschitz continuous gradient \cite{Grimmer2023The}. For a lower semicontinuous convex function $\varrho: \mathbb{R}^n \to \mathbb{R} \cup \left\{ +\infty \right\}$, the proximal operator associated with $\varrho$ is denoted by $\operatorname{prox}_{\bar{\gamma} \varrho}$ with $\bar{\gamma} > 0$, that is,
\vspace{-0.3cm}
\begin{equation*}
    \operatorname{prox}_{\bar{\gamma} \varrho}(x) := \mathop{\arg \min}_{z \in \mathbb{R}^n} \Bigl\{ \bar{\gamma} \varrho(z) + \frac{1}{2}\Vert z - x\Vert^2  \Bigr\},  \ \forall x \in \mathbb{R}^n.
\end{equation*}
Let $\varrho(z)$ be given by an indicator function, i.e., $\varrho(z) = \delta_{C}(x)$, where $C$ is a nonempty set. Then, 
\vspace{-0.3cm}
\begin{equation*}
    \operatorname{prox}_{\bar{\gamma} \varrho}(x) = \mathop{\arg \min}_{z \in \mathbb{R}^n} \Bigl\{ \bar{\gamma} \delta_C(z) + \frac{1}{2}\Vert z - x\Vert^2 \Bigr\} = \mathop{\arg \min}_{z \in C} \Vert z - x \Vert^2 = \operatorname{proj}_{C}(x).
\end{equation*}
For simplicity, let us define 
\vspace{-0.3cm}
\begin{equation*}
    T^{f,\varrho}_{\bar{\gamma}}(x) = \operatorname{prox}_{\bar{\gamma}^{-1} \varrho}( x - \bar{\gamma}^{-1} \nabla f(x) ) \text{ and } G^{f,\varrho}_{\bar{\gamma}}(x) = \bar{\gamma} (x - T^{f,\varrho}_{\bar{\gamma}}(x)).
\end{equation*}
Specifically, when $\varrho(z) = \delta_{C}(z)$, we use
$
    T^{f,C}_{\bar{\gamma}}(x) = \operatorname{proj}_{C}( x - \bar{\gamma}^{-1} \nabla f(x) )$ and $G^{f,C}_{\bar{\gamma}} (x)$ \\ $= \bar{\gamma} (x - T^{f,C}_{\bar{\gamma}}(x))
$. 
Similar to the definition of an $\epsilon$-optimal solution for minimax problem \cite{Lu2024First}, we introduce a class of approximate solutions for a general min-max-min optimization problem
\begin{equation}
    \label{min-max-minprob}
    \varPhi^{*} = \min_{x \in \mathcal{X}} \max_{y \in \mathcal{Y}} \min_{z \in \mathcal{Z}}  \varPhi(x,y,z),
\end{equation}
where $ \varPhi(\cdot,y,z): \mathbb{R}^{d_x} \to \mathbb{R} \cup \{ \infty \}$ and $ \varPhi(x,y,\cdot): \mathbb{R}^{d_y} \to \mathbb{R} \cup \{\infty\}$ are lower semicontinuous functions, $ \varPhi(x,\cdot,z):\mathbb{R}^{d_y} \to \mathbb{R} \cup \{ \infty \}$ is an upper semicontinuous function, and $ \varPhi^*$ is finite. The sets $\mathcal{X} \subset \mathbb{R}^{d_x}, \mathcal{Y} \subset \mathbb{R}^{d_y}$, and $\mathcal{Z} \subset \mathbb{R}^{dz}$ are convex and compact.

\begin{definition}
    \label{def:appsol_mMmprob}
    A point $(x_{\epsilon}, y_{\epsilon}, z_{\epsilon}) \in \mathcal{X} \times \mathcal{Y} \times \mathcal{Z}$ is called an $\epsilon$-optimal solution of the min-max-min problem \cref{min-max-minprob} if 
    \begin{align*}
         \varPhi(x_{\epsilon},y_{\epsilon}, z_{\epsilon}) - \min_{z}  \varPhi(x_{\epsilon},y_{\epsilon},z) &\leq \epsilon, \\ 
        \max_{y} \min_{z}  \varPhi(x_{\epsilon},y,z) -  \varPhi(x_{\epsilon},y_{\epsilon},z_{\epsilon}) & \leq \epsilon, \\ 
         \varPhi(x_{\epsilon},y_{\epsilon},z_{\epsilon}) -  \varPhi^* & \leq \epsilon. 
    \end{align*}
\end{definition}

\section{Problem properties and optimality conditions} \label{sec_optimconds}

In this section, we derive properties and optimality conditions of the problem \cref{prob:refpessbiopt_ours}.
First, we provide some basic assumptions about functions $\bar{f}, g$ and sets $\mathcal{X}, \mathcal{Y}, \Lambda$.

\begin{assumption}
    \label{ass:pessibiopt}
    Let functions $\bar{f}:\mathbb{R}^{d_x}\times \mathbb{R}^{d_y} \to \mathbb{R}, g: \mathbb{R}^{d_y} \times \mathbb{R}^{d_{\lambda}} \to \mathbb{R}$ and sets $\mathcal{X}, \mathcal{Y}, \Lambda$ satisfy the following assumptions. 
    \begin{itemize}
        \item[(1)] $\bar{f}(x,y)$ is continuous differentiable and $L_{\nabla \bar{f}}$-smooth on $\mathcal{X} \times \mathcal{Y}$. That is, for any $(x,y), (x^{\prime},y^{\prime}) \in \mathcal{X} \times \mathcal{Y} $, the following inequality holds
        \vspace{-0.3cm}
        \begin{equation*}
            \Vert \nabla \bar{f}(x,y) - \nabla \bar{f}(x^{\prime},y^{\prime}) \Vert^2 \leq L_{\nabla f}^2 \left( \Vert x - x^{\prime} \Vert^2 + \Vert y - y^{\prime}\Vert^2 \right).
        \end{equation*}
        \item[(2)] $g(y,\lambda)$ is continuous differentiable and $L_{\nabla g}$-smooth on $\mathcal{Y} \times \Lambda$. That is, for any $(y,\lambda), (y^{\prime},\lambda^{\prime}) \in \mathcal{Y} \times \Lambda$, the following inequality holds
        \vspace{-0.3cm}
        \begin{equation*}
            \Vert \nabla g(y,\lambda) - \nabla g(y^{\prime},\lambda^{\prime}) \Vert^2 \leq L_{\nabla g}^2 \left( \Vert y - y^{\prime}\Vert^2 + \Vert \lambda - \lambda^{\prime}\Vert^2 \right).
        \end{equation*}
        For any given $\lambda \in \Lambda$, $g(\cdot,\lambda)$ is convex . 
        \item[(3)] $\mathcal{X} \subset \mathbb{R}^{d_x}, \mathcal{Y} \subset \mathbb{R}^{d_y}$, and $\Lambda \subset \mathbb{R}^{d_{\lambda}}$ are convex and compact sets. 
    \end{itemize}
\end{assumption}

Under these assumptions, one can observe that the lower-level optimal solution set $Y^*(\lambda) = \{ y \in \mathcal{Y} | g(y, \lambda) = \min_{z} g(z, \lambda) \}$ is nonempty for each $\lambda \in \Lambda$. Moreover, if $y_k \in Y^{*}(\lambda_k)$ and $\lambda_k \to \bar{\lambda}$, all the cluster points of $\{y_k\}$ are in $Y^*(\bar{\lambda})$. 

For notational convenience, we define
\begin{align}
    & g_{\text{hi}} := \max_{(y,\lambda) \in \mathcal{Y} \times \Lambda} g(y,\lambda), \qquad g_{\text{low}}  := \min_{(y,\lambda) \in \mathcal{Y} \times \Lambda} g(y,\lambda), \label{bound_lowf} \\
    & f_{\text{hi}} := \max_{(x,y,\lambda) \in \mathcal{X} \times \mathcal{Y} \times \Lambda } f(x,y,\lambda), \qquad f_{\text{low}} := \min_{(x,y,\lambda) \in \mathcal{X} \times \mathcal{Y} \times \Lambda}  f(x,y,\lambda). \label{bound_uppf}
\end{align}

\begin{remark}
    Under Assumption \ref{ass:pessibiopt}-(1), it is easy to obtain that the upper-level objective function $f(x,y,\lambda)$ in problem \cref{prob:refpessbiopt_ours} is $L_{\nabla f}$-smooth where $L_{\nabla f} := L_{\nabla \bar{f}} + \Vert A \Vert + \Vert B\Vert$. 
\end{remark}

\subsection{Minimax with complementarity constraints}
In this part, we consider the equivalent complementarity constraint optimization problem of the general minimax bilevel optimization problem 
\begin{equation}
    \label{cc1}
    \begin{aligned}
        \min_{x } \max_{y,\lambda } \ & f(x,y, \lambda) \\
        \mathrm{s.t.~~~~} \ & g_x(x)\leq 0,\ g_{\lambda } (\lambda)\leq 0,\\
        &y \in \mathop{\arg \min}_{z}\ \{g(z,\lambda): \ g_{y } (z)\leq 0\},
    \end{aligned}
\end{equation}
where $g_x: \mathbb{R}^{d_x}\to \mathbb{R}^{q_x}$, $g_{y}: \mathbb{R}^{d_y}\to \mathbb{R}^{q_y}$, $g_{\lambda }: \mathbb{R}^{d_{\lambda}}\to \mathbb{R}^{q_{\lambda}}$ are continuously differentiable and convex functions. Under the lower-level convexity, the problem \cref{cc1} is reformulated as the following general minimax problem with complementarity constraint (Min-MaxPCC)
\begin{equation}
    \label{refpessbiopt_ours_d1}
    \begin{aligned}
        \min_{x } \max_{y,\lambda,\mu^l } \ & f(x,y, \lambda) \\
        \mathrm{s.t.~~~~} \ & g_x(x)\leq 0,\ g_{\lambda } (\lambda)\leq 0,\\
        &\nabla_yg(y,\lambda)+(\mathcal{J}{g}_y(y))^{T}\mu^l=0,\ 0\geq g_{y }(y)\bot \mu^l\geq0,
    \end{aligned} 
\end{equation}
where $\mu^l\in \mathbb{R}^{q_y}$ is the multiplier of the lower-level problem. Note that even for simple boundary constraints shown in  \cref{prob:refpessbiopt_ours}, the equivalent problem is a minimax optimization problem with nonconvex nonseparable constraints due to complementarity constraints. In the following, we introduce several stationary points to analyze the optimality conditions of the problem \cref{refpessbiopt_ours_d1}.

Let the feasible region of \cref{refpessbiopt_ours_d1} is $\mathcal{F}\subseteq \mathbb{R}^{d_x\times d_y\times d_{\lambda}\times q_y}$. For any $(\bar{x},\bar{y},\bar{\lambda},\bar{\mu}^l)\in \mathcal{F}$, define the following index sets
    \begin{align*}
        & \alpha:=\{i|({g}_{y })_i(\bar{y})=0,\bar{\mu}^l_i>0\}, 
        \beta:=\{i|({g}_{y })_i(\bar{y})=0,\bar{\mu}^l_i=0\}, \\
        & \gamma:=\{i|({g}_{y })_i(\bar{y})<0,\bar{\mu}^l_i=0\}.
    \end{align*}

Define the Lagrange function of the minimax problem \cref{refpessbiopt_ours_d1} as
    \begin{align*}
     & L(x,{y},{\lambda},{\mu}^l;\mu^x,\mu^y,\mu^{\lambda},\mu^{m},\mu^{h},\mu^{c})
    =f(x,y,\lambda)+g^{T}_x(x)\mu^x-g^{T}_y(y)\mu^y-g^{T}_{\lambda}(\lambda)\mu^{\lambda}\\
      & \qquad \qquad \qquad +(\mu^l)^{T}\mu^{m} -(\nabla_yg(y,\lambda))^{T}\mu^{h}-(\mu^{h})^{T}(\mathcal{J}{g}_y(y))^{T}\mu^l-\mu^{c}{g}^{T}_{y }(y)\mu^l,
    \end{align*}
where $\mu^x\in \mathbb{R}^{q_x},\mu^y\in \mathbb{R}^{q_y},\mu^{\lambda}\in \mathbb{R}^{q_{\lambda}}$ are the multipliers for corresponding inequality constraints in the upper-level problem, and  $\mu^{m}\in \mathbb{R}^{q_y}$ is the multiplier for the constraint $\mu^l\geq0$, and $\mu^{h}\in \mathbb{R}^{d_y}$ is the multiplier for the equality constraint in \cref{refpessbiopt_ours_d1}, and $\mu^{c}\in \mathbb{R}$ is the multiplier for the complementarity constraint in \cref{refpessbiopt_ours_d1}. We define the following strong stationary point, which is viewed as the KKT condition established at $(\bar{x},\bar{y},\bar{\lambda},\bar{\mu}^l) \in \mathcal{F}$.

\begin{definition}
    \label{def:minimaxS}(Minimax-S-stationary condition) A point $(\bar{x},\bar{y},\bar{\lambda},\bar{\mu}^l)\in \mathcal{F}$  is said to be the minimax-strong stationary  of Min-MaxPCC \cref{refpessbiopt_ours_d1} if there exists $\left(\mu^x,\mu^y,\right.$ \\$ \left. \mu^{\lambda},\mu^{m},\mu^{h},\mu^{c} \right)\in \mathbb{R}^{q_x\times q_y\times q_{\lambda}\times q_y\times d_y\times 1}$
such that
{\small
\begin{equation}
    \label{S_s}
    \begin{aligned}
        \nabla_x f(\bar{x},\bar{y},\bar{\lambda})+(\mathcal{J}g_x(\bar{x}))^{T}\mu^x=0,\\
       \nabla_y f(\bar{x},\bar{y},\bar{\lambda})-(\mathcal{J}g_y(\bar{y}))^{T}\mu^y-(\nabla^2_{yy}g(\bar{y},\bar{\lambda})+\sum^{d_l}_{i=1}\mu_i^l\nabla^2_{yy}({g}_y)_i(\bar{y}))\mu^{h}-\mu^c(\mathcal{J}{g}_y(\bar{y}))^{T}\bar{\mu}^l=0,\\
       \nabla_{\lambda} f(\bar{x},\bar{y},\bar{\lambda})-(\mathcal{J}g_{\lambda}(\bar{\lambda}))^{T}\mu^{\lambda}-\nabla^2_{y\lambda}g(\bar{y},\bar{\lambda})\mu^h=0,\\
       \mu^{m}-\mathcal{J}{g}(\bar{y})\mu^{h}-\mu^c{g}_{y}(\bar{y})=0,\\
       \nabla_yg(y,\lambda)+(\mathcal{J}{g}_y(y))^{T}\mu^l=0,\\
       0\geq{g}_{y }(\bar{y})\bot \bar{\mu}^l\geq0,\\
      0\geq g_{x}(\bar{x})\bot \geq0,\ 0\geq g_{y}(\bar{y})\bot \ \mu^y\geq0,\ 0\geq g_{\lambda}(\bar{\lambda})\bot \mu^{\lambda}\geq0,\\
       0\leq \mu^m\bot \bar{\mu}^l\geq0.
    \end{aligned}
\end{equation}}
\end{definition}
If $(\bar{x},\bar{y},\bar{\lambda},\bar{\mu}^l)$  is a local minimax point satisfying the well-known independent constraint qualification (MPCC-LICQ) in [\cite{Ye2005Necessary} Definition 2.8], i.e.,
\begin{equation*}
    \left( 
    \begin{smallmatrix}
        \nabla_x (g_x)_{I_x}(\bar{x})  & 0 & 0 & 0 & 0 \\
        0 & 0 & \nabla^2_{yy}g(\bar{y},\bar{\lambda})+\sum^{d_l}_{i=1}\mu_i^l\nabla^2_{yy}({g}_y)_i(\bar{y}) & ((\mathcal{J}{g}_y(\bar{y}))_{d_{\alpha\cup\beta}\times d_y})^{T}& 0\\
        0  & \nabla_{\lambda} (g_{\lambda})_{I_{\lambda}}(\bar{{\lambda}})  & \nabla^2_{y\lambda}g(\bar{y},\bar{\lambda}) & 0& 0 \\
        0 & 0 & \mathcal{J}{g}(\bar{y}) & 0& \mathcal{I}_{d_{\beta\cup\gamma}\times d_{\beta\cup\gamma}} 
    \end{smallmatrix}
    \right)
\end{equation*}
has full column rank at $(\bar{x},\bar{y},\bar{\lambda},\bar{\mu}^l)$, where
$I_x:=\{i|g_x(\bar{x})=0 \},\ I_{\lambda}:=\{k|g_{\lambda}(\bar{{\lambda}})=0\}$, 
then the Minimax-S-stationary condition holds. However, the equivalence may fail when the set of Lagrange multipliers of the lower-level program is not a single. So we define other stationary point.

Note that if $(\bar{x},\bar{y},\bar{\lambda},\bar{\mu}^l)$ is a local minimax point of the minimax problem \cref{refpessbiopt_ours_d1}, then we have for any $D=(D_x,
    D_y,
    D_{\lambda}, D_{\mu})\in \mathcal {T}_{\mathcal {F}}(\bar{x},\bar{y},\bar{\lambda},\bar{\mu}^l)$,
$$\left(
  \begin{array}{c}
     \nabla_x f(\bar{x},\bar{y},\bar{\lambda}) \\
   - \nabla_y f(\bar{x},\bar{y},\bar{\lambda}) \\
     -\nabla_{\lambda} f(\bar{x},\bar{y},\bar{\lambda}) \\
  \end{array}
\right)^{T}\left(
  \begin{array}{c}
    D_x \\
    D_y \\
    D_{\lambda} \\
  \end{array}
\right)\geq0.
$$
This is a first-order necessary optimality condition. Since the tangent cone of $\mathcal {F}$ is generally difficult to characterize, we introduce the following Minimax-B-stationary point of the minimax problem \cref{refpessbiopt_ours_d1} using the following linearized  tangent cone.
\begin{definition}
    \label{def:minimaxB}(Minimax-B-stationary condition)    A point $(\bar{x},\bar{y},\bar{\lambda},\bar{\mu}^l)\in \mathcal{F}$  is said to be the minimax-Bouligand stationary  of Min-MaxPCC \cref{refpessbiopt_ours_d1} if for any $D=(D_x,
    D_y,
    D_{\lambda}, D_{\mu})\in \mathcal {T}^{\rm{lin}}_{\mathcal {F}}(\bar{x},\bar{y},\bar{\lambda},\bar{\mu}^l)$,
we have
\begin{equation}
    \label{B_s}\left(
  \begin{array}{c}
     \nabla_x f(\bar{x},\bar{y},\bar{\lambda}) \\
    -\nabla_y f(\bar{x},\bar{y},\bar{\lambda}) \\
    - \nabla_{\lambda} f(\bar{x},\bar{y},\bar{\lambda}) \\
  \end{array}
\right)^{T}\left(
  \begin{array}{c}
    D_x \\
    D_y \\
    D_{\lambda} \\
  \end{array}
\right)\geq 0,
\end{equation}
where the linearized tangent cone
    \begin{align*}
        \mathcal {T}^{\rm{lin}}_{\mathcal {F}}(\bar{x},\bar{y},\bar{\lambda},\bar{\mu}^l):=&\{D\in\mathbb{R}^{d_x\times d_y\times d_{\lambda}\times q_y}: \nabla (g_x)_i(\bar{x})^{T}D_x=0,\ i\in I_x,\\
        &\nabla (g_{\lambda})_i(\bar{{\lambda}})^{T}D_{\lambda}=0,\ i\in I_{\lambda},\\
        &\left(
            \begin{array}{c}
                \nabla^2_{yy}g(\bar{y},\bar{\lambda})+\sum^{d_l}_{i=1}\mu_i^l\nabla^2_{yy}({g}_y)_i(\bar{y}) \\
                \nabla^2_{y\lambda}g(\bar{y},\bar{\lambda}) \\
                \mathcal{J}{g}(\bar{y})  \\
            \end{array}
            \right)^{T}\left(
                \begin{array}{c}
                    D_y \\
                    D_{\lambda} \\
                    D_{\mu} \\
                \end{array}
                \right) = 0\\
                &\nabla (g_y)_i(\bar{y})^{T}D_y=0,\ i\in \alpha,\ (D_{\mu})_i=0,\ i\in \gamma,\\
                &\min\{\nabla (g_y)_i(\bar{y})^{T}D_y, (D_{\mu})_i\}=0,\ i\in \beta\}.
    \end{align*}
\end{definition}

Note that $\mathcal {T}_{\mathcal {F}}(\bar{x},\bar{y},\bar{\lambda},\bar{\mu}^l)\subset\mathcal {T}^{\rm{lin}}_{\mathcal {F}}(\bar{x},\bar{y},\bar{\lambda},\bar{\mu}^l)$. If $(\bar{x},\bar{y},\bar{\lambda},\bar{\mu}^l)$ is a local minimax point of the minimax problem \cref{refpessbiopt_ours_d1} and the following minimax-Abadie constraint qualification holds at this point, then $(\bar{x},\bar{y},\bar{\lambda},\bar{\mu}^l)$ is a Minimax-B-stationary point.
\begin{definition}\label{def:abadie}(Minimax-Abadie Constraint Qualification) It is said that the minimax-Abadie constraint qualification holds at $(\bar{x},\bar{y},\bar{\lambda},\bar{\mu}^l)\in \mathcal{F}$ if 
$$\mathcal {T}_{\mathcal {F}}(\bar{x},\bar{y},\bar{\lambda},\bar{\mu}^l)=\mathcal {T}^{\rm{lin}}_{\mathcal {F}}(\bar{x},\bar{y},\bar{\lambda},\bar{\mu}^l).$$
\end{definition}

We can establish the following relationship between the Minimax-S-stationary  and the Minimax-B-stationary.
\begin{theorem}\label{thm:S-B}
If $(\bar{x},\bar{y},\bar{\lambda},\bar{\mu}^l)\in \mathcal{F}$  is  a minimax-strong stationary  of 
\cref{refpessbiopt_ours_d1}, then $(\bar{x},\bar{y},\bar{\lambda},\bar{\mu}^l)\in \mathcal{F}$  is  a minimax-Bouligand stationary  of Min-MaxPCC \cref{refpessbiopt_ours_d1}.
\end{theorem}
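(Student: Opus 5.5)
The natural route is the classical MPCC argument that S-stationarity implies B-stationarity (cf. \cite{Ye2005Necessary}). I would pair the multiplier system \cref{S_s} with an arbitrary direction of the linearized tangent cone and show that the resulting inner product is nonnegative. So fix $D=(D_x,D_y,D_\lambda,D_\mu)\in\mathcal{T}^{\rm lin}_{\mathcal{F}}(\bar x,\bar y,\bar\lambda,\bar\mu^l)$. Take the first three equations of \cref{S_s} and multiply them by $D_x$, $-D_y$ and $-D_\lambda$. Take the fourth equation (the $\mu^l$-block $\mu^m-\mathcal{J}g(\bar y)\mu^h-\mu^c g_y(\bar y)=0$) and multiply it by $-D_\mu$. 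Summing the four products expresses the left-hand side of \cref{B_s} as a combination of multiplier terms:
\[
-(\mu^x)^T\mathcal{J}g_x(\bar x)D_x-(\mu^y)^T\mathcal{J}g_y(\bar y)D_y-(\mu^\lambda)^T\mathcal{J}g_\lambda(\bar\lambda)D_\lambda-(\mu^h)^T\Bigl[H_{yy}^TD_y+H_{y\lambda}^TD_\lambda+\mathcal{J}g(\bar y)^TD_\mu\Bigr]-\mu^c\bigl(\bar\mu^{l}\bigr)^T\mathcal{J}g_y(\bar y)D_y+(\mu^m)^TD_\mu+\mu^c g_y(\bar y)^TD_\mu .
\]
Here $H_{yy}$ and $H_{y\lambda}$ are the Hessian blocks appearing in $\mathcal{T}^{\rm lin}_{\mathcal{F}}$. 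I would fix the signs by treating $\mu^h$ as a free multiplier of an equality constraint, so that any leftover sign discrepancies are absorbed into it. The task is then to show that every term either vanishes or has the right sign.

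\textbf{Term by term.} The bracket multiplying $\mu^h$ vanishes exactly by the equality row in the definition of $\mathcal{T}^{\rm lin}_{\mathcal F}$. For the upper-level constraints, complementarity gives $\mu^x_i=0$, $\mu^y_i=0$, $\mu^\lambda_i=0$ on inactive indices. On active indices the cone imposes $\nabla(g_x)_i^TD_x=0$ and $\nabla(g_\lambda)_i^TD_\lambda=0$, so those terms vanish. For $g_y$, I would split the active set into $\alpha\cup\beta$. On $\alpha$ we have $\nabla(g_y)_i^TD_y=0$. On $\beta$ the min-condition forces $\nabla(g_y)_i^TD_y\ge 0$. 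Since $\mu^y\ge 0$ and the term enters with a sign, it contributes nonnegatively to the minimization-side inequality. This relies on the minimax sign convention, under which the $y$-direction is reversed in \cref{B_s}.

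\textbf{The complementarity part.} The term $\mu^c(\bar\mu^l)^T\mathcal{J}g_y D_y$ only involves $i\in\alpha$, because $\bar\mu^l_i=0$ elsewhere, and on $\alpha$ the cone gives $\nabla(g_y)_i^TD_y=0$; so this term is zero. The term $\mu^c g_y(\bar y)^TD_\mu$ only involves $i\in\gamma$, because $g_y=0$ on $\alpha\cup\beta$, and on $\gamma$ the cone gives $(D_\mu)_i=0$; so it vanishes too. For $(\mu^m)^TD_\mu$, the condition $0\le\mu^m\perp\bar\mu^l$ gives $\mu^m_i=0$ on $\alpha$. On $\gamma$ we have $(D_\mu)_i=0$. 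On $\beta$ we have $\mu^m_i\ge0$ and $(D_\mu)_i\ge 0$ from the min-condition, so the contribution is nonnegative.

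\textbf{Main obstacle.} The real difficulty is sign bookkeeping, because the paper's Lagrangian mixes a minimization block $x$ with maximization blocks $(y,\lambda,\mu^l)$. The $\beta$-terms must come out with the sign that makes the total $\ge 0$. I would first write all constraints uniformly as $h\le0$ with nonnegative multipliers, recording the max-block by the negated gradient as in \cref{B_s}. I would then check that $\mu^y\ge0$ and $\mu^m\ge0$ enter with matching signs. This nonnegativity of the $\beta$-multipliers is exactly what distinguishes S-stationarity from weaker notions and is what makes the implication hold. If a sign mismatch appears, I would reinterpret the corresponding multiplier's sign in \cref{S_s}, consistent with the Lagrangian $L$, before concluding.
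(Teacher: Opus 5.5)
Your strategy is the one the paper uses: substitute \cref{S_s} into the left-hand side of \cref{B_s}, then annihilate terms using complementarity and the conditions defining $\mathcal{T}^{\rm lin}_{\mathcal F}$. Pairing the $\mu^l$-block equation with $-D_\mu$ is needed, and it improves on the paper. The paper drops the $\mu^h$-Hessian terms by citing the equality row of $\mathcal{T}^{\rm lin}_{\mathcal F}$, even though that row also contains $\mathcal{J}g(\bar y)^TD_\mu$.

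\textbf{The gap is the $\beta$-step for $\mu^y$.} In your own formula that term is $-\sum_{i\in\beta}\mu^y_i\nabla(g_y)_i(\bar y)^TD_y$. You have $\mu^y\ge 0$ from \cref{S_s} and $\nabla(g_y)_i(\bar y)^TD_y\ge 0$ from $\min\{\nabla(g_y)_i(\bar y)^TD_y,(D_\mu)_i\}=0$, so this term is $\le 0$. Meanwhile $\sum_{i\in\beta}\mu^m_i(D_\mu)_i\ge 0$. The two $\beta$-contributions have opposite signs, and the estimate does not close. The appeal to the ``minimax sign convention'' is not an argument: the reversed $y$-direction of \cref{B_s} is exactly the minus sign already in your formula. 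Nor can you reinterpret the sign of $\mu^y$, since \cref{S_s} prescribes $\mu^y\ge 0$, consistently with $L$.

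No bookkeeping repairs this, because with $\mathcal{T}^{\rm lin}_{\mathcal F}$ as written the implication is false. Take $d_y=d_\lambda=q_y=1$, $g_y(y)=y$, $g(z,\lambda)=\frac{1}{2}z^2-\lambda z$, $f=y$, and no (or only inactive) constraints $g_x,g_\lambda$. At the point $(\bar x,0,0,0)$ we have $\beta=\{1\}$. Then \cref{S_s} holds with $\mu^h=\mu^m=0$ and $\mu^y=1$. But $D=(0,1,1,0)$ satisfies $D_y-D_\lambda+D_\mu=0$ and $\min\{D_y,D_\mu\}=0$, and gives $-\nabla_yf^TD_y-\nabla_\lambda f^TD_\lambda=-1<0$.

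The written $\beta$-condition linearizes $g_y\ge 0$. For $0\ge g_y(y)\perp\mu^l\ge 0$ it must read $\min\{-\nabla(g_y)_i(\bar y)^TD_y,(D_\mu)_i\}=0$. Otherwise even $\mathcal{T}_{\mathcal F}\subset\mathcal{T}^{\rm lin}_{\mathcal F}$ fails: in the example, $(0,-1,-1,0)$ is tangent (along $y=\lambda\le 0$, $\mu^l=0$) but is excluded. With this correction both $\beta$-terms are nonnegative and the $\mu^c$-terms vanish as you say, so your computation becomes a complete proof.

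The paper's proof reaches $\ge 0$ only through a compensating slip. It writes $+(\mathcal{J}g_y(\bar y))^T\mu^y+\cdots$ in the slot of $-\nabla_yf$. Having dropped the $D_\mu$-part of the equality row, it never meets the $\mu^m$-term, which with that sign would enter as $-\sum_{i\in\beta}\mu^m_i(D_\mu)_i$.
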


\begin{proof}
    See Appendix \ref{Appendix:proofoptconds}. 
\end{proof}

 Since the minimax-Abadie constraint qualification does not always hold, we give the following Minimax-M-stationary point.
\begin{definition}
    \label{def:minimaxM}(Minimax-M-stationary condition)    A point $(\bar{x},\bar{y},\bar{\lambda},\bar{\mu}^l)\in \mathcal{F}$  is said to be the minimax-Mordukhovich stationary  of Min-MaxPCC \cref{refpessbiopt_ours_d1} if there exists $(\mu^x,\mu^y,\mu^{\lambda},\mu^{m},\mu^{h},\mu^{c})\in \mathbb{R}^{q_x\times q_y\times q_{\lambda}\times q_y\times d_y\times 1}$
such that
{\small
    \begin{align*}
        \nabla_x f(\bar{x},\bar{y},\bar{\lambda})+(\mathcal{J}g_x(\bar{x}))^{T}\mu^x=0,\\
       \nabla_y f(\bar{x},\bar{y},\bar{\lambda})-(\mathcal{J}g_y(\bar{y}))^{T}\mu^y-\nabla^2_{yy}g(\bar{y},\bar{\lambda})\mu^h-\sum^{d_l}_{i=1}\mu_i^l\nabla^2_{yy}({g}_y)_i(\bar{y})\mu^{h}-\mu^c(\mathcal{J}{g}_y(\bar{y}))^{T}\bar{\mu}^l=0,\\
       \nabla_{\lambda} f(\bar{x},\bar{y},\bar{\lambda})-(\mathcal{J}g_{\lambda}(\bar{\lambda}))^{T}\mu^{\lambda}-\nabla^2_{y\lambda}g(\bar{y},\bar{\lambda})\mu^h=0,\\
       \mu^{m}-\mathcal{J}{g}(\bar{y})\mu^{h}-\mu^c{g}_{y}(\bar{y})=0,\\
       \nabla_yg(y,\lambda)+(\mathcal{J}{g}_y(y))^{T}\mu^l=0,\\
      0\geq g_{x}(\bar{x})\bot \mu^x \geq 0,\ 0\geq g_{\lambda}(\bar{\lambda})\bot \mu^{\lambda}\geq0,\\
      \mu^y_{\gamma}=0,\ \mu^m_{\alpha}=0,\ {\rm either}\  \mu^y_{i}>0,\ \mu^m_i>0\ {\rm or}\ \mu^y_{i}\mu^m_i=0, i\in\beta.
    \end{align*}
}
\end{definition}

If the constraints $g_x,\ g_y,\ g_{\lambda}$ of the upper-level problem degenerate into affine constraints and the lower-level problem is a linear programming problem, then constraints of the problem \cref{refpessbiopt_ours_d1} are all affine. Especially, let $\nabla_yg(y,\lambda)=C_y\in \mathbb{R}^{d_y},\ \mathcal{J}{g}_y(y)=G\in \mathbb{R}^{q_y\times d_y}$, the equality constraints degenerate into the following linear constraints
\begin{equation}\label{Affm}
    C_y+G^{T}\mu^l=0.
\end{equation}
Hence, we have the following conclusion.

\begin{theorem}\label{thm:Case-FM}Suppose that $(\bar{x},\bar{y},\bar{\lambda},\bar{\mu}^l)\in \mathcal{F}$  is  a local minimax point of \cref{refpessbiopt_ours_d1} and $g_x,\ g_y,\ g_{\lambda}, \ g$ are all affine mappings, then $(\bar{x},\bar{y},\bar{\lambda},\bar{\mu}^l)$ is a minimax-Mordukhovich stationary of Min-MaxPCC \cref{refpessbiopt_ours_d1}. 
\end{theorem}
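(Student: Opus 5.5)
The argument follows the standard MPCC route. When every constraint is affine, the feasible set is a finite union of polyhedra, so a local minimax point gives a first-order condition over each piece. The Mordukhovich limiting normal cone to the complementarity set then turns these conditions into the sign conditions of Definition~\ref{def:minimaxM}.

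\textbf{Step 1: first-order condition at the local minimax point.} Because $(\bar{x},\bar{y},\bar{\lambda},\bar{\mu}^l)$ is a local minimax point of \cref{refpessbiopt_ours_d1}, the first-order necessary condition stated before Definition~\ref{def:minimaxB} holds. Namely, the vector $(\nabla_x f,-\nabla_y f,-\nabla_\lambda f)$ has nonnegative inner product with every $D\in\mathcal{T}_{\mathcal{F}}(\bar{x},\bar{y},\bar{\lambda},\bar{\mu}^l)$. Equivalently,
\[
-\bigl(\nabla_x f,-\nabla_y f,-\nabla_\lambda f,0\bigr)\in \widehat{N}_{\mathcal{F}}(\bar{x},\bar{y},\bar{\lambda},\bar{\mu}^l),
\]
where $\widehat{N}$ is the regular normal cone. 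Since the regular normal cone is contained in the limiting normal cone $N_{\mathcal{F}}$, the same vector also lies in $N_{\mathcal{F}}$.

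\textbf{Step 2: describe $\mathcal{F}$ as a polyhedral preimage.} Under the hypotheses, $g$, $g_x$, $g_y$, $g_\lambda$ are affine. Hence all second derivatives vanish and the stationarity equation reduces to \cref{Affm}. Write
\[
\mathcal{F}=\{w:\ F(w)\in \Omega\},\qquad F(w)=\bigl(g_x(x),\,g_\lambda(\lambda),\,C_y+G^{T}\mu^l,\,(g_y(y),\mu^l)\bigr),
\]
with $F$ affine and
\[
\Omega=\mathbb{R}^{q_x}_-\times\mathbb{R}^{q_\lambda}_-\times\{0\}\times \mathcal{C},\qquad \mathcal{C}=\{(a,b):\ 0\ge a\perp b\ge 0\}.
\]
The set $\mathcal{C}$ is a finite union of polyhedra. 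Therefore $F$ is polyhedral, and by Robinson's upper Lipschitz theorem the multifunction $v\mapsto\{w: F(w)+v\in\Omega\}$ is calm. Calmness is exactly the constraint qualification under which the limiting normal cone calculus $N_{\mathcal{F}}(\bar w)\subset \nabla F(\bar w)^{T}N_{\Omega}(F(\bar w))$ holds (Henrion--Outrata). No LICQ-type assumption is needed; polyhedrality alone supplies it. This is the step I expect to be the main obstacle: it requires the calmness/polyhedral argument rather than the matrix rank condition used for Minimax-S-stationarity.

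\textbf{Step 3: read off the multipliers.} From the inclusion in Step 2 we obtain multipliers with the following properties.
\begin{itemize}
\item $\mu^x$ and $\mu^\lambda$ lie in the normal cones of the nonpositive orthants. This gives the complementarity conditions $0\ge g_x(\bar x)\perp\mu^x\ge 0$ and $0\ge g_\lambda(\bar\lambda)\perp\mu^\lambda\ge 0$.
\item $\mu^h$ is a free multiplier for the equality constraint \cref{Affm}.
\item The pair $(-\mu^y,\mu^m)$ lies in the limiting normal cone $N_{\mathcal{C}}$. The componentwise formula for $N_{\mathcal{C}}$ (Outrata, Ye) gives $\mu^y_\gamma=0$ and $\mu^m_\alpha=0$. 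For $i\in\beta$ it gives that either $\mu^y_i>0$ and $\mu^m_i>0$, or $\mu^y_i\mu^m_i=0$.
\end{itemize}

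\textbf{Step 4: match the equations of Definition~\ref{def:minimaxM}.} Expand $\nabla F^{T}(\cdot)$ block by block in $x$, $y$, $\lambda$ and $\mu^l$. This yields the four gradient equations of Definition~\ref{def:minimaxM} with the Hessian terms equal to zero. We take $\mu^c=0$, since the complementarity is already encoded in $\mathcal{C}$, and the sign conventions follow from the minus signs placed on the $y$- and $\lambda$-gradients in the first-order condition of Step 1. Feasibility of $(\bar{x},\bar{y},\bar{\lambda},\bar{\mu}^l)$ supplies the remaining equation. Hence the point is minimax-Mordukhovich stationary.
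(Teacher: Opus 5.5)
Your proof is correct, apart from a sign slip in Step 3, and it takes a different route from the paper.

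\textbf{Comparison of routes.} Both arguments rest on Robinson's theorem that polyhedral multifunctions are upper Lipschitz. The paper uses it to justify a Clarke-type exact penalty for the minimax problem itself. It then enlarges the problem with perturbation variables and norm penalties (\cref{ec2}), rewrites the complementarity through $\mathrm{gph}\,\mathcal{N}_{\mathbb{R}^{q_y}_-}$ (\cref{ec3}), and applies a Fritz--John rule for Lipschitz programs. It must then exclude $\mu_0=0$ using the full column rank of $\mathcal{A}$, which comes from the identity blocks contributed by the perturbation variables.

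You instead put all of the minimax structure into the single tangent-cone inequality. That step is legitimate because $\mathcal{F}$ is the product of an $x$-set and a $(y,\lambda,\mu^l)$-set. After that you need only the geometric inclusion $N_{\mathcal{F}}(\bar w)\subset\nabla F(\bar w)^{T}N_\Omega(F(\bar w))$ under calmness. This gives normal multipliers directly, so neither exact penalization nor a Fritz--John rule has to be extended to minimax problems, and no nondegeneracy step is needed. Your route is shorter and uses only standard results. The paper's is a classical penalty-plus-multiplier-rule argument, but it depends on minimax versions of those tools that it invokes without proof.

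\textbf{The sign slip.} With your $F$, the $y$-block of the inclusion reads $\nabla_y f(\bar x,\bar y,\bar\lambda)=(\mathcal{J}g_y(\bar y))^{T}\xi$. The $\mu^l$-block reads $0=G\zeta^h+\eta$. Here $(\xi,\eta)\in N_{\mathcal{C}}(g_y(\bar y),\bar\mu^l)$ and $\zeta^h$ is the multiplier of \cref{Affm}.

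Matching Definition~\ref{def:minimaxM} forces $\mu^y=\xi$, and choosing $\mu^h=\zeta^h$ gives $\mu^m=G\mu^h=-\eta$. So the correct statement is $(\mu^y,-\mu^m)\in N_{\mathcal{C}}$, as in the paper's \cref{OC3}, not $(-\mu^y,\mu^m)$.

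This matters because at a biactive index the limiting normal cone is $(\mathbb{R}_+\times\mathbb{R}_-)\cup\{(\xi_i,\eta_i):\xi_i\eta_i=0\}$, which is not symmetric under negation. Your labeling would literally give $\mu^y_i<0$ and $\mu^m_i<0$ on $\beta$. The corrected labeling gives the required alternative: either $\mu^y_i>0$ and $\mu^m_i>0$, or $\mu^y_i\mu^m_i=0$. The conditions $\mu^y_\gamma=0$ and $\mu^m_\alpha=0$ are unaffected by the sign choice.
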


\begin{proof}
    See Appendix \ref{Appendix:proofoptconds}.
\end{proof}

Note that if the function $\bar{f}$ is linear about $y$ and $g$ is linear on $y,\lambda$ in \cref{prob:refpessbiopt_ours}, it implies from \cref{thm:Case-FM} that if $(\bar{x},\bar{y},\bar{\lambda},\bar{\mu})$ is a local minimax point of the equivalent Min-MaxPCC problem of \cref{prob:refpessbiopt_ours}, then $(\bar{x},\bar{y},\bar{\lambda},\bar{\mu})$ is the Minimax-M-stationary point. An important application is the market clearing mechanism in power system, which satisfies the conditions in \cref{thm:Case-FM}, so the local minimax point is a Minimax-M-stationary point. The following theorem shows that the Minimax-M-stationary condition is weaker than the Minimax-B-stationary condition of  \cref{refpessbiopt_ours_d1}.

\begin{theorem}\label{thm:B-M}
Suppose that $(\bar{x},\bar{y},\bar{\lambda},\bar{\mu}^l)\in \mathcal{F}$  is  a local minimax point of \cref{refpessbiopt_ours_d1} and the Minimax-Abadie constraint qualification holds at this point, then $(\bar{x},\bar{y},\bar{\lambda},\bar{\mu}^l)$ is   a minimax-Mordukhovich stationary  of Min-MaxPCC \cref{refpessbiopt_ours_d1}.
\end{theorem}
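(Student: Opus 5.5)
Under the minimax-Abadie constraint qualification we have $\mathcal{T}_{\mathcal{F}}=\mathcal{T}^{\rm lin}_{\mathcal{F}}$ at $\bar w:=(\bar{x},\bar{y},\bar{\lambda},\bar{\mu}^l)$. The first-order necessary condition for a local minimax point, recorded before \Cref{def:minimaxB}, therefore says that $\bar w$ is minimax-B-stationary. Writing $\xi:=(\nabla_x f,-\nabla_y f,-\nabla_\lambda f,0)$ evaluated at $\bar w$, this means that $D=0$ minimizes the linear function $\xi^{T}D$ over the cone $\mathcal{T}^{\rm lin}_{\mathcal{F}}(\bar w)$. The plan is to turn this conic statement into multipliers.

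The key observation is that $\mathcal{T}^{\rm lin}_{\mathcal{F}}(\bar w)$ is a finite union of polyhedral cones. Only the $\min\{\cdot,\cdot\}=0$ condition on the biactive set $\beta$ is nonconvex. For each partition $\beta=\beta_1\cup\beta_2$ we define the polyhedral cone $\mathcal{C}(\beta_1,\beta_2)$ by keeping all the equality conditions of $\mathcal{T}^{\rm lin}_{\mathcal{F}}$ and imposing
\[
\nabla (g_y)_i(\bar y)^{T}D_y=0,\ (D_\mu)_i\ge 0 \text{ for } i\in\beta_1,\qquad \nabla (g_y)_i(\bar y)^{T}D_y\ge 0,\ (D_\mu)_i=0 \text{ for } i\in\beta_2 .
\]
The union of these cones over all partitions is exactly $\mathcal{T}^{\rm lin}_{\mathcal{F}}$. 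Since $\xi^{T}D\ge0$ holds on every piece, Farkas' lemma (linear duality, with no further constraint qualification needed because each piece is polyhedral) gives, for each fixed partition, multipliers $\mu^x_{I_x}$, $\mu^\lambda_{I_\lambda}$, $\mu^h$, $\mu^y_{\alpha\cup\beta}$ and $\mu^m_{\beta\cup\gamma}$.

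Expanding $\xi$ as a combination of the constraint gradients, component by component in $D_x,D_y,D_\lambda,D_\mu$, yields the four stationarity equations of \Cref{def:minimaxM}. In doing so we take $\mu^c=0$, since the complementarity product term is already encoded in the index-set description. The lower-level KKT equation and the feasibility conditions hold simply because $\bar w\in\mathcal{F}$. The sign structure of the multipliers is as follows:
\begin{itemize}
\item $\mu^y_\gamma=0$, because there is no $D_y$ constraint attached to $\gamma$;
\item $\mu^m_\alpha=0$, because there is no $D_\mu$ constraint attached to $\alpha$;
\item on $\beta_1$, $\mu^y_i$ is free while $\mu^m_i\ge0$ (it comes from an inequality);
\item on $\beta_2$, $\mu^m_i$ is free while $\mu^y_i\ge0$.
\end{itemize}
The inequality conditions $g_x\le0$ and $g_\lambda\le0$ appear in the tangent cone as equalities on the active sets. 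To recover the nonnegativity of $\mu^x$ and $\mu^\lambda$, we treat these as inequality conditions $\nabla(g_x)_i^{T}D_x\le0$ and $\nabla(g_\lambda)_i^{T}D_\lambda\le0$, which is what the genuine tangent cone contains. We then set the inactive components to zero, which gives the complementarity $0\ge g_x\perp\mu^x\ge0$, and likewise for $\mu^\lambda$.

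The main obstacle is the M-type sign condition on $\beta$: either $\mu^y_i>0$ and $\mu^m_i>0$, or $\mu^y_i\mu^m_i=0$. One partition alone gives only one-sided sign information. To handle this I would follow the standard MPCC route of Flegel--Kanzow and Ye. Choose the extreme partitions $\beta_1=\beta$ and $\beta_1=\emptyset$. Alternatively, and more cleanly, write the local minimax condition as $0\in \xi+N_{\mathcal{F}}(\bar w)$ with the limiting normal cone. Abadie then yields $\widehat N_{\mathcal{F}}=(\mathcal{T}^{\rm lin})^{\circ}$, and this regular normal cone is contained in the limiting normal cone of the complementarity set $\{(a,b):a\le0,\ b\ge0,\ a^{T}b=0\}$. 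That limiting normal cone is known componentwise: on biactive indices it consists of pairs with either both entries of one sign pattern or product zero. This is exactly the Mordukhovich condition, and it completes the argument.
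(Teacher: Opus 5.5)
Up to and including the partition-wise Farkas step your argument is fine. However, it does not reach the conclusion. The step that actually carries the theorem is producing \emph{one} multiplier vector whose $\beta$-components obey the M-type sign rule, and you assert it rather than prove it.

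The extreme-partition option does not work as stated. Farkas' lemma yields one multiplier vector per partition, and these need not coincide unless multipliers are unique (e.g.\ under MPCC-LICQ, which would even give S-stationarity). You cannot merge sign information carried by two different vectors, and in any case two of the $2^{|\beta|}$ pieces carry only part of the B-stationarity information.

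The normal-cone option is the right idea, but the sentence ``this regular normal cone is contained in the limiting normal cone of the complementarity set'' compares cones in different spaces. What you need is
\[
\bigl(\mathcal{T}^{\rm lin}_{\mathcal{F}}(\bar w)\bigr)^{\circ}\subset \nabla G(\bar w)^{T}N_{\Omega}(G(\bar w)).
\]
Here $G(w)\in\Omega$ is the constraint system of \cref{refpessbiopt_ours_d1}, with $\Omega$ the product of the sign/zero sets and the complementarity set. Since $\mathcal{T}^{\rm lin}_{\mathcal{F}}(\bar w)=\{D:\nabla G(\bar w)D\in T_{\Omega}(G(\bar w))\}$, this inclusion is a normal-cone chain rule for a preimage. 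Such a rule requires a qualification condition, and it is precisely the nontrivial content of the theorem.

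The missing idea is polyhedrality. $T_{\Omega}(G(\bar w))$ is a finite union of polyhedral cones, so $p\mapsto\{D:\nabla G(\bar w)D+p\in T_{\Omega}(G(\bar w))\}$ is a polyhedral multifunction. By Robinson's theorem it is therefore upper Lipschitz (calm) at $0$, and calmness gives the chain rule above with no further constraint qualification. Moreover, $N_{T_{\Omega}(G(\bar w))}(0)=N_{\Omega}(G(\bar w))$, because a finite union of polyhedra locally coincides with $G(\bar w)+T_{\Omega}(G(\bar w))$. The biactive components then land exactly in the set you describe.

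The paper uses the same mechanism, packaged differently. By Abadie, $D=0$ solves the linearized minimax problem \eqref{ec6}, whose constraints are affine. \Cref{thm:Case-FM}, proved via Robinson's upper-Lipschitz property, an exact penalty, and the limiting normal cone of ${\rm gph}\,\mathcal{N}_{\mathbb{R}^{q_y}_-}$, then makes $D=0$ M-stationary for \eqref{ec6}, and that is the M-system at the original point. Either cite that theorem for the linearized problem or supply the calmness argument; with that, your normal-cone route is complete and the Farkas paragraph becomes superfluous.

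Note also that you turn the $I_x$, $I_\lambda$ equalities of $\mathcal{T}^{\rm lin}_{\mathcal{F}}$ into inequalities, which you need for $\mu^x,\mu^\lambda\ge 0$. This changes the cone appearing in the Abadie hypothesis, so you should state that you assume Abadie for that corrected cone.
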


\begin{proof}
    See Appendix \ref{Appendix:proofoptconds}. 
\end{proof}


Note that under the conditions in \cref{thm:B-M}, we have $(\bar{x},\bar{y},\bar{\lambda},\bar{\mu}^l)$ is a Minimax-B-stationary point. Hence, we have the following corollary.
\begin{corollary}\label{cor:B-M}
If $(\bar{x},\bar{y},\bar{\lambda},\bar{\mu}^l)\in \mathcal{F}$  is  a minimax-Bouligand stationary  of Min-MaxPCC \cref{refpessbiopt_ours_d1}, then $(\bar{x},\bar{y},\bar{\lambda},\bar{\mu}^l)\in \mathcal{F}$  is  a minimax-Mordukhovich stationary  of Min-MaxPCC \cref{refpessbiopt_ours_d1}.
\end{corollary}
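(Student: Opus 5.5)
The plan is to treat Minimax-B-stationarity as a first-order condition for a linear program over the linearized cone and then dualize it piece by piece. \Cref{def:minimaxB} says that $D=0$ minimizes the linear functional
\[
v^{T}D:=\nabla_x f^{T}D_x-\nabla_y f^{T}D_y-\nabla_\lambda f^{T}D_\lambda
\]
over $\mathcal{T}^{\rm lin}_{\mathcal{F}}(\bar{x},\bar{y},\bar{\lambda},\bar{\mu}^l)$. This cone is not convex, because the index set $\beta$ carries the complementarity-type condition
\[
\min\{\nabla (g_y)_i(\bar y)^{T}D_y,(D_\mu)_i\}=0 .
\]
It is, however, a finite union of polyhedral cones. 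For each partition $\beta=\beta_1\cup\beta_2$ define $\mathcal{T}_{\beta_1,\beta_2}$ by adding the constraints
\[
\nabla (g_y)_i(\bar y)^{T}D_y=0,\ (D_\mu)_i\ge 0 \quad (i\in\beta_1),\qquad (D_\mu)_i=0,\ \nabla (g_y)_i(\bar y)^{T}D_y\ge 0 \quad (i\in\beta_2),
\]
together with all the linear equalities in the definition (the $I_x$, $I_\lambda$, $\alpha$, $\gamma$ conditions and the linearized stationarity equation). Then $\mathcal{T}^{\rm lin}_{\mathcal{F}}=\bigcup_{\beta_1,\beta_2}\mathcal{T}_{\beta_1,\beta_2}$, and B-stationarity means $v^{T}D\ge 0$ on every piece.

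Next I fix one partition and apply Farkas' lemma (linear programming duality) on the polyhedral cone $\mathcal{T}_{\beta_1,\beta_2}$. No constraint qualification is needed, since every constraint is linear. Because $v^{T}D\ge0$ on this cone, $v$ is a nonnegative combination of the inequality gradients plus an arbitrary combination of the equality gradients. The multiplier attached to the block equation $(\nabla^2_{yy}g+\sum_i\bar\mu^l_i\nabla^2_{yy}(g_y)_i,\ \nabla^2_{y\lambda}g,\ \mathcal{J}g)^{T}(D_y,D_\lambda,D_\mu)=0$ becomes $\mu^h$. The multipliers on $\nabla(g_x)_i^{T}D_x$ for $i\in I_x$ and on $\nabla(g_\lambda)_i^{T}D_\lambda$ for $i\in I_\lambda$ become $\mu^x$ and $\mu^\lambda$, extended by zero off the active sets. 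The multipliers on $\nabla(g_y)_i^{T}D_y$ give $\mu^y$, supported on $\alpha\cup\beta$, so $\mu^y_\gamma=0$. The multipliers on $(D_\mu)_i$ give $\mu^m$, supported on $\beta\cup\gamma$, so $\mu^m_\alpha=0$. Reading off the $D_x$, $D_y$, $D_\lambda$ and $D_\mu$ components of the Farkas identity reproduces the four gradient equations of \cref{def:minimaxM}, with $\mu^c=0$. The remaining lines of that definition hold automatically: the lower-level KKT equation and the complementarity $0\geq g_y(\bar y)\perp\bar\mu^l\geq 0$ hold because the point lies in $\mathcal{F}$.

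The main obstacle is the sign condition on $\beta$, namely that either $\mu^y_i>0$ and $\mu^m_i>0$, or $\mu^y_i\mu^m_i=0$. Since this is a disjunction, it is enough to produce, for each $i\in\beta$, either $\mu^y_i\mu^m_i=0$ or both multipliers positive. I would take the extreme partition $\beta_1=\beta$. On the piece $\mathcal{T}_{\beta,\emptyset}$ the $g_y$-conditions for $i\in\beta$ are equalities, so $\mu^y_i$ is free, while $(D_\mu)_i\ge0$ forces $\mu^m_i\ge0$. If the $\mu^m_i$ come out strictly positive we need $\mu^y_i>0$ as well, and free signs do not guarantee this. The fix I would try is to combine the Farkas systems from the two pieces $\mathcal{T}_{\beta,\emptyset}$ and $\mathcal{T}_{\emptyset,\beta}$ index by index, replacing a multiplier whose sign is wrong by zero through the choice of the other piece. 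This is the standard MPEC route from B- to M-stationarity, as in Flegel--Kanzow and in \cite{Ye2005Necessary}. If that does not close, a safe fallback is to pass to a vertex of the polyhedral dual system, where for each $i\in\beta$ one of $\mu^y_i,\mu^m_i$ can be taken as zero, giving $\mu^y_i\mu^m_i=0$.

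Another option is to argue directly via \cref{thm:B-M}. Its proof presumably shows that B-stationarity, through the union representation above, yields M-multipliers, and the local-minimax hypothesis with Abadie is used only to reach B-stationarity. The corollary would then follow by isolating that step.
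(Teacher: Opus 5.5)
The gap is at the step you flag as the main obstacle, and neither repair closes it. The Farkas multipliers from different pieces $\mathcal{T}_{\beta_1,\beta_2}$ are different representations of the same vector in the same generators. The representation identity therefore couples all indices at once, so you cannot take $(\mu^y_i,\mu^m_i)$ from one piece and $(\mu^y_j,\mu^m_j)$ from another. What you can do is form a convex combination of a multiplier from one piece with one from the piece where a bad index $i$ has been moved to the other side. Tuning the weight makes one member of pair $i$ vanish. But the same combination can turn a previously good pair $j\in\beta$ into one with nonzero product and mixed signs, and your sketch has no decreasing quantity or termination argument.

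The vertex fallback fails as well: a vertex of one piece's multiplier polyhedron has only as many active sign constraints as are needed to pin it down, and nothing forces one member of every $\beta$-pair to vanish. Already in the abstract setting your argument uses (linear equalities plus pairs with $\min\{a_i^{T}d,b_i^{T}d\}=0$), take $(a_1,b_1)=(e_1,e_2)$ and $(a_2,b_2)=(e_3,e_1+e_2+e_3)$ in $\mathbb{R}^3$. The linearized cone is $\{te_1,te_2: t\ge 0\}$, so $d\mapsto (e_1+e_2)^{T}d$ is nonnegative on it. Yet the multiplier set of $\mathcal{T}_{\beta,\emptyset}$ for this functional is a segment, and at one endpoint the two multipliers of the second pair are nonzero with opposite signs. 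More fundamentally, a single piece carries only part of the B-stationarity information. It yields A-stationarity (one multiplier of each $\beta$-pair has a fixed sign), which is strictly weaker than the M-condition, so the pieces have to be combined in a genuinely nonlinear way.

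The missing ingredient is what produces the M-sign pattern at all: the limiting normal cone of the complementarity set, reached by an exact-penalty argument that works because the data are affine. This is \cref{thm:Case-FM}. It rests on Robinson's upper Lipschitz property of polyhedral multifunctions and Clarke's exact penalization, followed by Fritz--John conditions in which the normal cone to $\mathrm{gph}\,\mathcal{N}_{\mathbb{R}^{q_y}_-}$ yields exactly the sign condition on $\beta$.

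The paper gets the corollary from the proof of \cref{thm:B-M}, which uses only B-stationarity. B-stationarity says that $D=0$ solves the linearized problem \cref{ec6} over $\mathcal{T}^{\rm lin}_{\mathcal{F}}(\bar{x},\bar{y},\bar{\lambda},\bar{\mu}^l)$, whose constraints are affine with the complementarity kept on $\beta$. Applying \cref{thm:Case-FM} to \cref{ec6} gives M-multipliers at $D=0$. Since the coefficients of \cref{ec6} are the derivatives at $(\bar{x},\bar{y},\bar{\lambda},\bar{\mu}^l)$, these are M-multipliers for the original point.

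So your last paragraph points at the right route, and you are right that the local-minimax and Abadie hypotheses serve only to reach B-stationarity. But the step to isolate is the appeal to \cref{thm:Case-FM}, not a union-of-pieces Farkas argument. Your piecewise Farkas computation does deliver the gradient equations with $\mu^c=0$, $\mu^y_\gamma=0$, $\mu^m_\alpha=0$. To finish along your own route you would still need a real argument for the $\beta$ sign condition, and the calmness/limiting-normal-cone argument is the standard one.
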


For the definition of the Minimax-M-stationary of  \cref{refpessbiopt_ours_d1}, we can define the following weaker condition.
\begin{definition}
    \label{def:minimaxC}(Minimax-C-stationary condition)    
    A point $(\bar{x},\bar{y},\bar{\lambda},\bar{\mu}^l)\in \mathcal{F}$  is said to be the minimax-Clarke stationary  of Min-MaxPCC \cref{refpessbiopt_ours_d1} if there exists $(\mu^x,\mu^y,\mu^{\lambda},\mu^{m},\mu^{h},\mu^{c})\in \mathbb{R}^{q_x\times q_y\times q_{\lambda}\times q_y\times d_y\times 1}$
such that
{\small 
    \begin{align*}
        \nabla_x f(\bar{x},\bar{y},\bar{\lambda})+(\mathcal{J}g_x(\bar{x}))^{T}\mu^x=0,\\
       \nabla_y f(\bar{x},\bar{y},\bar{\lambda})-(\mathcal{J}g_y(\bar{y}))^{T}\mu^y-\nabla^2_{yy}g(\bar{y},\bar{\lambda})\mu^h-\sum^{d_l}_{i=1}\mu_i^l\nabla^2_{yy}({g}_y)_i(\bar{y})\mu^{h}-\mu^c(\mathcal{J}{g}_y(\bar{y}))^{T}\bar{\mu}^l=0,\\
       \nabla_{\lambda} f(\bar{x},\bar{y},\bar{\lambda})-(\mathcal{J}g_{\lambda}(\bar{\lambda}))^{T}\mu^{\lambda}-\nabla^2_{y\lambda}g(\bar{y},\bar{\lambda})\mu^h=0,\\
       \mu^{m}-\mathcal{J}{g}(\bar{y})\mu^{h}-\mu^c{g}_{y}(\bar{y})=0,\\
       \nabla_yg(y,\lambda)+(\mathcal{J}{g}_y(y))^{T}\mu^l=0,\\
      0\geq g_{x}(\bar{x})\bot \mu^x\geq0,\ 0\geq g_{\lambda}(\bar{\lambda})\bot \mu^{\lambda}\geq0,\\
      \mu^y_{\gamma}=0,\ \mu^m_{\alpha}=0,\ \mu^y_{i}\mu^m_i\geq0, i\in\beta.
    \end{align*}
}
\end{definition}

The Minimax-W-stationary is the weakest stationary condition, where no restrictions are placed on the multipliers $\mu^y,\ \mu^m$ within $\beta$.
\begin{definition}
    \label{def:minimaxW}(Minimax-W-stationary condition)    A point $(\bar{x},\bar{y},\bar{\lambda},\bar{\mu}^l)\in \mathcal{F}$  is said to be the minimax-weakly stationary  of Min-MaxPCC \cref{refpessbiopt_ours_d1} if there exists $(\mu^x,\mu^y,\mu^{\lambda},\mu^{m},\mu^{h},\mu^{c})\in \mathbb{R}^{q_x\times q_y\times q_{\lambda}\times q_y\times d_y\times 1}$
such that
{\small 
    \begin{align*}
        \nabla_x f(\bar{x},\bar{y},\bar{\lambda})+(\mathcal{J}g_x(\bar{x}))^{T}\mu^x=0,\\
       \nabla_y f(\bar{x},\bar{y},\bar{\lambda})-(\mathcal{J}g_y(\bar{y}))^{T}\mu^y-\nabla^2_{yy}g(\bar{y},\bar{\lambda})\mu^h-\sum^{d_l}_{i=1}\mu_i^l\nabla^2_{yy}({g}_y)_i(\bar{y})\mu^{h}-\mu^c(\mathcal{J}{g}_y(\bar{y}))^{T}\bar{\mu}^l=0,\\
       \nabla_{\lambda} f(\bar{x},\bar{y},\bar{\lambda})-(\mathcal{J}g_{\lambda}(\bar{\lambda}))^{T}\mu^{\lambda}-\nabla^2_{y\lambda}g(\bar{y},\bar{\lambda})\mu^h=0,\\
       \mu^{m}-\mathcal{J}{g}(\bar{y})\mu^{h}-\mu^c{g}_{y}(\bar{y})=0,\\
       \nabla_yg(y,\lambda)+(\mathcal{J}{g}_y(y))^{T}\mu^l=0,\\
      0\geq g_{x}(\bar{x})\bot \mu^x\geq0,\ 0\geq g_{\lambda}(\bar{\lambda})\bot \mu^{\lambda}\geq0,\\
      \mu^y_{\gamma}=0,\ \mu^m_{\alpha}=0.
    \end{align*}
}
\end{definition}

We have the following relationship between all five stationary points.

\tikzstyle{block} = [rectangle, draw, fill=white, 
    text width= 7em, text centered, rounded corners=0pt, 
    minimum height= 2em, font=\footnotesize\sffamily]
\tikzstyle{arrow} = [thick, ->, >=stealth]

\begin{center}
    \begin{tikzpicture}[node distance= 1.5em, auto] 
    \node[block] (SS) at (0,0) {Minimax-S-stationary};
    \node[block] (BS) at (3,0) {Minimax-B-stationary};
    \node[block] (MS) at (6,0) {Minimax-M-stationary};
    \node[block] (CS) at (6,-1) {Minimax-C-stationary};
    \node[block] (WS) at (9,-1) {Minimax-W-stationary};

    \draw[arrow] (SS) -- (BS) node[midway, above] {};
    \draw[arrow] (BS) -- (MS) node[midway, above] {};
    \draw[arrow] (MS) -- (CS) node[midway, right] {};
    \draw[arrow] (CS) -- (WS) node[midway, below] {};
\end{tikzpicture}
\end{center}

\subsection{Minimax with value function}
In this subsection, by denoting the value /marginal function of
the lower-level program $V(\lambda)=\min_{z \in \mathcal{Y}} g(z,\lambda)$, we propose another minimax stationary point of the problem \cref{prob:refpessbiopt_ours}. The problem \cref{prob:refpessbiopt_ours} is equivalent to the following minimax problem (Min-MaxVP)
\begin{equation}
    \label{refpessbioptcons_ours_vf}
    \begin{aligned}
        \min_{x \in \mathcal{X}} \max_{y \in \mathcal{Y},\lambda \in \Lambda} \ & f(x,y, \lambda) \\\
        \mathrm{s.t.~~~~~~~} \ & g(y,\lambda) \leq V(\lambda).
    \end{aligned}
\end{equation}
We give the stationary condition based on the value function of $V(\lambda)$.
\begin{definition}
    \label{def:minimaxV}(Minimax-V-stationary condition)    A point $(\bar{x},\bar{y},\bar{\lambda})$  is said to be the minimax-value  stationary  of MIN-MAXVP \cref{refpessbioptcons_ours_vf} if there exists multiplier $\mu\in \mathbb{R}^{d_y}$
such that
    \begin{align*}
        \nabla_x f(\bar{x},\bar{y},\bar{\lambda})=0,\\
        \nabla_y f(\bar{x},\bar{y},\bar{\lambda})-\mu \nabla_yg(\bar{y},\bar{\lambda})=0,\\
        \nabla_{\lambda} f(\bar{x},\bar{y},\bar{\lambda}) - \mu(\nabla_{\lambda}g(\bar{y},\bar{\lambda}) - \partial^cV(\bar{\lambda}))\ni0,\\
        \mu \geq 0,
    \end{align*}
where $\partial^cV$ denotes the Clarke subdifferential of the value function $V$.
\end{definition}

Under Assumption \ref{ass:pessibiopt}, if $(\bar{x},\bar{y},\bar{\lambda})$ is a local minimax point, then the MINMAX-V-stationary condition holds. This stationary condition is stronger than the Minimax-S-stationary condition for the problem \cref{refpessbiopt_ours_d1}. Considering the differentiability of the optimal solution function of $\bar{y}(\lambda)$, under some mild conditions, we can define the following Minimax-hypergradient-based stationary point.

\begin{definition}
    \label{def:minimaxH}(Minimax-H-stationary condition) Assume that $\mathcal {X}=\mathbb{R}^{d_x},\ \mathcal {Y}=\mathbb{R}^{d_y},\ \Lambda=\mathbb{R}^{d_{\lambda}}$,  $f(x,\cdot,\cdot)$ and $g(\cdot,\cdot)$ are twice
continuously differentiable in $\mathbb{R}^{d_y\times d_{\lambda}}$  for any $x\in \mathbb{R}^{d_x }$ and $g(\cdot,\lambda)$ is strongly convex for any $\lambda\in\mathbb{R}^{d_{\lambda}}$. A point $(\bar{x},\bar{\lambda})$ 
is said to be the minimax-hypergradient-based stationary of MIN-MAXVP \cref{refpessbioptcons_ours_vf} if 
\begin{equation}
    \label{H_s}
    \begin{aligned}
        \nabla_x f(\bar{x},\bar{y}(\bar{\lambda}),\bar{\lambda})=0,\\
       \nabla_{\lambda} f(\bar{x},\bar{y}(\bar{\lambda}),\bar{\lambda})-\nabla^2_{\lambda y} g(\bar{y}(\bar{\lambda}),\bar{\lambda})[\nabla^2_{yy}g(\bar{y}(\bar{\lambda}),\bar{\lambda})]^{-1}\nabla_{y} f(\bar{x},\bar{y}(\bar{\lambda}),\bar{\lambda})=0,
    \end{aligned}
\end{equation}
where $\bar{y}(\bar{\lambda})={\rm argmin}_z g(z,\bar{\lambda})$.
\end{definition}

We have the following relationship between Minimax-S-stationary point and the stationary points defined with value function. 

\tikzstyle{block} = [rectangle, draw, fill=white, 
    text width= 9em, text centered, rounded corners=0pt, 
    minimum height= 2em, font=\small\sffamily]
\tikzstyle{arrow} = [thick, ->, >=stealth]

\begin{center}
    \begin{tikzpicture}[node distance= 1.5em, auto] 
    \node[block] (SS) at (0,0) {Minimax-S-stationary};
    \node[block] (VS) at (4,0) {Minimax-V-stationary};
    \node[block] (HS) at (8,0) {Minimax-H-stationary};

    \draw[arrow] (SS) -- (VS) node[midway, above] {};
    \draw[arrow] (VS) -- (HS) node[midway, above] {};
    \end{tikzpicture}
\end{center}

\section{Model analysis and algorithm} \label{sec_modelandalgorithm}


In this section, motivated by the penalty method for general bilevel optimization proposed in \cite{Lu2024First}, we reformulate problem \cref{prob:refpessbiopt_ours} into a single-level min-max-min problem and propose a penalty method for it. To achieve this goal, we observe that problem \cref{prob:refpessbiopt_ours} can be viewed as 
\begin{equation}
    \label{refpessbioptcons_ours}
    \begin{aligned}
        f^* = \min_{x \in \mathcal{X}} \max_{y \in \mathcal{Y},\lambda \in \Lambda} \ & f(x,y, \lambda)\\
        \mathrm{s.t.~~~~~~} \ & g(y,\lambda) \leq \min_{z \in \mathcal{Y}} g(z,\lambda). 
    \end{aligned}
\end{equation}
Note that $g(y,\lambda) - \min_{z \in \mathcal{Y}} g(z,\lambda) \geq 0$ for all $y \in \mathcal{Y},\lambda \in \Lambda$. Consequently, a natural penalty problem for the problem \cref{refpessbioptcons_ours} is 
\vspace{-0.3cm}
\begin{equation*}
    \min_{x \in \mathcal{X}} \max_{y \in \mathcal{Y}, \lambda \in \Lambda} \ f(x,y,\lambda) - \rho (g(y,\lambda) - \min_{z \in \mathcal{Y}} g(z,\lambda)),
\end{equation*}
where $\rho > 0$ is a penalty parameter. It is not hard to find that it is equivalent to the following min-max-min tri-level problem
\begin{equation}
    \label{min-max-minpenalprob}
    \min_{x \in \mathcal{X}}  \max_{y \in \mathcal{Y}, \lambda \in \Lambda} \min_{z \in \mathcal{Y}} \ f(x,y,\lambda) - \rho (g(y,\lambda) - g(z,\lambda)).
\end{equation}
Let 
\begin{equation}
    \label{exch_penalty_func}
    P_{\rho}(x,y,\lambda,z) : = f(x,y,\lambda) - \rho (g(y,\lambda) - g(z,\lambda)).
\end{equation}
By the definition of $f(x,y,\lambda)$ in \cref{prob:refpessbiopt_ours} and Assumption \ref{ass:pessibiopt}, one can observe that $P_{\rho}$ satisfies the following properties.
\begin{enumerate}[(i)]
    \item $P_{\rho}$ has Lipschitz continuous gradient with $L_{\nabla P_{\rho}} := L_{\nabla f} + 2 \rho L_{\nabla g}$.
    \item $P_{\rho}$ is nonconvex in $x$, nonconcave in $(y,\lambda)$ but convex in $z$. 
\end{enumerate}
Hence, the problem \cref{min-max-minpenalprob} is a nonconvex-nonconcave-convex min-max-min problem. Moreover, using \cref{bound_lowf}, \cref{bound_uppf}, the definition of $P_{\rho}$ in \cref{exch_penalty_func}, and the triangle inequality of min and max operators, it holds that 
\begin{align*}
    P_{\rho,\text{hi}} :=&  \max_{(x,y,\lambda,z) \in \mathcal{X}\times \mathcal{Y}\times \Lambda \times \mathcal{Y}} P_{\rho}(x,y,\lambda,z) \nonumber\\
    \leq & \max_{(x,y,\lambda) \in \mathcal{X}\times \mathcal{Y}\times \Lambda } f(x,y,\lambda) + \max_{(\lambda,z) \in  \Lambda \times\mathcal{Y}} \rho g(z,\lambda) + \max_{(\lambda,y) \in  \Lambda \times \mathcal{Y}} -\rho g(y,\lambda)  \nonumber \\
    \leq & f_{\text{hi}} + \rho g_{\text{hi}} - \rho g_{\text{low}}, 
\end{align*}
and 
\begin{align*}
    P_{\rho,\text{low}} := & \min_{(x,y,\lambda,z) \in \mathcal{X}\times \mathcal{Y}\times \Lambda \times \mathcal{Y}} P_{\rho}(x,y,\lambda,z) \nonumber \\
    \geq & \min_{(x,y,\lambda) \in \mathcal{X}\times \mathcal{Y}\times \Lambda } f(x,y,\lambda) + \min_{(\lambda,z) \in \Lambda \times \mathcal{Y} } \rho g(z,\lambda) + \min_{(\lambda,y) \in  \Lambda \times \mathcal{Y}} -\rho g(y,\lambda)  \nonumber \\
    \geq & f_{\text{low}} + \rho g_{\text{low}} - \rho g_{\text{hi}} . 
\end{align*}

\subsection{An ideal penalty method}
Based on these observations, we are now ready to propose a penalty method for the problem \cref{prob:refpessbiopt_ours} by solving problem \cref{min-max-minpenalprob}. 
\begin{algorithm}[H]
    \caption{An ideal penalty method for problem \cref{prob:refpessbiopt_ours}}
    \label{algo:idealpenaltyAlgo}
    \begin{algorithmic}[1]
      \STATE{ \textbf{Input}: positive sequence $\{ \rho_k \}$ and $\epsilon_k$ with $\lim_{k \to \infty} (\rho_k,\epsilon_k) = (\infty,0)$.}
      \FOR{ $k = 0,1,2,\ldots$} 
           \STATE{Find an $\epsilon_k$-optimal solution $(x^k,y^k,\lambda^k,z^k)$ of problem \cref{min-max-minpenalprob} with $\rho = \rho_k$.}
      \ENDFOR
    \end{algorithmic}
\end{algorithm}
In what follows, we validate the convergence property of the iteration sequence generated by \cref{algo:idealpenaltyAlgo} as $\rho_k \to \infty$.
Define the following notations:
\begin{align}
    \mathcal{F}_{\text{low}} & = \Bigl\{(y,\lambda) \in \mathcal{Y} \times \Lambda| g(y,\lambda) = \min_{z \in \mathcal{Y}} g(z,\lambda) \Bigr\}, \\ 
    h_g(y, \lambda) &= g(y, \lambda) - \min_{z \in \mathcal{Y}} g(z, \lambda) \geq 0, \\ 
    \Psi_\rho(x, y, \lambda) &= \min_z P_\rho(x, y, \lambda, z) = f(x, y, \lambda) - \rho h_g(y, \lambda), \label{def_Psi} \\
    F_\rho(x) &= \max_{y \in \mathcal{Y}, \lambda \in \Lambda} \Psi_\rho(x, y, \lambda), \quad \Phi_\rho^* = \min_{x \in \mathcal{X}} F_{\rho}(x), \label{penalopt} \\
    \Phi(x) &= \max_{(y, \lambda) \in \mathcal{F}_{\text{low}}} f(x, y, \lambda), \quad \Phi^* = \min_{x \in \mathcal{X}} \Phi(x). \label{primeopt}
\end{align}


To proceed, let us define the global minimax solutions for problem \eqref{prob:refpessbiopt_ours}. A point $(x^*,y^*,\lambda^*)$ is the solution if the following holds:
\begin{align*}
    (y^*,\lambda^*) \in \mathcal{F}_{\text{low}} , \text{ and } x^* \in \mathop{\arg \min}_{x \in \mathcal{X}} \Phi(x), \ (y^*,\lambda^*) \in \mathop{\arg \max}_{(y,\lambda) \in \mathcal{F}_{\text{low}}} f(x^*,y,\lambda).
\end{align*}

\begin{lemma} 
    \label{lem:consistency}
    Let Assumptions \ref{ass:pessibiopt} hold. Then, for any $x \in X$,
    \begin{equation}
        \lim_{\rho \to \infty} F_{\rho}(x) = \Phi(x).
    \end{equation}
\end{lemma}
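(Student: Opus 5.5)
Fix $x \in \mathcal{X}$. The plan is to bound $F_\rho(x)$ from below by $\Phi(x)$, show that it is nonincreasing in $\rho$, and then prove that the limit cannot exceed $\Phi(x)$ by a compactness argument on the maximizers.

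\textbf{Lower bound and monotonicity.} Since $h_g \geq 0$ on $\mathcal{Y}\times\Lambda$, the function $\Psi_\rho(x,y,\lambda) = f(x,y,\lambda) - \rho h_g(y,\lambda)$ is nonincreasing in $\rho$ for each fixed $(y,\lambda)$. Hence $F_\rho(x)$ is nonincreasing in $\rho$.

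On $\mathcal{F}_{\text{low}}$ we have $h_g = 0$, so $\Psi_\rho = f$ there. Consequently
\[
F_\rho(x) \geq \max_{(y,\lambda)\in\mathcal{F}_{\text{low}}} f(x,y,\lambda) = \Phi(x).
\]
This needs $\mathcal{F}_{\text{low}}$ to be nonempty and compact. It is nonempty because $Y^*(\lambda)\neq\emptyset$ for every $\lambda$. It is closed because $h_g$ is continuous: $V(\lambda)=\min_{z\in\mathcal{Y}} g(z,\lambda)$ is continuous, being the minimum of a jointly continuous function over a compact set. Both maxima are attained, since $\Psi_\rho(x,\cdot,\cdot)$ is continuous on the compact set $\mathcal{Y}\times\Lambda$. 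So the limit $\bar F := \lim_{\rho\to\infty} F_\rho(x)$ exists, is finite, and satisfies $\bar F \geq \Phi(x)$. It is finite because it is bounded below by $\Phi(x)$.

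\textbf{Upper bound.} Take $\rho_k \to \infty$ and maximizers $(y^k,\lambda^k)$ of $\Psi_{\rho_k}(x,\cdot,\cdot)$. From
\[
f_{\text{low}} \leq \Phi(x) \leq F_{\rho_k}(x) = f(x,y^k,\lambda^k) - \rho_k h_g(y^k,\lambda^k) \leq f_{\text{hi}} - \rho_k h_g(y^k,\lambda^k),
\]
we get $0 \leq h_g(y^k,\lambda^k) \leq (f_{\text{hi}}-f_{\text{low}})/\rho_k \to 0$.

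By compactness, pass to a subsequence with $(y^k,\lambda^k)\to(\bar y,\bar\lambda)$. Continuity of $h_g$ then gives $h_g(\bar y,\bar\lambda)=0$, so $(\bar y,\bar\lambda)\in\mathcal{F}_{\text{low}}$. This is the same closedness fact the paper notes after Assumption~\ref{ass:pessibiopt}. Dropping the nonpositive penalty term,
\[
F_{\rho_k}(x) \leq f(x,y^k,\lambda^k) \to f(x,\bar y,\bar\lambda) \leq \Phi(x).
\]
Therefore $\bar F \leq \Phi(x)$, and combined with the lower bound, $\bar F = \Phi(x)$.

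\textbf{Main obstacle.} The only delicate point is the continuity of $V$, and hence of $h_g$. This is what makes the limit point land in $\mathcal{F}_{\text{low}}$. It follows from a standard Berge-type argument: $g$ is uniformly continuous on the compact set $\mathcal{Y}\times\Lambda$, so $|V(\lambda)-V(\lambda')| \leq \sup_{z\in\mathcal{Y}} |g(z,\lambda)-g(z,\lambda')|$, which tends to $0$ as $\lambda'\to\lambda$. Monotonicity in $\rho$ ensures that the limit along the subsequence equals the full limit.
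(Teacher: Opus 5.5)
Your proof is correct. The lower bound coincides with the paper's, but you obtain the upper bound differently.

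The paper uses an $\varepsilon$--$\delta$ argument. It fixes $\eta>0$ and takes a modulus $\delta$ of uniform continuity of $f$. It then asserts that some sublevel set $\{h_g<\varepsilon\}$ lies within distance $\delta$ of $\mathcal{F}_{\text{low}}$. Finally it chooses $\rho_0$ with $\rho_0\varepsilon>2\max|f|$, so that every maximizer of $\Psi_\rho(x,\cdot,\cdot)$ falls in that sublevel set, whence $F_\rho(x)\le\Phi(x)+\eta$ for all $\rho\ge\rho_0$.

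You instead fix $x$ and bound $h_g$ at the maximizers by $(f_{\text{hi}}-f_{\text{low}})/\rho_k$. You then extract a convergent subsequence and use monotonicity of $\rho\mapsto F_\rho(x)$ to pass from the subsequence to the full limit.

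Your version is more self-contained. The paper justifies its sublevel-set claim only by a phrase, and the claim actually needs compactness of $\mathcal{Y}\times\Lambda$, not merely of $\mathcal{F}_{\text{low}}$; it would itself be proved by exactly your sequential argument. You also verify the continuity of $V$, hence of $h_g$, which the paper takes for granted.

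What the paper's route buys is uniformity. Its $\delta$, $\varepsilon$ and $\rho_0$ do not depend on $x$, so it really shows $\sup_{x\in\mathcal{X}}|F_\rho(x)-\Phi(x)|\to0$. This stronger statement is what the proof of \cref{thm:convergence} later uses (the claim $\Delta_{\rho_k}\to0$). Your pointwise argument does not give this directly, but your monotonicity observation makes the upgrade immediate. $F_\rho$ and $\Phi$ are continuous on the compact set $\mathcal{X}$: for instance, $|F_\rho(x)-F_\rho(x')|\le\max_{(y,\lambda)\in\mathcal{Y}\times\Lambda}|f(x,y,\lambda)-f(x',y,\lambda)|$, since the penalty term does not involve $x$. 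Since $F_\rho$ decreases to $\Phi$ pointwise, Dini's theorem then yields uniform convergence.
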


\begin{proof}
    Let $x \in \mathcal{X}$ be arbitrary but fixed.
    For any $(y, \lambda) \in \mathcal{F}_{\text{low}}$, we have $h_g(y, \lambda) = 0$. Hence, $ \Psi_\rho(x, y, \lambda) = f(x, y, \lambda) \leq F_{\rho}(x)$. Maximizing over $(y,\lambda) \in \mathcal{F}_{\text{low}}$ yields 
    \begin{equation}
        \label{consist_lower}
        \Phi(x) = \max_{(y,\lambda) \in \mathcal{F}_{\text{low}}} f(x,y,\lambda) \leq F_{\rho}(x).
    \end{equation}
    
    Next, we establish an upper bound for $F_{\rho}(x)$. Let $\eta > 0$. Since $f$ is continuous and $\mathcal{X}, \mathcal{Y}, \Lambda$ are compact, there exists $\delta > 0$ such that whenever $\|(y, \lambda) - (y', \lambda')\| < \delta$, we have
    \begin{align*}
        |f(x, y, \lambda) - f(x, y', \lambda')| < \eta, \quad \forall x \in \mathcal{X}. 
    \end{align*}
    
    Since $h_g$ is continuous and $\mathcal{F}_{\text{low}}$ is compact, there exists $\varepsilon > 0$ such that
    \begin{equation}
        \label{consist_2}
        \sup_{(y, \lambda) \in \mathcal{F}_{\text{low},\varepsilon}} \inf_{(y', \lambda') \in \mathcal{F}_{\text{low}}} \|(y, \lambda) - (y', \lambda')\| < \delta,
    \end{equation}
    where $\mathcal{F}_{\text{low},\varepsilon} = \{ (y, \lambda) | h_g(y, \lambda) < \varepsilon \}$.
    
    Let $V = \max_{\mathcal{X} \times \mathcal{Y} \times \Lambda} |f|$. Choose $\rho_0$ such that $\rho_0 \varepsilon > 2 V$.
    Then, for any $\rho \ge \rho_0$ and $(y, \lambda) \notin \mathcal{F}_{\text{low},\varepsilon}$, we have
    $
        \Psi_\rho(x, y, \lambda) \le V - \rho \epsilon < -V.
    $
    On the other hand, if $(y,\lambda) \in \mathcal{F}_{\text{low}}$, $\Psi_\rho(x, y, \lambda) = f(x, y, \lambda) \geq -V$. Hence, any maximizer of $\Psi_\rho(x, \cdot, \cdot)$ must belong to $\mathcal{F}_{\text{low},\varepsilon}$.

    For any $(y, \lambda) \in \mathcal{F}_{\text{low},\varepsilon}$, from \eqref{consist_2}, there exists $(y', \lambda') \in \mathcal{F}_{\text{low}}$ with $\|(y, \lambda) - (y', \lambda')\| < \delta$. Then, we obtain $f(x, y, \lambda) \leq f(x, y', \lambda') + \eta \leq \Phi(x) + \eta$. Consequently,
    \begin{align*}
        \Psi_\rho(x, y, \lambda) = f(x,y,\lambda) - \rho h_g(y,\lambda) \leq f(x,y,\lambda) \leq \Phi(x) + \eta.
    \end{align*}
    Taking the maximum over $(y, \lambda)$ yields 
    \begin{equation}
        \label{consist_upper}
        F_{\rho}(x) \leq \Phi(x) + \eta, \ \forall \rho \geq \rho_0.
    \end{equation}
    
    Combining \eqref{consist_lower} and \eqref{consist_upper} and letting $\eta\to 0$ yields $\lim_{\rho \to \infty} F_{\rho}(x) = \Phi(x)$.
\end{proof}

Define $\delta_\rho(x) = |F_{\rho}(x) - \Phi(x)|$, then $\delta_\rho(x) \to 0$ as $\rho \to \infty$ for any $x \in \mathcal{X}$ by \cref{lem:consistency}.

\begin{lemma} 
   \label{lem:epsilon_sols}
    Let Assumptions \ref{ass:pessibiopt} hold and $(x_\epsilon, y_\epsilon, \lambda_\epsilon, z_\epsilon)$ be an $\epsilon$-optimal solution of the problem \eqref{min-max-minpenalprob} for some $\epsilon > 0$. Let $f, \ g, \ \rho, \ \Phi^{*}$, and $\Phi_{\rho}^{*}$ be given in \eqref{prob:refpessbiopt_ours}, \eqref{bound_uppf}, \eqref{min-max-minpenalprob}, \eqref{penalopt}, and \eqref{primeopt}, respectively. Then, 
    \begin{align*}
        &\Phi(x_\epsilon) \leq \Phi_\rho^* + 2\epsilon + \delta_\rho(x_\epsilon), \ f(x_{\epsilon},y_{\epsilon},\lambda_{\epsilon})\geq \Phi^{*} - 2 \epsilon - \delta_{\rho}(x_{\epsilon}), \\
        &g(y_\epsilon, \lambda_\epsilon) - \min_z g(z, \lambda_\epsilon) \leq \rho^{-1}\big( f_{\text{hi}} - \Phi^* + 2 \epsilon + \delta_\rho(x_\epsilon) \big).
    \end{align*}
\end{lemma}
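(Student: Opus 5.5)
Write $\Phi_\rho(x,y,\lambda,z):=P_\rho(x,y,\lambda,z)$ and recall $\Psi_\rho=\min_z P_\rho$, $F_\rho(x)=\max_{y,\lambda}\Psi_\rho$, and $\Phi_\rho^*=\min_x F_\rho$. By \cref{def:appsol_mMmprob}, applied to $\varPhi=P_\rho$ with the maximization variable $(y,\lambda)$, the three defining inequalities read:
\begin{align*}
&\text{(a)}\quad P_\rho(x_\epsilon,y_\epsilon,\lambda_\epsilon,z_\epsilon)-\Psi_\rho(x_\epsilon,y_\epsilon,\lambda_\epsilon)\le\epsilon,\\
&\text{(b)}\quad F_\rho(x_\epsilon)-P_\rho(x_\epsilon,y_\epsilon,\lambda_\epsilon,z_\epsilon)\le\epsilon,\\
&\text{(c)}\quad P_\rho(x_\epsilon,y_\epsilon,\lambda_\epsilon,z_\epsilon)-\Phi_\rho^*\le\epsilon .
\end{align*}
The plan is to combine these with the definition $\delta_\rho(x)=|F_\rho(x)-\Phi(x)|$ and the two comparisons $\Phi\le F_\rho$ (from \eqref{consist_lower}) and $\Phi_\rho^*\ge\Phi^*$.

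\textbf{First inequality.} Adding (b) and (c) gives $F_\rho(x_\epsilon)\le\Phi_\rho^*+2\epsilon$. Then
\[
\Phi(x_\epsilon)\le F_\rho(x_\epsilon)+\delta_\rho(x_\epsilon)\le\Phi_\rho^*+2\epsilon+\delta_\rho(x_\epsilon).
\]
The middle step could even use $\Phi\le F_\rho$ directly, but the form with $\delta_\rho$ is what is stated.

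\textbf{Second inequality.} Since $\Phi(x)\le F_\rho(x)$ for every $x$, taking minima gives $\Phi^*\le\Phi_\rho^*$. Now use (b) together with $\Phi_\rho^*\le F_\rho(x_\epsilon)$:
\[
P_\rho(x_\epsilon,y_\epsilon,\lambda_\epsilon,z_\epsilon)\ge F_\rho(x_\epsilon)-\epsilon\ge\Phi(x_\epsilon)-\epsilon\ge\Phi^*-\epsilon .
\]
Next, $P_\rho(x_\epsilon,y_\epsilon,\lambda_\epsilon,z_\epsilon)=f(x_\epsilon,y_\epsilon,\lambda_\epsilon)-\rho\bigl(g(y_\epsilon,\lambda_\epsilon)-g(z_\epsilon,\lambda_\epsilon)\bigr)$. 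To bound $f$ from below we need $g(y_\epsilon,\lambda_\epsilon)-g(z_\epsilon,\lambda_\epsilon)\ge 0$, or at least $\ge-\epsilon/\rho$. Using (a) and $\Psi_\rho=f-\rho h_g$ with $h_g\ge0$:
\[
P_\rho(x_\epsilon,y_\epsilon,\lambda_\epsilon,z_\epsilon)\le f(x_\epsilon,y_\epsilon,\lambda_\epsilon)-\rho h_g(y_\epsilon,\lambda_\epsilon)+\epsilon\le f(x_\epsilon,y_\epsilon,\lambda_\epsilon)+\epsilon .
\]
Combining the two displays yields $f(x_\epsilon,y_\epsilon,\lambda_\epsilon)\ge\Phi^*-2\epsilon$, which is stronger than the claimed $\Phi^*-2\epsilon-\delta_\rho(x_\epsilon)$.

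\textbf{Third inequality.} Again from (a), $\rho h_g(y_\epsilon,\lambda_\epsilon)\le f(x_\epsilon,y_\epsilon,\lambda_\epsilon)-P_\rho(x_\epsilon,y_\epsilon,\lambda_\epsilon,z_\epsilon)+\epsilon$. Bound $f\le f_{\text{hi}}$, and use the lower bound $P_\rho\ge\Phi^*-\epsilon$ from the previous step. This gives
\[
\rho h_g(y_\epsilon,\lambda_\epsilon)\le f_{\text{hi}}-\Phi^*+2\epsilon,
\]
and adding $\delta_\rho(x_\epsilon)\ge0$ gives the stated bound.

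The only delicate point is handling the inner $\min_z$ correctly via (a), so that one passes from $P_\rho$ to $\Psi_\rho=f-\rho h_g$ with $h_g\ge0$. The rest is a chain of inequalities. I expect no real obstacle beyond checking signs; the $\delta_\rho$ terms are slack that I keep only to match the statement.
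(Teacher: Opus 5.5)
Your proof is correct and takes essentially the paper's route: you combine the three defining inequalities of an $\epsilon$-optimal solution (the paper first sums them into $F_\rho(x_\epsilon)-\min_z P_\rho(x_\epsilon,y_\epsilon,\lambda_\epsilon,z)\le 2\epsilon$ and $F_\rho(x_\epsilon)-\Phi_\rho^*\le 2\epsilon$), then use $\min_z P_\rho = f-\rho h_g$ with $h_g\ge 0$, the comparison between $F_\rho$ and $\Phi$, and $f\le f_{\text{hi}}$. You are also right that the $\delta_\rho(x_\epsilon)$ terms are pure slack, because the one-sided bound $\Phi\le F_\rho$ from \eqref{consist_lower} suffices wherever the paper invokes $|F_\rho-\Phi|\le\delta_\rho$.
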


\begin{proof}

    Using the definition of an $\epsilon$-optimal solution in Definition \ref{def:appsol_mMmprob}, we have:
    \begin{align*}
        P_\rho(x_\epsilon, y_\epsilon, \lambda_\epsilon, z_\epsilon) - \min_z P_\rho(x_\epsilon, y_\epsilon, \lambda_\epsilon, z) &\leq \epsilon,\\ 
        F_{\rho}(x_\epsilon) - P_{\rho}(x_\epsilon, y_\epsilon, \lambda_\epsilon, z_\epsilon) &\leq \epsilon, \\
        P_\rho(x_\epsilon, y_\epsilon, \lambda_\epsilon, z_\epsilon) - \Phi_{\rho}^{*} & \leq \epsilon. 
    \end{align*}
    Summing these inequalities, we can obtain
    \begin{align}
        F_{\rho}(x_\epsilon) - \min_z P_\rho(x_\epsilon, y_\epsilon, \lambda_\epsilon, z) \leq 2 \epsilon, \label{ineq_1} \\
        F_{\rho}(x_\epsilon) - \Phi_{\rho}^{*}  \leq 2 \epsilon.  \label{ineq_2}
    \end{align}

    From \cref{lem:consistency} and \eqref{ineq_2}, it follows that 
    \begin{align*}
        \Phi_{\rho}^{*}
        \geq F_{\rho}(x_{\epsilon}) - 2 \epsilon \geq \Phi(x_{\epsilon}) - 2 \epsilon - \delta_{\rho}(x_{\epsilon}) . 
    \end{align*}

    Next, using \eqref{ineq_1}, the definition of $\Psi_{\rho}$ in \eqref{def_Psi}, and $ h(y_{\epsilon},\lambda_{\epsilon}) \geq 0$,
    we obtain 
    \begin{align*}
        F_{\rho}(x_{\epsilon}) \leq \min_{z} P_{\rho}(x_{\epsilon},y_{\epsilon},\lambda_{\epsilon},z) + 2 \epsilon 
         & = f(x_{\epsilon},y_{\epsilon},\lambda_{\epsilon}) - \rho h(y_{\epsilon},\lambda_{\epsilon})  +  2 \epsilon \\
        & \leq f(x_{\epsilon},y_{\epsilon},\lambda_{\epsilon}) + 2 \epsilon,
    \end{align*}
    which, together with \cref{lem:consistency} and \eqref{primeopt},  implies
    \begin{equation}
        f(x_{\epsilon},y_{\epsilon},\lambda_{\epsilon}) \geq \Phi(x_{\epsilon}) - 2 \epsilon - \delta_{\rho}(x_{\epsilon}) \geq \Phi^{*} - 2 \epsilon - \delta_{\rho}(x_{\epsilon}) .  
    \end{equation}

    Combining \eqref{ineq_1}, \eqref{primeopt}, \cref{lem:consistency} and the definition of $P_{\rho}$ in \cref{exch_penalty_func} yields that 
        \begin{align*}
           & f(x_{\epsilon},y_{\epsilon},\lambda_{\epsilon}) - \rho [g(y_{\epsilon},\lambda_{\epsilon}) - \min_{z} g(z,\lambda_{\epsilon})]\\
           & = \min_{z} P_{\rho}(x_{\epsilon},y_{\epsilon},\lambda_{\epsilon},z) \geq F_{\rho}(x_{\epsilon}) - 2 \epsilon \geq \Phi^{*} - 2 \epsilon - \delta_{\rho}(x_{\epsilon}),
        \end{align*}
    which, together with \eqref{bound_uppf}, implies that 
    \begin{align*}
        g(y_{\epsilon},\lambda_{\epsilon}) - \min_{z} g(z,\lambda_{\epsilon}) 
         \leq \rho^{-1}[f_{\text{hi}} - \Phi^{*} + 2 \epsilon + \delta_{\rho}(x_{\epsilon})].
    \end{align*}
    Then the desired results are obtained. 
\end{proof}

\begin{theorem} 
    \label{thm:convergence}
    Let Assumptions \ref{ass:pessibiopt} hold and $\{(x_k, y_k, \lambda_k, z_k)\}_{k \geq 0}$ be generated by \cref{algo:idealpenaltyAlgo}. Then, any accumulation point of $\{x_{k},y_k,\lambda_k\}_{k \geq 0}$ is a global minimax point of the problem \eqref{prob:refpessbiopt_ours}.
\end{theorem}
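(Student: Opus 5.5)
The plan is to extract a convergent subsequence and pass to the limit in the three estimates of \cref{lem:epsilon_sols}. Each of the three requirements in the definition of a global minimax point then follows in turn: lower-level feasibility, optimality of $x^*$, and maximality of $(y^*,\lambda^*)$.

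Since $\mathcal{X}\times\mathcal{Y}\times\Lambda$ is compact, let $(x^*,y^*,\lambda^*)$ be an accumulation point. Pass to a subsequence with $(x_k,y_k,\lambda_k)\to(x^*,y^*,\lambda^*)$, and write $\delta_k:=\delta_{\rho_k}(x_k)$.

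\textbf{Step 0: uniform consistency (the main obstacle).} \Cref{lem:consistency} as stated is only pointwise in $x$, but $\delta_k$ is evaluated at the moving point $x_k$, so I need $\sup_{x\in\mathcal{X}}\delta_\rho(x)\to0$. I would re-read the proof of \cref{lem:consistency} and check that it already yields this. The modulus $\delta$ is chosen uniformly in $x\in\mathcal{X}$ by uniform continuity of $f$ on a compact set. The tolerance $\varepsilon$ in \eqref{consist_2} depends only on $h_g$ and $\mathcal{F}_{\text{low}}$. The threshold $\rho_0$ depends only on $V=\max|f|$ and $\varepsilon$. Hence $0\le F_\rho(x)-\Phi(x)\le\eta$ for all $x\in\mathcal{X}$ and all $\rho\ge\rho_0$, so $\bar\delta_\rho:=\sup_x\delta_\rho(x)\to0$, and in particular $\delta_k\to0$.

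I also need two continuity facts, each a maximum or minimum of a continuous function over a compact set. First, $\Phi$ is continuous, since $\mathcal{F}_{\text{low}}$ is compact and $f$ is continuous. Second, $h_g$ is continuous, since $\lambda\mapsto\min_{z\in\mathcal{Y}}g(z,\lambda)$ is continuous. Finally, $\Phi_\rho^*\to\Phi^*$. Indeed, $F_\rho\ge\Phi$ gives $\Phi^*_\rho\ge\Phi^*$. Conversely, if $\hat x$ minimizes $\Phi$, then $\Phi^*_\rho\le F_\rho(\hat x)\le\Phi^*+\bar\delta_\rho$.

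\textbf{Step 1: feasibility.} By the third estimate of \cref{lem:epsilon_sols},
\[
h_g(y_k,\lambda_k)\le\rho_k^{-1}\bigl(f_{\text{hi}}-\Phi^*+2\epsilon_k+\delta_k\bigr),
\]
and the bracket is bounded while $\rho_k\to\infty$. Continuity of $h_g$ then gives $h_g(y^*,\lambda^*)=0$, i.e.\ $(y^*,\lambda^*)\in\mathcal{F}_{\text{low}}$.

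\textbf{Step 2: optimality of $x^*$.} By the first estimate, $\Phi(x_k)\le\Phi^*_{\rho_k}+2\epsilon_k+\delta_k\to\Phi^*$. Continuity of $\Phi$ gives $\Phi(x^*)\le\Phi^*$, so $x^*\in\arg\min_{x\in\mathcal{X}}\Phi(x)$.

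\textbf{Step 3: maximality of $(y^*,\lambda^*)$.} Here I would use the sharper intermediate inequality from the proof of \cref{lem:epsilon_sols},
\[
f(x_k,y_k,\lambda_k)\ge\Phi(x_k)-2\epsilon_k-\delta_k,
\]
rather than its weakened form with $\Phi^*$. Letting $k\to\infty$ gives $f(x^*,y^*,\lambda^*)\ge\Phi(x^*)=\max_{(y,\lambda)\in\mathcal{F}_{\text{low}}}f(x^*,y,\lambda)$. Combined with $(y^*,\lambda^*)\in\mathcal{F}_{\text{low}}$ from Step 1, this shows $(y^*,\lambda^*)\in\arg\max_{\mathcal{F}_{\text{low}}}f(x^*,\cdot,\cdot)$.

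Steps 1–3 together show that $(x^*,y^*,\lambda^*)$ is a global minimax point.
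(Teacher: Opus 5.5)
Your proposal is correct and follows essentially the same route as the paper: extract a convergent subsequence and pass to the limit in the three estimates of \cref{lem:epsilon_sols}, using $\Phi^*_{\rho_k}\to\Phi^*$. Your Step~0 makes explicit two facts the paper uses without comment, namely the uniformity $\sup_{x\in\mathcal{X}}\delta_{\rho}(x)\to 0$ and the continuity of $\Phi$; your Step~3 applies the sharper bound $f(x_k,y_k,\lambda_k)\ge\Phi(x_k)-2\epsilon_k-\delta_{\rho_k}(x_k)$ directly, whereas the paper uses the $\Phi^*$-version together with $\Phi(x_\infty)=\Phi^*$, which is only a minor streamlining.
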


\begin{proof}

    Let $\{(x^k,y^k,\lambda^k,z^k)\}$ be generated by \cref{algo:idealpenaltyAlgo} with $\lim_{k \to \infty} (\rho_k,\epsilon_k) $\\$ =(\infty,0)$. Since $\mathcal X,\mathcal Y$, and $\Lambda$ are compact, the sequence admits an accumulation point. Without loss of generality, we assume that $\lim_{k \to \infty} (x^k,y^k,\lambda^k) = (x_{\infty},y_{\infty},\lambda_{\infty})$. 

    From Lemma \ref{lem:epsilon_sols}, it follows that 
    $
        g(y^k, \lambda^k) - \min_z g(z, \lambda^k) \leq \rho_k^{-1}\big( f_{\text{hi}} - \Phi^* + \epsilon_k + \delta_{\rho_k}(x^k) \big).
    $
    Taking limits and using the continuity of $g$ gives 
    \begin{align*}
        g(y_{\infty}, \lambda_{\infty}) - \min_z g(z, \lambda_{\infty}) \leq 0,
    \end{align*}
    which implies $(y_{\infty},\lambda_{\infty}) \in \mathcal{F}_{\text{low}}$.
    
    \Cref{lem:consistency} implies $F_{\rho_k}(x) \to \Phi(x), \ \forall x$. Then, for all $x \in \mathcal{X}$,
    \begin{align}
        F_{\rho_k}(x) &\ge \Phi(x) - \delta_{\rho_k}(x), \label{eq:lower}\\
        F_{\rho_k}(x) &\le \Phi(x) + \delta_{\rho_k}(x). \label{eq:upper}
    \end{align}
    Taking the minimum over $x \in \mathcal{X}$ in \cref{eq:lower} yields 
    $$
       \Phi_{\rho_k}^{*} = \min_{x \in \mathcal{X}} F_{\rho_k}(x) \geq \min_{x \in \mathcal{X}} (\Phi(x) - \delta_{\rho_k}(x)) \geq \min_{x \in \mathcal{X}} \Phi(x) - \sup_{x \in \mathcal{X}} \delta_{\rho_k}(x) = \Phi^* - \sup_{x \in \mathcal{X}} \delta_{\rho_k}(x).
    $$
    Let $\Delta_{\rho_k} := \sup_{x \in \mathcal{X}} \delta_{\rho_k}(x)$. From \cref{lem:consistency}, we have $\Delta_{\rho_k} \to 0$. Thus, 
    \begin{equation}
        \label{eq:lower_star}
        \Phi_{\rho_k}^{*} \geq \Phi^* - \Delta_{\rho_k}.
    \end{equation} 
    On the other hand, letting $x = x^{*}$ in \eqref{eq:upper} yields that 
    \begin{equation}
        \label{eq:upper_star}
        \Phi_{\rho_k}^{*} \leq F_{\rho_k}(x^{*}) \leq \Phi(x^{*}) + \delta_{\rho_k}(x^{*}) \leq \Phi^{*} + \Delta_{\rho_k}. 
    \end{equation}
    Since $\Delta_{\rho_{k}} \to 0$, combining \cref{eq:lower_star} and \cref{eq:upper_star}, we have $\Phi_{\rho_k}^{*} \to \Phi^{*}$. 
    
    By \cref{lem:epsilon_sols}, we have 
    $\Phi(x^k) \leq \Phi_{\rho_{k}}^* + 2\epsilon + \delta_{\rho_k}(x^k)$.
    Taking limits yields 
    \begin{equation}
        \label{conk1_lim}
        \Phi(x_{\infty}) \leq \Phi^{*}.
    \end{equation}
    Similarly, using \cref{lem:epsilon_sols} and the continuity of $f$, we have 
    \begin{equation}
        \label{conk2_lim}
        f(x_{\infty},y_{\infty},\lambda_{\infty}) \geq \Phi^{*}.
    \end{equation}
    Since $\Phi(x_\infty) = \max_{(y, \lambda) \in \mathcal{F}_{\text{low}}} f(x_\infty, y, \lambda) \geq f(x_\infty, y_\infty, \lambda_\infty)$, combining with \eqref{conk2_lim} yields $\Phi(x_{\infty}) \geq \Phi^*$. Together with \eqref{conk1_lim}, we have 
    \begin{align*}
        \Phi(x_{\infty}) = \Phi^* = \min_{x \in \mathcal{X}}\Phi(x),
    \end{align*}
    which implies that $x_{\infty} \in \mathop{\arg \min}_{x \in \mathcal{X}} \Phi(x)$.
    
    From \eqref{conk2_lim} and $\Phi(x_\infty) \geq f(x_\infty, y_\infty, \lambda_\infty)$, it follows that 
    \begin{align*}
        f(x_\infty, y_\infty, \lambda_\infty) = \Phi(x_{\infty}) = \max_{(y,\lambda) \in \mathcal{F}_{\text{low}}} f(x_{\infty},y,\lambda),
    \end{align*}
    which implies $(y_{\infty},\lambda_{\infty}) \in \mathop{\arg \max}_{(y,\lambda) \in \mathcal{F}_{\text{low}}} f(x_{\infty},y,\lambda)$.
    This completes the proof.
\end{proof}

\section{A first-order numerical method} \label{sec_pracfirordermethod}


Due to its nonconvex-nonconcave-convex structure, problem \cref{min-max-minpenalprob} is difficult to find an $\epsilon$-optimal solution for an arbitrary $\epsilon > 0$.
Thus, \cref{algo:idealpenaltyAlgo} is not implemented in practice. In this section, we propose a practical first-order method for the problem \cref{prob:refpessbiopt_ours} to find an approximate stationary point of the problem \cref{min-max-minpenalprob} with an appropriate choice of $\rho$. To obtain an equivalent tractable reformulation of the  problem \cref{min-max-minpenalprob}, we need the following assumption to ensure the minimax property holds for the function of $g$.

\begin{assumption}
    \label{ass:mMpropoflowbojfunc}
    $(z^* ,\lambda^* )$ is the saddle point for the lower-level objective function $g$, i.e.,
    \vspace{-0.5cm}
    \begin{equation*}
      g(z^* ,\lambda^*) =  \max_{\lambda \in \Lambda} \min_{z \in \mathcal{Y}} g(z,\lambda) = \min_{z \in \mathcal{Y}} \max_{\lambda \in \Lambda} g(z,\lambda).
    \end{equation*}
\end{assumption}

Under Assumptions \ref{ass:pessibiopt} and \ref{ass:mMpropoflowbojfunc}, it is not hard to observe that problem \cref{min-max-minpenalprob} is equivalent to the following minimax problem
\begin{equation}
    \label{ex_minimaxpenlprob}
    \min_{x \in \mathcal{X}, z \in \mathcal{Y}} \max_{y \in \mathcal{Y}, \lambda \in \Lambda} f(x,y,\lambda) - \rho \bigl( g(y,\lambda) - g(z,\lambda) \bigr).
\end{equation}
By using the specific structure of the problem \cref{ex_minimaxpenlprob}, we can decompose it into the following two problems
\begin{align}
    & H(x,z) := \max_{y \in \mathcal{Y}, \lambda \in \Lambda} P_{\rho}(x,y,\lambda,z), \text{(inner-level problem)},  \label{ex_inner_max_prob}\\
    & \min_{x\in \mathcal{X}, z \in \mathcal{Y}} H(x,z) = \min_{x \in \mathcal{X},z \in \mathcal{Y}} \max_{y \in \mathcal{Y}, \lambda \in \Lambda} P_{\rho}(x,y,\lambda,z) \label{ex_outer_min_prob}, \text{(outer-level problem)}. 
\end{align}
For the inner-level nonconcave maximization problem \cref{ex_inner_max_prob}, to mitigate the challenge of nonsmoothness of $H(x,z)$, we can add a regularization term to ensure that $\Phi$ is smooth.
From Assumption \ref{ass:pessibiopt}, since the function $P_{\rho}(x,y,\lambda,z)$ is $L_{\nabla P_{\rho}}$-smooth, we know that, for any $(x,z) \in \mathcal{X} \times \mathcal{Y}$, the function $(y,\lambda) \to P_{\rho}(x,y,\lambda,z) - \frac{L_{\nabla P_{\rho}}}{2} \Vert (y,\lambda) \Vert^2$
  is concave. By introducing a constant $\tau > 0$ and two auxiliary variables $(u,v) \in \mathbb{R}^{d_y} \times \mathbb{R}^{d_{\lambda}}$, let us define 
    \begin{align}
        Q(x,z,u,v,y,\lambda) = & P_{\rho}(x,y,\lambda,z) - \frac{\tau}{2} \Vert (y,\lambda) - (u,v)\Vert^2, \label{def_reg_Q} \\
        \vartheta(x,z,u,v) = & \max_{y \in \mathcal{Y}, \lambda \in \Lambda} Q(x,z,u,v,y,\lambda). \label{def_vartheta}
    \end{align}
  If Assumption \ref{ass:pessibiopt} holds and $\tau > L_{\nabla P_{\rho}}$, then the problem $\max_{y \in \mathcal{Y},\lambda \in \Lambda} Q(x,z,u,v,y,\lambda)$ has a unique solution, denoted by 
  \vspace{-0.3cm}
  \begin{equation*}
    (y_{*}(x,z,u,v), \lambda_{*}(x,z,u,v)) = \mathop{\arg \max}_{y \in \mathcal{Y}, \lambda \in \Lambda}  Q(x,z,u,v,y,\lambda). 
  \end{equation*}

  We continue with an analysis of the function $\vartheta(x,z,u,v)$. We prove that the gradient of $\vartheta(x,z,u,v)$ is Lipschitz continuous in the following proposition.

  \begin{proposition}
    \label{prop:ex_Lipvalfunc}
    Let Assumptions \ref{ass:pessibiopt} and \ref{ass:mMpropoflowbojfunc} be satisfied and $\tau \geq L_{\nabla P_{\rho}} + \kappa$ with $\kappa > 0$. Then for any $(x^1,z^1,u^1,v^1) \in \text{dom} (\vartheta)$ and $(x^2,z^2,u^2,v^2) \in \text{dom} (\vartheta) $, 
    there exist $(y_{*}(x^1,z^1,u^1,v^1),\lambda_{*}(x^1,z^1,u^1,v^1))=\mathop{\arg \max}_{y \in \mathcal{Y}, \lambda \in \Lambda} Q(x^1,z^1,u^1,v^1,y,\lambda)$ and $(y_{*}(x^2,z^2,u^2,v^2), \lambda_{*}(x^2,z^2,u^2,v^2)) = \mathop{\arg \max}_{y \in \mathcal{Y}, \lambda \in \Lambda} Q(x^2,z^2,u^2,v^2,y,\lambda)$ such that
        \begin{align*}
            & \Vert (y_{*}(x^1,z^1,u^1,v^1), \lambda_{*}(x^1,z^1,u^1,v^1)) - (y_{*}(x^2,z^2,u^2,v^2),\lambda_{*}(x^2,z^2,u^2,v^2)) \Vert \\
            \leq & (L_{\nabla P_{\rho}} + \tau)\kappa^{-1} \Vert (x^1,z^1,u^1,v^1) - (x^2,z^2,u^2,v^2) \Vert.
        \end{align*}
    The function $\vartheta(x,z,u,v)$ is continuously differentiable at any $(x,z,u,v) \in \text{dom}(\vartheta)$ with 
    \vspace{-0.3cm}
    \begin{equation*}
        \nabla \vartheta(x,z,u,v) = \begin{pmatrix}
            \nabla_x f(x, y_{*}(x,z,u,v),\lambda_{*}(x,z,u,v)) \\
             \rho \nabla_z g(z,\lambda_{*}(x,z,u,v)) \\
            \tau ( -u + y_{*}(x,z,u,v)) \\
            \tau (-v + \lambda_{*}(x,z,u,v))
        \end{pmatrix}
    \end{equation*}
    and 
    \vspace{-0.3cm}
    \begin{equation*}
        \Vert \nabla \vartheta(x^1,z^1,u^1,v^1) - \nabla \vartheta(x^2,z^2,u^2,v^2) \Vert^2 \leq L_{\nabla \vartheta}^2 \Vert (x^1,z^1,u^1,v^1) - (x^2,z^2,u^2,v^2) \Vert^2,
    \end{equation*}
    where $L_{\nabla \vartheta} = (L_{\nabla f} + \rho L_{\nabla g} + 2\tau)(1 + (L_{\nabla P_{\rho}} + \tau) \kappa^{-1})$.
  \end{proposition}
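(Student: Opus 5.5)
The plan is to treat $\vartheta$ as the value function of a strongly concave parametric maximization and combine the standard strong-concavity perturbation argument with Danskin's theorem. Write $w=(x,z,u,v)$ and $p=(y,\lambda)$, and let $C=\mathcal{Y}\times\Lambda$. Since $P_\rho(x,\cdot,\cdot,z)$ is $L_{\nabla P_\rho}$-smooth, the map $p\mapsto P_\rho(x,p,z)+\tfrac{L_{\nabla P_\rho}}{2}\Vert p\Vert^2$ is convex-compatible in the sense that $-P_\rho$ is $L_{\nabla P_\rho}$-weakly convex in $p$. Hence, with $\tau\ge L_{\nabla P_\rho}+\kappa$, the function $p\mapsto Q(w,p)$ is $\kappa$-strongly concave on the compact convex set $C$. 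Its maximizer $p_*(w)$ therefore exists and is unique.

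\textbf{Step 1: Lipschitz continuity of $p_*$.} Write $p^i=p_*(w^i)$. The first-order optimality conditions give $\langle\nabla_p Q(w^i,p^i),p-p^i\rangle\le 0$ for all $p\in C$. Test the condition for $i=1$ at $p=p^2$ and the condition for $i=2$ at $p=p^1$, then add them. This yields
\[
\langle \nabla_pQ(w^1,p^1)-\nabla_pQ(w^1,p^2),\,p^1-p^2\rangle \ge \langle \nabla_pQ(w^2,p^2)-\nabla_pQ(w^1,p^2),\,p^1-p^2\rangle .
\]
By $\kappa$-strong concavity, the left side is at most $-\kappa\Vert p^1-p^2\Vert^2$. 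For the right side, compute $\nabla_pQ(w,p)=\nabla_pP_\rho(x,p,z)-\tau(p-(u,v))$. Its variation in $w$ is bounded by $L_{\nabla P_\rho}\Vert(x,z)^1-(x,z)^2\Vert+\tau\Vert(u,v)^1-(u,v)^2\Vert\le(L_{\nabla P_\rho}+\tau)\Vert w^1-w^2\Vert$. Applying Cauchy--Schwarz gives $\kappa\Vert p^1-p^2\Vert\le(L_{\nabla P_\rho}+\tau)\Vert w^1-w^2\Vert$, which is the first claim.

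\textbf{Step 2: Differentiability and the gradient formula.} Here I would invoke Danskin's theorem. $Q$ is continuously differentiable jointly in $(w,p)$, $C$ is compact, and the maximizer is unique. Consequently $\vartheta$ is differentiable with $\nabla\vartheta(w)=\nabla_wQ(w,p_*(w))$. Differentiating $Q$ in each block gives the stated components:
\begin{itemize}
\item the $x$-block is $\nabla_xf(x,y_*,\lambda_*)$, since the penalty terms do not depend on $x$;
\item the $z$-block is $\rho\nabla_zg(z,\lambda_*)$;
\item the $u$-block is $\tau(y_*-u)$ and the $v$-block is $\tau(\lambda_*-v)$.
\end{itemize}
Continuity of $\nabla\vartheta$ then follows from Step 1 together with the continuity of $\nabla f$ and $\nabla g$. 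Assumption~\ref{ass:mMpropoflowbojfunc} plays no role here beyond making $\vartheta$ the relevant object. The domain is taken to be $\mathcal{X}\times\mathcal{Y}\times\mathbb{R}^{d_y}\times\mathbb{R}^{d_\lambda}$, where $f$ and $g$ are smooth.

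\textbf{Step 3: Lipschitz bound for $\nabla\vartheta$.} Bound each block difference by the triangle inequality, splitting each into a change in $w$ at fixed $p$ plus a change in $p$:
\begin{itemize}
\item the $x$-block contributes at most $L_{\nabla f}(\Vert x^1-x^2\Vert+\Vert p^1-p^2\Vert)$;
\item the $z$-block contributes at most $\rho L_{\nabla g}(\Vert z^1-z^2\Vert+\Vert\lambda^1-\lambda^2\Vert)$;
\item the $(u,v)$-block contributes at most $\tau(\Vert(u,v)^1-(u,v)^2\Vert+\Vert p^1-p^2\Vert)$.
\end{itemize}
Summing, $\Vert\nabla\vartheta(w^1)-\nabla\vartheta(w^2)\Vert\le(L_{\nabla f}+\rho L_{\nabla g}+2\tau)(\Vert w^1-w^2\Vert+\Vert p^1-p^2\Vert)$, where the $2\tau$ absorbs the two $\tau$ contributions generously. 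Substituting Step 1 gives the constant $(L_{\nabla f}+\rho L_{\nabla g}+2\tau)(1+(L_{\nabla P_\rho}+\tau)\kappa^{-1})$.

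The main obstacle is only bookkeeping. In the Step 1 inequality one must make sure the $w$-Lipschitz constant of $\nabla_pQ$ is indeed $L_{\nabla P_\rho}+\tau$. One must also justify the Danskin step on a set where $f$ and $g$ are smooth, which is why the domain is restricted as in Step 2.
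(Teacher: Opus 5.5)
Your proposal is correct and follows essentially the same route as the paper: the first-order optimality conditions at the two maximizers, combined with $\kappa$-strong concavity of $Q$ in $(y,\lambda)$, give the $(L_{\nabla P_\rho}+\tau)\kappa^{-1}$ Lipschitz bound; Danskin's theorem then gives the gradient formula, and a blockwise triangle-inequality estimate yields $L_{\nabla\vartheta}$. The only slip is cosmetic: the fact you need is that $P_\rho-\frac{L_{\nabla P_\rho}}{2}\Vert(y,\lambda)\Vert^2$ (not $P_\rho+\cdots$) is concave in $(y,\lambda)$, which is exactly what your weak-convexity clause states.
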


  \begin{proof}
      See Appendix \ref{Appendix:subsecPGMAD}.
  \end{proof}

\subsection{Projected gradient multi-step ascent descent method}

In this part, we propose a projected gradient multi-step ascent descent (PG-MAD) method for \cref{prob:refpessbiopt_ours} by finding an approximate stationary point of the minimax problem \cref{ex_minimaxpenlprob}, which is presented in \cref{algo:ex_pracpenaltyprob}.
  \begin{algorithm}
    \caption{Projected gradient multi-step ascent descent (PG-MAD) method}
    \label{algo:ex_pracpenaltyprob}
    \begin{algorithmic}[1]
      \STATE{\textbf{Input}: $x^0,y^0,\lambda^0,u^0,v^0, \tau \geq L_{\nabla P_{\rho}} + \kappa$ with $\kappa > 0$, $\alpha_x > 0, \alpha_y > 0$, positive integers $T > 1, K > 1$, penalty parameter $\rho > 0$.}
      \FOR{$k = 0$ to $K-1$}
          \STATE{Set $(y^{[0]}(k),\lambda^{[0]}(k)) = (y^k, \lambda^k)$}
          \FOR{ $t = 0$ to $T - 1$}
              \STATE{ $y^{[t+1]}(k) = \operatorname{proj}_{\mathcal{Y}} \left[ y^{[t]}(k) + \alpha_y \nabla_{y} Q(x^k,z^k,u^k,v^k,y^{[t]}(k),\lambda^{[t]}(k)) \right]$} 
              \STATE{ $\lambda^{[t+1]}(k) = \operatorname{proj}_{\Lambda} \left[ \lambda^{[t]}(k) + \alpha_y \nabla_{\lambda} Q(x^k,z^k,u^k,v^k,y^{[t]}(k),\lambda^{[t]}(k)) \right]$}
      \ENDFOR
      \STATE{ Set $(y^{k+1},\lambda^{k+1}) = (y^{[T]}(k),\lambda^{[T]}(k))$}
      \STATE{ Set $x^{k+1} = \operatorname{proj}_{\mathcal{X}} \left[ x^k - \alpha_x \nabla_{x} Q(x^k,z^k,u^k,v^k,y^{k+1},\lambda^{k+1}) \right]$\\
      \qquad $z^{k+1} = \operatorname{proj}_{\mathcal{Y}} \left[ z^k - \alpha_x \nabla_{z} Q(x^k,z^k,u^k,v^k,y^{k+1},\lambda^{k+1}) \right]$ \\
      \qquad $u^{k+1} = ( 1 + \alpha_x \tau ) u^k - \alpha_x \tau y^{k+1}$\\
      \qquad $v^{k+1} = ( 1 + \alpha_x \tau ) v^k - \alpha_x \tau \lambda^{k+1}$
      }
      \ENDFOR

      \RETURN $(x^{k+1},y^{k+1},\lambda^{k+1},z^{k+1})$ for $k = 0,1,2,\ldots,K-1$. 
    \end{algorithmic}
  \end{algorithm}

To measure the quality of solutions generated by the proposed algorithm, we define the $\epsilon$-stationary point of \cref{ex_minimaxpenlprob}. We first establish the definition of the stationary gap as follows.   
Let $L_{x}, L_{y}, L_{\lambda}, L_{z} > 0$ and define
    \begin{align*}
        G_{L_{x}}^{P_{\rho},\mathcal{X}} (x,y,\lambda,z) &= L_{x} ( x - \operatorname{proj}_{\mathcal{X}}(x - L_{x}^{-1} \nabla_{x} P_{\rho}(x,y,\lambda,z))), \\
        G_{L_{y}}^{P_{\rho},\mathcal{Y}} (x,y,\lambda,z) &=  L_{y} ( y - \operatorname{proj}_{\mathcal{Y}}(y + L_{y}^{-1}\nabla_{y} P_{\rho}(x,y,\lambda,z))), \\
        G_{L_{\lambda}}^{P_{\rho}, \Lambda}(x,y,\lambda,z) &= L_{\lambda} ( \lambda - \operatorname{proj}_{\Lambda}( \lambda + L_{\lambda}^{-1}\nabla_{\lambda}P_{\rho}(x,y,\lambda,z))), \\
        G_{L_{z}}^{P_{\rho},\mathcal{Y}}(x,y,\lambda,z) &= L_{z} ( z - \operatorname{proj}_{\mathcal{Y}} (z - L_{z}^{-1} \nabla_{z} P_{\rho}(x,y,\lambda,z))).
    \end{align*}
Based on the definition of $P_{\rho}$ in \cref{exch_penalty_func}, we have 
{\small
    \begin{align*}
        G_{L_{x}}^{P_{\rho},\mathcal{X}} (x,y,\lambda,z) &= L_{x} ( x - \operatorname{proj}_{\mathcal{X}}(x - L_{x}^{-1} \nabla_{x} f(x,y,\lambda))), \\
        G_{L_{y}}^{P_{\rho},\mathcal{Y}} (x,y,\lambda,z) &= L_{y} ( y - \operatorname{proj}_{\mathcal{Y}}(y + L_{y}^{-1}(\nabla_{y} f(x,y,\lambda) - \rho \nabla_y g(y,\lambda))), \\
        G_{L_{\lambda}}^{P_{\rho}, \Lambda}(x,y,\lambda,z) &= L_{\lambda} ( \lambda - \operatorname{proj}_{\Lambda}( \lambda + L_{\lambda}^{-1}(\nabla_{\lambda} f(x,y,\lambda) - \rho(\nabla_{\lambda} g(y,\lambda) - \nabla_{\lambda}g(\lambda,z))))), 
        \\
        G_{L_{z}}^{P_{\rho},\mathcal{Y}}(x,y,\lambda,z) &= L_{z} ( z - \operatorname{proj}_{\mathcal{Y}} (z - L_{z}^{-1} \rho \nabla_{z} g(z,\lambda))).
    \end{align*}
}


  \begin{definition}
    \label{def:epsstationarypoint}
    For $L_{x}, L_{y}, L_{\lambda}, L_{z} > 0$, suppose that Assumptions \ref{ass:pessibiopt} and \ref{ass:mMpropoflowbojfunc} hold and that $(x,y,\lambda,z)$ is an $\epsilon$-stationary point of the problem  \cref{ex_minimaxpenlprob}. Then, for $\epsilon >0$, there exists a constant $\rho>0$ such that  
        \begin{align*}
            &\Vert G_{L_{x}}^{P_{\rho},\mathcal{X}}(x,y,\lambda,z) \Vert \leq \epsilon, \ \Vert G_{L_{y}}^{P_{\rho},\mathcal{Y}}(x,y,\lambda,z) \Vert \leq \epsilon, \\
            & \Vert G_{L_{\lambda}}^{P_{\rho}, \Lambda}(x,y,\lambda,z)\Vert \leq \epsilon, \ \Vert G_{L_{z}}^{P_{\rho},\mathcal{Y}} (x,y,\lambda,z)\Vert \leq \epsilon.
        \end{align*}
  \end{definition}


  
    To characterize the approximate solution obtained by \cref{algo:ex_pracpenaltyprob}, we next introduce a definition of $\epsilon$-KKT solution of the original problem \cref{prob:refpessbiopt_ours}, similar to \cite[Definition 3]{Lu2024First}.
    
    \begin{definition}
        \label{def:appKKT}
        The pair $(\bar{x},\bar{y},\bar{\lambda})$ is said to be a KKT solution of the problem \cref{prob:refpessbiopt_ours} if there exist $(z, \rho) \in \mathbb{R}^{d_y} \times \mathbb{R}_{+}$ and $L_{x}, L_{y}, L_{\lambda}, L_{z} > 0$, such that 
            \begin{align*}
                &G_{L_{x}}^{P_{\rho},\mathcal{X}}(\bar{x},\bar{y},\bar{\lambda},\bar{z}) = 0,
                G_{L_{y}}^{P_{\rho},\mathcal{Y}}(\bar{x},\bar{y},\bar{\lambda},\bar{z}) = 0, \\
                & G_{L_{\lambda}}^{P_{\rho},\Lambda}(\bar{x},\bar{y},\bar{\lambda},\bar{z}) = 0, G_{L_{z}}^{P_{\rho},\mathcal{Y}}(\bar{x},\bar{y},\bar{\lambda},\bar{z}) = 0, 
                 \ g(\bar{y},\bar{\lambda}) \leq \min_{z^{\prime} \in \mathcal{Y}} g(z^{\prime},\bar{\lambda}).
            \end{align*} 
    In addition, for any $\epsilon >0$, $(\bar{x},\bar{y},\bar{\lambda})$ is said to be an $\epsilon$-KKT solution of the problem \cref{prob:refpessbiopt_ours} if there exist $(z, \rho) \in \mathbb{R}^{d_y} \times \mathbb{R}_{+}$ and $L_{x}, L_{y}, L_{\lambda}, L_{z} > 0$, such that  
            \begin{align*}
                & \Vert G_{L_{x}}^{P_{\rho},\mathcal{X}}(\bar{x},\bar{y},\bar{\lambda},\bar{z}) \Vert \leq \epsilon, \ \Vert G_{L_{y}}^{P_{\rho},\mathcal{Y}}(\bar{x},\bar{y},\bar{\lambda},\bar{z}) \Vert \leq \epsilon, \\
                & \Vert G_{L_{\lambda}}^{P_{\rho},\Lambda}(\bar{x},\bar{y},\bar{\lambda},\bar{z}) \Vert \leq \epsilon, \ \Vert G_{L_{z}}^{P_{\rho},\mathcal{Y}}(\bar{x},\bar{y},\bar{\lambda},\bar{z}) \Vert \leq \epsilon, \ g(\bar{y},\bar{\lambda}) - \min_{z^{\prime} \in \mathcal{Y}} g(z^{\prime},\bar{\lambda}) \leq \epsilon.
            \end{align*} 
    \end{definition}
  
  In what follows, we investigate the relationship between an $\epsilon$-KKT solution and a Minimax-H-stationary point defined in \cref{def:minimaxH} when $\mathcal{X} = \mathbb{R}^{d_x}, \mathcal{Y} = \mathbb{R}^{d_y}, \ \Lambda = \mathbb{R}^{d_{\lambda}}$.

    \begin{theorem}
    \label{thm:relkktandhypergrad}
    Let Assumptions \ref{ass:pessibiopt} and \ref{ass:mMpropoflowbojfunc} be satisfied, $\epsilon_0, \ \rho_0 >0$ be given and $\mathcal{X}^{\prime} \subset \mathbb{R}^{d_x}, \ \Lambda^{\prime} \subset \mathbb{R}^{d_{\lambda}}$ be nonempty compact sets. Suppose that $f$ and $g$ are continuously differentiable and twice continuously differentiable in $\mathbb{R}^{d_x} \times \mathbb{R}^{d_y} \times \mathbb{R}^{d_{\lambda}}$ and $\mathbb{R}^{d_y} \times \mathbb{R}^{d_{\lambda}}$, respectively; that $g(\cdot,\lambda)$ is $\nu$-strongly convex for all $\lambda$ in an open set $\mathcal{L}$ containing $\Lambda^{\prime}$; and that $\nabla^2 g(\cdot, \lambda)$ is $L_{\nabla^2 g}$-Lipschitz continuous for all $\lambda \in \Lambda^{\prime}$. Suppose that $(\bar{x},\bar{y},\bar{\lambda},\bar{z})$ is an $\epsilon$-KKT solution of the problem \cref{prob:refpessbiopt_ours} with its associated $\rho \geq \rho_0$ for some $0 < \epsilon \leq \epsilon_0$. Let 
    \begin{align}
        \bar{\Gamma} &:= \max \left\{ \Vert \nabla f(x,y,\lambda) \Vert: x \in \mathcal{X}^{\prime}, \lambda \in \Lambda^{\prime}, \Vert y - \bar{y}(\bar{\lambda}) \Vert \leq \sqrt{2 \nu^{-1} \epsilon} \right\}, \label{ineq:boundofgrad} \\
        \varpi &:= \min \left\{ (\rho \nu)^{-1}(\epsilon + \bar{\Gamma}), \sqrt{2 \nu^{-1} \epsilon} \right\}, \label{ineq:defofvarpi} \\
        \Gamma &:= \max_{\lambda \in \Lambda^{\prime}} \Vert \nabla_{\lambda y}^{2} g(\bar{y}(\lambda),\lambda)[\nabla_{yy}^{2} g(\bar{y}(\lambda),\lambda)]^{-1}\Vert,
    \end{align}
    where $\bar{y}(\lambda) = \mathop{\arg \min}_{z^{\prime}} g(z^{\prime},\lambda)$. Then, we have 
    \begin{align*}
        & \Vert \nabla_{x} f(\bar{x},\bar{y}(\bar{\lambda}), \bar{\lambda}) \Vert \leq \epsilon + L_{\nabla f} \varpi \leq \epsilon + L_{\nabla f} \sqrt{2 \nu^{-1} \epsilon},\\
        & \Vert \nabla_{\lambda}f(\bar{x},\bar{y}(\bar{\lambda}), \bar{\lambda}) -  \nabla_{\lambda y}^{2} g(\bar{y}(\bar{\lambda}),\bar{\lambda})[\nabla_{yy}^{2} g(\bar{y}(\bar{\lambda}),\bar{\lambda})]^{-1}\nabla_{y} f(\bar{x},\bar{y}(\bar{\lambda}),\bar{\lambda}) \Vert \\
        & \leq (2 \Gamma+1) \epsilon + (\Gamma+1)\left[L_{\nabla f} \varpi + \frac{\rho \varpi^2 L_{\nabla^2 g}}{2} + \frac{L_{\nabla^{2}g} \epsilon^2}{2\rho \nu^2}\right] \\
        & \leq (2\Gamma+1)\epsilon + (\Gamma+1)\left[ L_{\nabla f}\sqrt{2 \nu^{-1}} + \frac{L_{\nabla^2 g}\nu^{-3/2}(\epsilon_0 + \bar{\Gamma})}{2} + \frac{L_{\nabla^2 g}\epsilon_0^{3/2}}{2\rho_0 \nu} \right] \sqrt{\epsilon}.
    \end{align*}
\end{theorem}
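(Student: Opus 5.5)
Since the sets are the whole spaces, the $\epsilon$-KKT conditions of \cref{def:appKKT} reduce to gradient bounds. We have $\|\nabla_x f(\bar x,\bar y,\bar\lambda)\|\le\epsilon$, $\|\nabla_y f(\bar x,\bar y,\bar\lambda)-\rho\nabla_y g(\bar y,\bar\lambda)\|\le\epsilon$, $\|\nabla_\lambda f(\bar x,\bar y,\bar\lambda)-\rho(\nabla_\lambda g(\bar y,\bar\lambda)-\nabla_\lambda g(\bar z,\bar\lambda))\|\le\epsilon$, and $\rho\|\nabla_z g(\bar z,\bar\lambda)\|\le\epsilon$. The last condition is $g(\bar y,\bar\lambda)-g(\bar y(\bar\lambda),\bar\lambda)\le\epsilon$. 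The plan is to compare $\bar y$ and $\bar z$ with the exact minimizer $\bar y(\bar\lambda)$ using $\nu$-strong convexity, and then compare each perturbed gradient with the hypergradient expression of \cref{def:minimaxH}.

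\textbf{Step 1 (distance bounds).} Strong convexity and $\nabla_y g(\bar y(\bar\lambda),\bar\lambda)=0$ give $\frac{\nu}{2}\|\bar y-\bar y(\bar\lambda)\|^2\le g(\bar y,\bar\lambda)-g(\bar y(\bar\lambda),\bar\lambda)\le\epsilon$, so $\|\bar y-\bar y(\bar\lambda)\|\le\sqrt{2\nu^{-1}\epsilon}$. This places $\bar y$ inside the ball defining $\bar\Gamma$, so $\|\nabla_y f(\bar x,\bar y,\bar\lambda)\|\le\bar\Gamma$. The $y$-stationarity condition then gives $\rho\|\nabla_y g(\bar y,\bar\lambda)\|\le\epsilon+\bar\Gamma$. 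Strong monotonicity of $\nabla_y g(\cdot,\bar\lambda)$ gives $\|\bar y-\bar y(\bar\lambda)\|\le(\rho\nu)^{-1}(\epsilon+\bar\Gamma)$. Together these yield $\|\bar y-\bar y(\bar\lambda)\|\le\varpi$. The same monotonicity argument applied to $\bar z$ gives $\|\bar z-\bar y(\bar\lambda)\|\le\epsilon/(\rho\nu)$.

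\textbf{Step 2 (the $x$ bound).} By $L_{\nabla f}$-smoothness, $\|\nabla_x f(\bar x,\bar y(\bar\lambda),\bar\lambda)\|\le\epsilon+L_{\nabla f}\varpi$, and $\varpi\le\sqrt{2\nu^{-1}\epsilon}$.

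\textbf{Step 3 (the $\lambda$ bound, the main obstacle).} Write $\bar y^*=\bar y(\bar\lambda)$, $H=\nabla^2_{yy}g(\bar y^*,\bar\lambda)$ and $M=\nabla^2_{\lambda y}g(\bar y^*,\bar\lambda)$, so that $\|MH^{-1}\|\le\Gamma$.

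First, use a second-order Taylor expansion with the Lipschitz Hessian of $g$ around $\bar y^*$:
\[
\|\nabla_y g(\bar y,\bar\lambda)-H(\bar y-\bar y^*)\|\le\tfrac{L_{\nabla^2g}}{2}\varpi^2 .
\]
Combined with the $y$-stationarity, this gives
\[
\|\nabla_y f(\bar x,\bar y^*,\bar\lambda)-\rho H(\bar y-\bar y^*)\|\le\epsilon+L_{\nabla f}\varpi+\tfrac{\rho L_{\nabla^2g}}{2}\varpi^2 .
\]

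Second, expand $\nabla_\lambda g(\bar y,\bar\lambda)-\nabla_\lambda g(\bar z,\bar\lambda)$ to second order around $\bar y^*$:
\[
\nabla_\lambda g(\bar y,\bar\lambda)-\nabla_\lambda g(\bar z,\bar\lambda)=M(\bar y-\bar y^*)-M(\bar z-\bar y^*)+\text{remainders}.
\]
The remainders are $\tfrac{L_{\nabla^2g}}{2}\varpi^2$ and $\tfrac{L_{\nabla^2g}}{2}(\epsilon/\rho\nu)^2$; this requires Lipschitz continuity of the mixed Hessian, which I will assume is covered by the hypothesis on $\nabla^2 g$. Similarly, $\bar z$ is handled via $\|\rho H(\bar z-\bar y^*)\|\lesssim\epsilon$, so the term $\rho M(\bar z-\bar y^*)$ contributes at most $\Gamma\epsilon$ plus a remainder.

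Then substitute $\rho M(\bar y-\bar y^*)=MH^{-1}\cdot\rho H(\bar y-\bar y^*)$ into the $\lambda$-stationarity condition and use $\|\nabla_\lambda f(\bar x,\bar y,\bar\lambda)-\nabla_\lambda f(\bar x,\bar y^*,\bar\lambda)\|\le L_{\nabla f}\varpi$. The triangle inequality then gives
\[
(2\Gamma+1)\epsilon+(\Gamma+1)\Bigl[L_{\nabla f}\varpi+\tfrac{\rho\varpi^2L_{\nabla^2g}}{2}+\tfrac{L_{\nabla^2g}\epsilon^2}{2\rho\nu^2}\Bigr].
\]
The delicate part is this bookkeeping, in particular grouping the $\Gamma$-weighted and unweighted errors so the constants come out exactly as stated.

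\textbf{Step 4 (final $\sqrt{\epsilon}$ form).} Use $\varpi\le\sqrt{2\nu^{-1}\epsilon}$ for the linear term. Use $\rho\varpi^2\le\rho\cdot(\rho\nu)^{-1}(\epsilon+\bar\Gamma)\cdot\sqrt{2\nu^{-1}\epsilon}$ for the quadratic term, together with $\epsilon\le\epsilon_0$. Use $\epsilon^2/\rho\le\epsilon_0^{3/2}\sqrt\epsilon/\rho_0$ for the last term. This gives the stated bound up to the displayed constants.
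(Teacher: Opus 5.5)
Your argument is correct and is essentially the paper's proof. It uses the same strong-convexity distance bounds $\|\bar y-\bar y(\bar\lambda)\|\le\varpi$ and $\|\bar z-\bar y(\bar\lambda)\|\le\epsilon/(\rho\nu)$, the same Lipschitz-Hessian Taylor remainders, and the same geometric-mean bound $\rho\varpi^2\le\sqrt{2}\,\nu^{-3/2}(\epsilon+\bar\Gamma)\sqrt{\epsilon}$. The only cosmetic difference is that the paper compares $\nabla_y f$ and $\nabla_\lambda f$ at $\bar y(\bar\lambda)$ directly with $\rho\nabla^2_{yy}g\,(\bar y-\bar z)$ and $\rho\nabla^2_{\lambda y}g\,(\bar y-\bar z)$. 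You instead pivot on $\bar y-\bar y(\bar\lambda)$ and peel off the $\bar z$ term separately through $MH^{-1}\rho H(\bar z-\bar y(\bar\lambda))$, which gives exactly the same constants.

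Carried out carefully, your Step 4 gives the coefficients $\frac{\sqrt{2}}{2}L_{\nabla^2 g}\nu^{-3/2}(\epsilon_0+\bar\Gamma)$ and $L_{\nabla^2 g}\epsilon_0^{3/2}/(2\rho_0\nu^2)$. So the last displayed line of the statement, which lacks the $\sqrt{2}$ and has $\nu$ in place of $\nu^2$, does not follow literally from either your argument or the paper's. Your hedge ``up to the displayed constants'' is therefore warranted.
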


\begin{proof}
    See Appendix \ref{Appendix:subsecPGMAD}.
\end{proof}

  \begin{remark}
    From \cref{thm:relkktandhypergrad}, one can observe that $\bar{\Gamma}, \Gamma $, and $\varpi$ are finite, which implies that an $\epsilon$-KKT solution is an $\sqrt{\epsilon}$-Minimax-H-stationary point in general. In addition, if we set $\rho = \mathcal{O}(\epsilon^{-1})$, it not hard to observe that an $\epsilon$-KKT solution is an $\epsilon$-Minimax-H-stationary point. Consequently, when the problem \cref{prob:refpessbiopt_ours} satisfies the conditions in \cref{thm:relkktandhypergrad}, if $(\bar{x},\bar{y},\bar{\lambda})$ is an $\epsilon$-KKT solution of the problem \cref{prob:refpessbiopt_ours}, then $(\bar{x},\bar{\lambda})$ is an $\sqrt{\epsilon}$- or $\epsilon$-Minimax-H-stationary point of it. 
  \end{remark}

  We next introduce the iteration complexity analysis of the PG-MAD method. For simplicity, we denote $Q^{(k)}(y,\lambda) := Q(x^k,z^k,u^k,v^k,y,\lambda)$, and $(y_{*}(k),\lambda_{*}(k)) := (y_{*}(x^k,z^k,u^k,v^k), \lambda_{*}(x^k,z^k,u^k,v^k))$.

  \begin{proposition}
    \label{prop:ex_innersols}
    Let Assumptions \ref{ass:pessibiopt} and \ref{ass:mMpropoflowbojfunc} be satisfied and $\tau \geq L_{\nabla P_{\rho}} + \kappa$ with $\kappa > 0$. Let $\alpha_y \in (0, 1/(L_{\nabla P_{\rho}} +\tau))$ and the sequence $\left\{ (y^{[t]}(k), \lambda^{[t]}(k))\right\}_{t=0}^{T}$ be generated by \cref{algo:ex_pracpenaltyprob}, where $k = 0, \ldots, K$. Then, for $t = 0, \ldots, T - 1$, \\
    (a)
    $$\Vert (y^{[t+1]}(k), \lambda^{[t+1]}(k)) - (y_{*}(k),\lambda_{*}(k)) \Vert^2 \leq (1 - \kappa \alpha_y) \Vert (y^{[t]}(k), \lambda^{[t]}(k)) - (y_{*}(k),\lambda_{*}(k)) \Vert^2;$$
    (b) $\Vert (y^{[t+1]}(k), \lambda^{[t+1]}(k)) - (y_{*}(k),\lambda_{*}(k)) \Vert^2 \leq (1 - \kappa \alpha_y)^{t+1} \Vert (y^k, \lambda^k) - (y_{*}(k),\lambda_{*}(k)) \Vert^2$; \\
    (c) $\vartheta (x^k,z^k,u^k,v^k) - Q^{(k)}(y^{[t+1]}(k), \lambda^{[t+1]}(k)) 
    \leq \frac{(1 - \kappa \alpha_y)^{t}}{2 \alpha_y} \Vert (y^k, \lambda^k) - (y_{*}(k),\lambda_{*}(k)) \Vert^2$.
  \end{proposition}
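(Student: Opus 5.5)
The plan is to treat the inner loop as projected gradient ascent on the strongly concave, smooth function $Q^{(k)}(y,\lambda)$ over the convex compact set $\mathcal{Y}\times\Lambda$, and to use the standard linear-convergence argument. The first step records two structural facts. Since $P_{\rho}$ is $L_{\nabla P_{\rho}}$-smooth and $\tau \geq L_{\nabla P_{\rho}}+\kappa$, the function $-Q^{(k)}$ is $(\tau - L_{\nabla P_{\rho}})\geq\kappa$-strongly convex in $(y,\lambda)$. Its gradient is $(L_{\nabla P_{\rho}}+\tau)$-Lipschitz. Write $L:=L_{\nabla P_{\rho}}+\tau$, $w^{[t]}:=(y^{[t]}(k),\lambda^{[t]}(k))$ and $w_*:=(y_*(k),\lambda_*(k))$. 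Lines 5--6 of \cref{algo:ex_pracpenaltyprob} are exactly $w^{[t+1]}=\operatorname{proj}_{\mathcal{Y}\times\Lambda}(w^{[t]}+\alpha_y\nabla Q^{(k)}(w^{[t]}))$, because the projection onto a product set splits componentwise.

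For (a), I will use the fundamental projected-gradient inequality. For a $\kappa$-strongly concave, $L$-smooth $Q$ and step $\alpha_y\le 1/L$, it gives, for any $w$ in the feasible set,
\begin{equation*}
Q^{(k)}(w)-Q^{(k)}(w^{[t+1]}) \le \tfrac{1}{2\alpha_y}\Vert w^{[t]}-w\Vert^2-\tfrac{1}{2\alpha_y}\Vert w^{[t+1]}-w\Vert^2-\tfrac{\kappa}{2}\Vert w^{[t]}-w\Vert^2 .
\end{equation*}
This follows from three ingredients: the descent lemma (upper model) at $w^{[t]}$, strong concavity (lower bound) between $w^{[t]}$ and $w$, and the variational characterization of the projection, $\langle w^{[t]}+\alpha_y\nabla Q^{(k)}(w^{[t]})-w^{[t+1]},\,w-w^{[t+1]}\rangle\le 0$. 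Setting $w=w_*$ and using optimality, $Q^{(k)}(w_*)\ge Q^{(k)}(w^{[t+1]})$, the left side is nonnegative. Rearranging gives $\Vert w^{[t+1]}-w_*\Vert^2\le(1-\kappa\alpha_y)\Vert w^{[t]}-w_*\Vert^2$. Note that $\kappa\alpha_y<\kappa/L<1$, so the factor is in $(0,1)$. This inequality is the step requiring care: the constants must come out as $\kappa$ rather than $\kappa/2$, so I will keep the strong-concavity term exactly as written and not split it.

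Part (b) follows from (a) by induction on $t$, using $w^{[0]}=(y^k,\lambda^k)$ from line 3.

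For (c), I apply the same displayed inequality at iteration $t$ with $w=w_*$, recalling that $\vartheta(x^k,z^k,u^k,v^k)=Q^{(k)}(w_*)$. Dropping the nonpositive term $-\frac{1}{2\alpha_y}\Vert w^{[t+1]}-w_*\Vert^2$ gives
\begin{equation*}
\vartheta-Q^{(k)}(w^{[t+1]})\le\frac{1-\kappa\alpha_y}{2\alpha_y}\Vert w^{[t]}-w_*\Vert^2 .
\end{equation*}
Invoking (b) at index $t-1$ bounds $\Vert w^{[t]}-w_*\Vert^2$ by $(1-\kappa\alpha_y)^{t}\Vert w^{[0]}-w_*\Vert^2$. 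Together these give $(1-\kappa\alpha_y)^{t+1}/(2\alpha_y)$, which is at most the claimed $(1-\kappa\alpha_y)^{t}/(2\alpha_y)$. The case $t=0$ is covered directly.
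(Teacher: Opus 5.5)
Your proposal is correct and follows the paper's proof essentially step for step. The paper applies the three-point proximal inequality (Lemma~\ref{lem:proxthreepoint}) to $-Q^{(k)}$ together with $\kappa$-strong convexity to obtain exactly your displayed inequality at $w=w_*$. It then deduces (a) from $Q^{(k)}(w^{[t+1]})\le\vartheta$, (b) by induction, and (c) by discarding the term $-\frac{1}{2\alpha_y}\Vert w^{[t+1]}-w_*\Vert^2$ and invoking (b). The only differences are that you derive that inequality directly from the descent lemma, strong concavity and the projection's variational inequality rather than citing it, and that your bound in (c) carries the slightly sharper exponent $t+1$ before you relax it to the stated $t$.
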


\begin{proof}
    See Appendix \ref{Appendix:subsecPGMAD}. 
\end{proof}

  Define 
  \begin{align*}
      \Delta_k = \vartheta(x^k,z^k,u^k,v^k) - Q^{(k)}(y^k,\lambda^k)\text{ , and }
      \Delta_{\vartheta} = \vartheta(x^0,z^0,u^0,v^0) - P_{\rho,\text{low}} .
  \end{align*}

  \begin{theorem}
    \label{thm:ex_innerrel}
    Let Assumptions \ref{ass:pessibiopt} and \ref{ass:mMpropoflowbojfunc} be satisfied and $\tau L_{\nabla P_{\rho}} + \kappa$ with $\kappa > 0$. Let $\alpha_y \in (0, 1/(L_{\nabla P_{\rho}} +\tau))$, the sequence $\left\{ (y^{[t]}(k), \lambda^{[t]}(k))\right\}_{t=0}^{T}$ be generated by \cref{algo:ex_pracpenaltyprob}, and $(y^{k+1},\lambda^{k+1}) = (y^{[T]}(k),\lambda^{[T]}(k))$, where $k = 0, \ldots, K$. Then, one has for $k = 0, \ldots, K - 1$ that 
        \begin{align*}
             \Vert (y^{k+1},\lambda^{k+1}) - (y_{*}(k),\lambda_{*}(k))\Vert^2 
            & \leq \frac{2}{\kappa}(1 - \kappa \alpha_y)^{T}  \Delta_k, \\
             \Vert \nabla_x Q^{(k)}(y^{k+1},\lambda^{k+1}) - \nabla_x \vartheta(x^k,z^k,u^k,v^k) \Vert^2 
            & \leq \frac{2 L_{\nabla f}^2}{\kappa}(1 - \kappa \alpha_y)^{T}  \Delta_k, \\
             \Vert \nabla_z Q^{(k)}(y^{k+1},\lambda^{k+1}) - \nabla_z \vartheta(x^k,z^k,u^k,v^k) \Vert^2 
            & \leq \frac{2 \rho^2 L_{\nabla g}^2}{\kappa}(1 - \kappa \alpha_y)^{T}  \Delta_k, \\
             \Vert \nabla_u Q^{(k)}(y^{k+1},\lambda^{k+1}) - \nabla_u \vartheta(x^k,z^k,u^k,v^k) \Vert^2 
            & \leq \frac{2 \tau^2}{\kappa}(1 - \kappa \alpha_y)^{T}  \Delta_k, \\
             \Vert \nabla_v Q^{(k)}(y^{k+1},\lambda^{k+1}) - \nabla_v \vartheta(x^k,z^k,u^k,v^k) \Vert^2
            & \leq \frac{2 \tau^2}{\kappa}(1 - \kappa \alpha_y)^{T}  \Delta_k, \\
             \Vert G_{\alpha_{y}^{-1}}^{Q,\mathcal{Y}} (x^{k},z^k,u^k,v^k,y^{k+1},\lambda^{k+1}) \Vert^2 
            & \leq \frac{18 \alpha_y^{-2}}{\kappa}(1 - \kappa \alpha_y)^{T}  \Delta_k, \\
             \Vert G_{\alpha_{y}^{-1}}^{Q,\Lambda}(x^{k},z^k,u^k,v^k,y^{k+1},\lambda^{k+1}) \Vert^2 
            & \leq \frac{18 \alpha_y^{-2}}{\kappa}(1 - \kappa \alpha_y)^{T}  \Delta_k. 
        \end{align*}
  \end{theorem}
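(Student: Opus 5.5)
The plan is to reduce every inequality to the single quantity $\Vert (y^{k+1},\lambda^{k+1}) - (y_*(k),\lambda_*(k))\Vert^2$ and control it with \cref{prop:ex_innersols}. All later bounds are Lipschitz transfers from this one distance.

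\textbf{Step 1: the basic distance bound.} Since $\tau \geq L_{\nabla P_{\rho}} + \kappa$, the map $Q^{(k)}(\cdot,\cdot)$ is $\kappa$-strongly concave on the convex set $\mathcal{Y}\times\Lambda$, with maximizer $(y_*(k),\lambda_*(k))$. Strong concavity together with the first-order optimality condition at the maximizer gives the quadratic growth bound
\[
\tfrac{\kappa}{2}\Vert (y,\lambda) - (y_*(k),\lambda_*(k))\Vert^2 \leq \vartheta(x^k,z^k,u^k,v^k) - Q^{(k)}(y,\lambda)
\]
for all feasible $(y,\lambda)$. Taking $(y,\lambda) = (y^k,\lambda^k)$ gives $\Vert (y^k,\lambda^k) - (y_*(k),\lambda_*(k))\Vert^2 \leq \frac{2}{\kappa}\Delta_k$. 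Substituting this into \cref{prop:ex_innersols}(b) with $t+1 = T$, and using $(y^{[0]}(k),\lambda^{[0]}(k)) = (y^k,\lambda^k)$, yields the first inequality.

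\textbf{Step 2: the gradient bounds for $x,z,u,v$.} We use the gradient formula for $\vartheta$ from \cref{prop:ex_Lipvalfunc}. Differentiating $Q$ directly in $x,z,u,v$ at $(y^{k+1},\lambda^{k+1})$ gives the same expressions with $(y_*(k),\lambda_*(k))$ replaced by $(y^{k+1},\lambda^{k+1})$. Each difference is therefore controlled by Lipschitz continuity:
\begin{itemize}
\item the $x$-block by $L_{\nabla f}$, since $\nabla_x f$ is $L_{\nabla f}$-Lipschitz in $(y,\lambda)$;
\item the $z$-block by $\rho L_{\nabla g}$, since only $\lambda$ changes inside $\rho\nabla_z g(z,\lambda)$;
\item the $u$- and $v$-blocks by $\tau$, since they are $\tau(-u+y)$ and $\tau(-v+\lambda)$.
\end{itemize}
Squaring and combining with Step 1 gives the four middle inequalities.

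\textbf{Step 3: the projected gradient mappings.} These are the main obstacle, because the constant $18\alpha_y^{-2}$ must come out. Write $w^+ = (y^{k+1},\lambda^{k+1})$, $w_* = (y_*(k),\lambda_*(k))$, and $T(w) = \operatorname{proj}(w + \alpha_y\nabla Q^{(k)}(w))$. The optimality of $w_*$ gives the fixed-point property $T(w_*) = w_*$. Then
\[
\Vert G(w^+)\Vert = \alpha_y^{-1}\Vert w^+ - T(w^+)\Vert \leq \alpha_y^{-1}\bigl(\Vert w^+ - w_*\Vert + \Vert T(w^+) - T(w_*)\Vert\bigr).
\]
By nonexpansiveness of the projection,
\[
\Vert T(w^+) - T(w_*)\Vert \leq (1 + \alpha_y L_Q)\Vert w^+ - w_*\Vert, \qquad L_Q \leq L_{\nabla P_{\rho}} + \tau.
\]
Since $\alpha_y < 1/(L_{\nabla P_{\rho}}+\tau)$, we have $1 + \alpha_y L_Q \leq 2$. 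This gives $\Vert G(w^+)\Vert \leq 3\alpha_y^{-1}\Vert w^+ - w_*\Vert$, so $\Vert G(w^+)\Vert^2 \leq 9\alpha_y^{-2}\Vert w^+ - w_*\Vert^2 \leq \frac{18\alpha_y^{-2}}{\kappa}(1-\kappa\alpha_y)^T\Delta_k$.

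The projection onto the product set $\mathcal{Y}\times\Lambda$ splits into separate projections onto $\mathcal{Y}$ and onto $\Lambda$. Hence each block component ($\mathcal{Y}$ and $\Lambda$) is bounded by the full mapping, which gives the last two inequalities.
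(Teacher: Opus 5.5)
Your proposal is correct and follows the paper's proof essentially step for step. It uses quadratic growth from $\kappa$-strong concavity plus first-order optimality to get $\Vert (y^k,\lambda^k)-(y_*(k),\lambda_*(k))\Vert^2\le \frac{2}{\kappa}\Delta_k$, then \cref{prop:ex_innersols}(b), then Lipschitz transfers through the gradient formula of \cref{prop:ex_Lipvalfunc}, and finally the fixed-point/nonexpansiveness argument yielding the factor $3\alpha_y^{-1}$; the only cosmetic difference is that you bound the joint projected-gradient map on $\mathcal{Y}\times\Lambda$ and restrict to blocks, whereas the paper bounds each block directly, with the same constant.
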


  \begin{proof}
      See Appendix \ref{Appendix:subsecPGMAD}.
  \end{proof}

  Let 
  \begin{equation}
    \label{inner_gradient}
    \xi^k = \begin{pmatrix}
        \nabla_x f(x^k, y^{k+1},\lambda^{k+1}) \\
        \rho \nabla_z g(z^k,\lambda^{k+1}) \\
        -\tau ( u^k - y^{k+1}) \\
        -\tau ( v^k - \lambda^{k+1})
    \end{pmatrix}.
  \end{equation}
  Next, we give the boundedness of $\Vert (x^{k+1},z^{k+1},u^{k+1},v^{k+1}) - (x^k,z^k,u^k,v^k) \Vert$ in the following proposition. 

  \begin{proposition}
    \label{prop:ex_outersolsbound}
    Let Assumptions \ref{ass:pessibiopt} and \ref{ass:mMpropoflowbojfunc} be satisfied and $\tau \geq L_{\nabla P_{\rho}} + \kappa$ with $\kappa > 0$. Let $\alpha_y \in (0, 1/(L_{\nabla P_{\rho}} + \tau)), \ \alpha_x \in (0, 1/L_{\nabla \vartheta})$ and the sequence $\left\{(x^k,y^k,\lambda^k,z^k,u^k,v^k) \right\}_{k=0}^{K}$ be generated by \cref{algo:ex_pracpenaltyprob}. Then, for $k = 0,\ldots,K-1$, 
    \vspace{-0.3cm}
     \begin{equation}
        \label{ex_outerfuncval}
        \begin{aligned}
            & \frac{1 - \alpha_x L_{\nabla \vartheta}}{4 \alpha_x} \Vert (x^{k+1},z^{k+1},u^{k+1},v^{k+1}) - (x^k,z^k,u^k,v^k) \Vert^2
         \leq \vartheta(x^k,z^k,u^k,v^k) \\
         & \qquad - \vartheta(x^{k+1}, z^{k+1},u^{k+1},v^{k+1}) 
            + \frac{2 \alpha_x(L_{\nabla f}^2 + \rho^2 L_{\nabla g}^2 + \tau^2) }{\kappa(1-\alpha_x L_{\nabla \vartheta})} (1 - \kappa \alpha_y)^{T}  \Delta_k. 
        \end{aligned}
    \end{equation}
  \end{proposition}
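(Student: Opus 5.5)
Write $w^k := (x^k,z^k,u^k,v^k)$ and $\mathcal{W} := \mathcal{X}\times\mathcal{Y}\times\mathbb{R}^{d_y}\times\mathbb{R}^{d_\lambda}$. The proof is a standard inexact projected-gradient descent lemma for $\vartheta$, using the $L_{\nabla\vartheta}$-smoothness from \cref{prop:ex_Lipvalfunc} and the gradient-error bounds of \cref{thm:ex_innerrel}.

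\textbf{Step 1: the outer update is one projected step.} Compute $\nabla_x Q^{(k)}(y^{k+1},\lambda^{k+1}) = \nabla_x f(x^k,y^{k+1},\lambda^{k+1})$ and $\nabla_z Q^{(k)}(y^{k+1},\lambda^{k+1}) = \rho\nabla_z g(z^k,\lambda^{k+1})$. For the auxiliary variables, $\nabla_u Q = \tau(y^{k+1}-u^k)$ and similarly for $v$. Hence the $u$-update reads $u^{k+1}=u^k-\alpha_x\tau(u^k-y^{k+1})$, a plain gradient step with direction $-\tau(u^k-y^{k+1})$. This matches the sign of $\nabla_u\vartheta=\tau(y_*-u)$ after the notational adjustment of \cref{prop:ex_Lipvalfunc}, which I take as consistent with $\xi^k$ in \cref{inner_gradient}. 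Therefore
\[
w^{k+1}=\operatorname{proj}_{\mathcal{W}}(w^k-\alpha_x\xi^k).
\]
By \cref{thm:ex_innerrel}, summing the four component bounds gives
\[
\|\xi^k-\nabla\vartheta(w^k)\|^2\le \frac{2(L_{\nabla f}^2+\rho^2L_{\nabla g}^2+2\tau^2)}{\kappa}(1-\kappa\alpha_y)^T\Delta_k .
\]
The factor $2\tau^2$ versus the $\tau^2$ in the statement has to be absorbed, either by a slightly looser constant or by combining the $u$ and $v$ blocks as one block $(y,\lambda)$ with a single $\tau^2$ bound via $\|(y^{k+1},\lambda^{k+1})-(y_*,\lambda_*)\|^2$. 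I would use the latter: $\tau^2\|(y^{k+1},\lambda^{k+1})-(y_*(k),\lambda_*(k))\|^2$ controls both blocks jointly.

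\textbf{Step 2: descent inequality.} By $L_{\nabla\vartheta}$-smoothness,
\[
\vartheta(w^{k+1})\le\vartheta(w^k)+\langle\nabla\vartheta(w^k),w^{k+1}-w^k\rangle+\tfrac{L_{\nabla\vartheta}}2\|w^{k+1}-w^k\|^2 .
\]
The projection characterization gives $\langle w^k-\alpha_x\xi^k-w^{k+1},\,w^k-w^{k+1}\rangle\le0$, i.e.
\[
\langle\xi^k,w^{k+1}-w^k\rangle\le-\tfrac1{\alpha_x}\|w^{k+1}-w^k\|^2 .
\]
Writing $\nabla\vartheta(w^k)=\xi^k+e^k$ with $e^k:=\nabla\vartheta(w^k)-\xi^k$ yields
\[
\vartheta(w^{k+1})\le\vartheta(w^k)-\frac{2-\alpha_xL_{\nabla\vartheta}}{2\alpha_x}\|w^{k+1}-w^k\|^2+\langle e^k,w^{k+1}-w^k\rangle .
\]

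\textbf{Step 3: absorb the error term.} This step is the main place where constants must match. Apply Young's inequality:
\[
\langle e^k,d\rangle\le\frac{1-\alpha_xL_{\nabla\vartheta}}{4\alpha_x}\|d\|^2+\frac{\alpha_x}{1-\alpha_xL_{\nabla\vartheta}}\|e^k\|^2 .
\]
Since $\frac{2-\alpha_xL}{2\alpha_x}-\frac{1-\alpha_xL}{4\alpha_x}\ge\frac{1-\alpha_xL}{4\alpha_x}$, the remaining quadratic coefficient is at least the one in \cref{ex_outerfuncval}. The residual $\frac{\alpha_x}{1-\alpha_xL}\|e^k\|^2$ is then bounded by Step 1, which produces exactly $\frac{2\alpha_x(L_{\nabla f}^2+\rho^2L_{\nabla g}^2+\tau^2)}{\kappa(1-\alpha_xL_{\nabla\vartheta})}(1-\kappa\alpha_y)^T\Delta_k$ once the $\tau^2$ bookkeeping is handled as described. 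Rearranging gives \cref{ex_outerfuncval}.
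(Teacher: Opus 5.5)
Your proposal is correct and is essentially the paper's argument. The paper gets the descent inequality from the prox lemma \cref{lem:proxtwopoint}, with the weaker coefficient $\frac12(\alpha_x^{-1}-L_{\nabla\vartheta})$ in place of your $\frac{2-\alpha_xL_{\nabla\vartheta}}{2\alpha_x}$ (which you obtain from convexity of the feasible set), and then uses the identical Young split and the bounds of \cref{thm:ex_innerrel}; your joint treatment of the $(u,v)$ blocks via $\tau^2\Vert (y^{k+1},\lambda^{k+1})-(y_*(k),\lambda_*(k))\Vert^2$ is exactly what justifies the single $\tau^2$ that the paper writes without comment.

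One slip to fix: the update is $u^{k+1}=u^k+\alpha_x\tau(u^k-y^{k+1})=u^k-\alpha_x\xi^k_u$ with $\xi^k_u=\tau(y^{k+1}-u^k)=\nabla_u Q^{(k)}(y^{k+1},\lambda^{k+1})$, so no sign adjustment is needed and $w^{k+1}=\operatorname{proj}_{\mathcal{W}}(w^k-\alpha_x\xi^k)$ holds exactly.
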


  \begin{proof}
      See Appendix \ref{Appendix:subsecPGMAD}.
  \end{proof}

  For developing the iteration complexity of \cref{algo:ex_pracpenaltyprob}, we need the following assumptions. 

  \begin{assumption}
    \label{ass:ex_outerfuncbound}
    Suppose that there exists a constant $\omega_1 > 0$ such that 
     \vspace{-0.3cm}
    \begin{equation*}
        \vartheta(x^k,z^k,u^k,v^k) - Q^{(k)}(y^k,\lambda^k) \leq \omega_1
    \end{equation*}
    for $k = 0,1,\ldots,K$. 
  \end{assumption}


  \begin{theorem}
    \label{thm:ex_algo_complexity}
    Let Assumptions \ref{ass:pessibiopt} and \ref{ass:mMpropoflowbojfunc} be satisfied and $\tau \geq L_{\nabla P_{\rho}} + \kappa$ with $\kappa > 0$. Let $\alpha_y \in (0,1/(L_{\nabla P_{\rho}} + \tau)),\ \alpha_x \in (0, 1/L_{\nabla \vartheta})$ and the sequence $\left\{(x^k,y^k,\lambda^k,z^k, \right.$ \\ $\left. u^k,v^k) \right\}_{k=0}^{K}$ be generated by \cref{algo:ex_pracpenaltyprob}. Choose $\epsilon > 0$. If Assumptions \ref{ass:ex_outerfuncbound} 
    is satisfied and 
        \begin{align*}
            T \geq & \frac{1}{-\log(1-\kappa \alpha_y)} \left[ \log \frac{32\nu_0^2\omega_1(L_{\nabla P_{\rho}}^2 + \tau^2)}{\kappa (1-\alpha_x L_{\nabla \vartheta})^2} + 2 \log \frac{1}{\epsilon} \right], \\
            K \geq & \frac{16 \nu_0^{2}(\vartheta(x^0,z^0,u^0,v^0) - P_{\rho,\text{low}})}{\alpha_x(1-\alpha_x L_{\nabla \vartheta})} 
        \frac{1}{\epsilon^2},
        \end{align*}
    where $\nu_0 = \max \{ 1+\alpha_x L_{\nabla P_{\rho}}, \sqrt{3}\}$.
    Then, there exists an integer $k \in \{0,1,2,\ldots,K-1\}$ such that 
        \begin{align*}
            &\Vert G_{\tau}^{P_{\rho}, \mathcal{Y}}(x^{k+1},y^{k+1},\lambda^{k+1},z^{k+1}) \Vert \leq \epsilon, \ \Vert G_{\tau}^{P_{\rho}, \Lambda}(x^{k+1},y^{k+1},\lambda^{k+1},z^{k+1}) \Vert \leq \epsilon, \\
            & \Vert G_{\alpha_x^{-1}}^{P_{\rho},\mathcal{X}}(x^{k+1},y^{k+1},\lambda^{k+1},z^{k+1})\Vert \leq \epsilon, \ \Vert G_{\alpha_x^{-1}}^{P_{\rho},\mathcal{Y}}(x^{k+1},y^{k+1},\lambda^{k+1},z^{k+1})\Vert \leq \epsilon.
        \end{align*}
    Furthermore, suppose that $\rho = \epsilon^{-1}$, and there exists a real number $M \in \mathbb{R}$ such that $ \inf_{(x,y,\lambda,z) \in \mathcal{X} \times \mathcal{Y} \times \Lambda \times \mathcal{Y}} P_{\rho}(x,y,\lambda,z) \geq M$, then, the sequence of $\{(y^k,\lambda^k)\}_{k=0}^{K}$ satisfies 
    \begin{equation*}
        g(y^k,\lambda^k) - \min_{z \in \mathcal{Y}} g(z,\lambda^k) \leq (f_{\text{hi}} - M) \epsilon. 
    \end{equation*} 
    
  \end{theorem}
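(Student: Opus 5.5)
The argument is a standard multi-step descent analysis. First telescope the sufficient-decrease inequality of \cref{prop:ex_outersolsbound}, using Assumption \ref{ass:ex_outerfuncbound} to control the inner errors $\Delta_k$. Then convert the smallness of one outer step, together with the inner accuracy from \cref{thm:ex_innerrel}, into bounds on the four projected-gradient residuals at the point $(x^{k+1},y^{k+1},\lambda^{k+1},z^{k+1})$.

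\textbf{Step 1 (telescoping).} Sum \cref{ex_outerfuncval} over $k=0,\dots,K-1$ and use $\Delta_k\le\omega_1$. Since $\vartheta\ge \max_{y,\lambda}Q\ge P_{\rho,\text{low}}-\tfrac{\tau}{2}\Vert\cdot\Vert^2$, one should use instead that $\vartheta(x,z,u,v)\ge Q$ evaluated at $(y,\lambda)=(u,v)$ when $(u,v)$ lies in $\mathcal Y\times\Lambda$. If it does not, I would simply use $\vartheta\ge P_{\rho,\text{low}}$ as the paper's definition of $\Delta_\vartheta$ suggests. This gives
\begin{equation*}
\min_{k}\Vert w^{k+1}-w^k\Vert^2\le \frac{4\alpha_x\Delta_\vartheta}{(1-\alpha_xL_{\nabla\vartheta})K}+\frac{8\alpha_x^2(L_{\nabla f}^2+\rho^2L_{\nabla g}^2+\tau^2)}{\kappa(1-\alpha_xL_{\nabla\vartheta})^2}(1-\kappa\alpha_y)^T\omega_1,
\end{equation*}
where $w=(x,z,u,v)$. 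With the stated choices of $K$ and $T$, each term is at most $\alpha_x^2\epsilon^2/(8\nu_0^2)$ (up to the constants displayed). Here $L_{\nabla f}^2+\rho^2L_{\nabla g}^2\le L_{\nabla P_\rho}^2$ is used to match the $T$ bound. So for the minimizing $k$ we get $\alpha_x^{-1}\Vert w^{k+1}-w^k\Vert\le \epsilon/(2\nu_0)$.

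\textbf{Step 2 (residuals).} For $x$ and $z$, the update is a projected step of size $\alpha_x$ on $Q$ at $(x^k,z^k,y^{k+1},\lambda^{k+1})$, and $\nabla_xQ=\nabla_xP_\rho$, $\nabla_zQ=\nabla_zP_\rho$. By nonexpansiveness of projection, the residual $G_{\alpha_x^{-1}}^{P_\rho,\mathcal X}$ at $w^{k+1}$ differs from $\alpha_x^{-1}(x^k-x^{k+1})$ by at most $(1+\alpha_xL_{\nabla P_\rho})\alpha_x^{-1}\Vert x^{k+1}-x^k\Vert$. The same holds for $z$, and this is where $\nu_0$ enters. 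For $y$ and $\lambda$, $u^{k+1}-u^k=\alpha_x\tau(u^k-y^{k+1})$, so $\tau\Vert y^{k+1}-u^k\Vert\le\epsilon/(2\nu_0)$. The residual $G_\tau^{P_\rho,\mathcal Y}$ is then compared with the $Q$-residual bounded in \cref{thm:ex_innerrel}. The key facts are $\nabla_yQ=\nabla_yP_\rho-\tau(y-u)$ and that a residual of step $\tau$ is controlled by one of step $\alpha_y^{-1}$ via the monotonicity of projected-gradient residuals in the stepsize. A $\sqrt3$-type triangle inequality then combines three pieces: the inner residual, $\tau\Vert y^{k+1}-u^k\Vert$, and the shift from $x^k,z^k$ to $x^{k+1},z^{k+1}$. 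The inner piece is $\le\epsilon/2$ by the $T$ bound.

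\textbf{Step 3 (feasibility).} With $\rho=\epsilon^{-1}$, the point is approximately stationary and attains near the inner max. I would mimic \cref{lem:epsilon_sols}: $P_\rho\ge M$ gives $f(x^k,y^k,\lambda^k)-\rho h_g(y^k,\lambda^k)\ge M$ after taking $z$ minimizing, hence $h_g\le (f_{\text{hi}}-M)/\rho=(f_{\text{hi}}-M)\epsilon$. This is immediate, since $\min_z P_\rho=\Psi_\rho\ge M$.

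\textbf{Main obstacle.} Step 2 for the $(y,\lambda)$ residuals is the hardest part. Transferring the bound from $Q$ at $(x^k,z^k,u^k,v^k)$ with stepsize $\alpha_y^{-1}$ to $P_\rho$ at $(x^{k+1},z^{k+1})$ with stepsize $\tau$ requires carefully handling the proximal term and the stepsize-comparison lemma.
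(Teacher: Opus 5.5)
There is a genuine gap in Step 2 for the $(y,\lambda)$ residuals, exactly where you flagged the main obstacle. The estimate you plan to import from \cref{thm:ex_innerrel} is
\[
\Vert G_{\alpha_y^{-1}}^{Q,\mathcal{Y}}(x^k,z^k,u^k,v^k,y^{k+1},\lambda^{k+1})\Vert^2\le\frac{18\alpha_y^{-2}}{\kappa}(1-\kappa\alpha_y)^{T}\Delta_k .
\]
Passing from stepsize $\alpha_y^{-1}$ to $\tau$ by monotonicity ($\Vert G_\tau\Vert\le\Vert G_{\alpha_y^{-1}}\Vert$, correct since $\tau<\alpha_y^{-1}$) keeps the factor $\alpha_y^{-2}$.

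The stated $T$ only guarantees $(1-\kappa\alpha_y)^{T}\omega_1\le\kappa(1-\alpha_xL_{\nabla\vartheta})^2\epsilon^2/\bigl(32\nu_0^2(L_{\nabla P_\rho}^2+\tau^2)\bigr)$, with essentially no slack when $T$ is at its lower bound. Your inner piece is therefore only bounded by $\frac{9\alpha_y^{-2}}{16\nu_0^2(L_{\nabla P_\rho}^2+\tau^2)}\epsilon^2$. Since $\alpha_y$ may be any number in $(0,1/(L_{\nabla P_\rho}+\tau))$, the ratio $\alpha_y^{-2}/(L_{\nabla P_\rho}^2+\tau^2)$ is unbounded. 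Nothing in $T$ compensates, because $\alpha_y$ enters $T$ only through the rate $-\log(1-\kappa\alpha_y)$. So ``the inner piece is $\le\epsilon/2$ by the $T$ bound'' fails in general. Your route proves the theorem only after adding a term of order $\log\bigl(\alpha_y^{-2}/(L_{\nabla P_\rho}^2+\tau^2)\bigr)$ to the bracket defining $T$. That is harmless for the complexity count if $\alpha_y\asymp1/(L_{\nabla P_\rho}+\tau)$, but it is not the statement as given.

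The paper never passes through the stepsize-$\alpha_y^{-1}$ residual. It writes the optimality of $(y_*(k),\lambda_*(k))$ for $Q^{(k)}$ as
\[
y_*(k)=\operatorname{proj}_{\mathcal{Y}}\bigl[u^k+\tau^{-1}\nabla_yP_\rho(x^k,y_*(k),\lambda_*(k),z^k)\bigr].
\]
That is, the stepsize-$\tau$ residual of $Q^{(k)}$ vanishes at its maximizer, and with stepsize exactly $\tau$ the proximal term turns the base point into $u^k$. Subtract this from the definition of $G_\tau^{P_\rho,\mathcal{Y}}(x^{k+1},y^{k+1},\lambda^{k+1},z^{k+1})$ and use nonexpansiveness. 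The residual is then bounded by $\tau\Vert y^{k+1}-y_*(k)\Vert$, by $\tau\Vert y^{k+1}-u^k\Vert=\alpha_x^{-1}\Vert u^{k+1}-u^k\Vert$, and by $L_{\nabla P_\rho}$ times $\Vert(x^{k+1},z^{k+1})-(x^k,z^k)\Vert+\Vert(y^{k+1},\lambda^{k+1})-(y_*(k),\lambda_*(k))\Vert$. The inner error enters only through the first (distance) estimate of \cref{thm:ex_innerrel}, with coefficient $O((L_{\nabla P_\rho}+\tau)^2)\le O(L_{\nabla P_\rho}^2+\tau^2)$, which is exactly what the stated $T$ absorbs.

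Alternatively, keep your three-piece decomposition but bound the $Q$-residual directly at stepsize $\tau$ instead of invoking monotonicity. The argument of \cref{thm:ex_innerrel} then gives the constant $(3\tau+L_{\nabla P_\rho})^2$ in place of $9\alpha_y^{-2}$.

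The rest of your plan is the paper's argument: telescoping, the $x$ and $z$ residuals via nonexpansiveness with $\nu_0$, and the feasibility bound from $\min_zP_\rho=\Psi_\rho\ge M$. Two smaller points:
\begin{itemize}
\item With the stated $K$ and $T$, each term in Step 1 is at most $\alpha_x^2\epsilon^2/(4\nu_0^2)$. You therefore get $\alpha_x^{-1}\Vert w^{k+1}-w^k\Vert\le\epsilon/(\sqrt{2}\,\nu_0)$, not $\epsilon/(2\nu_0)$, and you should recheck the final addition of your pieces with this.
\item Your fallback ``use $\vartheta\ge P_{\rho,\text{low}}$'' when $(u^K,v^K)\notin\mathcal{Y}\times\Lambda$ is not an argument. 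The paper asserts the same inequality $\vartheta(x,z,u,v)\ge P_\rho(x,u,v,z)$, which also needs $(u,v)\in\mathcal{Y}\times\Lambda$ even though the $u,v$ updates are unprojected. You are no worse off than the paper here, but the step remains unjustified.
\end{itemize}
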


  \begin{proof}
    Using the definition of $\vartheta(x,z,u,v)$, one has 
        \begin{align*}
            \vartheta(x,z,u,v) = & \max_{y\in \mathcal{Y}, \lambda \in \Lambda} \left\{ P_{\rho}(x,y,\lambda,z) - \frac{\tau}{2}\Vert (y,\lambda) - (u,v) \Vert^2 \right\} \\
            \geq & P_{\rho}(x,u,v,z) \geq \inf_{x,y,\lambda,z} P_{\rho}(x,y,\lambda,z) = P_{\rho,\text{low}}. 
        \end{align*}
    Summing up inequality \cref{ex_outerfuncval} from $k = 0$ to $K$ yields that
        \begin{align*}
            & \sum_{k = 0}^{K-1} \Vert (x^{k+1},z^{k+1},u^{k+1},v^{k+1}) - (x^k,z^k,u^k,v^k) \Vert^2 \\
            \leq & \frac{4 \alpha_x}{1 - \alpha_x L_{\nabla \vartheta}} \left[ \vartheta(x^0,z^0,u^0,v^0) - \vartheta(x^{K},z^{K},u^{K},v^{K})\right]  \\
            & ~~~~~~~~~~~~~~~~~~~~~~~+ \frac{8\alpha_x^2(L_{\nabla f}^2 + \rho^2 L_{\nabla g}^2 + \tau^2)}{\kappa (1 - \alpha_x L_{\nabla \vartheta})^2} (1-\kappa \alpha_y)^{T} \omega_1 K \\
            \leq & \frac{4 \alpha_x}{1 - \alpha_x L_{\nabla \vartheta}}  \Delta_{\vartheta} + \frac{8\alpha_x^2(L_{\nabla f}^2 + \rho^2 L_{\nabla g}^2 + \tau^2)}{\kappa (1 - \alpha_x L_{\nabla \vartheta})^2} (1-\kappa \alpha_y)^{T} \omega_1 K.
        \end{align*}
    Thus, there exists an integer $k \in \{0,1,\ldots,K-1\}$ such that 
    \begin{equation}
        \label{ex_outersolsbd_ineq}
        \begin{aligned}
            & \Vert (x^{k+1},z^{k+1},u^{k+1},v^{k+1}) - (x^k,z^k,u^k,v^k) \Vert^2 \\
            \leq & \frac{4 \alpha_x}{1 - \alpha_x L_{\nabla \vartheta}} \frac{\Delta_{\vartheta}}{K} + \frac{8\alpha_x^2(L_{\nabla f}^2 + \rho^2 L_{\nabla g}^2 + \tau^2)}{\kappa (1 - \alpha_x L_{\nabla \vartheta})^2} (1-\kappa \alpha_y)^{T} \omega_1.
        \end{aligned}
    \end{equation}
    Since 
    \begin{equation*}
        (y_{*}(k),\lambda_{*}(k)) = \mathop{\arg \max}_{y,\lambda} \Bigl\{ P_{\rho}(x^k,y,\lambda,z^k) - \frac{\tau}{2} \Vert (y,\lambda) - (u^k,v^k)\Vert^2 - \delta_{\mathcal{Y}}(y) - \delta_{\Lambda}(\lambda) \Bigr\},
    \end{equation*}
    we have 
        \begin{align*}
            0 & \in -\nabla_y P_{\rho}(x^k,y_{*}(k),\lambda_{*}(k),z^k) + \tau (y_{*}(k) - u^k) + \partial \delta_{\mathcal{Y}}(y_{*}(k)), \\
            0 & \in -\nabla_{\lambda} P_{\rho}(x^k,y_{*}(k),\lambda_{*}(k),z^k) + \tau (\lambda_{*}(k) - v^k) + \partial \delta_{\Lambda}(\lambda_{*}(k)),
        \end{align*}
    which implies  
        \begin{align*}
            y_{*}(k) & = \operatorname{proj}_{\mathcal{Y}} \left[ u^k + \tau^{-1} \nabla_y P_{\rho}(x^k,y_{*}(k),\lambda_{*}(k),z^k) \right], \\
            \lambda_{*}(k) & = \operatorname{proj}_{\Lambda} \left[ v^k + \tau^{-1} \nabla_{\lambda} P_{\rho}(x^k,y_{*}(k),\lambda_{*}(k),z^k) \right].
        \end{align*}
    Then, from $u^{k+1} = (1 + \alpha_x \tau) u^k - \alpha_x \tau y^{k+1}, v^{k+1} = (1 + \alpha_x \tau) v^k - \alpha_x \tau \lambda^{k+1}$ and the nonexpansivity property of the projection operator, it follows that 
        \begin{align*}
            & \Vert G_{\tau}^{P_{\rho}, \mathcal{Y}}(x^{k+1},y^{k+1},\lambda^{k+1},z^{k+1}) \Vert^2 \\
            =& \left\Vert \tau \left[ y^{k+1} - \operatorname{proj}_{\mathcal{Y}}(y^{k+1} + \tau^{-1} \nabla_y P_{\rho}(x^{k+1},y^{k+1},\lambda^{k+1},z^{k+1}))\right] \right\Vert^2 \\
            = & \Bigl\| \tau[y^{k+1} - y_{*}(k)] - \tau \operatorname{proj}_{\mathcal{Y}}((y^{k+1} + \tau^{-1} \nabla_y P_{\rho}(x^{k+1},y^{k+1},\lambda^{k+1},z^{k+1}))) \\
            & + \tau \operatorname{proj}_{\mathcal{Y}}(u^k + \tau^{-1} \nabla_y P_{\rho}(x^k,y_{*}(k),\lambda_{*}(k),z^{k})) \Bigr\|^2 \\
            \leq & 2\tau^2 \Vert y^{k+1} - y_{*}(k) \Vert^2 + 4\alpha_{x}^{-2} \Vert u^{k+1} - u^k \Vert^2 + 4L_{\nabla P_{\rho}}^2 \Vert (x^{k+1},z^{k+1}) - (x^k,z^k)\Vert^2 \\
            & + 4L_{\nabla P_{\rho}}^2 \Vert (y^{k+1},\lambda^{k+1}) - (y_{*}(k),\lambda_{*}(k))\Vert^2 \\
            \leq & 4(L_{\nabla P_{\rho}} + \tau)^2 \Vert (y^{k+1},\lambda^{k+1}) - (y_{*}(k),\lambda_{*}(k))\Vert^2 \\
            & + 4 \max\{ \alpha_x^{-2}, L_{\nabla P_{\rho}}^2\} \Vert (x^{k+1},z^{k+1},u^{k+1},v^{k+1}) - (x^k,z^k,u^k,v^k) \Vert^2.
        \end{align*}
    Similarly, 
        \begin{align*}
            & \Vert G_{\tau}^{P_{\rho}, \Lambda}(x^{k+1},y^{k+1},\lambda^{k+1},z^{k+1}) \Vert^2 
            \leq 4(L_{\nabla P_{\rho}} + \tau)^2 \Vert (y^{k+1},\lambda^{k+1}) - (y_{*}(k),\lambda_{*}(k))\Vert^2 \\
            & \qquad \qquad  + 4 \max\{ \alpha_x^{-2}, L_{\nabla P_{\rho}}^2\} \Vert (x^{k+1},z^{k+1},u^{k+1},v^{k+1}) - (x^k,z^k,u^k,v^k) \Vert^2.
        \end{align*}
    From \cref{thm:ex_innerrel} and \cref{ex_outersolsbd_ineq}, it yields that 
        \begin{align*}
             &\Vert G_{\tau}^{P_{\rho}, \mathcal{Y}}(x^{k+1},y^{k+1},\lambda^{k+1},z^{k+1}) \Vert^2
            \leq \frac{8(L_{\nabla P_{\rho}} + \tau)^2}{\kappa} (1-\kappa \alpha_y)^{T}\omega_1 \\
            &+  4 \max\{ \alpha_x^{-2}, L_{\nabla P_{\rho}}^2\} \left\{\frac{4 \alpha_x}{1 - \alpha_x L_{\nabla \vartheta}} \frac{\Delta_{\vartheta}}{K} + \frac{8\alpha_x^2(L_{\nabla f}^2 + \rho^2 L_{\nabla g}^2 + \tau^2)}{\kappa (1 - \alpha_x L_{\nabla \vartheta})^2} (1-\kappa \alpha_y)^{T} \omega_1\right\}.
        \end{align*}
    Noting that $L_{\nabla \vartheta} > \tau > L_{\nabla P_{\rho}},\ L_{\nabla P_{\rho}} = L_{\nabla f} + 2 \rho L_{\nabla g}$, and $\alpha_x \in (0, 1/L_{\nabla \vartheta})$, we have 
    \begin{equation}
        \label{ex_optcond_y_ineq1}
        \begin{aligned}
            & \Vert G_{\tau}^{P_{\rho}, \mathcal{Y}}(x^{k+1},y^{k+1},\lambda^{k+1},z^{k+1}) \Vert^2 
            \leq  \frac{8(L_{\nabla P_{\rho}} + \tau)^2}{\kappa} (1-\kappa \alpha_y)^{T}\omega_1 \\
            & ~~~~~~~~~~~~~~+ \frac{16}{\alpha_x(1 - \alpha_x L_{\nabla \vartheta})} \frac{\Delta_{\vartheta}}{K} + \frac{32(L_{\nabla f}^2 + \rho^2 L_{\nabla g}^2 + \tau^2)}{\kappa (1 - \alpha_x L_{\nabla \vartheta})^2} (1-\kappa \alpha_y)^{T} \omega_1 \\
            \leq & \left[ (1 - \alpha_x L_{\nabla \vartheta})^2 + 2\right] \frac{16(L_{\nabla P_{\rho}}^2 + \tau^2)}{\kappa (1 - \alpha_x L_{\nabla \vartheta})^2} (1 - \kappa \alpha_y)^{T} \omega_1 + \frac{16}{\alpha_x(1 - \alpha_x L_{\nabla \vartheta})} \frac{\Delta_{\vartheta}}{K}.
        \end{aligned}
    \end{equation}
    Similarly, it holds that 
        \begin{align*}
             & \Vert G_{\tau}^{P_{\rho}, \Lambda}(x^{k+1},y^{k+1},\lambda^{k+1},z^{k+1}) \Vert^2 \\
            & \leq  \left[ (1 - \alpha_x L_{\nabla \vartheta})^2 + 2\right] \frac{16(L_{\nabla P_{\rho}}^2 + \tau^2)}{\kappa (1 - \alpha_x L_{\nabla \vartheta})^2} (1 - \kappa \alpha_y)^{T} \omega_1
              + \frac{16}{\alpha_x(1 - \alpha_x L_{\nabla \vartheta})} \frac{\Delta_{\vartheta}}{K}.
        \end{align*}

    Considering the update form of $x^{k+1}$ in \cref{algo:ex_pracpenaltyprob} and the definition of $Q$ in \eqref{def_reg_Q}, we have  
    \begin{equation*}
        x^{k+1} = \operatorname{proj}_{\mathcal{X}} (x^k - \alpha_x \nabla_x P_{\rho}(x^{k},y^{k+1},\lambda^{k+1},z^{k})).
    \end{equation*}
    Subsequently, 
    \begin{equation}
        \label{ex_optcond_x_ineq1}
        \begin{aligned}
            & \Vert G_{\alpha_x^{-1}}^{P_{\rho},\mathcal{X}}(x^{k+1},y^{k+1},\lambda^{k+1},z^{k+1})\Vert^2 \\
            = & \Vert \alpha_x^{-1} [x^{k+1} - \operatorname{proj}_{\mathcal{X}}(x^{k+1} - \alpha_x \nabla_x P_{\rho}(x^{k+1},y^{k+1},\lambda^{k+1},z^{k+1}))] \Vert^2 \\
            = & \Vert \alpha_x^{-1} [\operatorname{proj}_{\mathcal{X}} (x^k - \alpha_x \nabla_x P_{\rho}(x^{k},y^{k+1},\lambda^{k+1},z^{k})) \\
            & ~~~~~~~-  \operatorname{proj}_{\mathcal{X}}(x^{k+1} - \alpha_x \nabla_x P_{\rho}(x^{k+1},y^{k+1},\lambda^{k+1},z^{k+1}))] \Vert^2 \\
            \leq & 2 \alpha_x^{-2} \Vert x^{k+1} - x^k \Vert^2 + 2 \Vert \nabla_x P_{\rho}(x^{k},y^{k+1},\lambda^{k+1},z^{k}) - \nabla_x P_{\rho}(x^{k+1},y^{k+1},\lambda^{k+1},z^{k+1}) \Vert^2 \\
            \leq & 2(\alpha_x^{-2} + L_{\nabla P_{\rho}}^2) \Vert (x^{k+1},z^{k+1}) - (x^k,z^k)\Vert^2 \\
            \leq & (1 + \alpha_x L_{\nabla P_{\rho}})^2 \left\{\frac{8 }{\alpha_x(1 - \alpha_x L_{\nabla \vartheta})} \frac{\Delta_{\vartheta}}{K} + \frac{16(L_{\nabla P_{\rho}}^2 + \tau^2)}{\kappa (1 - \alpha_x L_{\nabla \vartheta})^2} (1-\kappa \alpha_y)^{T} \omega_1\right\}.
        \end{aligned}
    \end{equation}
    Similarly, 
        \begin{align*}
            & \Vert G_{\alpha_x^{-1}}^{P_{\rho},\mathcal{Y}}(x^{k+1},y^{k+1},\lambda^{k+1},z^{k+1})\Vert^2 \\
            & \leq  (1 + \alpha_x L_{\nabla P_{\rho}})^2 \left\{\frac{8 }{\alpha_x(1 - \alpha_x L_{\nabla \vartheta})} \frac{\Delta_{\vartheta}}{K} + \frac{16(L_{\nabla P_{\rho}}^2 + \tau^2)}{\kappa (1 - \alpha_x L_{\nabla \vartheta})^2} (1-\kappa \alpha_y)^{T} \omega_1\right\}. 
        \end{align*}
    Since $(1 - \alpha_x L_{\nabla \vartheta})^2 + 2 \leq 3$, we get from \cref{ex_optcond_y_ineq1} that 
    \begin{equation}
        \label{ex_optcond_y_ineq2}
        \begin{aligned}
            & \Vert G_{\tau}^{P_{\rho}, \mathcal{Y}}(x^{k+1},y^{k+1},\lambda^{k+1},z^{k+1}) \Vert^2 \\
            & \leq \nu_0^2 \left\{ \frac{8 }{\alpha_x(1 - \alpha_x L_{\nabla \vartheta})} \frac{\Delta_{\vartheta}}{K} + \frac{16(L_{\nabla P_{\rho}}^2 + \tau^2)}{\kappa (1 - \alpha_x L_{\nabla \vartheta})^2} (1-\kappa \alpha_y)^{T} \omega_1 \right\},
        \end{aligned}
    \end{equation}
    and from \cref{ex_optcond_x_ineq1} that 
    \begin{equation}
        \label{ex_optcond_x_ineq2}
        \begin{aligned}
            & \Vert G_{\alpha_x^{-1}}^{P_{\rho},\mathcal{X}}(x^{k+1},y^{k+1},\lambda^{k+1},z^{k+1})\Vert^2  \\
            & \leq  \nu_0^2 \left\{ \frac{8 }{\alpha_x(1 - \alpha_x L_{\nabla \vartheta})} \frac{\Delta_{\vartheta}}{K} + \frac{16(L_{\nabla P_{\rho}}^2 + \tau^2)}{\kappa (1 - \alpha_x L_{\nabla \vartheta})^2} (1-\kappa \alpha_y)^{T} \omega_1 \right\}.
        \end{aligned}
    \end{equation}
    From the choice of $T$ and $K$, one has
    \begin{equation*}
        \nu_0^2 \frac{16(L_{\nabla P_{\rho}}^2 + \tau^2)}{\kappa (1 - \alpha_x L_{\nabla \vartheta})^2} (1-\kappa \alpha_y)^{T} \omega_1 \leq \frac{\epsilon^2}{2}, \text{ and } \nu_0^2 \frac{8 }{\alpha_x(1 - \alpha_x L_{\nabla \vartheta})} \frac{\Delta_{\vartheta}}{K} \leq \frac{\epsilon^2}{2}.  
    \end{equation*} 
    Thus, 
        \begin{equation*}
            \Vert G_{\tau}^{P_{\rho}, \mathcal{Y}}(x^{k+1},y^{k+1},\lambda^{k+1},z^{k+1}) \Vert^2 \leq \epsilon^2, 
            \ \Vert G_{\alpha_x^{-1}}^{P_{\rho},\mathcal{X}}(x^{k+1},y^{k+1},\lambda^{k+1},z^{k+1})\Vert^2 \leq \epsilon^2.
        \end{equation*}
    Similarly, we can obtain 
        \begin{equation*}
            \Vert G_{\tau}^{P_{\rho}, \Lambda}(x^{k+1},y^{k+1},\lambda^{k+1},z^{k+1}) \Vert^2 \leq \epsilon^2,\ 
            \Vert G_{\alpha_x^{-1}}^{P_{\rho},\mathcal{Y}}(x^{k+1},y^{k+1},\lambda^{k+1},z^{k+1})\Vert^2 \leq \epsilon^2.
        \end{equation*}

    Finally, by the definition of  $P_{\rho}$ and Assumption \ref{ass:pessibiopt} 
    , we have 
    \begin{equation*}
        \min_{z \in \mathcal{Y}} P_{\rho}(x^k,y^k,\lambda^k,z) = f(x^k,y^k,\lambda^k) - \rho (g(y^k,\lambda^k) - \min_{z \in \mathcal{Y}} g(z,\lambda^k)),
    \end{equation*}
    which implies 
        \begin{equation*}
            \rho (g(y^k,\lambda^k) - \min_{z \in \mathcal{Y}} g(z,\lambda^k)) \leq f(x^k,y^k,\lambda^k) - \min_{z \in \mathcal{Y}} P_{\rho}(x^k,y^k,\lambda^k,z) 
            \leq  f_{\text{hi}} - M, \ \forall k,
        \end{equation*}
    and we can obtain from $\rho = \epsilon^{-1}$ that 
    \begin{equation*}
        g(y^k,\lambda^k) - \min_{z \in \mathcal{Y}} g(z,\lambda^k) \leq (f_{\text{hi}} - M)  \epsilon. 
    \end{equation*}
    The proof is then completed.

\end{proof}

\begin{remark}~~

    \begin{itemize}
        \item[(i)] While the ideal penalty method in \cref{algo:idealpenaltyAlgo} requires $\rho \to \infty$ to ensure convergence to an optimal solution, our analysis focuses on achieving an $\epsilon$-KKT solution of problem \cref{prob:refpessbiopt_ours}. By setting $\rho = \mathcal{O}(\epsilon^{-1})$, the constraint violation is guaranteed to be bounded by the prescribed tolerance $\epsilon$. Since $\epsilon$ is a fixed positive constant in the context of iteration complexity, the resulting penalty parameter $\rho$ remains strictly finite. This avoids ill-conditioning issues and the unbounded below of the penalty function $P_{\rho}$ typically associated with $\rho \to \infty$, thus ensuring numerical stability throughout the iterative process.
        \item[(ii)] According to \cref{thm:ex_algo_complexity}, let $\rho = \mathcal{O}(\epsilon^{-1})$. Setting $\kappa = \mathcal{O}(\epsilon^{-1})$, we have $L_{\nabla P_{\rho}} = \mathcal{O}(\epsilon^{-1}), \tau = \mathcal{O}(\epsilon^{-1}), L_{\nabla \vartheta} = \mathcal{O}(\epsilon^{-1}), \alpha_x = \mathcal{O}(\epsilon), \alpha_y = \mathcal{O}(\epsilon), \nu_{0} = \mathcal{O}(1), T = \mathcal{O}(\log \epsilon^{-1})$, and $K = \mathcal{O}(\epsilon^{-3})$. Consequently, \cref{algo:ex_pracpenaltyprob} can find an $\epsilon$-KKT solution $(x_{\epsilon},y_{\epsilon},\lambda_{\epsilon})$ of \cref{prob:refpessbiopt_ours} in $\mathcal{O}(\epsilon^{-3} \log \epsilon^{-1})$ iterations if $T$ and $K$ are chosen as in \cref{thm:ex_algo_complexity}. 
        That is,
            \begin{align*}
                & \Vert G_{\alpha_{x}^{-1}}^{P_{\rho},\mathcal{X}}(x_{\epsilon},y_{\epsilon},\lambda_{\epsilon},z_{\epsilon}) \Vert \leq \epsilon, \ \Vert G_{\tau}^{P_{\rho},\mathcal{Y}}(x_{\epsilon},y_{\epsilon},\lambda_{\epsilon},z_{\epsilon}) \Vert \leq \epsilon, \\
                & \Vert G_{\tau}^{P_{\rho},\Lambda}(x_{\epsilon},y_{\epsilon},\lambda_{\epsilon},z_{\epsilon}) \Vert \leq \epsilon, \ \Vert G_{\alpha_{x}^{-1}}^{P_{\rho},\mathcal{Y}}(x_{\epsilon},y_{\epsilon},\lambda_{\epsilon},z_{\epsilon}) \Vert \leq \epsilon, \\
                & g(y_{\epsilon},\lambda_{\epsilon}) - \min_{z \in \mathcal{Y}} g(z,\lambda_{\epsilon}) = \mathcal{O}(\epsilon),
            \end{align*} 
        where $z_{\epsilon}$ is given in \cref{algo:ex_pracpenaltyprob}.
    \end{itemize}
\end{remark}

\subsection{Nesterov accelerated extension}

In this subsection, we adopt the Nesterov accelerated method to improve the convergence rate of \cref{algo:ex_pracpenaltyprob}, as presented in \cref{algo:ex_pracpenaltyprob_NA}. 

\begin{algorithm}
    \caption{Nesterov accelerated PG-MAD method}
    \label{algo:ex_pracpenaltyprob_NA}
    \begin{algorithmic}[1]
      \STATE{\textbf{Input}: $x^0,y^0,\lambda^0,u^0,v^0, \tau \geq L_{\nabla P_{\rho}} + \kappa$ with $\kappa > 0$, $\alpha_x > 0, \alpha_y > 0$, positive integers $T > 1, K > 1$, penalty parameter $\rho > 0$}.
      \FOR{$k = 0$ to $K-1$}
          \STATE{Set $(y^{[0]}(k),\lambda^{[0]}(k)) = (y^k, \lambda^k), (y_a^{[0]}(k),\lambda_a^{[0]}(k)) = (y^k, \lambda^k) $}
          \FOR{ $t = 0$ to $T - 1$} 
              \STATE{$y^{[t+1]}(k) = \operatorname{proj}_{\mathcal{Y}} \left[ y_a^{[t]}(k) + \alpha_y \nabla_{y} Q(x^k,z^k,u^k,v^k,y_{a}^{[t]}(k),\lambda_{a}^{[t]}(k)) \right]$} 
              \STATE{$\lambda^{[t+1]}(k) = \operatorname{proj}_{\Lambda} \left[ \lambda_{a}^{[t]}(k) + \alpha_y \nabla_{\lambda} Q(x^k,z^k,u^k,v^k,y_{a}^{[t]}(k),\lambda_{a}^{[t]}(k)) \right]$}
              \STATE{$\theta = \frac{1 - \sqrt{\kappa \alpha_y}}{1 + \sqrt{ \kappa \alpha_y}}$}
              \STATE{$y_{a}^{[t+1]}(k) = y^{[t+1]}(k) + \theta \left(y^{[t+1]}(k) - y^{[t]}(k) \right)$}
              \STATE{$\lambda_{a}^{[t+1]}(k) = \lambda^{[t+1]}(k) + \theta \left(\lambda^{[t+1]}(k) - \lambda^{[t]}(k) \right)$}
      \ENDFOR
      \STATE{Set $(y^{k+1},\lambda^{k+1}) = (y^{[T]}(k), \lambda^{[T]}(k))$}
      \STATE{Set 
           $x^{k+1}  = \operatorname{proj}_{\mathcal{X}}\left[ x^k -\alpha_x\nabla_{x}Q(x^k,z^k,u^k,v^k,y^{k+1},\lambda^{k+1}) \right]$ \\
           \qquad $z^{k+1}  = \operatorname{proj}_{\mathcal{Y}} \left[ z^k - \alpha_x \nabla_{z} Q(x^k,z^k,u^k,v^k,y^{k+1},\lambda^{k+1}) \right] $\\
            \qquad $u^{k+1}  = ( 1 + \alpha_x \tau ) u^k - \alpha_x \tau y^{k+1}$ \\
            \qquad $v^{k+1}  = ( 1 + \alpha_x \tau ) v^k - \alpha_x \tau \lambda^{k+1}$
      
      } 
      \ENDFOR
      \RETURN $(x^{k+1},y^{k+1},\lambda^{k+1},z^{k+1})$ for $k = 0,1,2,\ldots,K-1$. 
    \end{algorithmic}
  \end{algorithm}

  We have the following lemma, whose proof is based on Theorem 10.42 of \cite{Beck2017First}.
  \begin{lemma}
    \label{lem:ex_innersol_NA}
    Let Assumptions \ref{ass:pessibiopt} and \ref{ass:mMpropoflowbojfunc} be satisfied and $\tau \geq L_{\nabla P_{\rho}} + \kappa$ with $\kappa > 0$. Let $\alpha_y \in (0,1/(L_{\nabla P_{\rho}} + \tau ))$ and the sequence $\left\{(y^{[t]}(k), \lambda^{[t]}(k)\right\}_{t=0}^{T}$ be generated by \cref{algo:ex_pracpenaltyprob_NA}, where $k = 0,1,\ldots,K$. Then, for $t = 0,1,\ldots,T - 1$,\\
    (a) $\vartheta(x^k,z^k,u^k,v^k) - Q^{(k)}(y^{[t+1]}(k),\lambda^{[t+1]}(k)) \leq (1 - \sqrt{\kappa \alpha_y})^{t+1} [ \vartheta(x^k,z^k,u^k,v^k)$\\ $- Q^{(k)}(y^k,\lambda^k) + \frac{\kappa}{2}\Vert (y^k,\lambda^k) - (y_{*}(k),\lambda_{*}(k)) \Vert^2 ]$; \\
    (b) $\Vert (y^{[t+1]}(k), \lambda^{[t+1]}(k)) - (y_{*}(k),\lambda_{*}(k)) \Vert^2  \leq  \frac{2}{\kappa} (1 - \sqrt{\kappa \alpha_y})^{t+1} [\vartheta(x^k,z^k,u^k,v^k)$\\ $  - Q^{(k)}(y^k,\lambda^k)+ \frac{\kappa}{2}\Vert (y^k,\lambda^k) - (y_{*}(k),\lambda_{*}(k))\Vert^2]$.
  \end{lemma}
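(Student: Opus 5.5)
The plan is to cast the inner loop of \cref{algo:ex_pracpenaltyprob_NA} as the constant-momentum FISTA variant for strongly convex composite problems. We then read (a) directly off Theorem 10.42 of \cite{Beck2017First} and derive (b) from (a) via strong concavity.

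\textbf{Setup.} Fix $k$ and set
\[
\phi(y,\lambda) := -Q^{(k)}(y,\lambda) + \delta_{\mathcal{Y}}(y) + \delta_{\Lambda}(\lambda),
\]
with smooth part $-Q^{(k)}$ and nonsmooth part the indicator of $\mathcal{Y}\times\Lambda$.
\begin{itemize}
\item[(i)] $-P_\rho(x^k,\cdot,\cdot,z^k)$ is $L_{\nabla P_\rho}$-weakly convex, since $P_\rho$ is $L_{\nabla P_\rho}$-smooth. Adding $\frac{\tau}{2}\Vert(y,\lambda)-(u^k,v^k)\Vert^2$ with $\tau\ge L_{\nabla P_\rho}+\kappa$ makes $-Q^{(k)}$ $\kappa$-strongly convex.
\item[(ii)] $\nabla Q^{(k)}$ is $(L_{\nabla P_\rho}+\tau)$-Lipschitz, so any $\alpha_y\le 1/(L_{\nabla P_\rho}+\tau)$ is an admissible step.
\item[(iii)] The projected ascent steps on $Q^{(k)}$ are exactly the proximal gradient steps $\operatorname{prox}_{\alpha_y\delta}\bigl(w-\alpha_y\nabla(-Q^{(k)})(w)\bigr)$ at the extrapolated point $w=(y_a^{[t]},\lambda_a^{[t]})$.
\item[(iv)] The momentum $\theta=(1-\sqrt{\kappa\alpha_y})/(1+\sqrt{\kappa\alpha_y})$ is the constant momentum of the strongly convex FISTA in Beck, with $\sigma=\kappa$ and $1/L$ replaced by $\alpha_y$. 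Working with step $\alpha_y$ amounts to using an upper Lipschitz estimate $L'=1/\alpha_y\ge L_{\nabla P_\rho}+\tau$, which Beck's analysis permits.
\item[(v)] The minimizer of $\phi$ is $(y_*(k),\lambda_*(k))$, and $\min\phi=-\vartheta(x^k,z^k,u^k,v^k)$.
\end{itemize}
The iteration is initialized with $(y_a^{[0]},\lambda_a^{[0]})=(y^{[0]},\lambda^{[0]})=(y^k,\lambda^k)$, matching Beck's initialization $y^0=x^0$.

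\textbf{Part (a).} Theorem 10.42 gives
\[
\phi(w^{t+1})-\phi^*\le\Bigl(1-\sqrt{\sigma/L'}\Bigr)^{t+1}\Bigl[\phi(w^0)-\phi^*+\frac{\sigma}{2}\Vert w^0-w^*\Vert^2\Bigr].
\]
Substituting $\sigma=\kappa$, $L'=\alpha_y^{-1}$, $\phi=-Q^{(k)}$ on the feasible set (the iterates stay feasible because of the projections), $w^0=(y^k,\lambda^k)$ and $\phi^*=-\vartheta$ yields (a) verbatim.

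\textbf{Part (b).} By $\kappa$-strong convexity of $\phi$ and optimality of $w^*$ (the subgradient inequality with $0\in\partial\phi(w^*)$),
\[
\phi(w)-\phi^*\ge\frac{\kappa}{2}\Vert w-w^*\Vert^2 .
\]
Applying this at $w=(y^{[t+1]}(k),\lambda^{[t+1]}(k))$ and combining with (a) gives (b), with the factor $2/\kappa$.

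\textbf{Main obstacle.} The step needing care is the exact correspondence with Beck's theorem. That theorem is stated for $L_f$ the exact Lipschitz constant, and its momentum is written in terms of $\sqrt{\kappa_f}=\sqrt{L_f/\sigma}$. I would check that the descent lemma and the sufficient decrease inequality used there only require $L'\ge L_f$, so the proof goes through with $L'=1/\alpha_y$. If needed, I would reproduce the short Lyapunov argument with the potential $\phi(w^t)-\phi^*+\frac{\sigma}{2}\Vert v^t-w^*\Vert^2$, where $v^t$ is the auxiliary sequence. I would also confirm that the paper's two-sequence update with constant $\theta$ matches Beck's form, i.e., that $\theta$ equals $(\sqrt{\kappa_f}-1)/(\sqrt{\kappa_f}+1)$ when $\kappa_f=1/(\kappa\alpha_y)$.
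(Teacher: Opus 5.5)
Your proposal is correct and follows the same route as the paper. The paper's appendix proof is a self-contained reproduction of the Lyapunov argument behind Theorem 10.42 of Beck: it uses the three-point prox inequality (Lemma A.3) with $\bar{\gamma}=\alpha_y^{-1}\ge L_{\nabla P_{\rho}}+\tau$ and the convex combination $\eta(y_*,\lambda_*)+(1-\eta)(y^{[t]},\lambda^{[t]})$ with $\eta=\sqrt{\kappa\alpha_y}$, which yields a $(1-\eta)$-contraction of exactly the potential you describe; (b) then follows from (a) by strong concavity, as in your argument, and your point that only an upper estimate $L'=1/\alpha_y\ge L_f$ is needed matches Lemma A.3, which is stated for any $\bar{\gamma}\ge L_{\nabla \mathfrak{f}}$.
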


  \begin{proof}
      See Appendix \ref{Appendix:subsecNes}.
  \end{proof}

  \begin{theorem}
    \label{thm:ex_innerrel_NA}
    Let Assumptions \ref{ass:pessibiopt} and \ref{ass:mMpropoflowbojfunc} be satisfied and $\tau L_{\nabla P_{\rho}} + \kappa$ with $\kappa > 0$. Let $\alpha_y \in (0, 1/(L_{\nabla P_{\rho}} +\tau))$ and the sequence $\left\{ (y^{[t]}(k), \lambda^{[t]}(k))\right\}_{t=0}^{T}$ be generated by \cref{algo:ex_pracpenaltyprob_NA}, and $(y^{k+1},\lambda^{k+1}) = (y^{[T]}(k),\lambda^{[T]}(k))$, where $k = 0, 1, \ldots, K$. Then one has for $k = 0, 1, \ldots, K - 1$ that 
        \begin{align*}
             \Vert (y^{k+1},\lambda^{k+1}) - (y_{*}(k),\lambda_{*}(k))\Vert^2 & \leq \frac{4}{\kappa} (1 -\sqrt{ \kappa \alpha_y})^{T}  \Delta_k, \\
             \Vert \nabla_x Q^{(k)}(y^{k+1},\lambda^{k+1}) - \nabla_x \vartheta(x^k,z^k,u^k,v^k) \Vert^2 & \leq \frac{4 L_{\nabla f}^2}{\kappa} (1 -\sqrt{ \kappa \alpha_y})^{T}  \Delta_k, \\
             \Vert \nabla_z Q^{(k)}(y^{k+1},\lambda^{k+1}) - \nabla_z \vartheta(x^k,z^k,u^k,v^k) \Vert^2 & \leq \frac{4 \rho^2 L_{\nabla g}^2}{\kappa}(1 -\sqrt{ \kappa \alpha_y})^{T}  \Delta_k, \\
             \Vert \nabla_u Q^{(k)}(y^{k+1},\lambda^{k+1}) - \nabla_u \vartheta(x^k,z^k,u^k,v^k) \Vert^2 & \leq \frac{4 \tau^2}{\kappa}(1 -\sqrt{ \kappa \alpha_y})^{T}  \Delta_k, \\
             \Vert \nabla_v Q^{(k)}(y^{k+1},\lambda^{k+1}) - \nabla_v \vartheta(x^k,z^k,u^k,v^k) \Vert^2 & \leq \frac{4 \tau^2}{\kappa}(1 -\sqrt{ \kappa \alpha_y})^{T}  \Delta_k, \\
            \Vert G_{\alpha_y^{-1}}^{Q,\mathcal{Y}} (x^{k},z^k,u^k,v^k,y^{k+1},\lambda^{k+1}) \Vert^2 & \leq \frac{36 \alpha_y^{-2}}{\kappa}(1 -\sqrt{ \kappa \alpha_y})^{T}  \Delta_k, \\
             \Vert G_{\alpha_y^{-1}}^{Q,\Lambda}(x^{k},z^k,u^k,v^k,y^{k+1},\lambda^{k+1}) \Vert^2 & \leq \frac{36 \alpha_y^{-2}}{\kappa}(1 -\sqrt{ \kappa \alpha_y})^{T}  \Delta_k. 
        \end{align*}
  \end{theorem}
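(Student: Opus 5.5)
The plan is to mirror the proof of \cref{thm:ex_innerrel}, replacing the linear rate of \cref{prop:ex_innersols} by the accelerated rate of \cref{lem:ex_innersol_NA}. The common quantity on the right-hand sides is $\Delta_k=\vartheta(x^k,z^k,u^k,v^k)-Q^{(k)}(y^k,\lambda^k)$, so the first step is to bound the bracket appearing in \cref{lem:ex_innersol_NA} by a multiple of $\Delta_k$. Since $\tau\geq L_{\nabla P_\rho}+\kappa$, the map $(y,\lambda)\mapsto Q^{(k)}(y,\lambda)$ is $\kappa$-strongly concave on $\mathcal{Y}\times\Lambda$, and $(y_*(k),\lambda_*(k))$ is its constrained maximizer. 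The optimality condition combined with strong concavity then gives the quadratic growth bound
\begin{equation*}
\tfrac{\kappa}{2}\Vert (y^k,\lambda^k)-(y_*(k),\lambda_*(k))\Vert^2\leq \Delta_k .
\end{equation*}
Consequently the bracket in \cref{lem:ex_innersol_NA} is at most $2\Delta_k$. Applying part (b) with $t+1=T$ and $(y^{k+1},\lambda^{k+1})=(y^{[T]}(k),\lambda^{[T]}(k))$ yields the first inequality, $\frac{4}{\kappa}(1-\sqrt{\kappa\alpha_y})^T\Delta_k$.

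The gradient bounds come from Lipschitz continuity and the formula for $\nabla\vartheta$ in \cref{prop:ex_Lipvalfunc}.
\begin{itemize}
\item The $x$-partial of $Q^{(k)}$ at $(y^{k+1},\lambda^{k+1})$ is $\nabla_x f(x^k,y^{k+1},\lambda^{k+1})$, while $\nabla_x\vartheta(x^k,z^k,u^k,v^k)=\nabla_x f(x^k,y_*(k),\lambda_*(k))$. Their difference is at most $L_{\nabla f}$ times the distance just bounded.
\item The $z$-partial is $\rho\nabla_z g(z^k,\lambda)$ evaluated at $\lambda^{k+1}$ versus $\lambda_*(k)$, giving the factor $\rho^2L_{\nabla g}^2$.
\item The $u$- and $v$-partials are $-\tau(u^k-y)$ and $-\tau(v^k-\lambda)$, so their differences are exactly $\tau$ times $\Vert y^{k+1}-y_*(k)\Vert$ and $\Vert\lambda^{k+1}-\lambda_*(k)\Vert$.
\end{itemize}
Squaring each of these gives the stated constants $4L_{\nabla f}^2/\kappa$, $4\rho^2L_{\nabla g}^2/\kappa$ and $4\tau^2/\kappa$.

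For the projected gradient mappings $G_{\alpha_y^{-1}}^{Q,\mathcal{Y}}$ and $G_{\alpha_y^{-1}}^{Q,\Lambda}$ at $(y^{k+1},\lambda^{k+1})$, I use the fixed-point characterization $y_*(k)=\operatorname{proj}_{\mathcal{Y}}[y_*(k)+\alpha_y\nabla_yQ^{(k)}(y_*(k),\lambda_*(k))]$, and similarly for $\lambda$. Subtracting, applying nonexpansiveness of the projection, and using the $(L_{\nabla P_\rho}+\tau)$-Lipschitz gradient of $Q^{(k)}$ together with $\alpha_y(L_{\nabla P_\rho}+\tau)<1$, the mapping norm is at most $\alpha_y^{-1}(2+\alpha_y(L_{\nabla P_\rho}+\tau))\Vert(y^{k+1},\lambda^{k+1})-(y_*(k),\lambda_*(k))\Vert\leq 3\alpha_y^{-1}\Vert\cdot\Vert$. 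Squaring gives the factor $9\alpha_y^{-2}$, and combining with the first bound gives $36\alpha_y^{-2}/\kappa$, exactly as $18=9\cdot 2$ arose in \cref{thm:ex_innerrel}.

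The only step needing care is the quadratic growth inequality in the constrained setting. It holds because $Q^{(k)}$ plus the indicator of $\mathcal{Y}\times\Lambda$ is $\kappa$-strongly concave, and the first-order variational inequality at the maximizer kills the linear term. Everything else is routine Lipschitz bookkeeping.
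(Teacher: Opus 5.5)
Your proposal is correct and follows the paper's proof. The paper likewise uses $\kappa$-strong concavity together with the variational inequality at $(y_*(k),\lambda_*(k))$ to get $\frac{\kappa}{2}\Vert (y^k,\lambda^k)-(y_*(k),\lambda_*(k))\Vert^2\le \Delta_k$, bounds the bracket in part (b) of \cref{lem:ex_innersol_NA} by $2\Delta_k$ to reach $\frac{4}{\kappa}(1-\sqrt{\kappa\alpha_y})^T\Delta_k$, and then repeats the argument of \cref{thm:ex_innerrel} with $\frac{2}{\kappa}(1-\kappa\alpha_y)^T$ replaced by $\frac{4}{\kappa}(1-\sqrt{\kappa\alpha_y})^T$; you simply write out the Lipschitz and projection bookkeeping (including the $3\alpha_y^{-1}$ factor behind $36\alpha_y^{-2}/\kappa$) that the paper omits.
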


  \begin{proof}
      See Appendix \ref{Appendix:subsecNes}.
  \end{proof}

Adopting the same approach as in the proof of \cref{prop:ex_outersolsbound}, we can establish the boundedness of $\Vert (x^{k+1},z^{k+1},u^{k+1},v^{k+1}) - (x^k,z^k,u^k,v^k)\Vert^2$ in the following proposition. 

  \begin{proposition}
    \label{prop:ex_outersolsbound_NA}
    Let Assumptions \ref{ass:pessibiopt} and \ref{ass:mMpropoflowbojfunc} be satisfied and $\tau \geq L_{\nabla P_{\rho}} + \kappa$ with $\kappa > 0$. Let $\alpha_y \in (0, 1/(L_{\nabla P_{\rho}} + \tau)), \alpha_x \in (0, 1/L_{\nabla \vartheta})$, and the sequence $\left\{(x^k,y^k,\lambda^k,z^k,u^k,v^k)\right\}_{k=0}^{K}$ be generated by \cref{algo:ex_pracpenaltyprob_NA}. Then, for $k = 0,\ldots,K-1$, 
        \begin{align*}
             & \frac{1 - \alpha_x L_{\nabla \vartheta}}{4 \alpha_x} \Vert (x^{k+1},z^{k+1},u^{k+1},v^{k+1}) - (x^k,z^k,u^k,v^k) \Vert^2  \leq \vartheta(x^k,z^k,u^k,v^k) \\
             & \qquad - \vartheta(x^{k+1},z^{k+1},u^{k+1},v^{k+1}) + \frac{4 \alpha_x(L_{\nabla f}^2 + \rho^2 L_{\nabla g}^2 + \tau^2) }{\kappa(1-\alpha_x L_{\nabla \vartheta})} (1 -\sqrt{ \kappa \alpha_y})^{T}  \Delta_k.
        \end{align*}
  \end{proposition}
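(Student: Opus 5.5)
The argument follows the proof of \cref{prop:ex_outersolsbound}; only the inner-loop error bound changes, from \cref{thm:ex_innerrel} to \cref{thm:ex_innerrel_NA}.

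Write $w^k=(x^k,z^k,u^k,v^k)$. Let $\xi^k$ be the inexact gradient in \cref{inner_gradient}, built from the Nesterov inner output $(y^{k+1},\lambda^{k+1})$.

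\textbf{Step 1: the outer update is a projected step.} The outer update is a projected gradient step on $\mathcal{X}\times\mathcal{Y}\times\mathbb{R}^{d_y}\times\mathbb{R}^{d_\lambda}$ with direction $\xi^k$. Indeed, $u^{k+1}=u^k-\alpha_x\tau(u^k-y^{k+1})$, and likewise for $v$. By \cref{prop:ex_Lipvalfunc}, $\nabla\vartheta(w^k)$ has exactly the same form as $\xi^k$, but evaluated at $(y_*(k),\lambda_*(k))$.

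\textbf{Step 2: descent inequality.} Since $\vartheta$ is $L_{\nabla\vartheta}$-smooth, the descent lemma gives
\[
\vartheta(w^{k+1})\le\vartheta(w^k)+\langle\nabla\vartheta(w^k),w^{k+1}-w^k\rangle+\tfrac{L_{\nabla\vartheta}}{2}\|w^{k+1}-w^k\|^2 .
\]
The projection characterization of $w^{k+1}$, tested at the point $w^k$, gives
\[
\langle \xi^k,w^{k+1}-w^k\rangle\le-\alpha_x^{-1}\|w^{k+1}-w^k\|^2 .
\]
Combining the two yields
\[
\vartheta(w^{k+1})\le\vartheta(w^k)-\bigl(\alpha_x^{-1}-\tfrac{L_{\nabla\vartheta}}{2}\bigr)\|w^{k+1}-w^k\|^2+\langle\nabla\vartheta(w^k)-\xi^k,w^{k+1}-w^k\rangle .
\]

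\textbf{Step 3: absorb the error term.} Apply Young's inequality to the cross term:
\[
\langle e,d\rangle\le\frac{1-\alpha_xL_{\nabla\vartheta}}{4\alpha_x}\|d\|^2+\frac{\alpha_x}{1-\alpha_xL_{\nabla\vartheta}}\|e\|^2 .
\]
Since $\alpha_x^{-1}-\frac{L_{\nabla\vartheta}}{2}-\frac{1-\alpha_xL_{\nabla\vartheta}}{4\alpha_x}\ge\frac{1-\alpha_xL_{\nabla\vartheta}}{4\alpha_x}$, the left-hand side of the claimed inequality follows.

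\textbf{Step 4: bound the gradient error.} It remains to bound $\|\nabla\vartheta(w^k)-\xi^k\|^2$, which is the only point where the accelerated inner loop enters. Split it blockwise and use the componentwise estimates of \cref{thm:ex_innerrel_NA}:
\[
\|\nabla\vartheta(w^k)-\xi^k\|^2\le\frac{4}{\kappa}\bigl(L_{\nabla f}^2+\rho^2L_{\nabla g}^2+2\tau^2\bigr)(1-\sqrt{\kappa\alpha_y})^T\Delta_k .
\]
This is the main obstacle. The block count gives $2\tau^2$ from the $u$ and $v$ blocks, while the statement has $\tau^2$. I would resolve this exactly as in \cref{prop:ex_outersolsbound}: bound the $(u,v)$ blocks jointly. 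Using $\tau\|(y^{k+1},\lambda^{k+1})-(y_*(k),\lambda_*(k))\|$ together with the first estimate of \cref{thm:ex_innerrel_NA}, their combined squared error is at most $\frac{4\tau^2}{\kappa}(1-\sqrt{\kappa\alpha_y})^T\Delta_k$.

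Multiplying by $\frac{\alpha_x}{1-\alpha_xL_{\nabla\vartheta}}$ then gives the stated constant
\[
\frac{4\alpha_x(L_{\nabla f}^2+\rho^2L_{\nabla g}^2+\tau^2)}{\kappa(1-\alpha_xL_{\nabla\vartheta})}(1-\sqrt{\kappa\alpha_y})^T\Delta_k .
\]
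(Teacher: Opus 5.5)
Your proof is correct and follows the paper's route. The paper simply reruns the proof of \cref{prop:ex_outersolsbound}: the one-step descent inequality from \cref{lem:proxtwopoint}, the same Young split, then the inner-loop bound, now taken from \cref{thm:ex_innerrel_NA}. Deriving the descent step directly from the descent lemma and the projection inequality only buys a slightly better coefficient. Your joint bound on the $(u,v)$ blocks is what justifies $\tau^2$ rather than the $2\tau^2$ a blockwise sum would give, a point the paper leaves implicit. One minor slip: the algorithm's update is $u^{k+1}=u^k+\alpha_x\tau(u^k-y^{k+1})=u^k-\alpha_x\xi^k_u$, not $u^k-\alpha_x\tau(u^k-y^{k+1})$; your projected-step representation with $\xi^k$ from \cref{inner_gradient} is still correct.
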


  The iteration complexity of \cref{algo:ex_pracpenaltyprob_NA} follows directly from \cref{thm:ex_algo_complexity}. 


  \begin{theorem}
    \label{thm:ex_algo_complexity_NA}
    Let Assumptions \ref{ass:pessibiopt} and \ref{ass:mMpropoflowbojfunc} be satisfied and $\tau \geq L_{\nabla P_{\rho}} + \kappa$ with $\kappa > 0$. Let $\alpha_y \in (0,1/(L_{\nabla P_{\rho}} + \tau)), \alpha_x \in (0, 1/L_{\nabla \vartheta})$ and the sequence $\left\{(x^k,y^k,\lambda^k,z^k,\right.$ \\$ \left. u^k,v^k)\right\}_{k=0}^{K}$ be generated by \cref{algo:ex_pracpenaltyprob_NA}. Choose $\epsilon > 0$. Suppose Assumptions \ref{ass:ex_outerfuncbound} 
    be satisfied and 
        \begin{align*}
            T \geq & \frac{1}{-\log(1-\sqrt{\kappa \alpha_y})} \left[ \log \frac{64\nu_1^2\omega_1(L_{\nabla P_{\rho}}^2 + \tau^2)}{\kappa (1-\alpha_x L_{\nabla \vartheta})^2} + 2 \log \frac{1}{\epsilon} \right], \\
            K \geq & \frac{16 \nu_1^{2}(\vartheta(x^0,z^0,u^0,v^0) - P_{\rho,\text{low}})}{\alpha_x(1-\alpha_x L_{\nabla \vartheta})} 
        \frac{1}{\epsilon^2},
        \end{align*}
    where $\nu_1 = \max \{ 1+\alpha_x L_{\nabla P_{\rho}}, \sqrt{5}\}$.
    Then, there exists an integer $k \in \{0,1,\ldots,K-1\}$ such that 
        \begin{align*}
            &\Vert G_{\tau}^{P_{\rho}, \mathcal{Y}}(x^{k+1},y^{k+1},\lambda^{k+1},z^{k+1}) \Vert \leq \epsilon, \ \Vert G_{\tau}^{P_{\rho}, \Lambda}(x^{k+1},y^{k+1},\lambda^{k+1},z^{k+1}) \Vert \leq \epsilon, \\
            & \Vert G_{\alpha_x^{-1}}^{P_{\rho},\mathcal{X}}(x^{k+1},y^{k+1},\lambda^{k+1},z^{k+1})\Vert \leq \epsilon, \ \Vert G_{\alpha_x^{-1}}^{P_{\rho},\mathcal{Y}}(x^{k+1},y^{k+1},\lambda^{k+1},z^{k+1})\Vert \leq \epsilon.
        \end{align*}
    Furthermore, suppose that $\rho = \epsilon^{-1}$, and there exists a real number $M \in \mathbb{R}$ such that $\inf_{(x,y,\lambda,z) \in \mathcal{X} \times \mathcal{Y} \times \Lambda \times \mathcal{Y} } P_{\rho}(x,y,\lambda,z) \geq M $, the sequence of $\{(y^k,\lambda^k)\}_{k=0}^{K}$ satisfies 
    \begin{equation*}
        g(y^k,\lambda^k) - \min_{z \in \mathcal{Y}} g(z,\lambda^k) \leq (f_{\text{hi}} - M) \epsilon. 
    \end{equation*}
    
  \end{theorem}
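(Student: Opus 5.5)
The plan is to replay the proof of \cref{thm:ex_algo_complexity} line by line. The only change is that \cref{thm:ex_innerrel} and \cref{prop:ex_outersolsbound} are replaced by their accelerated counterparts \cref{thm:ex_innerrel_NA} and \cref{prop:ex_outersolsbound_NA}. The accelerated versions differ in two ways: the contraction factor $(1-\kappa\alpha_y)^T$ becomes $(1-\sqrt{\kappa\alpha_y})^T$, and the constants double ($2/\kappa \to 4/\kappa$, $2\alpha_x/\kappa \to 4\alpha_x/\kappa$).

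\textbf{Step 1 (telescoping the outer steps).} As before, $\vartheta(x,z,u,v)\ge P_\rho(x,u,v,z)\ge P_{\rho,\text{low}}$. We sum the inequality of \cref{prop:ex_outersolsbound_NA} over $k=0,\dots,K-1$ and use Assumption~\ref{ass:ex_outerfuncbound} ($\Delta_k\le\omega_1$). This gives an index $k$ with
\begin{equation*}
\Vert (x^{k+1},z^{k+1},u^{k+1},v^{k+1})-(x^k,z^k,u^k,v^k)\Vert^2 \le \frac{4\alpha_x}{1-\alpha_xL_{\nabla\vartheta}}\frac{\Delta_\vartheta}{K}+\frac{16\alpha_x^2(L_{\nabla f}^2+\rho^2L_{\nabla g}^2+\tau^2)}{\kappa(1-\alpha_xL_{\nabla\vartheta})^2}(1-\sqrt{\kappa\alpha_y})^T\omega_1 .
\end{equation*}

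\textbf{Step 2 (bounding the $\mathcal{Y}$ and $\Lambda$ residuals).} We use the fixed-point characterization of $(y_*(k),\lambda_*(k))$ as projections of $u^k+\tau^{-1}\nabla_yP_\rho$ (and analogously for $\lambda$), together with nonexpansivity of the projection. Exactly as before, $\Vert G_\tau^{P_\rho,\mathcal Y}\Vert^2$ and $\Vert G_\tau^{P_\rho,\Lambda}\Vert^2$ are bounded by $4(L_{\nabla P_\rho}+\tau)^2\Vert(y^{k+1},\lambda^{k+1})-(y_*(k),\lambda_*(k))\Vert^2$ plus $4\max\{\alpha_x^{-2},L_{\nabla P_\rho}^2\}$ times the outer step. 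Now the first term is controlled by the first bound in \cref{thm:ex_innerrel_NA}, namely $\frac{4}{\kappa}(1-\sqrt{\kappa\alpha_y})^T\omega_1$, and the second by Step 1. For the $\mathcal X$ residual and the $z$ residual, the same computation as \eqref{ex_optcond_x_ineq1} gives the factor $2(\alpha_x^{-2}+L_{\nabla P_\rho}^2)\le \alpha_x^{-2}(1+\alpha_xL_{\nabla P_\rho})^2$ times the outer step.

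\textbf{Step 3 (collecting constants).} This is the only real obstacle. We must check that, using $\tau\le L_{\nabla\vartheta}$ and $\alpha_xL_{\nabla P_\rho}\le 1$, all four residuals are bounded by
\begin{equation*}
\nu_1^2\Bigl\{\frac{8}{\alpha_x(1-\alpha_xL_{\nabla\vartheta})}\frac{\Delta_\vartheta}{K}+\frac{32(L_{\nabla P_\rho}^2+\tau^2)}{\kappa(1-\alpha_xL_{\nabla\vartheta})^2}(1-\sqrt{\kappa\alpha_y})^T\omega_1\Bigr\}.
\end{equation*}
In the $\mathcal Y$/$\Lambda$ estimate, the combined coefficient of the inner-error term is $[2(1-\alpha_xL_{\nabla\vartheta})^2+4]$ times the common factor. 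This is at most $5$ because $(1-\alpha_xL_{\nabla\vartheta})^2\le1$; it is the source of $\sqrt5$ in $\nu_1$. The $\Delta_\vartheta/K$ term in this estimate carries the factor $16$, which is at most $2\nu_1^2$ times the $8$ in the display. If this bookkeeping is off by a constant, I would simply absorb it into the $64$ appearing in the choice of $T$.

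\textbf{Step 4 (choice of $T$ and $K$, and the final feasibility claim).} The stated lower bounds on $T$ and $K$ make each of the two bracketed terms, multiplied by $\nu_1^2$, at most $\epsilon^2/2$; for $T$ this uses $(1-\sqrt{\kappa\alpha_y})^T\le \epsilon^2\kappa(1-\alpha_xL_{\nabla\vartheta})^2/(64\nu_1^2\omega_1(L_{\nabla P_\rho}^2+\tau^2))$. Each residual is therefore at most $\epsilon$. The lower-level feasibility estimate is independent of the algorithm. Since $\min_zP_\rho(x^k,y^k,\lambda^k,z)=f-\rho h_g\ge M$ and $f\le f_{\text{hi}}$, we get $\rho h_g(y^k,\lambda^k)\le f_{\text{hi}}-M$, and $\rho=\epsilon^{-1}$ gives the claim.
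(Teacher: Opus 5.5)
Your approach is the paper's own. The paper gives no separate proof: it says the result follows from \cref{thm:ex_algo_complexity} by swapping in \cref{thm:ex_innerrel_NA} and \cref{prop:ex_outersolsbound_NA}, which is what you do. Your target display with $8$ and $32$ is also exactly what the stated $T$ and $K$ require.

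The constant checks in Step 3 are mis-stated, but the conclusion survives once they are corrected.
\begin{itemize}
\item The $\mathcal X$ and $z$ residuals. The inequality $2(\alpha_x^{-2}+L_{\nabla P_\rho}^2)\le \alpha_x^{-2}(1+\alpha_xL_{\nabla P_\rho})^2$ is false for $\alpha_xL_{\nabla P_\rho}<1$, since it reduces to $(1-\alpha_xL_{\nabla P_\rho})^2\le 0$. The correct version is $2(\alpha_x^{-2}+L_{\nabla P_\rho}^2)\le 2\alpha_x^{-2}(1+\alpha_xL_{\nabla P_\rho})^2$, and that factor $2$ is what produces the $8$ and $32$.
\item The $\mathcal Y/\Lambda$ inner-error term. $2(1-\alpha_xL_{\nabla\vartheta})^2+4$ is at most $6$, not $5$. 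It must be compared with $2\nu_1^2$, because the target coefficient $32$ is twice your common factor $16(L_{\nabla P_\rho}^2+\tau^2)/(\kappa(1-\alpha_xL_{\nabla\vartheta})^2)$.
\item The $\Delta_\vartheta/K$ term. The comparison you need is $16\le 8\nu_1^2$.
\end{itemize}
Together these show that $\nu_1^2\ge 3$ already suffices, so $\sqrt5$ leaves slack rather than being forced by this estimate.

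Drop the fallback of absorbing bookkeeping errors into the $64$. The constants are fixed by the statement, and the $64$ in $T$ could not cover the $\Delta_\vartheta/K$ term anyway. With the corrected arithmetic the fallback is unnecessary.
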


\begin{remark}
    Note that under the same parameter settings, Algorithms \ref{algo:ex_pracpenaltyprob_NA} and  \ref{algo:ex_pracpenaltyprob} achieve the same order of iteration complexity for finding an $\epsilon$-KKT solution of problem \cref{prob:refpessbiopt_ours}. However, by adopting the Nesterov accelerated, it substantially reduces the constant factor in the inner loop, potentially leading to a reduction in the total number of iterations. Developing algorithms that improve the order of iteration complexity remains an interesting and challenging issue for future work. 
\end{remark}

\section{Applications } \label{sec_applications}

In this section, we provide a detailed introduction to three applications of minimax bilevel optimization problems and focus on theoretical analysis for this class of problems.

\subsection{Market clearing mechanism in power systems}

Considering the electricity market clearing mechanism coordinated the operation between distribution system (DS) and microgrids (MGs), Wang et.al. \cite{Wang2021Bi-Level} proposed a bilevel robust economic dispatch model for DS and MGs. This model is formulated through the hierarchical interaction with the information of power and price exchanged between DS and MGs. According to the definition of the locational marginal price (LMP), it is derived from the Lagrangian multipliers for the corresponding power balance constraints. Consequently, DS determines the distribution LMP (DLMP) 
and sends these price signals to MGs. Each MG subsequently optimizes its dispatch based on the received price, which determine the operation cost of power exchanges with DS. 
Inspired by this modeling approach, we formulate a deterministic bilevel optimization framework to characterize the interactions between DS and single MG. Based on duality theory, the upper-level objective is formulated as a Lagrangian function, incorporating the power balance constraint with its dual variable. The detailed formulation is as follows
\begin{equation}
    \label{bileveloptimal_primal_linear}
    \begin{aligned}
        \min_{x \in \mathcal{X}} \max_{\lambda} \ & c_1^{T} x + \lambda^{T}(Ax + By_{*}(\lambda) - b) \\
        \mathrm{s.t.~~~~} \ & y_{*}(\lambda) \in \mathop{\arg \min}_{y \in \mathcal{Y}} \ \lambda^{T}y + c_2^{T}y,
    \end{aligned}
\end{equation}
where $c_1^{T} x$ and $\lambda^{T}y + c_2^{T}y$ are the operation costs of DS and MG, respectively. $Ax + B y_{*}(\lambda) - b$ represents the power balance constraint. $\mathcal{X} := \{ x \in \mathbb{R}^{d_x}: E_1 x \leq e_1, E_2 x = e_2, x_{\text{lb}} \leq x \leq x_{\text{ub}} \}$ and $\mathcal{Y} := \{y \in \mathbb{R}^{d_y}: H_1 y \leq h_1, H_2 y = h_2, y_{\text{lb}} \leq y \leq y_{\text{ub}}\}$ denote the non-coupled constraints with respect to the decision variables $x,y$, respectively. However, since the linear lower-level problem does not have a unique solution, problem \cref{bileveloptimal_primal_linear} is not a rigorous BLO model in mathematics. To address this issue, two main approaches have been suggested, i.e., optimistic and pessimistic reformulations defined in \cite{Dempe2002Foundations}. In the optimistic setting, DS and MG operate under a fully cooperative relationship. MG selects his solutions which is the best one for DS. That is, 
    \begin{align*}
        \min_{x \in \mathcal{X}, y \in \mathcal{Y}} \max_{\lambda} \ & c_1^{T} x + \lambda^{T}(Ax + By - b) \\
        \mathrm{s.t.~~~~} \ & y \in \mathop{\arg \min}_{z \in \mathcal{Y}} \ \lambda^{T}z + c_2^{T}z.
    \end{align*}
Conversely, the pessimistic case assumes an adversarial relationship between DS and MG, in which MG aims to harm DS by choosing the worst-possible reaction \cref{ep-1}.

This is a minimax  problem \cref{prob:refpessbiopt_ours} with  linear objective functions both in the upper-level and lower-level problems. Assume that the problem \cref{ep-1} satisfies  Assumptions \ref{ass:pessibiopt}, \ref{ass:mMpropoflowbojfunc}, and \ref{ass:ex_outerfuncbound}, then Theorems \ref{thm:ex_algo_complexity} and \ref{thm:ex_algo_complexity_NA} hold, which implies Algorithms \ref{algo:ex_pracpenaltyprob} and \ref{algo:ex_pracpenaltyprob_NA} can find $\epsilon$-KKT solutions within $\mathcal{O}(\epsilon^{-3}\log \epsilon^{-1})$ iterations.

\subsection{Adversarial training}

Adversarial training can be considered as a  bilevel problems, where the upper level aims to minimize the training loss, while the lower level is used to model the attack generation process. Then, the bilevel AT problem is cast as follows:
\begin{equation}
    \label{app_adtrain:bilevelform}
    \begin{aligned}
        \min_{\theta} \ & \mathbb{E}_{(x,y) \in \mathcal{D}} [\ell_{\text{tr}}(\theta,x+\delta^{*}(\theta;x,y);y)] \\
        \mathrm{s.t.} \ & \delta^{*}(\theta;x,y) \in \mathop{\arg \min}_{\delta \in \mathcal{C}} \ell_{\text{atk}}(\theta,\delta;x,y).
    \end{aligned}
\end{equation}
Here, $\ell_{\text{tr}}(\cdot)$ and $\ell_{\text{atk}}(\cdot)$ denote the training loss and attack functions, respectively; $\theta$ denotes the model parameters; $\mathcal{D}$ is a training set consisting of pairs of label data  with feature $x$ and label $y$; $\delta$ represents adversarial perturbations; $\mathcal{C}$ is the perturbation constraint. Note that the upper level problem is a stochastic optimization problem, which often assumes that the true distribution of data is given. However, especially in data-driven case, the true distribution remains unobserved and is typically inferred from historical data, resulting in sub-optimal or even biased solutions. To address this issue, a distributionally robust formulation approach was proposed by Delage and Ye \cite{Delage2010Distributionally}. In this model, the unknown true distribution is assumed in a given uncertainty set, and the objective function is formulated with respect to the worst case expected cost over the choice of a distribution in the set. Therefore, when the labeled data is finite, a distributionally robust bilevel AT model can be formulated as 
\begin{equation}
    \label{app_adtrain:dro_bilevelform_finite}
    \begin{aligned}
        \min_{\theta}  \max_{p = [p_i]_{i= 1}^{N} \in \mathcal{P}} \ & \sum_{i=1}^{N} p_i [\ell_{\text{tr}}(\theta,x_i+\delta_{i}^{*}(\theta;x,y);y_i)] \\
        \mathrm{s.t.~~~~~} \ & \delta^{*}(\theta;x,y) \in \mathop{\arg \min}_{\delta \in \mathcal{C}}  \ \ell_{\text{atk}}(\theta,\delta;x,y),
    \end{aligned}
\end{equation}
where $\mathcal{P}$ is referred to as an ambiguity set that contains all possible probability distributions. Assume that the upper level problem satisfies stochastic min-max theorem, and we can reformulate problem \cref{app_adtrain:dro_bilevelform_finite} as \cref{app_adtrain:dro_bilevelform_final}.
Specifically, considering a linear model and employing the mean squared error as the loss function, problem \cref{app_adtrain:dro_bilevelform_final} satisfies  Assumptions \ref{ass:pessibiopt}, \ref{ass:mMpropoflowbojfunc}, and \ref{ass:ex_outerfuncbound} when all feasible sets are convex and compact. Consequently, Algorithms \ref{algo:ex_pracpenaltyprob} and \ref{algo:ex_pracpenaltyprob_NA} can be adopted to solve problem \cref{app_adtrain:dro_bilevelform_final} and find $\epsilon$-KKT solutions within $\mathcal{O}(\epsilon^{-3}\log \epsilon^{-1})$ iterations.


\subsection{Robust signal-setting for road network with uncertain performance preferences}

Consider a robust signal setting problem for a signal-controlled road network, aiming to minimize the total travel delay incurred by road users under the decision maker's performance preference uncertainty. This problem can be formulated as a minimax bilevel program. In \cite{Chiou2014Optimal}, at the upper level, the performance index is defined as a sum of weighted linear combination of the rate of delay and number of stops.
Due to the uncertainty of the decision maker's performance preference, we can treat the link-specific weighting factor as a decision variable, reflecting varying managerial priorities. 
At the lower level, road users respond to the chosen signal-setting variables by selecting routes with minimal travel time, which is supposed to the follow Wardrop's principle. 

Specifically, consider a signal-controlled road network represented by $G(N,L)$, where $N$ is the set of nodes and $L$ is the set of links. Let $\Psi = (\zeta, \theta, \phi)$ be the set of signal-setting variables, where $\zeta$ is the reciprocal of common cycle time, $\theta = [\theta_{jm}]$ and $\phi = [\phi_{jm}]$ respectively represent the vector of start $\theta_{jm}$ and duration of green $\phi_{jm}$ for signal group $j$ at junction $m$ as proportions of common cycle time; let $\mathcal{W} = [(\mathcal{W}_{aD}, \mathcal{W}_{aS})], \forall a \in L$ be the link-specific weighting factor for rate of delay and number of stops on link $a$; let $f = [f_a], \forall a \in L$ be a vector of link flow; let $h = [h_p], \forall p \in R_{w}, \forall w \in W$ be a vector of path flow, where $W$ is a set of original-destination (OD) pairs and $R_{w}$ represents the set of paths connecting OD pair $w$. According to Wardrop's principle, a user equilibrium traffic assignment problem can be formulated as the following variational inequality problem
\begin{equation}
    \label{app_trans:ll_VI}
    c(\Psi, f(\Psi))(\bar{f} - f(\Psi)) \geq 0, \forall \bar{f} \in K,
\end{equation}
where $K := \left\{f: f = \delta h, \Lambda h = q, h \geq 0\right\}$; $c= [c_a], \forall a \in L$ denotes the vector of link flow travel cost; $\delta$ represents the link-path incidence matrix; $\Lambda$ denotes the OD-path incidence matrix; and $q$ represents the travel demand matrix for OD pair. Under some mild assumptions, \cref{app_trans:ll_VI} can be viewed as the first-order optimality conditions of the convex optimization problem
\cref{app_trans:ll_prob}. Then, a minimax bilevel signal-setting problem can be expressed as follows
\begin{subequations} 
    \label{app_trans:formulation}
    \begin{align}
        \min_{\Psi, f} \max_{\mathcal{W}=[(\mathcal{W}_{aD},\mathcal{W}_{aS})]} \ & P(\mu, \Psi, f(\Psi)) = \sum_{a} D_a(\Psi, f(\Psi)) \mathcal{W}_{aD} + \sigma_{SD} S_{a}(\Psi, f(\Psi)) \mathcal{W}_{aS} \\
        \mathrm{s.t.~~~~~} \ & \mathcal{W}_{aS} + \mathcal{W}_{aD} = 1, \  \mathcal{W}_{aS}, \mathcal{W}_{aD} \geq 0,\label{app_trans_form_consfirst} \\
        \ & \underline{\zeta} \leq \zeta \leq \overline{\zeta}, \ \lambda_{\text{min}} \zeta \leq \phi_{jm} \leq 1, \ \forall i, m, \\
        & \theta_{jm} + \phi_{jm} + \tau_{jlm} \zeta \leq \theta_{lm} + \Omega_{m}(j,l), \ j \neq l, \forall j, l, m, \\
        & \mu f_{a}(\Psi) \leq s_a \lambda_{a}, \ \forall a \in L, \ f(\Psi) \in S(\Psi), \label{app_trans_form_conslast} 
    \end{align}
\end{subequations}
in which $D_a$ and $S_{a}$ denote the rate of delay and the number of stops per unit time on link $a$, respectively; $\sigma_{SD}$ denotes the conversion factor from $S_{a}$ to $D_a$; $\Omega_{m}(j,l)$ denotes a set of numbers $0$ and $1$ for each pair of incompatible signal groups at junction $m$; $\mu$ represents a common link flow multiplier; $s_a$ and $\lambda_{a}$ denote the saturation flow and duration of effective green for link $a, \forall a \in L$, respectively. 
If max-min theorem for problem \cref{app_trans:formulation} is satisfied, problem \cref{app_trans:formulation} can be reformulated as 
\begin{equation}
\label{app_trans:formula_final}
    \begin{aligned}
         \min_{\mathcal{W} =(\mathcal{W}_{aD},\mathcal{W}_{aS})} \max_{\Psi, f} \ & -\sum_{a} D_a(\Psi, f(\Psi)) \mathcal{W}_{aD} + \sigma_{SD} S_{a}(\Psi, f(\Psi)) \mathcal{W}_{aS} \\
        \mathrm{s.t.~~~~~} \ & \cref{app_trans_form_consfirst} - \cref{app_trans_form_conslast}.
    \end{aligned}
\end{equation}
We note that problem \cref{app_trans:formula_final} is a special case of problem \cref{prob:refpessbiopt_ours}. If we assume that problem \cref{app_trans:formula_final} satisfies  Assumptions \ref{ass:pessibiopt}, \ref{ass:mMpropoflowbojfunc}, and \ref{ass:ex_outerfuncbound}, then Theorems \ref{thm:ex_algo_complexity} and \ref{thm:ex_algo_complexity_NA} hold, which implies Algorithms \ref{algo:ex_pracpenaltyprob} and \ref{algo:ex_pracpenaltyprob_NA} can find $\epsilon$-KKT solutions within $\mathcal{O}(\epsilon^{-3}\log \epsilon^{-1})$ iterations.  

\section{Numerical experiments} \label{sec_numexperiments}

  In this section, we conduct some preliminary experiments to show the performances of our proposed methods. The first two experiments are implemented by MATLAB 2020 on a 64-bit laptop with an Intel i5-2.50 GHz CPU, 16.0 GB RAM, and the Windows 11 operating system. The last experiment is implemented by Matlab 2024 and the Optimization toolbox by Intel(R) Core(TM) i7-14650HX (2.20 GHZ), 32 GB RAM, 1 TB ROM.
  
  \subsection{Minimax bilevel optimization with strongly-convex lower-level}

  In this part, we consider three specific minimax bilevel optimization problems with strongly-convex lower-level as follows.

    \begin{example} 
        \label{example1}
        Let $\mathcal{X} = \mathcal{Y} = \Lambda = [0,1]$. We solve 
            \begin{align*}
                \min_{x \in \mathcal{X}} \max_{y \in \mathcal{Y},\lambda \in \Lambda} \ &  y^2 + \lambda(x + y - 1) \\
                \mathrm{s.t.~~~~~} \ & y \in \mathop{\arg \min}_{z \in \mathcal{Y}} \left\{ \frac{1}{2}z^2 + \lambda z \right\}.
            \end{align*} 
   \end{example}

    \begin{example} 
        \label{example2}
        Let $\mathcal{X} = \mathcal{Y} = [-1,1]$ and $\Lambda = [-2,2]$. We solve 
            \begin{align*}
                \min_{x \in \mathcal{X}} \max_{y \in \mathcal{Y},\lambda \in \Lambda} \ & x^2 + y^2 + \lambda (x + y - 2) \\
                \mathrm{s.t.~~~~~} \ & y \in \mathop{\arg \min}_{z \in \mathcal{Y}} \ \left\{z^2 + \lambda z \right\}.
            \end{align*} 
   \end{example}

    \begin{example}
        \label{example3}
        Let $\mathcal{X} = \mathcal{Y} = \Lambda = [-1,1]^2$. We solve
            \begin{align*}
                \min_{x \in \mathcal{X}} \max_{y \in \mathcal{Y}, \lambda \in \Lambda} & \ \Vert x \Vert^2 + \Vert y\Vert^2 + \lambda^{T} (x + y) \\
                \mathrm{s.t.} & \ y \in \mathop{\arg \min}_{z \in \mathcal{Y}} \ \left\{ 2 \Vert z\Vert^2 - 4 \lambda z \right\}.
            \end{align*}
    \end{example}

   In Algorithms \ref{algo:ex_pracpenaltyprob} and \ref{algo:ex_pracpenaltyprob_NA}, we set the algorithmic parameters as follows. The stepsizes are selected as $\alpha_x = 0.618$ for the $(x,z,u,v)$-update and $\alpha_y = 0.1$ for the $(y,\lambda)$-update within the inner loops, where the number of inner iterations is fixed at $T=20$. The penalty parameter is defined by $\rho_k = 5^{k-1}$.
   The regularization parameter is chosen as $\tau = 2L_{\nabla P_{\rho}}$, i.e., $\kappa = L_{\nabla P_{\rho}}$. For NA-PG-MAD method, we additionally set $\theta = 0.5$.
   We measure convergence using a composite error metric ``$\text{Error}$'', defined as 
   {\small
   \begin{align*}
        \left\| [G_{\alpha_{x}^{-1}}^{P_{\rho},\mathcal{X}}(x^k,y^k,\lambda^k,z^k); G_{\tau}^{P_{\rho},\mathcal{Y}}(x^k,y^k,\lambda^k,z^k); G_{\tau}^{P_{\rho},\Lambda}(x^k,y^k,\lambda^k,z^k); G_{\alpha_{x}^{-1}}^{P_{\rho},\mathcal{Y}}(x^k,y^k,\lambda^k,z^k)] \right\|.
   \end{align*}   
   }
    Algorithms \ref{algo:ex_pracpenaltyprob} and \ref{algo:ex_pracpenaltyprob_NA} are executed with the penalty parameter $\rho$ dynamically assigned as $\rho_k$ at each outer iteration $k$. The algorithms terminate upon satisfying either of the following conditions: (i) the outer iteration $k \geq 200$, or (ii) $\rho_k  \geq 10^{4}$, $\text{Error} \leq 10^{-4}$, and $|g( y_k, \lambda_k) - \min_{z \in \mathcal{Y}} g(z,\lambda_k)| \leq 10^{-6}$. For each $\lambda_k$, the value of the lower-level problem $\min_{z \in \mathcal{Y}} g(z,\lambda_k)$ is computed using the CVX solver. All initial points are generated randomly from the standard normal distribution. 


   In Figures \ref{fig:example_stronglyconvex} and \ref{fig:example_stronglyconvex-2}, we compare the performance of the two algorithms with respect to CPU time. It can be observed that NA-PG-MAD method achieves enhanced convergence with better outcomes. 

   \begin{figure}[htbp]
      \centering 
      \vspace{-0.5cm}
      \setlength{\abovecaptionskip}{-0.1cm}
      \setlength{\belowcaptionskip}{-0.2cm}
      \subfigure[Example \ref{example1}]{
            \label{fig:example1}
            \includegraphics[width=0.3\textwidth]{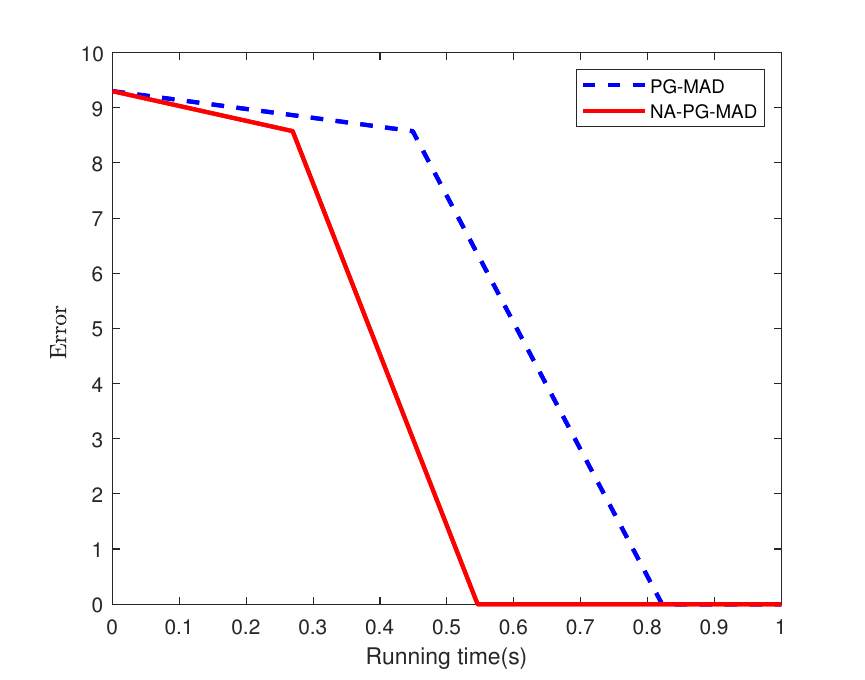}
            \hspace{-2.5mm}
      }
      \hspace{-2.5mm}
      \subfigure[Example \ref{example2}]{
            \label{fig:example2}
            \includegraphics[width=0.3\textwidth]{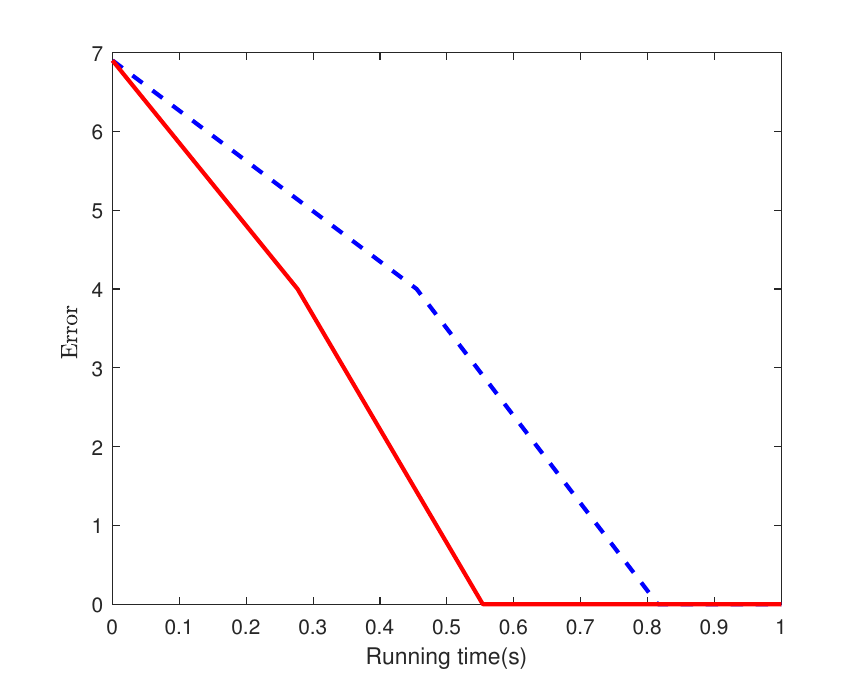}
            \hspace{-2.5mm}
      }
      \hspace{-2.5mm}
      \subfigure[Example \ref{example3}]{
            \label{fig:example3}
            \includegraphics[width=0.3\textwidth]{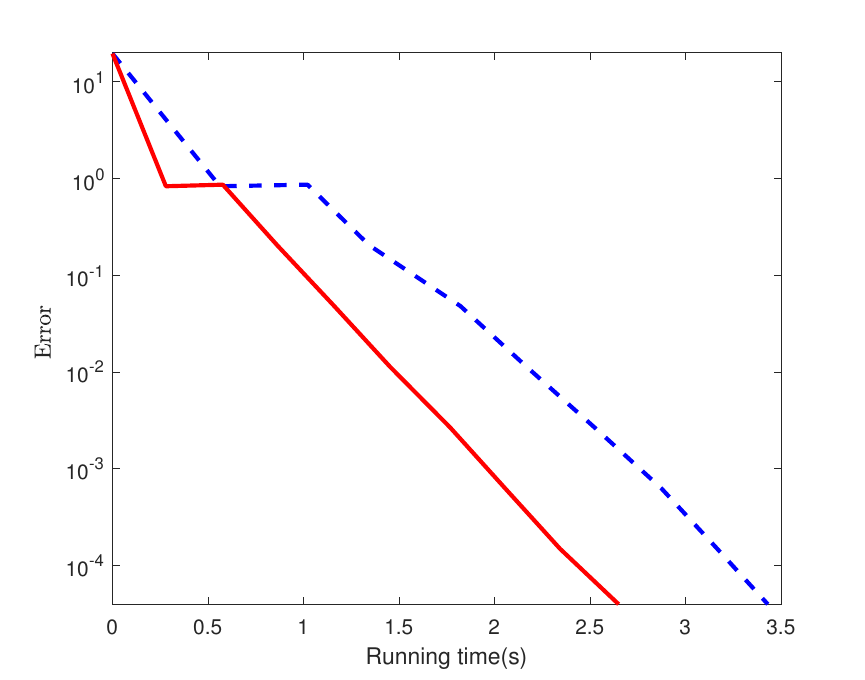}
            \hspace{-2.5mm}
      }
      \caption{The performance of PG-MAD and NA-PG-MAD for Examples \ref{example1}-\ref{example3}.}
      \label{fig:example_stronglyconvex}
    \end{figure}

     \begin{figure}[htbp]
      \centering 
      \setlength{\abovecaptionskip}{-0.1cm}
      \setlength{\belowcaptionskip}{-0.5cm}
      \subfigure[Example \ref{example1}]{
            \label{fig:example1-2}
            \hspace{-2.5mm}
            \includegraphics[width=0.3\textwidth]{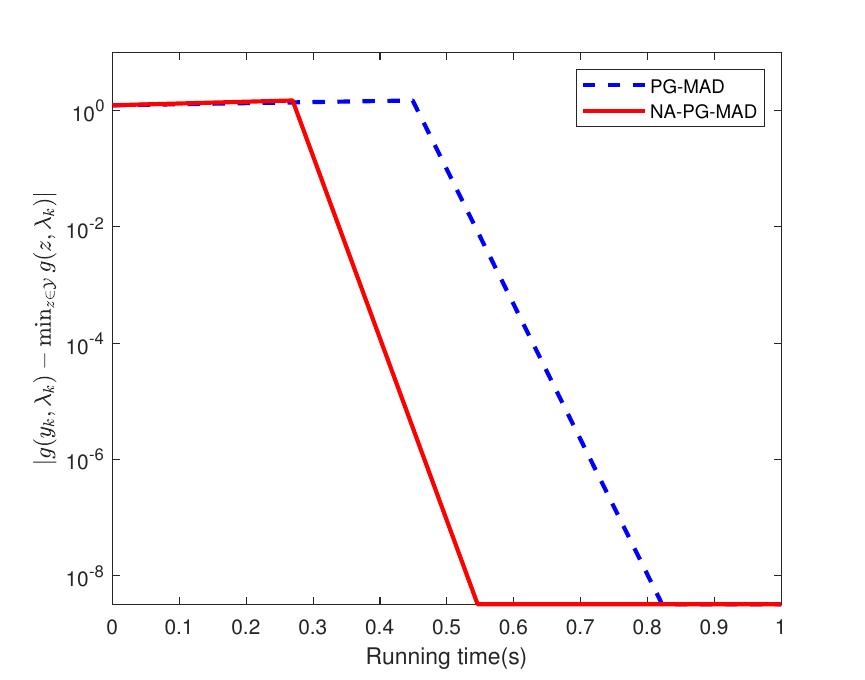}
      }
      \hspace{-2.5mm}
      \subfigure[Example \ref{example2}]{
            \label{fig:example2-2}
            \hspace{-2.5mm}
            \includegraphics[width=0.3\textwidth]{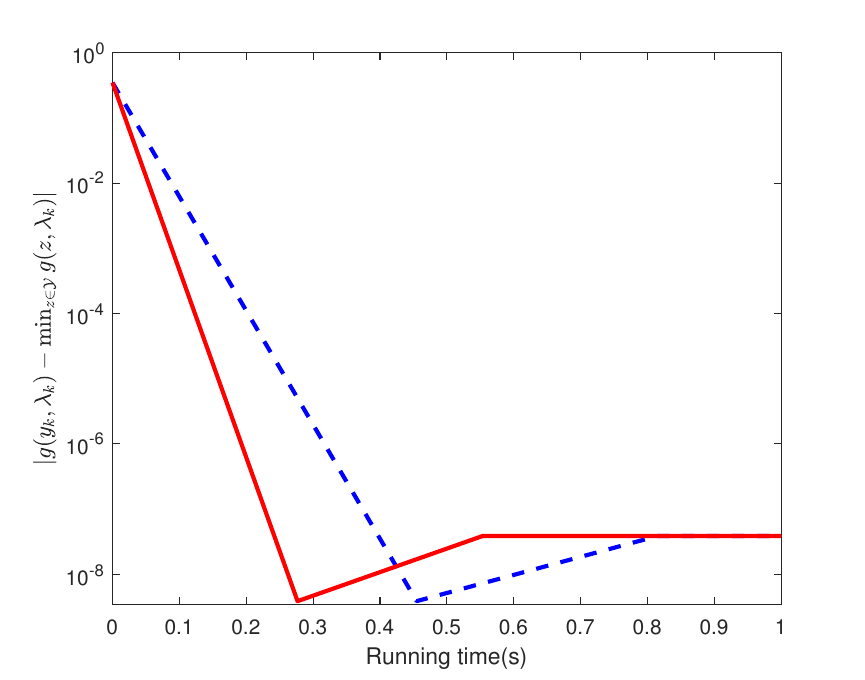}
      }
      \hspace{-2.5mm}
      \subfigure[Example \ref{example3}]{
            \label{fig:example3-2}
            \hspace{-2.5mm}
            \includegraphics[width=0.3\textwidth]{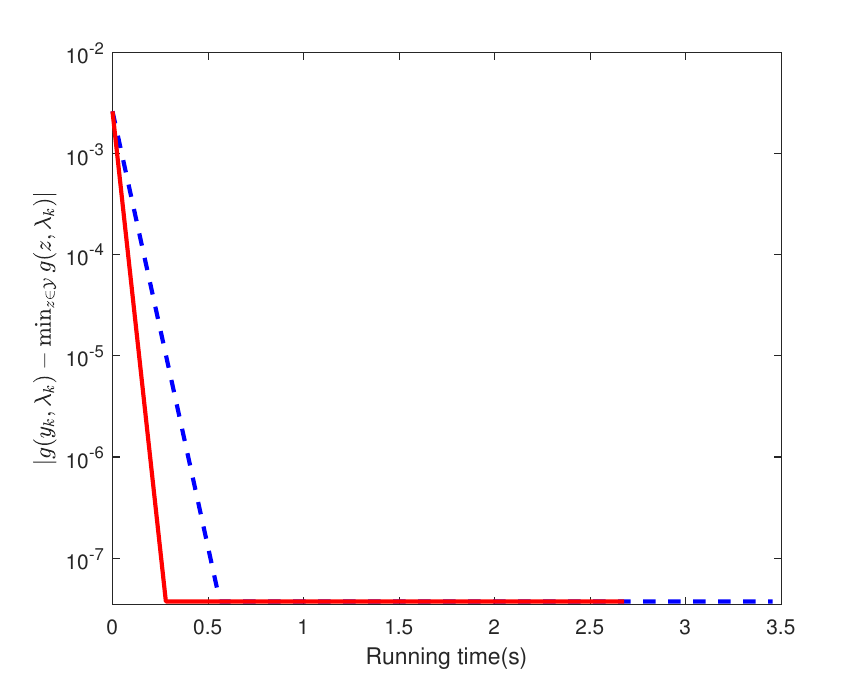}
      }
      \caption{The performance of PG-MAD and NA-PG-MAD for Examples \ref{example1}-\ref{example3}.}
      \label{fig:example_stronglyconvex-2}
    \end{figure}\vspace{-0.5cm}



    \subsection{Minimax linear bilevel optimization with linear lower-level}

    In this subsection, we focus on solving a specific minimax bilevel linear optimization 
    \begin{equation}
        \label{example:linear}
        \begin{aligned}
            \min_{x \in \mathcal{X}} \max_{y \in \mathcal{Y}, \lambda \in \Lambda} & c^{T}x + \lambda^{T}(Ax + By - c) \\
            \mathrm{s.t.~~~~~~~~} & y \in \mathop{\arg \min}_{z \in \mathcal{Y}} \left\{ d^{T}z + \lambda^{T}z\right\},
        \end{aligned}
    \end{equation}
    where $\mathcal{X} := \{x \in \mathbb{R}^{d_x}| E_1 x \leq e_1, E_2 x = e_2, -5 \leq x \leq 5 \}$, $\mathcal{Y}:= \{ y \in \mathbb{R}^{d_y}|  H_1 y \leq h_1, H_2 y = h_2, -3 \leq y \leq 3 \}$, and $\Lambda := \{ \lambda \in \mathbb{R}^{d_{\lambda}}| H_3 \lambda \leq h_3, H_4 \lambda = h_4, 0 \leq \lambda \leq 5 \}$. We randomly generate $c \in \mathbb{R}^{d_x}, d \in \mathbb{R}^{d_y}$ with all entries independently chosen from the standard normal distribution, $A \in \mathbb{R}^{d_{\lambda} \times d_x}, B \in \mathbb{R}^{d_{\lambda} \times d_y}, E_1 \in \mathbb{R}^{d_x \times d_x}, E_2 \in \mathbb{R}^{d_x \times d_x}, H_1 \in \mathbb{R}^{d_y \times d_y}, H_2 \in \mathbb{R}^{d_y \times d_y}, H_3 \in \mathbb{R}^{d_{\lambda} \times d_{\lambda}}$, and $H_4 \in \mathbb{R}^{d_{\lambda} \times d_{\lambda}} $ with all entries independently selected from a normal distribution with mean $-1$ and standard deviation $2$. Subsequently, we randomly generate $\hat{x} \in [-5,5]^{d_x}$ with all entries independently chosen from a standard normal distribution and then projected to $[-5,5]^{d_x}$, and we choose $e_1 = E_1 \hat{x} + \Delta \epsilon_1$ and $e_2 = E_2 \hat{x}$ with the disturbance $\Delta \epsilon_1$ given by a constant bias of $0.5$ plus normal distributed noise. In addition, we randomly generate $(h_1, h_2)$ and $(h_3, h_4)$ in a way similar to $(e_1, e_2)$.

    In Algorithms \ref{algo:ex_pracpenaltyprob} and \ref{algo:ex_pracpenaltyprob_NA}, we set $\alpha_x = 0.5, \alpha_y = 0.001, T = 5, \kappa = L_{\nabla P_{\rho}}$ with $\rho = 10^4$. The stopping criteria are set as follows: (i) the outer iteration number $k \geq 1000$, or (ii) $\frac{\Vert x_{k} - x_{k-1} \Vert}{\max\{1,\Vert x_k\Vert \}} \leq 10^{-4}, \text{Error} \leq 10^{-4}$ and $|g(y_k,\lambda_k) - \min_{z \in \mathcal{Y}} g(z,\lambda_k)| \leq 10^{-4}$. Here, ``$\text{Error}$'' denotes the sum of $\Vert G_{\alpha_{x}^{-1}}^{P_{\rho},\mathcal{X}}(x^k,y^k,\lambda^k,z^k)\Vert, \Vert G_{\tau}^{P_{\rho},\mathcal{Y}}(x^k,y^k,\lambda^k,z^k)\Vert, $\\$ \Vert G_{\tau}^{P_{\rho},\Lambda}(x^k,y^k,\lambda^k,z^k)\Vert$, and $\Vert G_{\alpha_{x}^{-1}}^{P_{\rho},\mathcal{Y}}(x^k,y^k,\lambda^k,z^k) \Vert$. For each $\lambda_k$, the value of the lower-level problem $\min_{z \in \mathcal{Y}} g(z,\lambda_k)$ is computed using the \text{linprog} solver. 
    The initial points $(x_0,y_0,\lambda_0)$ are randomly generated from their bound constraints, $(u_0,v_0)$ are randomly chosen from the standard normal distribution, $z_0 = \mathop{\arg \min}_{z \in \mathcal{Y}} \{d^{T}z + \lambda_{0}^{T}z \}$ and $x_{-1} = 0$.

    The computational results of PG-MAD and NA-PG-MAD for the minimax linear bilevel problem \cref{example:linear} are shown in \cref{fig:linear}. Both algorithms can find approximate solutions satisfying the definition of $\epsilon$-KKT solution in \cref{def:appKKT}. Moreover, NA-PG-MAD converges faster than PG-MAD in terms of CPU time.

    \begin{figure}[htbp]
      \centering 
      \vspace{-0.4cm}
      \setlength{\abovecaptionskip}{0.1cm}
      \setlength{\belowcaptionskip}{-0.2cm}
      \subfigbottomskip = -3 pt
      \subfigcapskip = -3 pt
      \subfigure[$d_x = 100, d_y = d_{\lambda} = 50$]{
            \label{fig:linear_100_50_50}
            \includegraphics[width=0.3\linewidth]{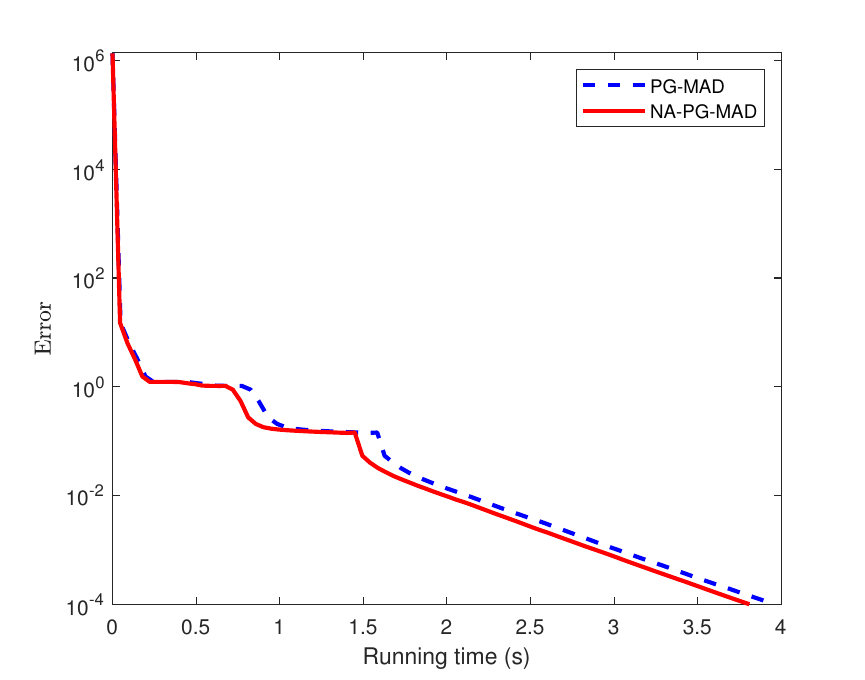}  
            \includegraphics[width=0.3\linewidth]{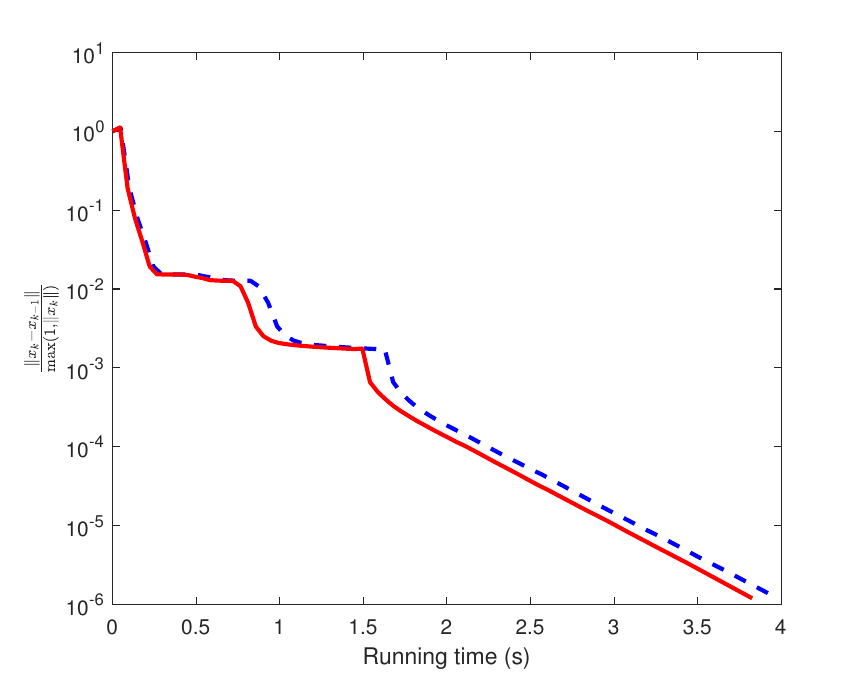}
            \includegraphics[width=0.3\linewidth]{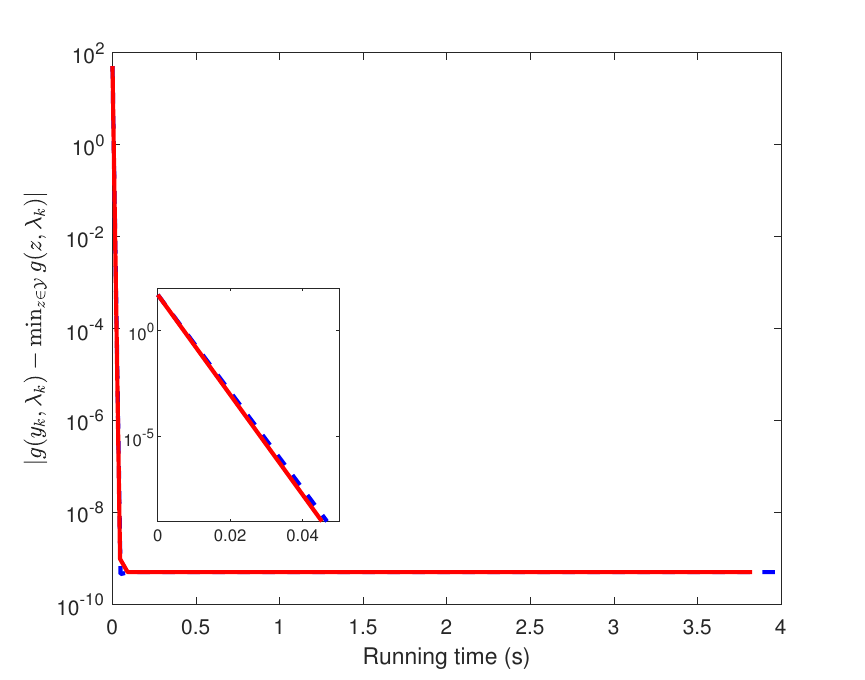}
      }
      \hspace{-10mm}
      \subfigure[$d_x = 100, d_y = d_{\lambda} = 100$]{
            \label{fig:linear_100_100_100}
            \includegraphics[width=0.3\linewidth]{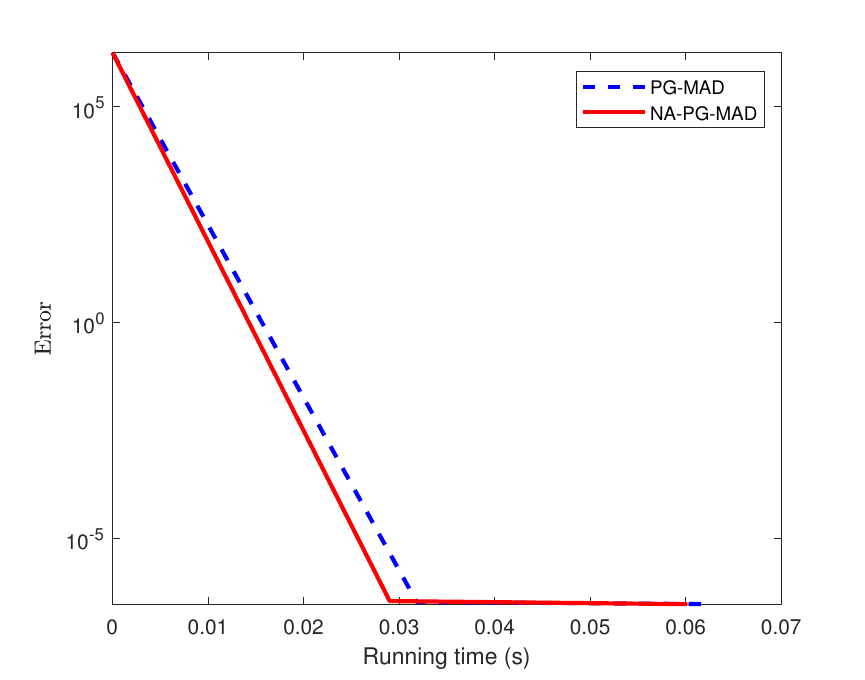}  
            \includegraphics[width=0.3\linewidth]{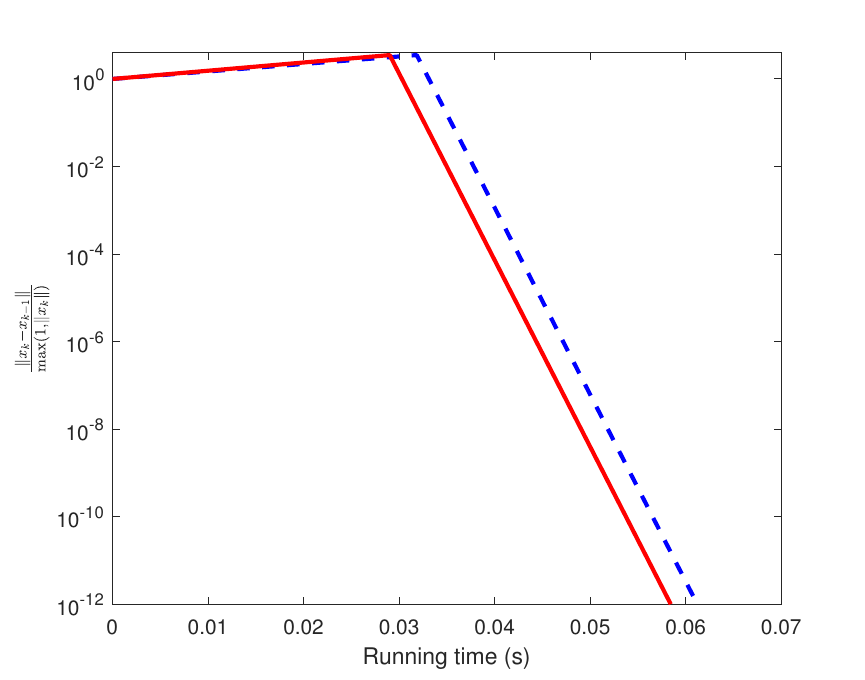}
            \includegraphics[width=0.3\linewidth]{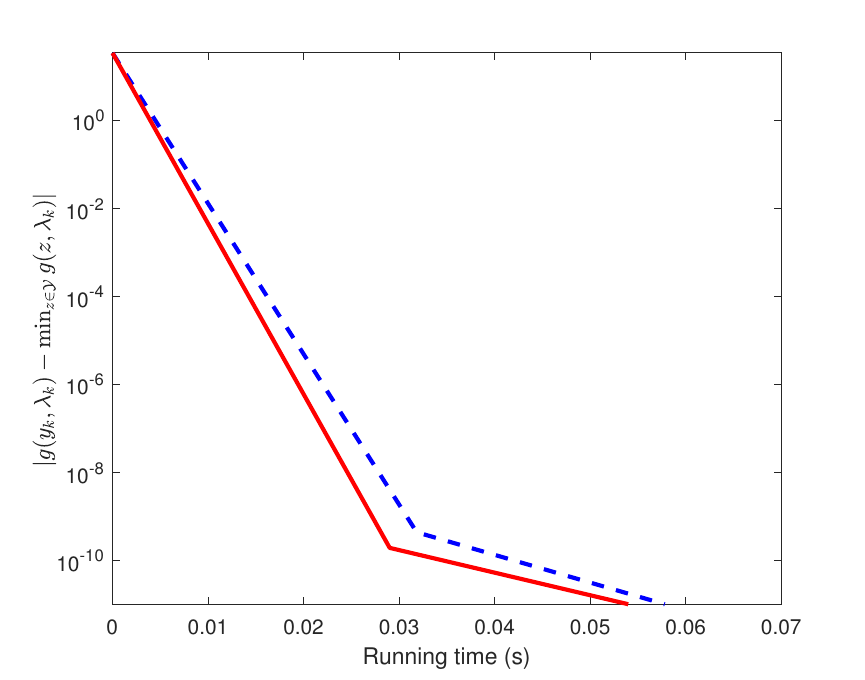}   
      }
      \hspace{-10mm}
      \subfigure[$d_x = 200, d_y = d_{\lambda} = 100$]{
      \label{fig:lineaar_lowerfuncval_200_100_100}
            \includegraphics[width=0.3\linewidth]{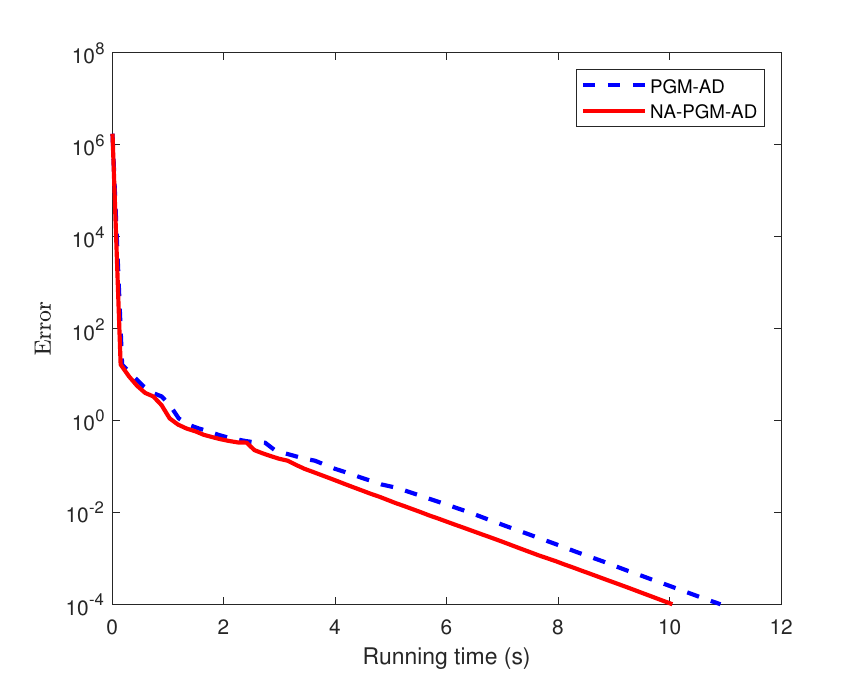}  
            \includegraphics[width=0.3\linewidth]{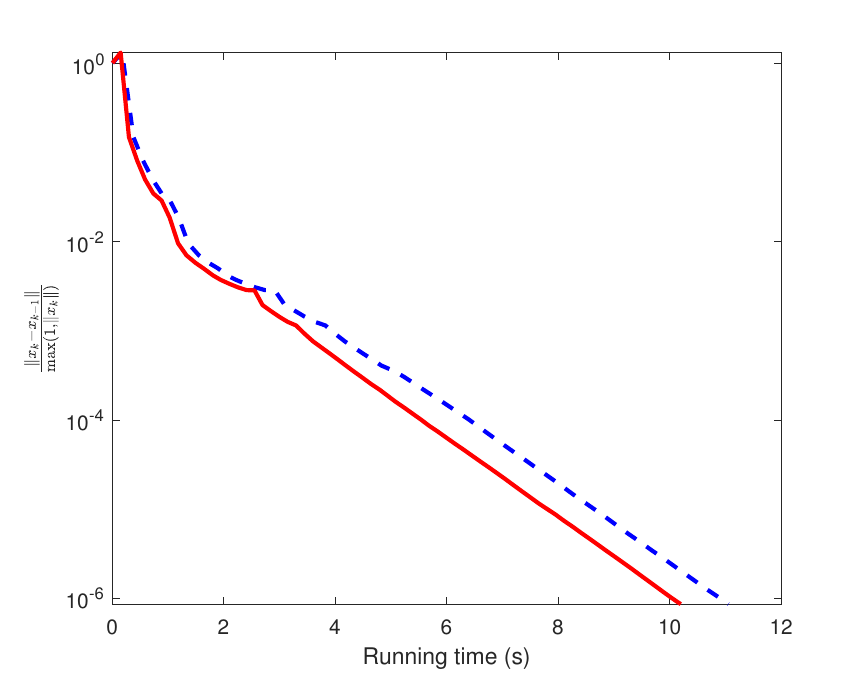}
            \includegraphics[width=0.3\linewidth]{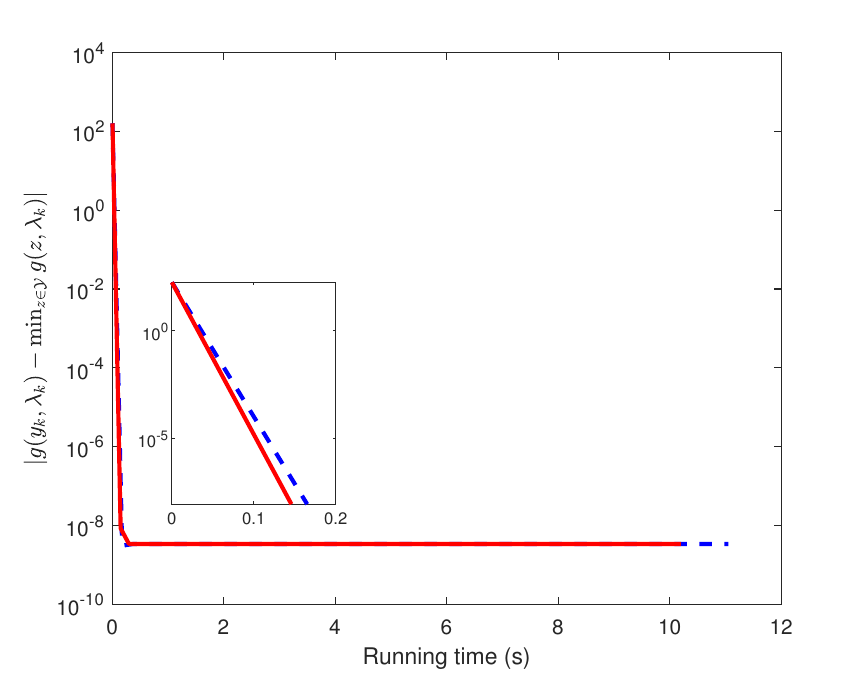}
      }
      \caption{The performance of PG-MAD and NA-PG-MAD for problem \cref{example:linear} with different dimensions.}
      \label{fig:linear}
    \end{figure}

    \subsection{Case study: Bilevel economic dispatch for DS with a single MG using DLMP}

    In this part, we validate the effectiveness of the proposed minimax bilevel model in a modified 33-bus DS \cite{Wang2021Bi-Level}, which is depicted in \cref{fig:cs_DSOandMG}. 
    The DS operates as the upper-level agent, while the MG acts as a price-taking lower-level agent. Both DS and MG seek to minimize their respective operational costs subject to their own physical and technical constraints, with coordination achieved through DLMP. The specific description of this problem can be found in Appendix \ref{app:detofcase}.

    \begin{figure}[htbp]
        \centering
        \vspace{-0.3cm}
        \setlength{\abovecaptionskip}{-0.1cm}
        \setlength{\belowcaptionskip}{-0.2cm}
        \includegraphics[scale = 0.2]
        {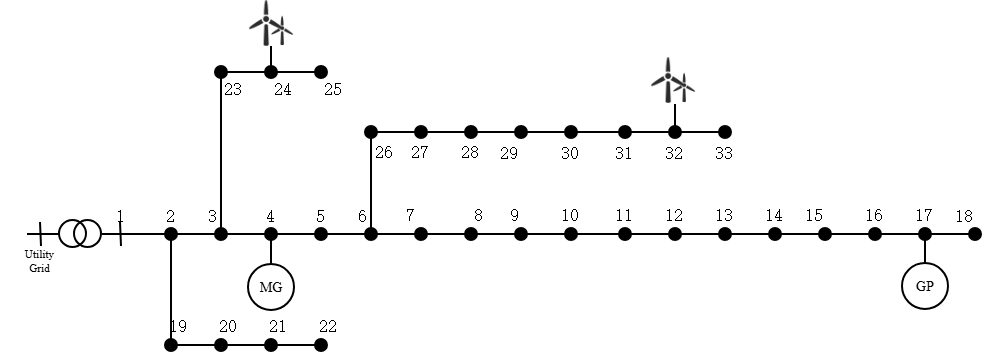}
        \caption{A modified 33-bus DSO}
        \label{fig:cs_DSOandMG}
    \end{figure}

    We utilized Algorithms \ref{algo:ex_pracpenaltyprob} and \ref{algo:ex_pracpenaltyprob_NA} to this practical application and compared them with the classical Alternating Direction Method of Multipliers (ADMM) algorithm. The detailed experimental results are shown in \cref{fig:casestudy}, where $\Vert y - e \Vert_2$ represents the power exchange between DSO and MG. As seen in \cref{fig:casestudy}, the proposed algorithms remain effective whether the upper-level objective function is linear or quadratic. In particular, compared to ADMM, PG-MAD demonstrate a faster convergence, while NA-PG-MAD exhibits a tighter error bound at the cost of a reduced convergence speed.

    \begin{figure}[htbp]
      \centering 
      \vspace{-0.5cm}
      \setlength{\abovecaptionskip}{-0.1cm}
      \setlength{\belowcaptionskip}{-0.2cm}
      \subfigure[Linear]{
            \label{fig:casestudy_linear_convergence}
            \includegraphics[width=0.23\textwidth]{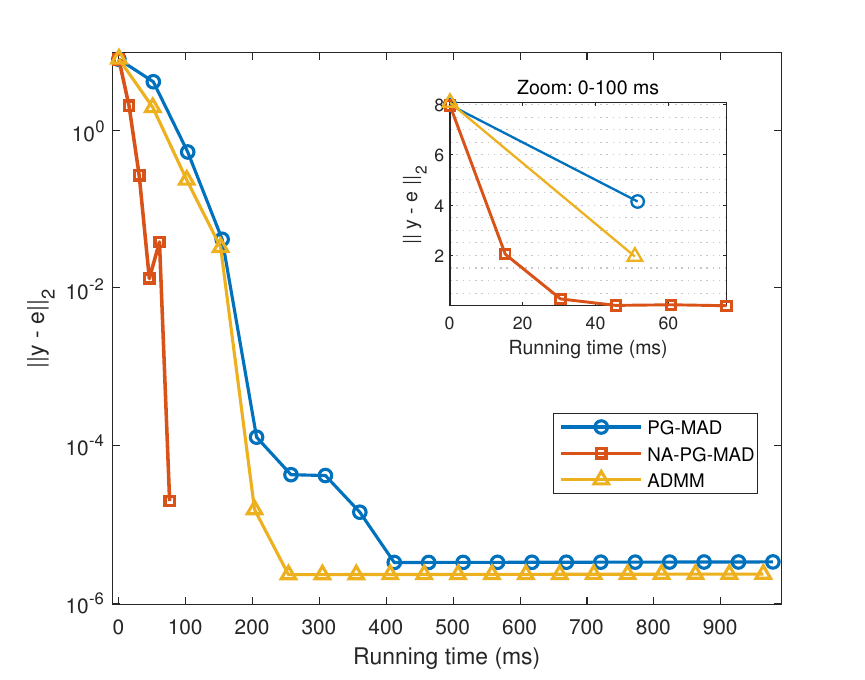}
            \includegraphics[width=0.23\textwidth]{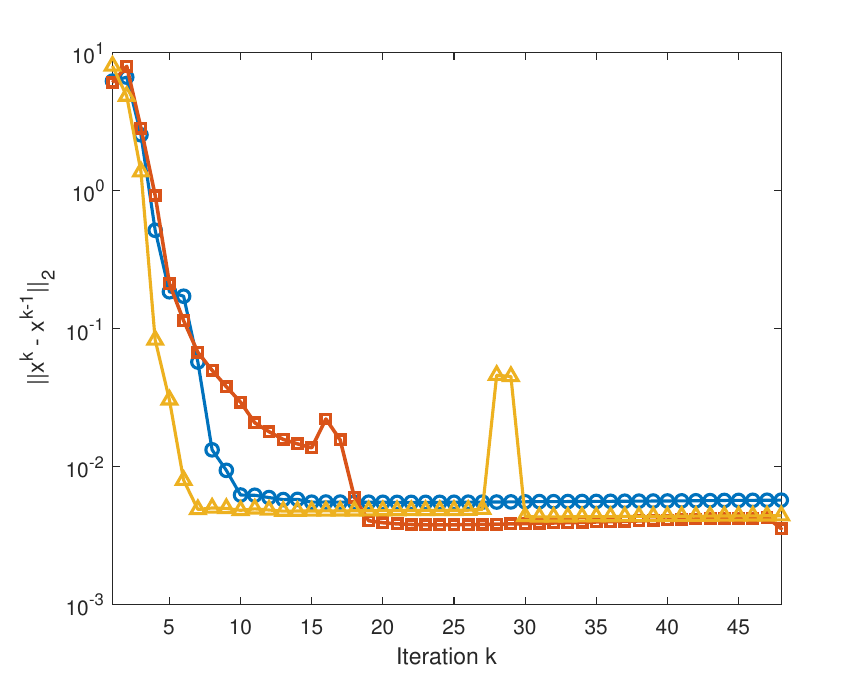}
      }
      \hspace{-5mm}
      \subfigure[Quadratic]{
            \label{fig:casestudy_quad_convergence}
            \includegraphics[width=0.23\textwidth]{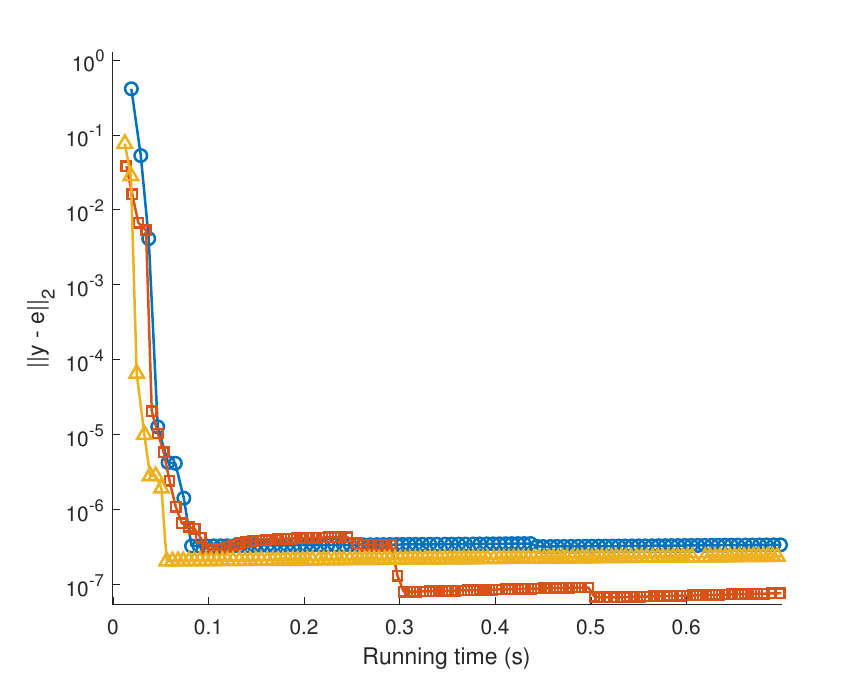}
            \includegraphics[width=0.23\textwidth]{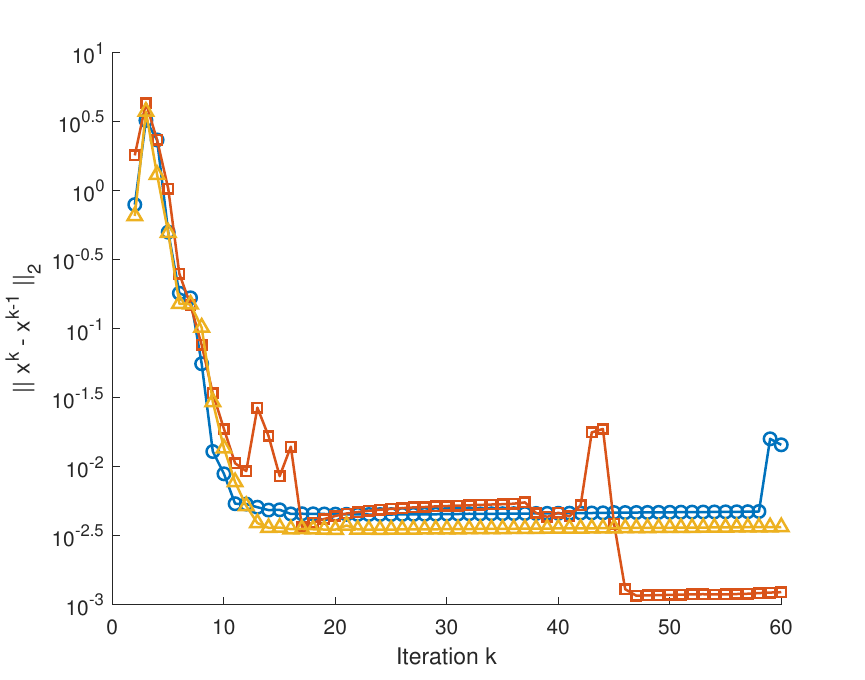}
      }
      \caption{Comparing the performance of our proposed algorithms PG-MAD and NA-PG-MAD with ADMM when the upper-level objective function is linear or quadratic.}
      \label{fig:casestudy}
    \end{figure}


\section{Conclusions}
\label{sec:conclusions}

In this paper, we focus on studying a novel bilevel optimization problem with a minimax structure, which can be viewed as a pessimistic bilevel problem. Using the KKT and value function reformulations of the lower-level problem, we extend several definitions of stationary point to minimax bilevel optimization, and establish its optimality conditions. By reformulating the lower-level problem into an equivalent inequality constraint, we transform the minimax bilevel problem into a single-level minimax problem via the penalty function method. We extend the projected gradient multi-step ascent descent method to solve the reformulated minimax problem, and incorporate Nesterov accelerated to enhance its computational efficiency. Under some mild conditions, we establish an iteration complexity bound of $\mathcal{O}(\epsilon^{-3}\log \epsilon^{-1})$ to find an $\epsilon$-KKT solution of the original minimax bilevel problem. Finally, our numerical results show the effectiveness of the proposed model and algorithms in both test problems and a practical application.

Our current work primarily focuses on the pessimistic formulation of minimax bilevel optimization problem. 
In future work, investigating the optimality conditions for the optimistic variant and developing corresponding first-order methods can be important and challenging. 
Furthermore, designing efficient algorithms to address the intrinsic iteration complexity of minimax bilevel problem remains an interesting issue.

\appendix

\section{Proofs of the main results in \Cref{sec_optimconds}} 
\label{Appendix:proofoptconds}

\begin{proof}[\textbf{Proof of \cref{thm:S-B}}]
    Since $(\bar{x},\bar{y},\bar{\lambda},\bar{\mu}^l)\in \mathcal{F}$  is  a minimax-strong stationary  of \cref{refpessbiopt_ours_d1} shown in \cref{S_s}, for any $D=(D_x,
    D_y, D_{\lambda}, D_{\mu})\in \mathcal {T}^{\rm{lin}}_{\mathcal {F}}(\bar{x},\bar{y},\bar{\lambda},\bar{\mu}^l)$, there exists $(\mu^x,\mu^y,\mu^{\lambda},\mu^{m},\mu^{h},\mu^{c})\in \mathbb{R}^{q_x\times q_y\times q_{\lambda}\times q_y\times d_y\times 1}$ such that
    {\small
\begin{align*}
&\left(\begin{array}{c}
     \nabla_x f(\bar{x},\bar{y},\bar{\lambda}) \\
    -\nabla_y f(\bar{x},\bar{y},\bar{\lambda}) \\
    - \nabla_{\lambda} f(\bar{x},\bar{y},\bar{\lambda}) \\
  \end{array}
\right)^{T}\left(
  \begin{array}{c}
    D_x \\
    D_y \\
    D_{\lambda} \\
  \end{array}
\right)\\
=&\left(\begin{array}{c}
     -(\mathcal{J}g_x(\bar{x}))^{T}\mu^x\\
    (\mathcal{J}g_y(\bar{y}))^{T}\mu^y+\nabla^2_{yy}g(\bar{y},\bar{\lambda})\mu^h+\sum^{d_l}_{i=1}\mu_i^l\nabla^2_{yy}({g}_y)_i(\bar{y})\mu^{h}+\mu^c(\mathcal{J}{g}_y(\bar{y}))^{T}\bar{\mu}^l\\
   (\mathcal{J}g_{\lambda}(\bar{\lambda}))^{T}\mu^{\lambda}+\nabla^2_{y\lambda}g(\bar{y},\bar{\lambda})\mu^h \\
  \end{array}
\right)^{T}\left(
  \begin{array}{c}
    D_x \\
    D_y \\
    D_{\lambda} \\
  \end{array}
\right)\\
=&\left(\begin{array}{c}
     -(\mathcal{J}g_x(\bar{x}))^{T}\mu^x\\
    (\mathcal{J}g_y(\bar{y}))^{T}\mu^y+\mu^c(\mathcal{J}{g}_y(\bar{y}))^{T}\bar{\mu}^l\\
   (\mathcal{J}g_{\lambda}(\bar{\lambda}))^{T}\mu^{\lambda} \\
  \end{array}
\right)^{T}\left(
  \begin{array}{c}
    D_x \\
    D_y \\
    D_{\lambda} \\
  \end{array}
\right)\\
=&\left(\begin{array}{c}
     -((\mathcal{J}g_x(\bar{x}))_{d_x\times I_x})^{T}\mu^x\\
    ((\mathcal{J}g_y(\bar{y}))_{d_y\times (\alpha\cup\beta)})^{T}\mu^y+\mu^c((\mathcal{J}{g}_y(\bar{y}))_{d_y\times (\alpha\cup\beta)})^{T}\bar{\mu}^l\\
   ((\mathcal{J}g_{\lambda}(\bar{\lambda}))_{d_{\lambda}\times I_{\lambda}})^{T}\mu^{\lambda} \\
  \end{array}
\right)^{T}\left(
  \begin{array}{c}
    D_x \\
    D_y \\
    D_{\lambda} \\
  \end{array}
\right)
\geq 0,
 \end{align*} 
 }
 where the third last equality and the last inequality are derived from the definition of the linearized  tangent cone $\mathcal {T}^{\rm{lin}}_{\mathcal {F}}(\bar{x},\bar{y},\bar{\lambda},\bar{\mu}^l)$, while the second last equality arises from the complementary constraints between the multiplier and the constraint in the Minimax-S-stationary conditions.
\end{proof}

\begin{proof}[\textbf{Proof of \cref{thm:Case-FM}}]

Define
\begin{align*}\Sigma_1(p,q,r,s):=\{&(u,v,w)\in\Omega\times\mathbb{R}^{d_y\times d_{\lambda}\times q_y}| g_{\lambda}(w)+p\leq0,\ h(w)+q=0,\\
&-g_y(y)-u+r=0,\ \mu^l-v+s=0
\},\\
\Sigma_2(t):=\{&x\in\mathbb{R}^{d_x}| g_x(w)+t\leq0
\},
\end{align*}
where $w:=(y,\lambda,\mu^l)$, $
h(w):=\nabla_yg(y,\lambda)+(\mathcal{J}{g}_y(y))^{T}\mu^l=C_y+G^{T}\mu^l $ defined in \cref{Affm}, and  $\Omega:=\{(u,v)\in \mathbb{R}^{q_y}_+\times\mathbb{R}^{q_y}_+ | \ \langle u,v\rangle=0\}$. Obviously, 
$${\rm gph} \Sigma_1(p,q,r,s)=\{(p,q,r,s;u,v,w)| \ \Sigma_1(p,q,r,s)=(u,v,w)\}$$
and
$${\rm gph} \Sigma_2(t)=\{(t;x)| \ \Sigma_2(t)=x\}$$
are the union of finitely many polyhedra, which implies that  $\Sigma_1$ and $\Sigma_2$ are   polyhedral set-valued mappings. From \cite{Robinson1981}, $\Sigma_1$ and $\Sigma_2$ are Lipschitz continuous at $(0,0,0,0)\in \mathbb{R}^{q_{\lambda}\times d_y\times q_y\times q_y}$ and $0\in \mathbb{R}^{q_x}$, respectively. That is, there exist a neighborhood $S_1$ of $(0,0,0,0)\in \mathbb{R}^{q_{\lambda}\times d_y\times q_y\times q_y}$ and a neighborhood $S_2$ of $0\in \mathbb{R}^{q_x}$  and constants $a,\ b\geq0$ satisfying 
\begin{equation*}
    \begin{array}{cc}
         & \Sigma_1(p,q,r,s)\subset\Sigma_1(0,0,0,0)-a\|(p,q,r,s)\|\textbf{B},\ \forall (p,q,r,s)\in S_1, \\
         & \Sigma_2(t)\subset\Sigma_2(0)+b\|t\|\textbf{B},\ \forall t\in S_2,
    \end{array}
\end{equation*}
where $\textbf{B}$ denotes the unit ball. Hence, Min-MaxPCC \cref{refpessbiopt_ours_d1} is equivalent to the following minimax problem
\begin{equation}
    \label{ec1}
    \begin{aligned}
        \min_{x} \max_{y,\lambda,\mu^l,u,v } \ & f(x,y, \lambda) \\
        \mathrm{s.t.~~~~~~} \ & (u,v,w)\in \Sigma_1(0,0,0,0),\ x\in \Sigma_2(0).
    \end{aligned}
\end{equation}
Since $(\bar{x},\bar{w},-g_y(\bar{y}),\bar{\mu}^l)$ is the local minimax point of \cref{ec1}, it implies from Clarke's precise penalty principle \cite{Schirotzek2007} that there exists $K>0$ such that $(-g_y(\bar{y}),\bar{\mu},\bar{w})$ is the local minimax point for the following unconstrained minimax problem
$$ \min_{x} \max_{y,\lambda,\mu^l,u,v } \ f(x,y, \lambda) -K_1d((u,v,w), \Sigma_1(0,0,0,0))+K_2d(x, \Sigma_2(0)),$$
where $d(x, X)$ is the distance from the point $x$  to the set $X$. Note that $\Sigma_1(p,q,r,s)$ only contains variables $y,\lambda,\mu$ in the inner maximization problem, which implies that the complementarity constraint only affects to the inner maximization problem. Therefore, we construct the penalty function to consider penalizing the constraint on the inner maximization problem.

Since $\Sigma_1(p,q,r,s)$ and $\Sigma_2(x)$ are upper Lipschitz functions, it implies from the error bounds for upper Lipschitz functions in \cite{Pang1997} that 
    \begin{align*}
          d((u,v,w), \Sigma_1(0,0,0,0))\leq a\|(p,q,r,s)\|,\ & \forall (p,q,r,s)\in S_1, (u,v,w)\in \Sigma_1(p,q,r,s), \\
         d(x, \Sigma_2(0))\leq b\|t\|, \ & \forall t\in S_2,\forall x \in \Sigma_2(t).
    \end{align*}
Then, we have that $(\bar{x},\bar{w},0,0,0,0,0)$ is the local minimax point for the following minimax problem
\begin{equation}
    \label{ec2}
    \begin{aligned}
        \min_{ x,t} \max_{y,\lambda,\mu^l,p,q,r,s } \ & f(x,y, \lambda)-aK_1\|(p,q,r,s)\|+bK_2\|t\| \\
        \mathrm{s.t.~~~~~~} \ &g_x(x)+t\leq 0,\ g_{\lambda}(\lambda)+p\leq0,\ h(w)+q=0,\\
&-g_y(y)+r\geq0,\ \mu^l+s\geq0, \langle -g_y(y)+r,\mu^l+s\rangle=0.
    \end{aligned}
\end{equation}

In the following, we prove that  $(\bar{x},\bar{w},0,0,0,0,0)$ is the Minimax-M-stationary point of \cref{ec2}. Consider the following equivalent minimax problem of \cref{ec2}
\begin{equation}
    \label{ec3}
    \begin{aligned}
        \min_{x,t} \max_{y,\lambda,\mu^l,p,q,r,s } \ & f(x,y, \lambda)-aK_1\|(p,q,r,s)\|+bK_2\|t\|\\
        \mathrm{s.t.~~~~~~} \ &g_x(x)+t\leq 0,\  g_{\lambda}(\lambda)+p\leq0,\ h(w)+q=0,\\
&-g_y(y)+\phi+r=0,\ \mu^l-\psi+s=0, (\phi,\psi)\in {\rm gph}\mathcal{N}_{\mathbb{R}^{q_y}_-},
    \end{aligned}
\end{equation}
where $\mathcal{N}_{\mathbb{R}^{q_y}_-}$ denotes the normal cone of the set $\mathbb{R}^{q_y}_-$ and ${\rm gph}\mathcal{N}_{\mathbb{R}^{q_y}_-}=\{(\phi,\psi):\ \psi=\mathcal{N}_{\mathbb{R}^{q_y}_-}(\phi)\}$. The local minimax point of \cref{ec3} is $(\bar{\phi},\bar{\psi},\bar{x},\bar{w},0,0,0,0,0)=(g_y(\bar{y}),\bar{\mu}^l,\bar{x},$ \\ $ \bar{w},0,0,0,0,0)$. From the Fritz-John optimal condition of the Lipschitz continuous programming as Theorem 12.4.1 in \cite{Schirotzek2007}, there exist $\mu_0\geq 0$ and the multipliers $U=(\mu^x,\mu^{\lambda},\mu^{h},\mu^y,\mu^{m})\in \mathbb{R}^{q_x\times q_{\lambda}\times d_{y}\times q_y\times q_y}$ that are not entirely zero, and $(\xi_1,\xi_2)\in \mathcal{N}_{{\rm gph}\mathcal{N}_{\mathbb{R}^{q_y}_-}(\bar{\phi},\bar{\psi})}$, such that $h(\bar{w})=0$,  $0\geq g_x(\bar{x})\bot \mu^{x}\geq 0,\ 0\geq g_{\lambda}(\bar{\lambda})\bot \mu^{\lambda}\geq 0$ and
{\scriptsize
\begin{equation*}
    \left( 
    \begin{smallmatrix}
        0\\
        0\\
        0 \\
        0 \\
        0\\
        0\\
        0\\
        0\\
        0\\
        0\\
        0
    \end{smallmatrix}
    \right) \in  \mu_0 \left(
    \begin{smallmatrix}
        0\\
    0 \\
   \nabla_x f(\bar{x},\bar{y},\bar{\lambda})\\
    -\nabla_y f(\bar{x},\bar{y},\bar{\lambda})\\
   - \nabla_{\lambda} f(\bar{x},\bar{y},\bar{\lambda})\\
   0\\
   aK_1\delta(0)\\
   aK_1\delta(0)\\
   aK_1\delta(0)\\
   aK_1\delta(0)\\bK_2\delta(0)
    \end{smallmatrix}
    \right) + \left( 
    \begin{smallmatrix}
        0&0&0&-\mathcal{I}_{q_y\times q_y}&0\\
     0&0&0&0&\mathcal{I}_{q_y\times q_y}\\
   (\mathcal{J}g_x(\bar{x}))^{T}&0&0&0&0\\
   0& (\mathcal{J}g_{\lambda}(\bar{\lambda}))^{T}& \nabla^2_{yy}g(\bar{y},\bar{\lambda})&0&0\\
   0&0&\sum^{d_l}_{i=1}\mu_i^l\nabla^2_{yy}({g}_y)_i(\bar{y})&(\mathcal{J}{g}_y(\bar{y}))^{T}&0\\
   0&0&(\mathcal{J}g_y(\bar{y}))^{T}&0&-\mathcal{I}_{q_y\times q_y}\\
   0&\mathcal{I}_{q_x\times q_x}&0&0&0\\
   0&0&\mathcal{I}_{d_y\times d_y}&0&0\\
   0&0&0&-\mathcal{I}_{q_y\times q_y}&0\\
   0&0&0&0&-\mathcal{I}_{q_y\times q_y}\\
    \mathcal{I}_{q_x\times q_x}&0&0&0&0
    \end{smallmatrix}
    \right)  U + \left(
    \begin{smallmatrix}
        \xi_1\\
        \xi_2 \\
        0\\
        0\\
        0\\
        0\\
        0\\
        0\\
        0\\
        0\\
        0
    \end{smallmatrix}
    \right)
\end{equation*}}
  where the function 
  $$\delta(\theta):=\partial\|\theta\|=\left\{\begin{aligned}
       \theta/\|\theta\| ,  &\ \theta\neq0 ,\\
       \|\theta\|\leq1,&\ \theta=0,
    \end{aligned}\right.$$
and $\mathcal{I}$ is the identity matrix. This optimal condition implies that 
\begin{align}
&C_y+G^{T}\bar{\mu}^l=0, \label{OC1} \\ 
&0\geq g_x(\bar{x})\bot \mu^{x}\geq 0,\ 0\geq g(\bar{w})\bot \mu^{\lambda}\geq 0, \label{OC2}\\ 
&(\mu^y,-\mu^{m})\in \mathcal{N}{{\rm gph}\mathcal{N}_{\mathbb{R}^{q_y}_-}(\bar{\phi},\bar{\psi})}, \label{OC3}
\end{align}
{\small
\begin{equation}
    \left( 
\begin{smallmatrix}
  0\\ 
  0 \\
   0\\
   0\\
   0\\
   0\\
   0\\
   0\\
   0
  \end{smallmatrix}
  \right)\in \mu_0 \left( \begin{smallmatrix}
   \nabla_x f(\bar{x},\bar{y},\bar{\lambda})\\
    -\nabla_y f(\bar{x},\bar{y},\bar{\lambda})\\
   - \nabla_{\lambda} f(\bar{x},\bar{y},\bar{\lambda})\\
   0\\
   aK_1\delta(0)\\
   aK_1\delta(0)\\
   aK_1\delta(0)\\
   aK_1\delta(0)\\bK_2\delta(0)
  \end{smallmatrix}\right) 
  +\underbrace{\left( \begin{smallmatrix}
   (\mathcal{J}g_x(\bar{x}))^{T}&0&0&0&0\\
   0& (\mathcal{J}g_{\lambda}(\bar{\lambda}))^{T}& \nabla^2_{yy}g(\bar{y},\bar{\lambda})&0&0\\
   0&0&\sum^{d_l}_{i=1}\mu_i^l\nabla^2_{yy}({g}_y)_i(\bar{y})&(\mathcal{J}{g}_y(\bar{y}))^{T}&0\\
   0&0&(\mathcal{J}g_y(\bar{y}))^{T}&0&-\mathcal{I}_{q_y\times q_y}\\
   0&\mathcal{I}_{q_x\times q_x}&0&0&0\\
   0&0&\mathcal{I}_{d_y\times d_y}&0&0\\
   0&0&0&-\mathcal{I}_{q_y\times q_y}&0\\
   0&0&0&0&-\mathcal{I}_{q_y\times q_y}\\
    \mathcal{I}_{q_x\times q_x}&0&0&0&0
  \end{smallmatrix}\right)}_{\mathcal{A}}  U. \label{OC4}
\end{equation}}
From \cref{OC3}, we have
\[
     \label{OC3-1}
     \mu^y_{\gamma}=0,\ \mu^m_{\alpha}=0,\ {\rm either}\  \mu^y_{i}>0,\ \mu^m_i>0\ {\rm or}\ \mu^y_{i}\mu^m_i=0, i\in\beta.
\]
In the following, we show that $\mu_0\neq 0$. We prove by contradiction. Suppose that $\mu_0= 0$. Since $\mathcal{A}$ is a column full rank matrix, the linear equation $\mathcal{A}U=0$ has only the zero solution, which contradicts the  Fritz-John optimal condition, i.e., $\mu_0 \neq 0$. 
Combined with \cref{OC1}, \cref{OC2}, \cref{OC3-1}, \cref{OC4},  $(\bar{x},\bar{w},0,0,0,0,0)$ is the Minimax-M-stationary point of \cref{ec2}, which implies that $(\bar{x},\bar{y},\bar{\lambda},\bar{\mu}^l)$  is the Minimax-M-stationary point of \cref{refpessbiopt_ours_d1}. 
\end{proof}

\begin{proof}[\textbf{Proof of \cref{thm:B-M}}]
    Since Minimax-Abadie constraint qualification holds at the local minimax point $(\bar{x},\bar{y},\bar{\lambda},\bar{\mu}^l)$, we have $D=(D_x,
    D_y,
    D_{\lambda}, D_{\mu})=0$ is the optimal solution of the following minimax problem 
    \begin{equation}
    \label{ec6}
    \begin{aligned}
        \min_{x} \max_{y,\lambda,\mu^l} \ & \nabla f(\bar{x},\bar{y},\bar{\lambda})^{T}(D_x,D_y,D_{\lambda})\\
        \mathrm{s.t.~~~~~~} \ &  D\in\mathcal {T}^{\rm{lin}}_{\mathcal {F}}(\bar{x},\bar{y},\bar{\lambda},\bar{\mu}^l),
    \end{aligned}
    \end{equation}
where $\nabla f(\bar{x},\bar{y},\bar{\lambda})=(\nabla_x f(\bar{x},\bar{y},\bar{\lambda}), -\nabla_y f(\bar{x},\bar{y},\bar{\lambda}),-\nabla_{\lambda} f(\bar{x},\bar{y},\bar{\lambda}))$. Note that \eqref{ec6} is a minimax problem with complementarity constraints, where the constraints are all affine mappings. From  \cref{thm:Case-FM}, we have $D=(D_x,D_y,D_{\lambda}, D_{\mu})=0$ is a Minimax-M stationary point of the minimax problem \eqref{ec6}, which implies that  $(\bar{x},\bar{y},\bar{\lambda},\bar{\mu}^l)$  is the Minimax-M-stationary point of \cref{refpessbiopt_ours_d1}.  
\end{proof}

\section{Proofs of the main results in \Cref{sec_pracfirordermethod}}

\subsection{Projected gradient multi-step ascent descent method} \label{Appendix:subsecPGMAD}

In the proof of \cref{prop:ex_Lipvalfunc}, we make use of Danskin's theorem, which is described in the following lemma. 

\begin{lemma}[Danskin's Theorem, \cite{Bernhard1995On}]
    \label{thm:Danskin}
    Let $Z \subset \mathbb{R}^m$ be a nonempty compact set, and $\phi: \mathbb{R}^n \times Z \to \mathbb{R}$ be such that $\phi(\cdot,z)$ is differentiable for each $z \in Z$ and $\nabla_{x} \phi(x,z)$ is continuous on $\mathbb{R}^{n} \times Z$.  Also, let $Z^{*}(x) = \left\{ z \in \mathop{\arg \max}_{z \in Z} \phi(x,z) \right\}$. Then, $\mathcal{V}(x) := \max_{z \in Z} \phi(x,z)$ is locally Lipschitz continuous, directionally differentiable, and its directional derivative is given by
    \begin{equation*}
        \mathcal{V}^{\prime}(x;h) = \max_{z \in Z^{*}(x)} \phi^{\prime}(x,z;h).
    \end{equation*}
    In particular, if for some $x \in \mathbb{R}^n$ the set $Z^{*}(x) = \{ z^{*}\}$ is a singleton, then $\mathcal{V}$ is differentiable at $x$, and $\nabla \mathcal{V}(x) = \nabla_{x} \phi(x,z^{*})$. 
\end{lemma}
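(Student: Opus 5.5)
The proof proceeds in three steps. First I establish local Lipschitz continuity by a uniform gradient bound. Next I obtain the directional derivative formula as a matching lower bound and upper bound, the upper bound coming from a compactness argument on maximizers. Finally I deduce differentiability in the singleton case. Throughout I use that $\phi$ is jointly continuous on $\mathbb{R}^n\times Z$. This is implicit in the standard statement of \cref{thm:Danskin} (it is needed to guarantee $\mathcal{V}$ is finite and $Z^*(x)\neq\emptyset$), and I will state it explicitly.

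\emph{Step 1 (local Lipschitz continuity).} Fix $x$ and a closed ball $\bar B(x,r)$. Since $\nabla_x\phi$ is continuous on the compact set $\bar B(x,r)\times Z$, it is bounded there by some $L$. By the mean value theorem, $|\phi(x_1,z)-\phi(x_2,z)|\le L\|x_1-x_2\|$ for all $x_1,x_2\in\bar B(x,r)$, uniformly in $z\in Z$. Taking the maximum over $z$ preserves this bound, so $|\mathcal{V}(x_1)-\mathcal{V}(x_2)|\le L\|x_1-x_2\|$.

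\emph{Step 2 (directional derivative).} For the lower bound, take any $z^*\in Z^*(x)$ and $t>0$. Then $\mathcal{V}(x+th)-\mathcal{V}(x)\ge \phi(x+th,z^*)-\phi(x,z^*)$. Dividing by $t$ and letting $t\downarrow 0$ gives $\liminf_{t\downarrow0} t^{-1}(\mathcal{V}(x+th)-\mathcal{V}(x))\ge \nabla_x\phi(x,z^*)^{T}h$, and we then maximize over $z^*\in Z^*(x)$.

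For the upper bound, take $t_k\downarrow0$ attaining the $\limsup$ and choose $z_k\in Z^*(x+t_kh)$. Then
\[
\mathcal{V}(x+t_kh)-\mathcal{V}(x)\le \phi(x+t_kh,z_k)-\phi(x,z_k)=t_k\nabla_x\phi(x+s_kt_kh,z_k)^{T}h
\]
for some $s_k\in(0,1)$. By compactness of $Z$ we may pass to a subsequence with $z_k\to\bar z$.

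To see that $\bar z\in Z^*(x)$, note that for every $z\in Z$ we have $\phi(x+t_kh,z_k)\ge\phi(x+t_kh,z)$. Joint continuity lets us pass to the limit, giving $\phi(x,\bar z)\ge\phi(x,z)$. Since $\nabla_x\phi$ is continuous at $(x,\bar z)$, we get $\limsup \le \nabla_x\phi(x,\bar z)^{T}h\le\max_{z\in Z^*(x)}\nabla_x\phi(x,z)^{T}h$. The two bounds coincide, so the limit exists and equals $\max_{z\in Z^*(x)}\phi'(x,z;h)$.

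\emph{Step 3 (singleton case).} If $Z^*(x)=\{z^*\}$, then $\mathcal{V}'(x;h)=\nabla_x\phi(x,z^*)^{T}h$, which is linear in $h$. Hence $\mathcal{V}$ is Gâteaux differentiable at $x$. In finite dimensions, Gâteaux differentiability combined with the local Lipschitz continuity of Step 1 yields Fréchet differentiability. This follows by a standard covering argument: uniform convergence of the difference quotients on the compact unit sphere is obtained from pointwise convergence together with the uniform Lipschitz bound. Therefore $\nabla\mathcal{V}(x)=\nabla_x\phi(x,z^*)$.

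The main obstacle is the upper bound in Step 2. It requires identifying the cluster point $\bar z$ of the perturbed maximizers as an element of $Z^*(x)$, an upper-semicontinuity property of the argmax map. This is exactly where joint continuity of $\phi$ and compactness of $Z$ are essential, so I would verify that hypothesis carefully before anything else.
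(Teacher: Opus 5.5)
Your proof is correct. The paper gives no proof of this lemma: it quotes it from Bernhard and Rapaport and only uses it in the proof of \cref{prop:ex_Lipvalfunc}, so there is no in-paper argument to compare against. What you wrote is the standard self-contained argument: a lower bound from a fixed maximizer, an upper bound from cluster points of perturbed maximizers, and then Lipschitz plus G\^ateaux implies Fr\'echet in finite dimensions. Each step checks out.

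Your decision to add joint continuity of $\phi$ is not merely cosmetic, because the lemma as printed is false without it. For example, take $Z=[0,1]$ and $\phi(x,z)=h(z)$ with $h(z)=z$ for $z<1$ and $h(1)=0$. This satisfies every stated hypothesis (since $\nabla_x\phi\equiv 0$), yet $\max_{z\in Z}\phi(x,z)$ is not attained, so $\mathcal{V}$ and $Z^{*}(x)$ are not even well defined.

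A weaker assumption would suffice: continuity of $\phi(x_0,\cdot)$ for a single $x_0$. Then $\phi(x,z)=\phi(x_0,z)+\int_0^1\nabla_x\phi(x_0+s(x-x_0),z)^{T}(x-x_0)\,ds$ is jointly continuous. In the paper's application, $Q$ is jointly continuous anyway, so nothing is lost by your added hypothesis.
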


  \begin{proof}[\textbf{Proof of \cref{prop:ex_Lipvalfunc}}]
    For simplicity, let $(y_{*}^1,\lambda_{*}^1) = (y_{*}(x^1,z^1,u^1,v^1), $\\$ \lambda_{*}(x^1,z^1,u^1,v^1))$ and $(y_{*}^2,\lambda_{*}^2) = (y_{*}(x^2,z^2,u^2,v^2),\lambda_{*}(x^2,z^2,u^2,v^2))$. By using the definition of $(y_{*}(x,z,u,v), \lambda_{*}(x,z,u,v))$ and the $\kappa$-strong concavity of the function $(y,\lambda) \to Q(x,z,u,v,y,\lambda)$, we get 
        \begin{align*}
            & Q(x^1,z^1,u^1,v^1,y_{*}^1,\lambda_{*}^1) - Q(x^1,z^1,u^1,v^1,y_{*}^2,\lambda_{*}^2) \\
            \leq & \langle \nabla_{(y,\lambda)} Q(x^1,z^1,u^1,v^1,y_{*}^2,\lambda_{*}^2), (y_{*}^1,\lambda_{*}^1) - (y_{*}^2,\lambda_{*}^2) \rangle - \frac{\kappa}{2} \Vert (y_{*}^1,\lambda_{*}^1) - (y_{*}^2,\lambda_{*}^2) \Vert^2,
        \end{align*}
    and 
        \begin{align*}
            & Q(x^1,z^1,u^1,v^1,y_{*}^2,\lambda_{*}^2) - Q(x^1,z^1,u^1,v^1,y_{*}^1,\lambda_{*}^1)\\
            \leq & \langle \nabla_{(y,\lambda)} Q(x^1,z^1,u^1,v^1,y_{*}^1,\lambda_{*}^1), (y_{*}^2,\lambda_{*}^2) - (y_{*}^1,\lambda_{*}^1) \rangle - \frac{\kappa}{2} \Vert (y_{*}^2,\lambda_{*}^2) - (y_{*}^1,\lambda_{*}^1) \Vert^2.
        \end{align*}
    Adding the above two inequalities yields  
    \begin{equation}
        \label{ex_sconcave_ineq1}
        \begin{aligned}
            \kappa \Vert (y_{*}^2,\lambda_{*}^2) - (y_{*}^1,\lambda_{*}^1) \Vert^2 \leq & \langle \nabla_{(y,\lambda)} Q(x^1,z^1,u^1,v^1,y_{*}^2,\lambda_{*}^2), (y_{*}^1,\lambda_{*}^1) - (y_{*}^2,\lambda_{*}^2) \rangle \\
            & \qquad + \langle \nabla_{(y,\lambda)} Q(x^1,z^1,u^1,v^1,y_{*}^1,\lambda_{*}^1), (y_{*}^2,\lambda_{*}^2) - (y_{*}^1,\lambda_{*}^1) \rangle. 
        \end{aligned}
    \end{equation}
    Next, from the definitions of the points $(y_{*}^1,\lambda_{*}^1)$ and $(y_{*}^2,\lambda_{*}^2)$, it follows that 
    \begin{align}
        & (y_{*}^1,\lambda_{*}^1) = \mathop{\arg \max}_{y \in \mathcal{Y},\lambda \in \Lambda} Q(x^1,z^1,u^1,v^1,y,\lambda) \nonumber \\
        & \Rightarrow \langle \nabla_{(y,\lambda)} Q(x^1,z^1,u^1,v^1,y_{*}^1,\lambda_{*}^1), (y,\lambda) - (y_{*}^1,\lambda_{*}^1) \rangle \leq 0, \forall (y,\lambda) \in \mathcal{Y} \times \Lambda \nonumber \\
        & \Rightarrow \langle \nabla_{(y,\lambda)} Q(x^1,z^1,u^1,v^1,y_{*}^1,\lambda_{*}^1), (y_{*}^2,\lambda_{*}^2) - (y_{*}^1,\lambda_{*}^1) \rangle \leq 0. \label{ex_firstcond_ineq1}   
    \end{align}
    Similarly, we have 
    \begin{equation}
        \langle \nabla_{(y,\lambda)} Q(x^2,z^2,u^2,v^2,y_{*}^2,\lambda_{*}^2), (y_{*}^1,\lambda_{*}^1) - (y_{*}^2,\lambda_{*}^2) \rangle \leq 0. \label{ex_firstcond_ineq2}
    \end{equation}
    Then, combining \cref{ex_sconcave_ineq1}, \cref{ex_firstcond_ineq1} with \cref{ex_firstcond_ineq2} yields that 
        \begin{align*}
            & \kappa \Vert (y_{*}^2,\lambda_{*}^2) - (y_{*}^1,\lambda_{*}^1) \Vert^2  \\
            \leq & \langle \nabla_{(y,\lambda)} Q(x^1,z^1,u^1,v^1,y_{*}^2,\lambda_{*}^2) - \nabla_{(y,\lambda)} Q(x^2,z^2,u^2,v^2,y_{*}^2,\lambda_{*}^2), (y_{*}^1,\lambda_{*}^1) - (y_{*}^2,\lambda_{*}^2) \rangle \\
            \overset{(a)}{\leq} & \Vert \nabla_{(y,\lambda)} Q(x^1,z^1,u^1,v^1,y_{*}^2,\lambda_{*}^2) - \nabla_{(y,\lambda)} Q(x^2,z^2,u^2,v^2,y_{*}^2,\lambda_{*}^2)\Vert \Vert (y_{*}^1,\lambda_{*}^1) - (y_{*}^2,\lambda_{*}^2)\Vert \\
            \overset{(b)}{\leq} & (L_{P_{\rho}} + \tau) \Vert (x^1,z^1,u^1,v^1) - (x^2,z^2,u^2,v^2) \Vert \Vert (y_{*}^1,\lambda_{*}^1) - (y_{*}^2,\lambda_{*}^2)\Vert,\\
            \Rightarrow & \Vert (y_{*}^2,\lambda_{*}^2) - (y_{*}^1,\lambda_{*}^1) \Vert \leq (L_{\nabla P_{\rho}} + \tau) \kappa^{-1} \Vert (x^1,z^1,u^1,v^1) - (x^2,z^2,u^2,v^2) \Vert,
        \end{align*}
    where (a) utilizes the Cauchy-Schwarz inequality and (b) holds due to the definition of $Q$, the triangle inequality and the Lipschitz gradient property of $P_{\rho}$. 

    Notice that $\mathcal{Y}$ and $\Lambda$ are compact sets, $Q$ is differentiable and has a unique maximum due to its strong concavity in $(y,\lambda)$. Then, from Danskin's theorem (\cref{thm:Danskin}), we can infer that $\vartheta(x,z,u,v)$ is a differentiable function with 
    \begin{equation}
        \label{ex_grad_Q}
        \begin{aligned}
            \nabla \vartheta(x,z,u,v) = & \nabla Q(x,z,u,v,y_{*}(x,z,u,v),\lambda_{*}(x,z,u,v)) \\
            = & \begin{pmatrix}
            \nabla_x f(x, y_{*}(x,z,u,v),\lambda_{*}(x,z,u,v)) \\
             \rho \nabla_z g(z,\lambda_{*}(x,z,u,v)) \\
            \tau ( -u + y_{*}(x,z,u,v)) \\
            \tau (-v + \lambda_{*}(x,z,u,v))
        \end{pmatrix}.
        \end{aligned}
    \end{equation}
    Consequently, we can obtain 
        \begin{align*}
            & \Vert \nabla \vartheta(x^1,z^1,u^1,v^1) - \nabla \vartheta(x^2,z^2,u^2,v^2) \Vert^2 \\
            = & \Vert \nabla_x f(x^1, y_{*}^1,\lambda_{*}^1) - \nabla_x f(x^2, y_{*}^2,\lambda_{*}^2) \Vert^2 
             + \Vert \rho \nabla_z g(z^1,\lambda_{*}^1) - \rho \nabla_z g(z^2,\lambda_{*}^2) \Vert^2 \\
            & + \Vert \tau ( -u^1 + y_{*}^1) - \tau ( -u^2 + y_{*}^2) \Vert^2 
             + \Vert \tau ( -v^1 + \lambda_{*}^1) - \tau ( -v^2 + \lambda_{*}^2) \Vert^2 \\
            \leq & [(L_{\nabla f} + \rho L_{\nabla g} + 2\tau)(1 + (L_{\nabla P_{\rho}} + \tau) \kappa^{-1}) ]^2 \Vert (x^1,z^1,u^1,v^1) - (x^2,z^2,u^2,v^2) \Vert^2.
        \end{align*}
\end{proof}

\begin{proof}[\textbf{Proof of \cref{thm:relkktandhypergrad}}]
    Since $(\bar{x},\bar{y},\bar{\lambda},\bar{z})$ is an $\epsilon$-KKT solution of problem \cref{prob:refpessbiopt_ours} with $\rho > \rho_0$, from \cref{def:appKKT}, we have 
    \begin{align}
        \Vert \nabla_{x} f(\bar{x},\bar{y},\bar{\lambda}) \Vert &\leq \epsilon, \label{ineq:appkkt_x}\\
        \Vert \nabla_{y} f(\bar{x},\bar{y},\bar{\lambda}) - \rho \nabla_{y} g(\bar{y},\bar{\lambda}) \Vert &\leq \epsilon, \label{ineq:appkkt_y}\\
        \Vert \nabla_{\lambda} f(\bar{x},\bar{y},\bar{\lambda}) - \rho (\nabla_{\lambda} g(\bar{y},\bar{\lambda}) - \nabla_{\lambda} g(\bar{z},\bar{\lambda})) \Vert &\leq \epsilon, \label{ineq:appkkt_lambda}\\
        \rho \Vert \nabla_{y} g(\bar{z},\bar{\lambda})\Vert \leq \epsilon, \ \ g(\bar{y},\bar{\lambda}) - \min_{z^{\prime}} g(z^{\prime},\lambda) &\leq \epsilon. \label{ineq:appkkt_z}
    \end{align}
    Using \cref{ineq:appkkt_x} and \cref{H_s}, it yields that 
    \begin{equation}
        \label{ineq:minimaxhyper_x}
        \begin{aligned}
            \Vert \nabla_{x} f(\bar{x},\bar{y}(\bar{\lambda}),\bar{\lambda})\Vert &= \Vert \nabla_{x} f(\bar{x},\bar{y},\bar{\lambda}) + \nabla_{x} f(\bar{x},\bar{y}(\bar{\lambda}),\bar{\lambda}) - \nabla_{x} f(\bar{x},\bar{y},\bar{\lambda}) \Vert \\
            &\leq \epsilon + L_{\nabla f}\Vert \bar{y}(\bar{\lambda}) - \bar{y}\Vert, \\
        \end{aligned}
    \end{equation}
    where the first inequality holds due to the triangle inequality and the second inequality is true due to the assumption that $f$ is $L_{\nabla f}$-smooth. 

    Next, using \cref{ineq:appkkt_y}, we have 
    \begin{equation}
        \label{ineq:y}
        \begin{aligned}
            &\Vert \nabla_{y}f(\bar{x},\bar{y}(\bar{\lambda}),\bar{\lambda}) - \rho \nabla_{yy}^{2} g(\bar{y}(\bar{\lambda}),\bar{\lambda}) (\bar{y} - \bar{z})\Vert \\
            \leq& \Vert \nabla_{y}f(\bar{x},\bar{y},\bar{\lambda}) - \rho \nabla_y g(\bar{y},\bar{\lambda}) \Vert + \Vert \nabla_{y}f(\bar{x},\bar{y}(\bar{\lambda}),\bar{\lambda}) - \nabla_y f(\bar{x},\bar{y},\bar{\lambda})  \Vert \\
            & + \rho \Vert \nabla_{y} g(\bar{y},\bar{\lambda}) - \nabla_{y} g(\bar{y}(\bar{\lambda}),\bar{\lambda}) - \nabla_{yy}^{2} g(\bar{y}(\bar{\lambda}),\bar{\lambda}) (\bar{y} - \bar{y}(\bar{\lambda}))\Vert \\
            & + \rho \Vert \nabla_{y} g(\bar{z},\bar{\lambda}) - \nabla_{y} g(\bar{y}(\bar{\lambda}),\bar{\lambda}) - \nabla_{yy}^{2} g(\bar{y}(\bar{\lambda}),\bar{\lambda}) (\bar{z} - \bar{y}(\bar{\lambda}))\Vert + \rho \Vert \nabla_{y} g(\bar{z},\bar{\lambda})\Vert \\
            \leq & 2 \epsilon + L_{\nabla f} \Vert \bar{y}(\bar{\lambda}) - \bar{y}\Vert + \frac{\rho L_{\nabla^2 g}}{2} \Vert \bar{y}(\bar{\lambda}) - \bar{y}\Vert^2 + \frac{\rho L_{\nabla^2 g}}{2}\Vert \bar{y}(\bar{\lambda}) - \bar{z}\Vert^2,
        \end{aligned}
    \end{equation} 
    where 
    the last inequality holds due to the Lipschitz continuity of $\nabla f$ and $\nabla^2 g$. Similarly, using \cref{ineq:appkkt_lambda}, it follows that
    \begin{equation}
        \label{ineq:lambda}
        \begin{aligned}
            &\Vert \nabla_{\lambda} f(\bar{x},\bar{y}(\bar{\lambda}),\bar{\lambda}) - \rho \nabla_{\lambda y}^2 g(\bar{y}(\bar{\lambda},\bar{\lambda})(\bar{y} - \bar{z}))\Vert \\
            \leq &  \nabla_{\lambda} f(\bar{x},\bar{y},\bar{\lambda}) - \rho \nabla_{\lambda}g(\bar{y},\bar{\lambda}) + \rho \nabla_{\lambda} g(\bar{z},\bar{\lambda})\Vert + \Vert \nabla_{\lambda} f(\bar{x},\bar{y}(\bar{\lambda}),\bar{\lambda}) - \nabla_{\lambda} f(\bar{x},\bar{y},\bar{\lambda})\Vert \\
            & + \rho \Vert \nabla_{\lambda} g(\bar{y},\bar{\lambda}) - \nabla_{\lambda} g(\bar{y}(\bar{\lambda}),\bar{\lambda}) - \nabla_{\lambda y}^2 g(\bar{y}(\bar{\lambda}),\bar{\lambda})(\bar{y} - \bar{y}(\bar{\lambda})) \\
            & + \rho \Vert \nabla_{\lambda} g(\bar{z},\bar{\lambda}) - \nabla_{\lambda} g(\bar{y}(\bar{\lambda}),\bar{\lambda}) - \nabla_{\lambda y}^2 g(\bar{y}(\bar{\lambda}),\bar{\lambda})(\bar{z} - \bar{y}(\bar{\lambda})) \\
            \leq & \epsilon + L_{\nabla f}\Vert \bar{y}(\bar{\lambda}) - \bar{y}\Vert + \frac{\rho L_{\nabla^2 g}}{2}\Vert \bar{y}(\bar{\lambda}) - \bar{y}\Vert^2 + \frac{\rho L_{\nabla^2 g}}{2}\Vert \bar{z} - \bar{y}(\bar{\lambda})\Vert^2. 
        \end{aligned}
    \end{equation}
    Then, using the triangle inequality, combing \cref{H_s}, \cref{ineq:y} with \cref{ineq:lambda} yields that 
    \begin{equation}
        \label{ineq:minimaxhyper_lambda}
        \begin{aligned}
            &\Vert \nabla_{\lambda}f(\bar{x},\bar{y}(\bar{\lambda}),\bar{\lambda}) - \nabla_{\lambda y}^2 g(\bar{y}(\bar{\lambda}),\bar{\lambda})[\nabla_{yy}^2g(\bar{y}(\bar{\lambda}),\bar{\lambda})]^{-1}\nabla_{y}f(\bar{x},\bar{y}(\bar{\lambda}),\bar{\lambda})\Vert \\
            \leq & \Vert \nabla_{\lambda}f(\bar{x},\bar{y}(\bar{\lambda}),\bar{\lambda}) - \rho \nabla_{\lambda y}^2 g(\bar{y}(\bar{\lambda}),\bar{\lambda})(\bar{y} - \bar{z}) \Vert  \\
            & + \Vert \nabla_{\lambda y}^2 g(\bar{y}(\bar{\lambda}),\bar{\lambda})[\nabla_{yy}^2g(\bar{y}(\bar{\lambda}),\bar{\lambda})]^{-1} \Vert \Vert \nabla_{y} f(\bar{x},\bar{y}(\bar{\lambda}),\bar{\lambda}) - \rho \nabla_{yy}^2  g(\bar{y}(\bar{\lambda}),\bar{\lambda})(\bar{y} - \bar{z}) \Vert\\
            \leq & (2 \Gamma + 1) \epsilon + (\Gamma + 1)\left[L_{\nabla f} \Vert \bar{y}(\bar{\lambda}) - \bar{y}\Vert + \frac{\rho L_{\nabla^2 g}}{2} \Vert \bar{y}(\bar{\lambda}) - \bar{y}\Vert^2 + \frac{\rho L_{\nabla^2 g}}{2}\Vert \bar{y}(\bar{\lambda}) - \bar{z}\Vert^2\right].
        \end{aligned}
    \end{equation}

    Using the strong convexity of $g(y,\lambda)$ in $y$ for all $\lambda \in \mathcal{L}$, \cref{ineq:appkkt_z}, and the definition of $\bar{y}(\bar{\lambda})$, it follows that 
    \begin{equation}
        \label{ineq:boundnorm2_yy}
        \Vert \bar{y}(\bar{\lambda}) - \bar{y} \Vert^2 \leq 2 \nu^{-1} [g(\bar{y},\bar{\lambda}) - \min_{z^{\prime}} g(z^{\prime},\bar{\lambda})] \leq 2 \nu^{-1} \epsilon.
    \end{equation}
    Combining this inequality with $x \in \mathcal{X}^{\prime}, \lambda \in \Lambda$, and \cref{ineq:boundofgrad} yeilds that $\Vert \nabla f(x,y,\lambda) \Vert \leq \bar{\Gamma} $. Furthermore, using $\nabla_{y} g(\bar{y}(\bar{\lambda}),\bar{\lambda}) = 0$, we have 
    \begin{equation}
        \label{ineq:boundnorm_yy}
        \begin{aligned}
            \Vert \bar{y}(\bar{\lambda}) - \bar{y} \Vert &\leq \nu^{-1}\Vert \nabla_{y}g(\bar{y},\bar{\lambda}) - \nabla_{y}g(\bar{y}(\bar{\lambda}),\bar{\lambda})\Vert = \nu^{-1} \Vert \nabla_{y}g(\bar{y},\bar{\lambda})\Vert \\
            &= (\rho \nu)^{-1} (\Vert \nabla_y f(\bar{x},\bar{y},\bar{\lambda}) + \rho \nabla_{y} g(\bar{y},\bar{\lambda})\Vert + \Vert \nabla_y f(\bar{x},\bar{y},\bar{\lambda}) \Vert) = (\rho \nu)^{-1}(\epsilon + \bar{\Gamma})
        \end{aligned}
    \end{equation}
    and 
    \begin{equation}
        \label{ineq:boundnorm_zy}
        \begin{aligned}
            \Vert \bar{z} - \bar{y}(\bar{\lambda})\Vert & \leq \mu^{-1}\Vert \nabla_y g(\bar{z},\bar{\lambda}) - \nabla_{y} g(\bar{y}(\bar{\lambda}),\bar{\lambda}) \Vert = \nu^{-1} \Vert g(\bar{z},\bar{\lambda}) \Vert \leq (\rho \nu)^{-1} \epsilon.
        \end{aligned}
    \end{equation}
    Then, from the definition of $\varpi$ in \cref{ineq:defofvarpi}, \cref{ineq:boundnorm2_yy} and \cref{ineq:boundnorm_yy}, it yields that $\Vert \bar{y}(\bar{\lambda}) - \bar{y}\Vert \leq \varpi$. Combining this, \cref{ineq:minimaxhyper_x}, \cref{ineq:minimaxhyper_lambda}, and \cref{ineq:boundnorm_zy}, we obtain that 
    \begin{equation}
        \label{ineq:hyper_xandlambda}
        \begin{aligned}
            \Vert \nabla_x f(\bar{x},\bar{y}(\bar{\lambda}),\bar{\lambda}) \Vert &\leq \epsilon + L_{\nabla f} \varpi,\\
            \Vert \nabla_{\lambda}f(\bar{x},\bar{y}(\bar{\lambda}),\bar{\lambda}) - &\nabla_{\lambda y}^2 g(\bar{y}(\bar{\lambda}),\bar{\lambda})[\nabla_{yy}^2g(\bar{y}(\bar{\lambda}),\bar{\lambda})]^{-1}\nabla_{y}f(\bar{x},\bar{y}(\bar{\lambda}),\bar{\lambda})\Vert \\
            &\leq (2\Gamma+1)\epsilon + (\Gamma+1)\left[L_{\nabla f} \varpi + \frac{L_{\nabla^2 g} \rho \varpi^2}{2} + \frac{L_{\nabla^2 g} \epsilon^2}{2 \rho \nu^{2}}\right].
        \end{aligned}
    \end{equation}
    In addition, from \cref{ineq:defofvarpi}, one has $\varpi \leq \sqrt{2 \nu^{-1} \epsilon}$ and 
        \begin{align*}
            \rho \varpi^2 &= \min \left\{ \rho^{-1}\nu^{-2}(\epsilon + \bar{\Gamma})^2, 2 \rho \nu^{-1} \epsilon \right\} \\
            &\leq \min \left\{ \rho^{-1}\nu^{-2}(\epsilon_0 + \bar{\Gamma})^2, 2 \rho \nu^{-1} \epsilon \right\} \leq \sqrt{2}\nu^{-3/2}(\epsilon_0 + \bar{\Gamma}) \sqrt{\epsilon}.
        \end{align*}
    Substituting the above inequalities into \cref{ineq:hyper_xandlambda}, the desired results can be obtained. 
\end{proof}

To prove \cref{prop:ex_innersols}, we need the following lemma.

\begin{lemma}[ Lemma A.3, \cite{Dai2024Optimality}]
    \label{lem:proxthreepoint}
    Let $\mathfrak{g}: \mathbb{R}^n \to \mathbb{R} \cup \{+\infty\}$ be a proper lower semicontinuous convex function and $\mathfrak{f}: \mathbb{R}^n \to \mathbb{R}$ be an $L_{\nabla \mathfrak{f}}$-smooth function. Then, for $\mathfrak{F} = \mathfrak{f} + \mathfrak{g}, x \in \text{dom}(\mathfrak{g}), \bar{\gamma} \geq L_{\nabla \mathfrak{f}}$, and 
    \vspace{-0.2cm}
    \begin{equation*}
        x^{+} = \operatorname{prox}_{\bar{\gamma}^{-1} \mathfrak{g}} \left( x - \bar{\gamma}^{-1} \nabla \mathfrak{f}(x)\right),
    \end{equation*}
    we have, for any $x^{\prime} \in \text{dom} (\mathfrak{g})$, 
    \begin{equation*}
        \mathfrak{f}(x^{\prime}) - \mathfrak{f}(z^{+}) \geq \frac{\bar{\gamma}}{2} \Vert x^{\prime} - x^{+}\Vert^2 - \frac{\bar{\gamma}}{2} \Vert x^{\prime} - x\Vert^2 + l_{\mathfrak{f}}(x^{\prime},x),
    \end{equation*}
    where 
        $l_{\mathfrak{f}} (x^{\prime},x) = \mathfrak{f}(x^{\prime}) - \mathfrak{f}(x) - \langle \nabla \mathfrak{f}(x), x^{\prime} - x \rangle.$
\end{lemma}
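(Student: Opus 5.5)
The natural reading of the displayed inequality is with the composite function, i.e.
\[
\mathfrak{F}(x^{\prime}) - \mathfrak{F}(x^{+}) \geq \frac{\bar{\gamma}}{2}\Vert x^{\prime}-x^{+}\Vert^2 - \frac{\bar{\gamma}}{2}\Vert x^{\prime}-x\Vert^2 + l_{\mathfrak{f}}(x^{\prime},x),
\]
where ``$z^{+}$'' is read as $x^{+}$. With $\mathfrak{f}$ alone on the left, the statement cannot hold in general because $\mathfrak{g}$ is arbitrary. This is the form used for projected steps, where $\mathfrak{g}$ is an indicator function and $\mathfrak{F}=\mathfrak{f}$ on the feasible set. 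I would prove this composite version by combining three one-line facts and a polarization identity.

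\textbf{Step 1: optimality condition of the prox step.} Since $x^{+}$ minimizes the strongly convex function $\bar{\gamma}^{-1}\mathfrak{g}(w)+\frac12\Vert w-(x-\bar{\gamma}^{-1}\nabla\mathfrak{f}(x))\Vert^2$, we have $\bar{\gamma}(x-x^{+})-\nabla\mathfrak{f}(x)\in\partial\mathfrak{g}(x^{+})$. The subgradient inequality at $x^{\prime}\in\text{dom}(\mathfrak{g})$ then gives
\[
\mathfrak{g}(x^{\prime})\geq \mathfrak{g}(x^{+})+\langle \bar{\gamma}(x-x^{+})-\nabla\mathfrak{f}(x),\,x^{\prime}-x^{+}\rangle .
\]

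\textbf{Step 2: descent lemma.} Because $\mathfrak{f}$ is $L_{\nabla\mathfrak{f}}$-smooth and $\bar{\gamma}\geq L_{\nabla\mathfrak{f}}$,
\[
\mathfrak{f}(x^{+})\leq \mathfrak{f}(x)+\langle\nabla\mathfrak{f}(x),x^{+}-x\rangle+\frac{\bar{\gamma}}{2}\Vert x^{+}-x\Vert^2 .
\]

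\textbf{Step 3: expand $\mathfrak{f}(x^{\prime})$.} By the definition of $l_{\mathfrak{f}}$, exactly
\[
\mathfrak{f}(x^{\prime})=\mathfrak{f}(x)+\langle\nabla\mathfrak{f}(x),x^{\prime}-x\rangle+l_{\mathfrak{f}}(x^{\prime},x).
\]

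\textbf{Combination.} Subtracting the Step 2 bound from the Step 3 identity gives
\[
\mathfrak{f}(x^{\prime})-\mathfrak{f}(x^{+})\geq\langle\nabla\mathfrak{f}(x),x^{\prime}-x^{+}\rangle+l_{\mathfrak{f}}(x^{\prime},x)-\frac{\bar{\gamma}}{2}\Vert x^{+}-x\Vert^2 .
\]
Adding the Step 1 inequality, the gradient inner products cancel and we obtain
\[
\mathfrak{F}(x^{\prime})-\mathfrak{F}(x^{+})\geq \bar{\gamma}\langle x-x^{+},x^{\prime}-x^{+}\rangle-\frac{\bar{\gamma}}{2}\Vert x-x^{+}\Vert^2+l_{\mathfrak{f}}(x^{\prime},x).
\]
Finally, the three-point identity
\[
2\langle x-x^{+},x^{\prime}-x^{+}\rangle=\Vert x-x^{+}\Vert^2+\Vert x^{\prime}-x^{+}\Vert^2-\Vert x^{\prime}-x\Vert^2
\]
makes the $\Vert x-x^{+}\Vert^2$ terms cancel, leaving exactly the claimed bound.

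\textbf{Main obstacle.} There is no analytical difficulty here. The only delicate point is bookkeeping: signs must be kept straight so that the inner-product terms cancel exactly, and the descent lemma must be applied with $\bar{\gamma}$ in place of $L_{\nabla\mathfrak{f}}$, which is legitimate since $\bar{\gamma}\geq L_{\nabla\mathfrak{f}}$. When this lemma is applied in the proof of \cref{prop:ex_innersols} to the concave maximization by negation, one also has to check that $\mathfrak{g}$ is the indicator of $\mathcal{Y}\times\Lambda$. In that case $\mathfrak{F}=\mathfrak{f}$ at all points involved, which is consistent with the paper's statement written in terms of $\mathfrak{f}$.
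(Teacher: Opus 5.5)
Your proof is correct, and you read the statement the right way: $\mathfrak{F}$ rather than $\mathfrak{f}$ belongs on the left, and $z^{+}$ means $x^{+}$. The paper gives no proof of its own, only the citation to \cite{Dai2024Optimality}; the underlying result is the fundamental prox-grad inequality (Theorem 10.16 in \cite{Beck2017First}), whose standard proof is exactly your combination of the prox optimality condition (equivalently, strong convexity of the prox subproblem at its minimizer), the descent lemma with $\bar{\gamma}\geq L_{\nabla\mathfrak{f}}$, and the definition of $l_{\mathfrak{f}}$. Your argument also never uses $x\in\mathrm{dom}(\mathfrak{g})$, only $x^{\prime},x^{+}\in\mathrm{dom}(\mathfrak{g})$, which matters because the paper later applies the lemma at the extrapolated, possibly infeasible, point $(y_a^{[t]}(k),\lambda_a^{[t]}(k))$ in the proof of \cref{lem:ex_innersol_NA}.
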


\begin{proof}[\textbf{Proof of \cref{prop:ex_innersols}}]
    By the assumption in this proposition, it is not hard to verify that function $-Q^{(k)}(y,\lambda)$
    is $(L_{\nabla P_{\rho}} + \tau)$-smooth and $\kappa$-strongly convex. Noting that for $k = 0,\ldots,K-1$, the sequence $\left\{ (y^{[t]}(k), \lambda^{[t]}(k))\right\}_{t=0}^{T}$ satisfies
    {\small
        \begin{align*}
            (y^{[t+1]}(k),\lambda^{[t+1]}(k)) = \operatorname{prox}_{\alpha_y (\delta_{\mathcal{Y}} +\delta_{\Lambda})}\left[ (y^{[t]}(k),\lambda^{[t]}(k)) - \alpha_y \nabla_{(y,\lambda)} [-Q^{(k)}(y^{[t]}(k),\lambda^{[t]}(k))] \right].
        \end{align*}
    }
    Then, using \cref{lem:proxthreepoint}, the strong convexity of $-Q(x,z,u,v,y,\lambda)$ in $(y,\lambda)$, and $(y^{[t+1]}(k),\lambda^{[t+1]}(k)),(y^{[t]}(k),\lambda^{[t]}(k)), (y_{*}(k),\lambda_{*}(k)) \in \mathcal{Y} \times \Lambda$, for $\alpha_{y} \in (0,1/(L_{\nabla P_{\rho}} + \tau))$, it holds that
    {\small
    \begin{equation}
        \label{ex_innerprob_ineq1}
        \begin{aligned}
            & Q^{(k)}(y^{[t+1]}(k),\lambda^{[t+1]}(k)) - \vartheta(x^k,z^k,u^k,v^k) \\
            \geq & \frac{1}{2\alpha_y} \Vert (y_{*}(k),\lambda_{*}(k)) - (y^{[t+1]}(k),\lambda^{[t+1]}(k)) \Vert^2 - \frac{1}{2 \alpha_y} \Vert (y_{*}(k),\lambda_{*}(k)) - (y^{[t]}(k),\lambda^{[t]}(k)) \Vert^2 \\
            & + \frac{\kappa}{2} \Vert (y_{*}(k),\lambda_{*}(k)) - (y^{[t]}(k),\lambda^{[t]}(k)) \Vert^2.
        \end{aligned}
    \end{equation}}
    Based on the definition of $\vartheta(x^k,z^k,u^k,v^k)$, it follows that 
    \begin{equation*}
        Q^{(k)}(y^{[t+1]}(k),\lambda^{[t+1]}(k)) - \vartheta(x^k,z^k,u^k,v^k) \leq 0.
    \end{equation*}
    Then, combining the above inequality with \cref{ex_innerprob_ineq1} yields that 
    \begin{equation*}
        \Vert (y_{*}(k),\lambda_{*}(k)) - (y^{[t+1]}(k),\lambda^{[t+1]}(k)) \Vert^2 \leq (1-\kappa \alpha_y) \Vert (y_{*}(k),\lambda_{*}(k)) - (y^{[t]}(k),\lambda^{[t]}(k)) \Vert^2,
    \end{equation*}
    which implies part (a). Subsequently, part (b) is clearly established. In addition, from \cref{ex_innerprob_ineq1} and part (b), we can obtain 
    {\small
        \begin{align*}
            & \vartheta(x^k,z^k,u^k,v^k) - Q^{(k)}(y^{[t+1]}(k),\lambda^{[t+1]}(k)) \\
            \leq &  \frac{\alpha_y^{-1} - \kappa}{2} \Vert (y_{*}(k),\lambda_{*}(k)) - (y^{[t]}(k),\lambda^{[t]}(k)) \Vert^2 - \frac{1}{2\alpha_y} \Vert (y_{*}(k),\lambda_{*}(k)) - (y^{[t+1]}(k),\lambda^{[t+1]}(k)) \Vert^2 \\
            \leq & (2 \alpha_y)^{-1}(1 - \kappa \alpha_y)^{t} \Vert (y^k,\lambda^k) - (y_{*}(k),\lambda_{*}(k))\Vert^2.
        \end{align*}
    }
  \end{proof}

\begin{proof}[\textbf{Proof of \cref{thm:ex_innerrel}}]
    Using the strong concavity of $Q(x,z,u,v,y,\lambda)$ in $(y,\lambda)$ and the definition of $(y_{*}(k),\lambda_{*}(k))$, it holds that 
        \begin{align*}
            & Q^{(k)}(y^k,\lambda^k) - Q^{(k)}( y_{*}(k),\lambda_{*}(k))  \\
            \leq & \langle \nabla_{(y,\lambda)} Q^{(k)}(y_{*}(k),\lambda_{*}(k)), (y^k,\lambda^k) - (y_{*}(k),\lambda_{*}(k)) \rangle - \frac{\kappa}{2} \Vert (y^k,\lambda^k) - (y_{*}(k), \lambda_{*}(k)) \Vert^2\\
            \leq & - \frac{\kappa}{2} \Vert (y^k,\lambda^k) - (y_{*}(k), \lambda_{*}(k)) \Vert^2,
        \end{align*}
    which implies 
        $Q^{(k)}(y^k,\lambda^k) - \vartheta(x^k,z^k,u^k,v^k) \leq - \frac{\kappa}{2} \Vert (y^k,\lambda^k) - (y_{*}(k), \lambda_{*}(k)) \Vert^2.$
    Then,
    \begin{equation*}
        \Vert (y^k,\lambda^k) - (y_{*}(k), \lambda_{*}(k)) \Vert^2 \leq \frac{2}{\kappa} \left[ \vartheta(x^k,z^k,u^k,v^k) - Q^{(k)}(y^k,\lambda^k) \right],
    \end{equation*}
    which together with part (b) in \cref{prop:ex_innersols}, yields that 
    \begin{equation}
        \label{ineq:diffylamk+1ylamstar}
        \Vert (y^{k+1},\lambda^{k+1}) - (y_{*}(k),\lambda_{*}(k))\Vert^2 \leq \frac{2}{\kappa} (1 - \kappa \alpha_y)^{T} \left[ \vartheta(x^k,z^k,u^k,v^k) - Q^{(k)}(y^k,\lambda^k) \right].
    \end{equation}
    From \cref{prop:ex_Lipvalfunc}, \cref{ass:pessibiopt} and the definitions of $P_{\rho}$ in \cref{exch_penalty_func} and $Q$ in \cref{def_reg_Q}, we have the following inequalities
        \begin{align*}
            &\Vert \nabla_x Q^{(k)}(y^{k+1},\lambda^{k+1}) - \nabla_x \vartheta(x^k,z^k,u^k,v^k) \Vert^2 \\
            = & \Vert \nabla_{x} f(x^k,y^{k+1},\lambda^{k+1}) - \nabla_{x} f(x^k,y_{*}(k),\lambda_{*}(k)) \Vert^2 
            \leq \frac{2 L_{\nabla f}^2}{\kappa} (1-\kappa \alpha_y)^{T}\Delta_k. 
        \end{align*}
    Similarly, we can obtain 
        \begin{align*}
            \Vert \nabla_z Q^{(k)}(y^{k+1},\lambda^{k+1}) - \nabla_z \vartheta(x^k,z^k,u^k,v^k) \Vert^2 \leq &  \frac{2 \rho^2 L_{\nabla g}^2}{\kappa} (1-\kappa \alpha_y)^{T} \Delta_k,\\
            \Vert \nabla_u Q^{(k)}(y^{k+1},\lambda^{k+1}) - \nabla_u \vartheta(x^k,z^k,u^k,v^k) \Vert^2 \leq & \frac{2 \tau^2}{\kappa} (1-\kappa \alpha_y)^{T} \Delta_k, \\
            \Vert \nabla_v Q^{(k)}(y^{k+1},\lambda^{k+1}) - \nabla_v \vartheta(x^k,z^k,u^k,v^k) \Vert^2 \leq & \frac{2 \tau^2}{\kappa} (1-\kappa \alpha_y)^{T} \Delta_k.
        \end{align*}
    From the optimality condition, we have that $ G_{\alpha_y^{-1}}^{Q,\mathcal{Y}}(x^{k},z^k,u^k,v^k,y_{*}(k),\lambda_{*}(k)) = 0$ and $ G_{\alpha_{y}^{-1}}^{Q,\Lambda}(x^{k},z^k,u^k,v^k,y_{*}(k),\lambda_{*}(k)) = 0$ for any $(x^{k},z^k,u^k,v^k)$. Based on Assumption \ref{ass:pessibiopt}, it follows that
        \begin{align*}
            & \Vert G_{\alpha_{y}^{-1}}^{Q,\mathcal{Y}} (x^{k},z^k,u^k,v^k,y^{k+1},\lambda^{k+1}) \Vert \\
            = & \Bigl\| \frac{1}{\alpha_y}\left[y^{k+1} - \operatorname{proj}_{\mathcal{Y}}[y^{k+1} + \alpha_y \nabla_y Q^{(k)}(y^{k+1},\lambda^{k+1})] \right]  \\
            & \qquad -  \frac{1}{\alpha_y}\left[y_{*}(k) - \operatorname{proj}_{\mathcal{Y}}[y_{*}(k) + \alpha_y \nabla_y Q^{(k)}(y_{*}(k),\lambda_{*}(k))] \right] \Bigr\| \\
            \leq & \frac{2}{\alpha_y} \Vert y^{k+1} - y_{*}(k)\Vert + \Vert \nabla_y Q^{(k)}(y_{*}(k),\lambda_{*}(k)) - \nabla_y Q^{(k)}(y^{k+1},\lambda^{k+1}) \Vert \\
            \leq & \frac{2}{\alpha_y} \Vert y^{k+1} - y_{*}(k)\Vert + (L_{\nabla P_{\rho}} + \tau) \Vert (y^{k+1},\lambda^{k+1}) - (y_{*}(k),\lambda_{*}(k))\Vert \\
            \leq & 3 \alpha_y^{-1} \Vert (y^{k+1},\lambda^{k+1}) - (y_{*}(k),\lambda_{*}(k))\Vert,
        \end{align*}
    where the first inequality derives from the triangle inequality and the nonexpansivity of projection operator, the second inequality comes from the $(L_{\nabla P_\rho} + \tau)$-smooth of function $Q$, and the last inequality holds because of $\alpha_y \in (0, 1/(L_{\nabla P_{\rho}} + \tau))$.  
    Similarly, we can obtain  
    \begin{equation*}
        \Vert G_{\alpha_{y}^{-1}}^{Q,\Lambda} (x^{k},z^k,u^k,v^k,y^{k+1},\lambda^{k+1}) \Vert \leq 3 \alpha_y^{-1} \Vert (y^{k+1},\lambda^{k+1}) - (y_{*}(k),\lambda_{*}(k))\Vert.
    \end{equation*}
    Combined with inequality \cref{ineq:diffylamk+1ylamstar}, the proof is then completed. 

  \end{proof}

  The following lemma is fundamental to the proof of \cref{prop:ex_outersolsbound}.

  \begin{lemma}[ Lemma A.2, \cite{Dai2024Optimality}]
    \label{lem:proxtwopoint}
    Let $\mathfrak{g}: \mathbb{R}^n \to \mathbb{R} \cup \{+\infty\}$ be a proper lower semicontinuous function with $\inf_{x \in \mathbb{R}^n} \mathfrak{g}(x) > -\infty$,  and $\mathfrak{f}: \mathbb{R}^n \to \mathbb{R}$ be an $L_{\nabla \mathfrak{f}}$-smooth function. Then, for $\mathfrak{F} = \mathfrak{f} + \mathfrak{g}, x \in \text{dom} (\mathfrak{g}), \xi \in \mathbb{R}^n$, and 
    \begin{equation*}
        x^{+} \in \operatorname{prox}_{\bar{\gamma}^{-1} \mathfrak{g}} (x - \bar{\gamma}^{-1} \xi),
    \end{equation*}
    we have 
    \vspace{-0.3cm}
    \begin{equation*}
        \mathfrak{F}(x^{+}) \leq \mathfrak{F}(x) - \frac{1}{2} (\bar{\gamma} - L_{\nabla \mathfrak{f}}) \Vert x^{+} - x\Vert^2 + \langle \nabla \mathfrak{f}(x) - \xi, x^{+} - x\rangle. 
    \end{equation*}
\end{lemma}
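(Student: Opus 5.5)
The plan is the standard two-inequality argument: compare the prox subproblem value at $x^{+}$ with its value at the feasible point $x$, then add the descent lemma for $\mathfrak{f}$. Convexity of $\mathfrak{g}$ is never used, only that $x^{+}$ is some global minimizer of the prox subproblem. Lower semicontinuity and $\inf \mathfrak{g} > -\infty$ only guarantee that the prox set is nonempty, which the statement takes for granted by writing $x^{+} \in \operatorname{prox}$.

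\emph{Step 1 (prox comparison).} By the definition of $\operatorname{prox}_{\bar{\gamma}^{-1}\mathfrak{g}}$ in the Notations, after multiplying by $\bar{\gamma}>0$, $x^{+}$ minimizes
\begin{equation*}
\mathfrak{g}(u) + \frac{\bar{\gamma}}{2}\bigl\Vert u - x + \bar{\gamma}^{-1}\xi \bigr\Vert^2
\end{equation*}
over $u \in \mathbb{R}^n$. Evaluate this at $u = x^{+}$ and at $u = x \in \text{dom}(\mathfrak{g})$, where the value is finite. Expand $\frac{\bar{\gamma}}{2}\Vert (x^{+}-x) + \bar{\gamma}^{-1}\xi\Vert^2 = \frac{\bar{\gamma}}{2}\Vert x^{+}-x\Vert^2 + \langle \xi, x^{+}-x\rangle + \frac{1}{2\bar{\gamma}}\Vert \xi\Vert^2$. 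The constant $\frac{1}{2\bar{\gamma}}\Vert\xi\Vert^2$ appears on both sides and cancels, which gives
\begin{equation*}
\mathfrak{g}(x^{+}) \leq \mathfrak{g}(x) - \frac{\bar{\gamma}}{2}\Vert x^{+}-x\Vert^2 - \langle \xi, x^{+}-x\rangle .
\end{equation*}
In particular, $x^{+} \in \text{dom}(\mathfrak{g})$.

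\emph{Step 2 (descent lemma).} Since $\mathfrak{f}$ is $L_{\nabla \mathfrak{f}}$-smooth on $\mathbb{R}^n$, the standard quadratic upper bound gives
\begin{equation*}
\mathfrak{f}(x^{+}) \leq \mathfrak{f}(x) + \langle \nabla \mathfrak{f}(x), x^{+}-x\rangle + \frac{L_{\nabla \mathfrak{f}}}{2}\Vert x^{+}-x\Vert^2 .
\end{equation*}
This follows from $\mathfrak{f}(x^{+}) - \mathfrak{f}(x) = \int_0^1 \langle \nabla \mathfrak{f}(x + s(x^{+}-x)), x^{+}-x\rangle\, ds$ together with Cauchy--Schwarz and the Lipschitz bound on $\nabla \mathfrak{f}$.

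\emph{Step 3 (combine).} Add the two inequalities and use $\mathfrak{F} = \mathfrak{f} + \mathfrak{g}$. The quadratic terms combine to $-\frac{1}{2}(\bar{\gamma} - L_{\nabla\mathfrak{f}})\Vert x^{+}-x\Vert^2$, and the linear terms combine to $\langle \nabla \mathfrak{f}(x) - \xi, x^{+}-x\rangle$. This is exactly the claimed inequality. No sign condition on $\bar{\gamma} - L_{\nabla \mathfrak{f}}$ is needed, because the inequality holds as stated for any $\bar{\gamma}>0$.

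The only point needing care is Step 1. One has to use the exact scaling convention of $\operatorname{prox}_{\bar{\gamma}^{-1}\mathfrak{g}}$ from the Notations section, so that the quadratic coefficient comes out as $\bar{\gamma}/2$ and the cross term as $\langle \xi, x^{+}-x\rangle$ with the correct sign. One should also note that comparing with $u=x$ is legitimate because $x^{+}$ is a global minimizer of the subproblem, even when $\mathfrak{g}$ is nonconvex. Everything else is routine algebra.
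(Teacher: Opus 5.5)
Your proof is correct. Comparing the prox objective $\mathfrak{g}(u)+\frac{\bar{\gamma}}{2}\Vert u-x+\bar{\gamma}^{-1}\xi\Vert^2$ at $u=x^{+}$ and at $u=x$, and then adding the descent lemma for $\mathfrak{f}$, is the standard argument for this inequality, and you are right that neither convexity of $\mathfrak{g}$ nor a sign condition on $\bar{\gamma}-L_{\nabla\mathfrak{f}}$ is needed. The paper does not prove the lemma itself; it imports it as Lemma A.2 of Dai et al., and your two-inequality argument is a valid standard proof of it.
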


\begin{proof}[\textbf{Proof of \cref{prop:ex_outersolsbound}}]
    From the definition of $\xi^k$, $(x^{k+1},z^{k+1},u^{k+1},v^{k+1})$ can be expressed as 
    \vspace{-0.3cm}
    \begin{equation*}
        (x^{k+1},z^{k+1},u^{k+1},v^{k+1}) = \operatorname{prox}_{\alpha_x \sigma} \left[ (x^k,z^k,u^k,v^k) - \alpha_x \xi^k \right],
    \end{equation*}
    where $\sigma(x,z,u,v) = \delta_{\mathcal{X}}(x) + \delta_{\mathcal{Y}}(z)$. Applying \cref{lem:proxtwopoint} with $\mathfrak{f} = \vartheta$ and this $\sigma$, it yields that 
        \begin{align*}
            & \vartheta(x^{k+1},z^{k+1},u^{k+1},v^{k+1}) \\
            & \leq \vartheta(x^k,z^k,u^k,v^k) -\frac{1}{2}(\alpha_x^{-1} - L_{\nabla \vartheta}) \Vert (x^{k+1},z^{k+1},u^{k+1},v^{k+1}) - (x^k,z^k,u^k,v^k) \Vert^2 \\
            & + \langle \nabla \vartheta(x^k,z^k,u^k,v^k) - \xi^k, (x^{k+1},z^{k+1},u^{k+1},v^{k+1}) - (x^k,z^k,u^k,v^k) \rangle \\
            & \leq \vartheta(x^k,z^k,u^k,v^k) -\frac{1}{2}(\alpha_x^{-1} - L_{\nabla \vartheta}) \Vert (x^{k+1},z^{k+1},u^{k+1},v^{k+1}) - (x^k,z^k,u^k,v^k) \Vert^2  \\
            & + \frac{1}{4}(\alpha_x^{-1} - L_{\nabla \vartheta}) \Vert (x^{k+1},z^{k+1},u^{k+1},v^{k+1}) - (x^k,z^k,u^k,v^k) \Vert^2  \\
            & + \frac{1}{\alpha_x^{-1} - L_{\nabla \vartheta}} \Vert \nabla \vartheta(x^k,z^k,u^k,v^k) - \xi^k \Vert^2 \\
            & \leq \vartheta(x^k,z^k,u^k,v^k) -\frac{1}{4}(\alpha_x^{-1} - L_{\nabla \vartheta}) \Vert (x^{k+1},z^{k+1},u^{k+1},v^{k+1}) - (x^k,z^k,u^k,v^k) \Vert^2 \\
            & + \frac{1}{\alpha_x^{-1} - L_{\nabla \vartheta}} \frac{2(L_{\nabla f}^2 + \rho^2 L_{\nabla g}^2 + \tau^2)}{\kappa} (1 - \kappa \alpha_y)^{T} \left[ \vartheta(x^k,z^k,u^k,v^k) - Q^{(k)}(y^k,\lambda^k) \right],
        \end{align*}
    where the second inequality follows from the standard inequality $\langle a, b \rangle \leq \frac{\bar{\gamma}}{2} \Vert a \Vert^2 + \frac{1}{2\bar{\gamma}} \Vert b\Vert^2$, and the last inequality uses \cref{thm:ex_innerrel}. Then the desired result is obtained.

  \end{proof}

\subsection{Nesterov accelerated extension} \label{Appendix:subsecNes}

\begin{proof}[\textbf{Proof of \cref{lem:ex_innersol_NA}}]
    By \cref{lem:proxthreepoint} with $\mathfrak{f} = -Q$ and $\mathfrak{g} = \delta_{\mathcal{Y}}(y) + \delta_{\Lambda}(\lambda)$, and the strong convexity of $-Q$ in $(y,\lambda)$, it follows that for any $(y^{\prime}(k),\lambda^{\prime}(k)) \in \mathcal{Y} \times \Lambda$, 
   { \small
    \begin{equation}
        \label{naconverana_ineq1}
        \begin{aligned}
            & Q^{(k)}(y^{[t+1]}(k),\lambda^{[t+1]}(k)) - Q^{(k)}(y^{\prime}(k),\lambda^{\prime}(k)) \\
            & \geq  \frac{\alpha_y^{-1}}{2} \Vert (y^{\prime}(k),\lambda^{\prime}(k)) - (y^{[t+1]}(k),\lambda^{[t+1]}(k)) \Vert^2 - \frac{\alpha_y^{-1}}{2} \Vert (y^{\prime}(k),\lambda^{\prime}(k)) - (y_a^{[t]}(k),\lambda_a^{[t]}) \Vert^2 \\
             & \qquad \qquad + \frac{\kappa}{2} \Vert (y^{\prime}(k),\lambda^{\prime}(k)) - (y_a^{[t]}(k),\lambda_a^{[t]}(k))\Vert^2\\
             & \geq \frac{\alpha_y^{-1}}{2} \Vert (y^{\prime}(k),\lambda^{\prime}(k)) - (y^{[t+1]}(k),\lambda^{[t+1]}(k)) \Vert^2 - \frac{\alpha_y^{-1} - \kappa}{2} \Vert (y^{\prime}(k),\lambda^{\prime}(k)) - (y_a^{[t]}(k),\lambda_a^{[t]}) \Vert^2.
        \end{aligned}
    \end{equation}
    }
    Let $ t \geq 0$ and $\eta := \sqrt{\kappa \alpha_y}$.  Substituting $(y^{\prime}(k),\lambda^{\prime}(k)) = \eta (y_*(k),\lambda_{*}(k)) + (1 - \eta)(y^{[t]}(k), \lambda^{[t]}(k))$ into \cref{naconverana_ineq1}, we obtain
    {\small
     \begin{equation}
        \label{naconverana_ineq2}
        \begin{aligned}
            & Q^{(k)}(y^{[t+1]}(k),\lambda^{[t+1]}(k)) - Q^{(k)}(\eta y_*(k) + (1 - \eta)y^{[t]}(k), \eta \lambda_{*}(k) + (1 - \eta)\lambda^{[t]}(k)) \\
            \geq & \frac{\alpha_y^{-1}}{2} \Vert \eta (y_*(k),\lambda_{*}(k)) + (1 - \eta)(y^{[t]}(k), \lambda^{[t]}(k)) - (y^{[t+1]}(k),\lambda^{[t+1]}(k)) \Vert^2 \\
            & - \frac{\alpha_y^{-1} - \kappa}{2} \Vert \eta (y_*(k),\lambda_{*}(k)) + (1 - \eta)(y^{\prime}(k),\lambda^{\prime}(k)) - (y_a^{[t]}(k),\lambda_a^{[t]}) \Vert^2 \\
            = & \frac{\kappa^2 \alpha_y }{2} \Vert \eta^{-1}((y^{[t+1]}(k),\lambda^{[t+1]}(k))) - \left( (y_*(k),\lambda_{*}(k)) + (\eta^{-1} - 1)(y^{[t]}(k), \lambda^{[t]}(k)) \right) \Vert^2 \\
            & -\frac{(\alpha_y^{-1} - \kappa)\kappa \alpha_y}{2} \Vert \eta^{-1} (y_a^{[t]}(k),\lambda_a^{[t]}(k)) - \left( (y_*(k),\lambda_{*}(k)) + (\eta^{-1} - 1)(y^{[t]}(k), \lambda^{[t]}(k)) \right) \Vert^2.
        \end{aligned}
    \end{equation}}
    By the $\kappa$-strong convexity of $-Q$ in $(y,\lambda)$, it holds that 
        \begin{align*}
            & -Q^{(k)}(\eta y_*(k) + (1 - \eta)y^{[t]}(k), \eta \lambda_{*}(k) + (1 - \eta)\lambda^{[t]}(k)) 
            \leq -\eta Q^{(k)}(y_{*}(k),\lambda_{*}(k)) \\  & \qquad \quad - (1 - \eta) Q^{(k)}(y^{[t]}(k), \lambda^{[t]}(k)) 
            - \frac{\kappa}{2}\eta(1-\eta) \Vert (y^{[t]}(k), \lambda^{[t]}(k)) - (y_{*}(k),\lambda_{*}(k))\Vert^2.
        \end{align*}
    Hence, let $val_{[t+1]}(k) \equiv Q^{(k)}(y_{*}(k),\lambda_{*}(k)) - Q^{(k)}(y^{[t+1]}(k),\lambda^{[t+1]}(k)) $ for any $k \geq 0$,
    \begin{small}
            \begin{align*}
                & Q^{(k)}(y^{[t+1]}(k),\lambda^{[t+1]}(k)) - Q^{(k)}(\eta y_*(k) + (1 - \eta)y^{[t]}(k), \eta \lambda_{*}(k) + (1 - \eta)\lambda^{[t]}(k)) \\
                & \leq Q^{(k)}(y^{[t+1]}(k),\lambda^{[t+1]}(k)) - \eta Q^{(k)}(y_{*}(k),\lambda_{*}(k))- (1 - \eta) Q^{(k)}(y^{[t]}(k), \lambda^{[t]}(k)) \\
                & - \frac{\kappa}{2}\eta(1-\eta) \Vert (y^{[t]}(k), \lambda^{[t]}(k)) - (y_{*}(k),\lambda_{*}(k))\Vert^2\\
                & = -val_{[t+1]}(k) + (1 - \eta) val_{[t]}(k) - \frac{\kappa}{2}\eta(1-\eta) \Vert (y^{[t]}(k), \lambda^{[t]}(k)) - (y_{*}(k),\lambda_{*}(k))\Vert^2
            \end{align*}
    \end{small}
    which, combined with \cref{naconverana_ineq2}, yields
    \begin{equation}
        \label{naconverana_ineq3}
        \begin{aligned}
            & \eta^{-1}(\eta^{-1} - 1) val_{[t]}(k) - \frac{\kappa}{2}(\eta^{-1} - 1) \Vert (y^{[t]}(k), \lambda^{[t]}(k)) - (y_{*}(k),\lambda_{*}(k))\Vert^2 \\
            & + \frac{\alpha_y^{-1} - \kappa}{2} \Bigl\Vert \eta^{-1} (y_a^{[t]}(k),\lambda_a^{[t]}(k)) - \left( (y_*(k),\lambda_{*}(k)) + (\eta^{-1} - 1)(y^{[t]}(k), \lambda^{[t]}(k)) \right) \Bigr\Vert^2 \\
            & \geq (\eta)^{-2}val_{[t+1]}(k)  + \frac{\kappa}{2} \Vert \eta^{-1}((y^{[t+1]}(k),\lambda^{[t+1]}(k))) - ((y_*(k),\lambda_{*}(k))  \\
            &  + (\eta^{-1} - 1)(y^{[t]}(k), \lambda^{[t]}(k))) \Vert^2.
        \end{aligned}
    \end{equation}
    We will use the following identity that holds for any $a, b \in \mathbb{R}^n$ and $\varsigma \in [0,1)$
    \begin{equation*}
        \Vert a + b \Vert^2 - \varsigma \Vert a \Vert^2 = (1-\varsigma) \left\Vert a + \frac{1}{1-\varsigma} b \right\Vert^2 - \frac{\varsigma}{1-\varsigma} \Vert b \Vert^2.
    \end{equation*}
    Plugging {\small $a = (y^{[t]}(k), \lambda^{[t]}(k)) - (y_{*}(k),\lambda_{*}(k)), b =\frac{1}{\eta}((y_a^{[t]}(k),\lambda_a^{[t]}(k)) - (y^{[t]}(k),\lambda^{[t]}(k)))$}, and {\small $\varsigma = \frac{\kappa (\eta^{-1} - 1)}{\alpha_y^{-1} - \kappa}$} into the above inequality yields  
    {\small
        \begin{align*}
            & \frac{\alpha_y^{-1} - \kappa}{2} \Vert \eta^{-1} ((y_a^{[t]}(k),\lambda_a^{[t]}(k)) - (y^{[t]}(k),\lambda^{[t]}(k))) + (y^{[t]}(k), \lambda^{[t]}(k)) - (y_{*}(k),\lambda_{*}(k)) \Vert^2 \\
            & \qquad \qquad  \qquad - \frac{\kappa (\eta^{-1}-1)}{2} \Vert (y^{[t]}(k), \lambda^{[t]}(k)) - (y_{*}(k),\lambda_{*}(k)) \Vert^2 \\
            = & \frac{\alpha_y^{-1} - \kappa}{2} \Bigl[ \Vert \eta^{-1} ((y_a^{[t]}(k),\lambda_a^{[t]}(k)) - (y^{[t]}(k),\lambda^{[t]}(k))) + (y^{[t]}(k), \lambda^{[t]}(k)) - (y_{*}(k),\lambda_{*}(k)) \Vert^2 \\
            & \qquad \qquad \qquad - \frac{\kappa (\eta^{-1} - 1)}{\alpha_y^{-1} - \kappa} \Vert (y^{[t]}(k), \lambda^{[t]}(k)) - (y_{*}(k),\lambda_{*}(k)) \Vert^2  \Bigr] \\
            = & \frac{\alpha_y^{-1} - \kappa}{2} \left[ \frac{\alpha_y^{-1} - \kappa \eta^{-1}}{\alpha_y^{-1} - \kappa} \Bigl\Vert (y^{[t]}(k), \lambda^{[t]}(k)) - (y_{*}(k),\lambda_{*}(k)) + \frac{(\alpha_y^{-1} - \kappa)\eta^{-1}}{\alpha_y^{-1} - \kappa \eta^{-1}}  ((y_a^{[t]}(k),\lambda_a^{[t]}(k)) \right.\\ 
            &  \left. \qquad \qquad \quad  - (y^{[t]}(k),\lambda^{[t]}(k))) \Bigr\Vert^2 - \frac{\kappa (\eta^{-1} - 1)}{\alpha_y^{-1} - \kappa} \Vert \eta^{-1} ((y_a^{[t]}(k),\lambda_a^{[t]}(k)) - (y^{[t]}(k),\lambda^{[t]}(k))) \Vert^2 \right] \\
            \leq & \frac{\alpha_y^{-1} - \kappa \eta^{-1}}{2} \Bigl\Vert (y^{[t]}(k), \lambda^{[t]}(k)) - (y_{*}(k),\lambda_{*}(k)) \\
            & \qquad \qquad \qquad   + \frac{(\alpha_y^{-1} - \kappa)\eta^{-1}}{\alpha_y^{-1} - \kappa \eta^{-1}} ((y_a^{[t]}(k),\lambda_a^{[t]}(k)) - (y^{[t]}(k),\lambda^{[t]}(k))) \Bigr\Vert^2,
        \end{align*}
    }
    together with \cref{naconverana_ineq3}, implies that 
    {\small 
    \begin{equation}
        \label{naconverana_ineq4}
        \begin{aligned}
            & \eta^{-1}(\eta^{-1} - 1) val_{[t]}(k) + \frac{\alpha_y^{-1} - \kappa \eta^{-1}}{2} \Bigl\Vert (y^{[t]}(k), \lambda^{[t]}(k)) - (y_{*}(k),\lambda_{*}(k)) \\
            & \qquad \qquad \qquad  \qquad + \frac{\alpha_y^{-1} - \kappa}{\alpha_y^{-1} - \kappa \eta^{-1}} \eta^{-1} ((y_a^{[t]}(k),\lambda_a^{[t]}(k)) - (y^{[t]}(k),\lambda^{[t]}(k))) \Bigr\Vert^2 \\
            & \geq  \eta^{-2} val_{[t+1]}(k) \\
            & + \frac{\kappa}{2} \left\| \eta^{-1}((y^{[t+1]}(k),\lambda^{[t+1]}(k))) - \left( (y_*(k),\lambda_{*}(k)) + (\eta^{-1} - 1)(y^{[t]}(k), \lambda^{[t]}(k)) \right) \right\|^2.
        \end{aligned}
    \end{equation}}
    If $t \geq 1$, then using the relations $y_a^{[t]}(k) = y^{[t]}(k) + \frac{1 - \eta}{1 + \eta}(y^{[t]}(k) - y^{[t-1]}(k))$ and $\lambda_a^{[t]}(k) = \lambda^{[t]}(k) + \frac{1 - \eta}{1 + \eta}(\lambda^{[t]}(k) - \lambda^{[t-1]}(k))$, it yields that
    {\small 
        \begin{align*}
            & (y^{[t]}(k), \lambda^{[t]}(k)) - (y_{*}(k),\lambda_{*}(k)) + \frac{\alpha_y^{-1} - \kappa}{\alpha_y^{-1} - \kappa \eta^{-1}} \eta^{-1} ((y_a^{[t]}(k),\lambda_a^{[t]}(k)) - (y^{[t]}(k),\lambda^{[t]}(k))) \\
            = & (y^{[t]}(k), \lambda^{[t]}(k)) - (y_{*}(k),\lambda_{*}(k)) \\
            & \qquad + \frac{\eta^{-2} - 1}{\eta^{-2} - \eta^{-1}} \eta^{-1} \frac{\eta^{-1} - 1}{ 1 + \eta^{-1}} ((y^{[t]}(k),\lambda^{[t]}(k)) - (y^{[t-1]}(k),\lambda^{[t-1]}(k))) \\
            = & \eta^{-1} (y^{[t]}(k), \lambda^{[t]}(k)) - \left( (y_{*}(k),\lambda_{*}(k)) + (\eta^{-1} -1 ) (y^{[t-1]}(k),\lambda^{[t-1]}(k))  \right),
        \end{align*}
    }
    and obviously, for $t = 0$ (recalling that $(y_a^{[0]}(k),\lambda_a^{[0]})(k) = (y^{[0]}(k),\lambda^{[0]})(k) = (y^k, \lambda^k)$),
    {\small
        \begin{align*}
            & (y^{[0]}(k), \lambda^{[0]}(k)) - (y_{*}(k),\lambda_{*}(k)) + \frac{\alpha_y^{-1} - \kappa}{\alpha_y^{-1} - \kappa \eta^{-1}} \eta^{-1} ((y_a^{[0]}(k),\lambda_a^{[0]}(k)) - (y^{[0]}(k),\lambda^{[0]}(k))) \\
            & = (y^k,\lambda^k) - (y_{*}(k),\lambda_{*}(k)).
        \end{align*}
    }
    Therefore, we can rewrite \cref{naconverana_ineq4} as follows:\\
    \textbf{Case 1:} If $t \geq 1$, 
    {\small
        \begin{align*}
            & val_{[t+1]}(k) + \frac{\kappa}{2} \left\| \eta^{-1}((y^{[t+1]}(k),\lambda^{[t+1]}(k))) - ( (y_*(k),\lambda_{*}(k)) + (\eta^{-1} - 1)(y^{[t]}(k), \lambda^{[t]}(k))) \right\|^2 \\
            \leq & (1 - \eta) \Bigl[ val_{[t]}(k) \\
            & \qquad \qquad + \frac{\kappa}{2} \left\| \eta^{-1} (y^{[t]}(k), \lambda^{[t]}(k)) - ( (y_{*}(k),\lambda_{*}(k)) + (\eta^{-1} -1 ) (y^{[t-1]}(k),\lambda^{[t-1]}(k))  \right\|^2  \Bigr].
        \end{align*}
    }
    \textbf{Case 2:} If $t = 0$, 
    {\small 
        \begin{align*}
            & val_{[t+1]}(k) + \frac{\kappa}{2} \left\| \eta^{-1}((y^{[t+1]}(k),\lambda^{[t+1]}(k))) - ( (y_*(k),\lambda_{*}(k)) + (\eta^{-1} - 1)(y^{[t]}(k), \lambda^{[t]}(k)) ) \right\|^2 \\
            \leq & (1 - \eta) \left[ val_{[0]}(k) + \frac{\kappa}{2} \Vert (y^k,\lambda^k) - (y_{*}(k),\lambda_{*}(k)) \Vert^2  \right].
        \end{align*}
    }
    Thus, we can conclude that for any $t \geq 0$, 
    \begin{equation*}
         val_{[t+1]}(k) \leq (1 - \eta)^{t+1}\left[ val_{[0]}(k) + \frac{\kappa}{2} \Vert (y^k,\lambda^k)- (y_{*}(k),\lambda_{*}(k)) \Vert^2 \right],
    \end{equation*}
    which is the desired result (a). 
    Part (b) follows from (a) immediately by the $\kappa$-strong concavity of function $Q$ in $(y,\lambda)$. 
    
  \end{proof}

   \begin{proof}[\textbf{Proof of \cref{thm:ex_innerrel_NA}}]
    Using the strong convexity of $-Q(x,z,u,v,y,\lambda)$ in $(y,\lambda)$ and the definition of $(y_{*}(k),\lambda_{*}(k))$, it holds that 
        \begin{align*}
            & \vartheta(x^k,z^k,u^k,v^k) - Q^{(k)}(y^k,\lambda^k) \\
            \geq & \langle \nabla_{(y,\lambda)} Q^{(k)}(y_{*}(k),\lambda_{*}(k)), (y_{*}(k),\lambda_{*}(k)) - (y^k,\lambda^k) \rangle   +\frac{\kappa}{2} \Vert (y^k,\lambda^k) - (y_{*}(k), \lambda_{*}(k)) \Vert^2, 
        \end{align*}
    which implies $\vartheta(x^k,z^k,u^k,v^k) - Q^{(k)}(y^k,\lambda^k) \geq \frac{\kappa}{2} \Vert (y^k,\lambda^k) - (y_{*}(k), \lambda_{*}(k)) \Vert^2$.
     Combined with part (b) in \cref{lem:ex_innersol_NA}, yields that 
    \begin{equation}
        \Vert (y^{k+1},\lambda^{k+1}) - (y_{*}(k),\lambda_{*}(k))\Vert^2 \leq \frac{4}{\kappa} (1 - \sqrt{\kappa \alpha_y})^{T} \left[ \vartheta(x^k,z^k,u^k,v^k) - Q^{(k)}(y^k,\lambda^k) \right].
    \end{equation}

    The remainder of the proof follows a similar line of argument as in the proof of \cref{thm:ex_innerrel}. Specifically, by substituting $\frac{2}{\kappa} (1- \kappa \alpha_y)^{T}$ with $\frac{4}{\kappa} (1- \sqrt{\kappa \alpha_y})^{T}$, one can replicate the steps of \cref{thm:ex_innerrel} to obtain the result. Hence, we omit the proof here for simplicity.

  \end{proof}

\section{Details of the case study in power system} 
\label{app:detofcase}

 The upper-level DS optimizes its economic dispatch subject to DC power flow constraint and power balance equation, i.e.,
    \begin{align}
        \min \ & \sum_{t \in T} \left( \sum_{i \in G} (a_i (p_{i,t}^{g})^2 + b_i p_{i,t}^{g} + c_i) + c^{H}p_{t}^{H} \right) \\
        \mathrm{s.t.} \ & p_{ij} = \frac{\theta_i - \theta_j}{x_{ij}},  \forall (i,j) \in L \label{cs:upp_DC_cons} \\
        & \sum_{i \in G} p_i^g - \sum_{i \in N} p_i^{d} = 0 \label{cs:upp_baleq_cons} \\
        & p_{ij}^{\text{min}} \leq p_{ij} \leq p_{ij}^{\text{max}} , \forall (i,j) \in L \\
        & p_{i}^{g, \text{max}} \leq p_{i,t}^{g} \leq p_{i}^{g, \text{max}} , \forall i \in G. 
    \end{align}
    In the above model, $G, L$ and $N$ denote the sets of generators, branches, and buses in the power system, respectively. The coefficient $c^{H}$ is the cost of the power exchange with the ISO. The variable $p_{ij}$ denotes the active power flow from bus $i$ to bus $j$, with $p_{ij}^{\text{min}}, p_{ij}^{\text{max}}$ representing the minimum and maximum transmission capacity limits of branch $ij$, respectively. For the generator at bus $i$, $p_{i}^{g,\text{min}}, p_{i}^{g,\text{max}}$ indicate its minimum and maximum allowable active power outputs. The variable $\theta_i$ denotes the phase angle of bus $i$. Eq. \cref{cs:upp_DC_cons} is the DC power flow equation. Eq. \cref{cs:upp_baleq_cons} represents the active power balance equation. 

    The objective of MG is to minimize the operation cost based on the market clearing price (DLMP) announced by DS. A economic dispatch model for MG is formulated as follows.
    {\small
    \begin{align}
         \min &\quad \sum_{t=1}^{T} \bigg[ \sum_{g} \big( c^{f} p_{g,t} + c^{\text{su}}_{g} u_{g,t} + c^{\text{sd}}_{g} (1 - u_{g,t-1}) u_{g,t} \big) \notag \\
         & \qquad \qquad + \sum_{b} c^{\text{deg}}_{b} \big( p^{\text{ch}}_{b,t} + p^{\text{dis}}_{b,t} \big) + c^{\text{buy}}_{t} p^{\text{buy}}_{t} - c^{\text{sell}}_{t} p^{\text{sell}}_{t} + \sum_{r} c^{\text{curt}}_{r} p_{r,t} \bigg] \label{eq:obj} \\
         \mathrm{s.t.} &\quad \sum_{g} p_{g,t} + \sum_{r} p^{\text{curt}}_{r,t} + \sum_{b} \big( -p^{\text{ch}}_{b,t} + p^{\text{dis}}_{b,t} \big) + p^{\text{buy}}_{t} - p^{\text{sell}}_{t} = \sum_{l} p^{\text{demand}}_{l,t} + p^{\text{loss}}_{t}, \quad \forall t \in T \label{cs:low_eq_balance} \\
         &\quad u_{g,t} p^{\min}_{g} \leq p_{g,t} \leq u_{g,t} p^{\max}_{g} \label{cs:low_eq_gen_limits} \\
         &\quad p_{g,t} - p_{g,t-1} \leq R^{up}_{g} \Delta t, \ 
          p_{g,t-1} - p_{g,t} \leq R^{down}_{g} \Delta t \label{cs:low_eq_ramp_down} \\
         &\quad v^{\text{ch}}_{b,t} + v^{\text{dis}}_{b,t} \leq 1, \ 
          0 \leq p^{\text{ch}}_{b,t} \leq p^{\text{ch},\max}_{b,t} v^{\text{ch}}_{b,t}, \ 
          0 \leq p^{\text{dis}}_{b,t} \leq p^{\text{dis},\max}_{b,t} v^{\text{dis}}_{b,t} \label{cs:low_eq_dis_limits} \\
         &\quad E_{b,t} = E_{b,t-1} + \upsilon^{\text{ch}}_{b} p^{\text{ch}}_{b,t} \Delta t - \frac{1}{\upsilon^{\text{dis}}_{b}} p^{\text{dis}}_{b,t} \Delta t \label{cs:low_eq_ess_energy} \\
         &\quad E^{\min}_{b} \leq E_{b,t} \leq E^{\max}_{b}, \
          E_{b,0} = E_{b,T} \label{cs:low_eq_ess_cycle} \\
         &\quad p_{r,t} + p^{\text{curt}}_{r,t} = p^{\text{forecast}}_{r,t}, 
         \quad 0 \leq p_{r,t} \leq p^{\text{forecast}}_{r,t}, 
         \quad 0 \leq p^{\text{curt}}_{r,t} \leq p^{\text{forecast}}_{r,t} \label{cs:low_eq_renew_curtail} \\
         &\quad 0 \leq p^{\text{buy}}_{t} \leq p^{\text{grid},\max}, 
         \quad 0 \leq p^{\text{sell}}_{t} \leq p^{\text{grid},\max}, 
         \quad p^{\text{sell}}_{t} \cdot p^{\text{buy}}_{t} = 0 \label{cs:low_eq_buy_sell_mutex}
    \end{align}}
    where $\{p_{g,t}, u_{g,t}, p_{b,t}^{\text{ch}}, p_{b,t}^{\text{dis}}, p_t^{\text{buy}}, p_{t}^{\text{sell}}, p_{r,t}^{\text{curt}}\}$ are the decision variables of the MG model, which represent generator output, unit commitment status, charging and discharging power of the energy storage system (ESS), power purchased and sold, and curtailed renewable power, respectively. The operation cost of MG includes the following components: the generation cost of dispatchable generators $c^{f} p_{g,t}$, the start-up cost of generators $c_g^{\text{su}} u_{g,t}$, the degradation cost of energy storage $c_{b}^{\text{deg}}(p_{b,t}^{\text{ch}} + p_{b,t}^{\text{dis}})$, the cost/revenue from grid interaction $c_{t}^{\text{buy}} p_{t}^{\text{buy}} - c_{t}^{\text{sell}} p_{t}^{\text{sell}}$, and the penalty cost from renewable curtailment $c_{r}^{\text{curt}} p_{r,t}^{\text{curt}}$. 
    Eq. \cref{cs:low_eq_balance} is the active power balance constraint. Eq. \cref{cs:low_eq_gen_limits} enforces the generator output limits. Eqs. 
    \cref{cs:low_eq_ramp_down} are the generator ramp-up and ramp-down constraints. The ESS charging and discharging logic and power limits are captured by constraints \cref{cs:low_eq_dis_limits}.
    Eq. \cref{cs:low_eq_ess_energy} is the energy balance equation. Eqs. 
    \cref{cs:low_eq_ess_cycle} enforce the energy storage capacity limits. The constraints of renewable generation utinization and curtailment are in \cref{cs:low_eq_renew_curtail}.
    Eqs.\cref{cs:low_eq_buy_sell_mutex} 
    represent the power exchange constraints. 


\bibliographystyle{siamplain}
\bibliography{references}

\end{document}

%% file: ex_shared.tex
\usepackage{lipsum}
\usepackage{amsfonts}
\usepackage{graphicx}
\usepackage{epstopdf}
\usepackage{algorithmic}
\ifpdf
  \DeclareGraphicsExtensions{.eps,.pdf,.png,.jpg}
\else
  \DeclareGraphicsExtensions{.eps}
\fi

\newsiamremark{remark}{Remark}
\newsiamremark{hypothesis}{Hypothesis}
\crefname{hypothesis}{Hypothesis}{Hypotheses}
\newsiamthm{claim}{Claim}
\newsiamremark{fact}{Fact}
\crefname{fact}{Fact}{Facts}

\headers{Minimax Bilevel Optimization Problems}{Yaling Hu, Jiani Wang, Yu-hong Dai, Xiaojiao Tong}

\title{Optimality Conditions and Numerical Algorithms for a Class of Minimax Bilevel Optimization Problems \thanks{Submitted to the editors xxxx.
\funding{This work was supported by National Key R\&D Program of China (2022YFA1004000), the National Natural Science Foundation of China (12331011, 1240011365), and the State Key Laboratory of Scientific and Engineering Computing, Chinese Academy of Sciences.}}}

\author{ Yaling Hu \thanks{School of Mathematics and Computational Science, Xiangtan University, Xiangtan 411105, China
  (\email{yalinghu0829@163.com}). }
 \and Jiani Wang\thanks{School of Mathematical Sciences \& Key Laboratory of Mathematics and Information Networks, Beijing University of Posts and Telecommunications, Ministry of Education, Beijing 100876, China
  (\email{wjiani@bupt.edu.cn}).}
\and Yu-hong Dai \thanks{LSEC, ICMSEC, Academy of Mathematics and Systems Science, Chinese Academy of Sciences Academy of Mathematics and Systems Science, Chinese Academy of Sciences Beijing 100190, China
  (\email{dyh@lsec.cc.ac.cn}).}
  \and Xiaojiao Tong \thanks{Corresponding author. School of Mathematics and Computational Science, Xiangtan University, Xiangtan 411105, China
  (\email{dysftxj@hnfnu.edu.cn}).}
  }

\usepackage{amsopn}
